\theoremstyle{plain}
\newtheorem{thm}{Theorem}[section]
\newtheorem{lemma}[thm]{Lemma}
\newtheorem{prop}[thm]{Proposition}
\newtheorem{cor}[thm]{Corollary}
\newtheorem{problem}[thm]{Problem}
\theoremstyle{definition}
\newtheorem{defn}{Definition}[section]
\theoremstyle{remark}
\newtheorem*{rem*}{Remark}
\newtheorem{rem}{Remark}[section]
\newcommand{\eps}{\varepsilon}
\newcommand{\calA}{\mathcal{A}}
\newcommand{\calF}{\mathcal{F}}
\newcommand{\calG}{\mathcal{G}}
\newcommand{\calL}{\mathcal{L}}
\newcommand{\R}{\mathbb{R}}
\newcommand{\N}{\mathbb{N}}
\newcommand{\Rd}{\mathbb{R}^d}
\renewcommand{\leq}{\leqslant}
\renewcommand{\geq}{\geqslant}
\DeclareMathOperator{\dist}{dist}
\DeclareMathOperator{\supp}{supp}
\DeclareMathOperator{\tr}{tr}
\def\({\left(}
\def\){\right)}
\def\[{\left[}
\def\]{\right]}
\def\<{\langle}
\def\>{\rangle}
\title{On the Bernoulli free boundary problems for the half Laplacian and for the spectral half Laplacian}
\author{
\ Sven Jarohs\footnote{Goethe-Universit\"at Frankfurt, Germany, jarohs@math.uni-frankfurt.de.},
Tadeusz Kulczycki\footnote{Wroclaw University of Science and Technology, Poland, tadeusz.kulczycki@pwr.edu.pl \newline T. Kulczycki was supported by the National Science Centre, Poland, grant no. 2019/33/B/ST1/02494.},
Paolo Salani\footnote{DiMaI, Universit\`a di Firenze, Italy, paolo.salani@unifi.it}
}
\date{}
\begin{document}
\maketitle

\begin{abstract}
We study the exterior and interior Bernoulli problems for the half Laplacian and the interior Bernoulli problem for the spectral half Laplacian. We concentrate on the existence and geometric properties of solutions. Our main results are the following. For the exterior Bernoulli problem for the half Laplacian, we show that under starshapedness assumptions on the data the free domain is starshaped. For the interior Bernoulli problem for the spectral half Laplacian, we show that under convexity assumptions on the data the free domain is convex and we prove a Brunn-Minkowski inequality for the Bernoulli constant. For Bernoulli problems for the half Laplacian we use a variational approach, whereas for Bernoulli problem for the spectral half Laplacian we use the Beurling method based on subsolutions. 
\end{abstract}

\section{Introduction}

The classical {\em Bernoulli problem}  splits into two similar but different cases, namely {\em the exterior and the interior} Bernoulli problems. They are free boundary problems, both involving two nested bounded open sets $K$ and $D$, with $\overline{K}\subset D$, and the Newtonian potential $u$ of $K$ with respect to $D$, i.e. the solution to the following Dirichlet problem:
$$
\Delta u=0\quad \text{in $D\setminus \overline{K}$ with $u=0$ on $\partial D$, $u=1$ on $\partial K$}.
$$
Then they are formulated as follows:
\begin{enumerate}
\item (Exterior BP) given $K$ and $\lambda>0$, find $D$ so that $|\nabla u|=\lambda$ on $\partial D$;
\item (Interior BP)  given $D$ and $\lambda>0$, find $K$ so that $|\nabla u|=\lambda$ on $\partial K$.
\end{enumerate}

Both problems have been largely investigated since the pioneering work of Beurling \cite{B1957}, first for the Laplacian and then for the $p$-Laplacian and other operators (we refer for instance to the papers \cite{AC1981, A1989, FR1997, HS1997, HS20000, HS2000, CT2002, CM2008, BS2009, DK2010} and references therein). 
According to literature, the exterior problem is easier and it usually has a unique solution for every value of $\lambda>0$, under suitable geometric assumptions on the involved domains. The interior problem is instead in general much more delicate and there is a parameter's threshold for $\lambda$, the so called {\em Bernoulli constant}, which determines the existence of a solution.

In this paper, we discuss the Bernoulli problem for the {\em half Laplacian} $(-\Delta)^{1/2}$ (or the square root of the Laplacian) and the {\em spectral half Laplacian} $(-\Delta_D)^{1/2}$ (or the spectral square root of the Laplacian), where the latter depends on an open bounded set $D\subset \R^d$.

Bernoulli problems for fractional Laplacians $(-\Delta)^{\alpha/2}$ are relevant in classical physical models in mediums where long range interactions are present. For example, they are related to models involving traveling wave solutions for planar cracks. They are also related to the theory of semipermeable membranes, see e.g. \cite{DL1976}. Such problems have been addressed for the first time by Caffarelli, Roquejoffre, and Sire in \cite{CRS2010} and subsequently they have been intensively studied in recent years see e.g. \cite{SR2012, DSS2015, DSS2015b, SSS2014, EKPSS2019, FR2020}. However, in these papers mainly the regularity of the free boundary of variational solutions is studied. In our paper instead, we concentrate on the existence and geometric properties of solutions. In particular, we show that, under appropriate assumptions, some geometric properties of the given domain (namely, starshapedness and convexity) imply similar geometric properties of the free domain. For the interior problems we study the related Bernoulli constants, which determine existence of solutions, and in the case of the spectral half Laplacian we prove a Brunn-Minkowski inequality and some of its consequences.

As usual, for a smooth function $\phi:\R^d\to\R$ the half Laplacian is given by
\begin{equation}\label{defi-halflaplacian}
(-\Delta)^{1/2}\phi(x)= \calA_d \int_{\R^d}\frac{2\phi(x)-\phi(x+y)-\phi(x-y)}{|y|^{d+1}}\ dy,\quad x\in \R^d
\end{equation}
where $\calA_d=\Gamma((d+1)/2)\pi^{-(d+1)/2}/2$ is a normalization constant so that the Fourier-transform of this operator has the symbol $|\cdot|$, see e.g. \cite{K2017} and the references therein.
It is noteworthy that the half Laplacian is a nonlocal operator and the boundary conditions have to be considered on the complement of the domain of interest. Moreover, the boundary regularity is different from the Laplacian, so that the gradient of a solution in general does not exist up to the boundary (see e.g. \cite{K1997,RS2014}). The precise formulations of the exterior and interior Bernoulli problems for the half Laplacian are as follows.

\begin{problem}[The exterior Bernoulli problem for the half Laplacian]
\label{ebp}
Given an open nonempty and bounded set $K\subset \R^d$ and $\lambda>0$, find a function $u\in C(\R^d)$ such that $\overline{K}\subset \{u>0\}$, $\{u>0\}$ is bounded and of class $C^1$, 
\begin{equation}
\label{bernoulli-problem basis}
\left\{
\begin{aligned}
(-\Delta)^{1/2}u&=0 &&\text{in $\{u>0\}\setminus \overline{K}$,}\\
u&=1 &&\text{in $\overline{K}$,}\\
u&=0 &&\text{in $\R^d\setminus\{u>0\}$,}\\
\end{aligned}
\right.
\end{equation}
and 
\begin{equation}
\label{normal_exterior}
\lim\limits_{t\to 0^+} \frac{u(\theta+t\nu(\theta))}{\sqrt{t}} =\lambda,
\end{equation}
for all $\theta\in \partial \{u>0\}$, where $\nu(\theta)$ denotes the interior normal of $\{u>0\}$ at $\theta$.
\end{problem} 

\begin{rem}\label{rem:homogeneity-exterior}
Let $s>0$ and $K_s=sK$. Then, if $u$ is a solution of Problem \ref{ebp} for $K$ and $\lambda$, it is easily seen that
$u_s(x)=u(x/s)$ solves the same problem for $K_s$ and $\lambda/\sqrt{s}$ (clearly we have $\{u_s>0\}=s\{u>0\}$).
\end{rem}

\begin{problem}[The interior Bernoulli problem for the half Laplacian]
\label{ibp}
Given an open nonempty and bounded set $D\subset \R^d$ and $\lambda>0$, find a function $u\in C(\R^d)$  such that $0\leq u\leq 1$,  the set $\{u= 1\}$ is nonempty, contained in $D$ and of class $C^1$,
\begin{equation}
\label{bernoulli-problem basis interior}
\left\{
\begin{aligned}
(-\Delta)^{1/2}u&=0 &&\text{in $D\setminus\{u=1\}$,}\\
u&=0 &&\text{in $\R^d\setminus D$,}\\
\end{aligned}
\right.
\end{equation}
and 
\begin{equation}
\label{normal_interior}
\lim\limits_{t\to 0^+} \frac{u(\theta) - u(\theta+t\nu(\theta))}{\sqrt{t}} =\lambda,
\end{equation}
for all  $\theta\in \partial \{u= 1\}$, where $\nu(\theta)$ denotes the exterior normal of $\{u = 1\}$ at $\theta$.
\end{problem}

Differently from the half Laplacian appearing above, the definition of the spectral half Laplacian depends on the domain of interest. Given a bounded open nonempty set $D$, let $\lambda_k$, $k\in \mathbb{N}$ be the Dirichlet eigenvalues of the negative Laplacian $-\Delta$ in $D$ with corresponding eigenfunctions $\phi_k$. Then, for $u\in H^1_0(D)$  we define the operator
\begin{equation}\label{defi-spectralhalflaplacian}
(-\Delta_D)^{1/2}u=\sum_{k=1}^{\infty} \lambda_k^{1/2} u_k\phi_k,\quad\text{where}\quad u_k=\int_{D} u(x)\phi_k(x)\ dx
\end{equation}
and we call it {\em the spectral half Laplacian} of $u$ in $D$ (see e.g. \cite{CT2010,SV2014}). Here, $H^1_0(D)$ denotes the closure of $C^{\infty}_c(D)$ with respect to the norm $u\mapsto \|\nabla u\|_{L^2(D)}$. We emphasize that also the spectral half Laplacian is nonlocal, however with nonlocality restricted to the set $D$. We focus here on the interior BP.

\begin{problem}[The interior Bernoulli problem for the spectral half Laplacian]
\label{isbp}
Given an open nonempty and bounded set $D\subset \R^d$ and $\lambda>0$, find a function $u\in C(\overline{D})$ such that $0\leq u\leq 1$, the set $\{u= 1\}$ is nonempty, contained in $D$ and of class $C^1$,
\begin{equation}
\label{bernoulli-problem basis spectral interior}
\left\{
\begin{aligned}
(-\Delta_D)^{1/2}u&=0 &&\text{in $D\setminus\{u=1\}$,}\\
u&=0 &&\text{on $\partial D$,}\\
\end{aligned}
\right.
\end{equation}
and 
$$
\lim\limits_{t\to 0^+} \frac{u(\theta) - u(\theta+t\nu(\theta))}{\sqrt{t}} =\lambda\quad\text{ for all }\theta\in \partial \{u = 1\},
$$
where $\nu(\theta)$ denotes the exterior normal of $\{u= 1\}$ at $\theta$.
\end{problem} 

\begin{rem}\label{rem:homogeneity-interior}
Similarly to the exterior BP, we can easily observe a homogeneity property of solutions of Problems \ref{ibp} and \ref{isbp} with respect to the domain $D$. Precisely, let $s>0$ and $D_s=sD$. Then, if $u$ is a solution of Problem \ref{ibp} or Problem \ref{isbp} for $D$ and $\lambda$, it is easily seen that $u_s(x)=\,u(x/s)$
solves the same problem for $D_s$ and $\lambda/\sqrt{s}$ (clearly we have $\{u_s=1\}=s\{u=1\}$).
\end{rem}

We aim at giving a solution to Problems \ref{ebp}, \ref{ibp}, and \ref{isbp} and investigate properties of these solutions. Problems \ref{ebp} and \ref{ibp} are a kind of free boundary problem, which has been studied in \cite{CRS2010,DSS2015,DSS2015b,SR2012,SSS2014} and we make use of the regularity results for the free boundary shown in these papers. In order to apply these results and to find solutions to the Bernoulli problems with the half Laplacian, we study two suitable functionals whose constrained minimizers satisfy a \textit{localized} version of the above problems.
For this, recall the Sobolev spaces $H^1(\R^{d+1}) = \{U \in L^2(\R^{d+1}): \, \nabla U \in L^2(\R^{d+1}) \}$,
$$
H^{1/2}(\R^d)=\{u\in L^2(\R^d)\;:\; [u]_{1}<\infty\} \quad\text{with}\quad [u]_{1}=\iint_{\R^d\times\R^d}\frac{(u(x)-u(y))^2}{|x-y|^{d+1}}\ 
 \, dx \, dy,
$$
and the continuous trace operator $\tr:H^1(\R^{d+1})\to H^{1/2}(\R^d)$, where the trace is considered with respect to the boundary of $\R^{d+1}_+ :=\{x\in \R^{d+1}\;:\; x_{d+1}>0\}$. Denoting by $\calL_d$ the $d$-dimensional Lebesgue measure on $\R^d$, for a given $\lambda>0$ we connect minimizers subject to the constraint $\tr U=1$ on $K$ of \begin{equation}
\label{ebp functional}
E_{\lambda} :H^1(\R^{d+1})\to [0,\infty],\quad E_{\lambda}(U)=\int_{\R^{d+1}} |\nabla U(x)|^2 \, dx + \frac{\pi}{4}\lambda^2 \,\calL_d (\{\tr U > 0\})
\end{equation}
with solutions to Problem \ref{ebp} (where $\{\tr U > 0\}=D$), and minimizers, subject to the constraints $\tr U=0$ on $\R^d\setminus D$, of
\begin{equation}
\label{ibp functional}
I_{\lambda,D} :H^1(\R^{d+1})\to [0,\infty],\quad I_{\lambda,D}(U)=\int_{\R^{d+1}} |\nabla U(x)|^2 \, dx + \frac{\pi}{4}\lambda^2 \,\calL_d (\{\tr U < 1\}\cap D)
\end{equation}
to Problem \ref{ibp}. More precisely, we study the following varational problems.
\begin{problem}
\label{variational}
Given $\lambda>0$ and $K\subset \R^d$ open nonempty and bounded, find a minimizer $U\in H^1(\R^{d+1})$ of $E_\lambda$ subject to the constraint $\tr U=1$ on $K$.
\end{problem}

\begin{problem}
\label{variational interior}
Given $\lambda>0$ and $D\subset \R^d$ open nonempty and bounded, find a nontrivial minimizer $U\in H^1(\R^{d+1})$ of $I_{\lambda,D}$ subject to the constraint $\tr U=0$ on $\R^d\setminus D$.
\end{problem}

Note that instead of studying functionals in $H^1(\R^{d+1})$ one may study functionals in $H^{1/2}(\R^d)$. To see this, recall that if $u\in H^{1/2}(\R^d)$ and $U\in H^1(\R^{d+1})$ is given by the harmonic extension of $u$, that is,
\begin{equation}\label{extension}
U(x,y)=\int_{\R^d}P(x-z,|y|)u(z)\ dz, \quad\text{for $x\in \R^d$, $y \ne 0$}
\end{equation}
where 
\begin{equation}
\label{Poisson}
P(x,y)=2\calA_d\frac{y}{(|x|^2+y^2)^{\frac{d+1}{2}}},
\end{equation}
and evenly reflected across $\partial \R^{d+1}_+$, we have
\begin{equation}
\label{energy identity}
\int_{\R^{d+1}}|\nabla U(x,y)|^2 \, dx \, dy, =\calA_d[u]_1.
\end{equation}
We prove that minimizers of Problems \ref{variational} and \ref{variational interior} satisfy (\ref{extension}). Hence, instead of minimizers of $E_\lambda$, we may study minimizers of 
\begin{equation}
\label{ebp functional 2}
 e_{\lambda}: \, H^{1/2}(\R^d)\to[0,\infty],\quad e_{\lambda}(u) =\calA_d\,[u]_1 + \frac{\pi}{4}\lambda^2 \,\calL_d (\{u> 0\})
\end{equation}
and similarly, instead of minimizers of $I_{\lambda,D}$, we may study minimizers of 
\begin{equation}
\label{ibp functional 2}
i_{\lambda,D}: \,H^{1/2}(\R^d)\to[0,\infty],\quad i_{\lambda,D}(u) =\calA_d\,[u]_1 + \frac{\pi}{4}\lambda^2 \,\calL_d (\{ u< 1\}\cap D).
\end{equation}

Now we state our main results for exterior and interior Bernoulli problems for the half Laplacian. The first one concerns Problem \ref{ebp} and, in order to properly state it, we use the following notion of starshapedness: given $B\subset K\subset \R^d$, $K$ is called \textit{starshaped with center $B$}, if $K$ is starshaped with respect to every $x\in B$. 

\begin{thm}
\label{bernoulli-exterior}
\begin{enumerate}[(i)]
\item Let $K\subset \R^d$ be an open nonempty bounded set. Then there exists a unique solution $U$ of Problem \ref{variational}. Moreover, $U$ satisfies
\begin{enumerate}[(a)]
\item $0 \le U \le 1$,
\item  $U$ is $1/2$-H{\"o}lder continuous on any compact subset of $\R^{d+1}\setminus (\overline{K}\times\{0\})$,
\item $U$ is harmonic in the set $\{U>0\}\setminus (\overline{K}\times\{0\})$ and in the set $\R^{d+1}\setminus(\R^d\times\{0\})$,
\item $\{\tr U > 0\}$ is bounded.
\item $U$ can be represented in $\R^{d+1}_+$  by the harmonic extension of its trace $u:=\tr U\in H^{1/2}(\R^d)$, $U$ is even in $x_{d+1}$ variable and $u$ satisfies (\ref{bernoulli-problem basis}). For any $\theta\in \partial\{u>0\} \setminus \overline{K}$ such that the interior unit normal vector to $\{u>0\}$ at $\theta$ exists (\ref{normal_exterior}) holds.
\end{enumerate}

\item If additionally $K$  has a $C^2$ boundary then $\partial\{u > 0\}$ and $\overline{K}$ are disjoint and $U \in C(\R^{d+1})$.

\item If additionally $K$ has a $C^2$ boundary and it is starshaped with center $B=B_r^d(x_0)$ for some $r>0$ and $x_0\in K$, then all the superlevel sets of $U$ are starshaped with center $B_r^{d+1}((x_0,0))$, $\partial\{u>0\}$ is of class $C^{\infty}$, and $u$ is a solution of Problem \ref{ebp}.
\end{enumerate}
\end{thm}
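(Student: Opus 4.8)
The plan is to derive the whole statement from a comparison principle for minimizers of Problem \ref{variational}, which itself follows by combining the energy-splitting trick for $\min$ and $\max$ with the uniqueness asserted in part (i) of Theorem~\ref{bernoulli-exterior}. After a translation we may assume $x_0=0$. First I would record the scaling behaviour of minimizers, i.e.\ the minimizer version of Remark~\ref{rem:homogeneity-exterior}: if $U$ minimizes $E_\lambda$ subject to $\tr U=1$ on $K$, then for $s>0$ the function $U(\cdot/s)$ minimizes $E_{\lambda/\sqrt s}$ subject to $\tr U=1$ on $sK$, and analogously for dilations centred at any point of $\R^d\times\{0\}$; this is immediate from the change of variables in \eqref{ebp functional}.

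Next comes the comparison lemma: if $U_1$ minimizes $E_{\lambda_1}$ subject to $\tr U_1=1$ on $K_1$, and $U_2$ minimizes $E_{\lambda_2}$ subject to $\tr U_2=1$ on $K_2$, with $K_1\subseteq K_2$ and $\lambda_1>\lambda_2$, then $U_1\leq U_2$ on $\R^{d+1}$. To prove it, put $m=\min(U_1,U_2)$ and $M=\max(U_1,U_2)$. Since $0\leq U_i\leq 1$ by part (i)(a) and $K_1\subseteq K_2$, the function $m$ is admissible for the first problem and $M$ for the second, so minimality gives $E_{\lambda_1}(m)\geq E_{\lambda_1}(U_1)$ and $E_{\lambda_2}(M)\geq E_{\lambda_2}(U_2)$. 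On the other hand $|\nabla m|^2+|\nabla M|^2=|\nabla U_1|^2+|\nabla U_2|^2$ a.e., and $\{\tr m>0\}=\{\tr U_1>0\}\cap\{\tr U_2>0\}$, $\{\tr M>0\}=\{\tr U_1>0\}\cup\{\tr U_2>0\}$, so
\begin{equation*}
E_{\lambda_1}(m)+E_{\lambda_2}(M)=E_{\lambda_1}(U_1)+E_{\lambda_2}(U_2)+\frac{\pi}{4}(\lambda_2^2-\lambda_1^2)\calL_d(\{\tr U_1>0\}\setminus\{\tr U_2>0\}) .
\end{equation*}
As $\lambda_1>\lambda_2$, the last term is $\leq 0$; combined with the two minimality inequalities this forces all three to be equalities, so in particular $m$ is itself a minimizer of $E_{\lambda_1}$ subject to $\tr=1$ on $K_1$, whence $m=U_1$ by the uniqueness in part (i), i.e.\ $U_1\leq U_2$.

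Now fix $z'\in B_r^d(x_0)$ and $t\in(0,1)$ and let $\phi$ be the dilation of $\R^{d+1}$ centred at $(z',0)$ with ratio $t$. By the scaling fact, $V:=U\circ\phi$ minimizes $E_{\lambda\sqrt t}$ subject to $\tr V=1$ on $\phi^{-1}(K)$; here I use that $K$ is of class $C^2$, so that $\{u=1\}=\overline K$ by part (ii) and the constraint set transforms correctly, and that $K$ is starshaped with respect to $z'$, so $\phi(K)\subseteq K$, i.e.\ $K\subseteq\phi^{-1}(K)$. Applying the comparison lemma with $(\lambda_1,K_1)=(\lambda,K)$ and $(\lambda_2,K_2)=(\lambda\sqrt t,\phi^{-1}(K))$ (note $\lambda>\lambda\sqrt t$) yields $U\leq U\circ\phi$. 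Letting $z'$ run over $B_r^d(x_0)$ and $t$ over $(0,1)$, this says exactly that every superlevel set of $U$ is starshaped with respect to every point of $B_r^d(x_0)\times\{0\}$; restricting to $\R^d\times\{0\}$, $\{u>0\}$ is starshaped with center $B_r^d(x_0)$. The upgrade to starshapedness with center the full ball $B_r^{d+1}((x_0,0))$ uses in addition the evenness of $U$ in $x_{d+1}$ from part (i)(e), reducing the vertical directions to the horizontal ones already treated; I expect this geometric step, together with the rigorous justification of the energy-splitting identity (that $\tr$ commutes with $\min$, $\max$ and the Dirichlet energies add), to require the most care.

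Finally, starshapedness of $\{u>0\}$ with center a ball makes $\partial\{u>0\}$ a Lipschitz radial graph around $x_0$; plugging this into the free boundary regularity theory for one-phase problems for the half Laplacian (\cite{CRS2010,DSS2015,DSS2015b}) gives that $\partial\{u>0\}$ is $C^{1,\gamma}$, and then a hodograph-type bootstrap — using that, away from $\overline K$, the overdetermined problem \eqref{bernoulli-problem basis}, \eqref{normal_exterior} has smooth structure once the free boundary is $C^{1,\gamma}$ — improves this to $C^\infty$. In particular the interior normal to $\{u>0\}$ exists at every boundary point, so \eqref{normal_exterior} holds for all $\theta\in\partial\{u>0\}$ by part (i)(e); together with parts (i)(a), (i)(d) and (ii), which give $u\in C(\R^d)$, $\overline K\subset\{u>0\}$, $\{u>0\}$ bounded, and \eqref{bernoulli-problem basis}, this shows that $u$ solves Problem~\ref{ebp}.
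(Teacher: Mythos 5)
Your argument addresses, in substance, only part (iii) of the theorem: parts (i) and (ii) (existence, uniqueness, the properties (a)--(e), continuity and $\partial\{u>0\}\cap\overline{K}=\emptyset$) are invoked as inputs rather than proved. In the paper these occupy most of the work (Lemmas \ref{general properties}, \ref{existence}, \ref{existence in ball}, Corollaries \ref{boundedness} and \ref{boundary and representation}, Lemma \ref{nonlocal-framework}), so as a proof of the full statement there is a genuine coverage gap. For part (iii) itself, your route is essentially the paper's: your comparison lemma is a mild generalization of the combination of Lemma \ref{monotonicity} with the scaling computation in Lemma \ref{starshaped} (same admissibility of $\min$/$\max$, same additivity of Dirichlet energies, same sign analysis of the measure term using $\lambda_1>\lambda_2$), and the final step (Lipschitz starshaped free boundary, then $C^{1,\gamma}$ and $C^\infty$ via De Silva--Savin, then \eqref{normal_exterior} everywhere) is exactly Lemma \ref{solution to bernoulli}. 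Your passage from starshapedness with respect to $B_r^d(x_0)\times\{0\}$ to the full ball $B_r^{d+1}((x_0,0))$ is left as a flagged geometric step; the paper is equally terse there, so I do not count it against you, but note that "repeating the argument" with a center off the hyperplane does not literally work since such dilations do not preserve $\R^d\times\{0\}$.

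The one logical point you should repair is the foundation of the comparison principle. You deduce $m=U_1$ from "the uniqueness in part (i)", and you also say the comparison principle "itself follows by combining the energy-splitting trick \dots with the uniqueness asserted in part (i)". In the paper the dependency runs the other way: uniqueness is a \emph{corollary} of the comparison (Lemma \ref{monotonicity} with $B=K$), and the comparison is closed not by uniqueness but by the Alt--Caffarelli--Friedman-type argument that if $U-V$ changed sign near a point of $\{U>0\}\cap\{V>0\}$ where $U=V$, then $\min(U,V)$ could not be harmonic there, contradicting Lemma \ref{general properties}\eqref{item:d+1 3}. The energy-splitting identity alone only shows that $\min$ and $\max$ are again minimizers of their respective problems; it does not by itself yield either uniqueness or the pointwise inequality. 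So either supply an independent proof of uniqueness before invoking it, or replace the appeal to uniqueness by the harmonicity/crossing argument as the paper does.
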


As it turns out, the solution found in Theorem \ref{bernoulli-exterior} of Problem \ref{ebp} is indeed the unique solution, see Proposition \ref{uniqueness of ebp} below. If $K\subset \R^d$ is an open nonempty bounded set with $C^2$ boundary then using the regularity results obtained by De Silva and Savin \cite[Theorem 1.1]{DSS2015} (see also \cite[Theorem 1.1]{EKPSS2019}) and Theorem \ref{bernoulli-exterior} (ii) we obtain that for $d = 2$ (\ref{normal_exterior}) is satisfied for all $\theta\in \partial\{u>0\}$ and for $d \ge 3$ (\ref{normal_exterior}) is satisfied for all $\theta\in \partial\{u>0\}$ except a set of Hausdorff dimension $\leq d-3$. If additionally $K$ is starshaped with center $B$ then (\ref{normal_exterior}) is satisfied for all $\theta\in \partial\{u>0\}$ for any dimension. This means that geometric assumptions on $K$ in the third part of Theorem \ref{bernoulli-exterior} give the regularity of the {\it complete} free boundary (in general the free boundary can be split into its regular part and its singular part see e.g. \cite{DSS2015,EKPSS2019}).

\medskip

Our next result concerns the interior Bernoulli problem. 

For $D\subset \R^d$ open nonempty and bounded, we define the {\em Bernoulli constant} of $D$ for the half Laplacian as follows:
\begin{equation}
\label{nontrivial exists intro}
\Lambda(D):=\inf\Big\{\lambda>0\;:\;\text{Problem  \ref{variational interior} has a  nontrivial solution}\Big\}.
\end{equation}

Then we have the following.

\begin{thm}
\label{bernoulli-interior}
Let $D\subset \R^d$ be an open nonempty bounded set. Then $0<\Lambda(D)<\infty$ and the Problem \ref{variational interior} has a solution $V$ if and only if $\lambda\geq \Lambda(D)$. Moreover, $V$ satisfies
\begin{enumerate}[(a)]
\item $0 \le V \le 1$,
\item $V$ is $1/2$-H{\"o}lder continuous on any compact subset of $\R^{d+1}\setminus (D^c\times\{0\})$,
\item $V$ is harmonic in the set $\{V>0\}\setminus (D^c\times\{0\})$ and in the set $\R^{d+1}\setminus(\R^d\times\{0\})$.
\item $V$ can be represented in $\R^{d+1}_+$  by the harmonic extension of its trace $u:=\tr V\in H^{1/2}(\R^d)$, $V$ is even in $x_{d+1}$ variable and $u$ is a solution of \eqref{bernoulli-problem basis interior}. For any $\theta\in \partial\{u=1\} \setminus D^c$ such that the exterior unit normal vector to $\{u=1\}$ at $\theta$ exists (\ref{normal_interior}) holds.
\end{enumerate} 
\end{thm}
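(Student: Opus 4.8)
The plan is to run the variational argument already used for Theorem~\ref{bernoulli-exterior}, now for the interior functional, and then to analyse the Bernoulli constant separately. First I would pass from $I_{\lambda,D}$ to the fractional functional $i_{\lambda,D}$ of \eqref{ibp functional 2}: replacing $U\in H^1(\Rdp)$ with $\tr U=0$ on $\R^d\setminus D$ by the even harmonic extension of its trace does not increase $I_{\lambda,D}$ (the Dirichlet term decreases by minimality of the harmonic extension together with \eqref{energy identity}, the volume term is unchanged), so minimizers of $I_{\lambda,D}$ are even harmonic extensions and it suffices to minimize $i_{\lambda,D}$ over $\calK_D:=\{u\in H^{1/2}(\R^d)\;:\;u=0\text{ on }\R^d\setminus D\}$; this already yields the representation and evenness in (d). Truncating $u$ to $0\vee u\wedge1$ does not increase $[u]_1$ and leaves $\calL_d(\{u<1\}\cap D)$ unchanged, so a minimizer with $0\le u\le1$ can be selected, which is (a). Existence of a minimizer for each fixed $\lambda>0$ is then the direct method: $[\cdot]_1$ is coercive on $\calK_D$ by a fractional Poincar\'e inequality, $[\cdot]_1$ is lower semicontinuous along $L^2_{loc}$-convergent sequences, and so is $u\mapsto\calL_d(\{u<1\}\cap D)$ since $\1_{\{u<1\}}\le\liminf_n\1_{\{u_n<1\}}$ a.e.\ when $u_n\to u$ a.e.\ with $u_n\le1$.

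Next I would locate the threshold. For $\Lambda(D)<\infty$, fix a ball $B$ with $\overline B\subset D$ and one admissible $w$ with $0\le w\le1$ and $w\equiv1$ on $B$: since $i_{\lambda,D}(w)-i_{\lambda,D}(0)=\calA_d[w]_1-\tfrac{\pi}{4}\lambda^2\,\calL_d(\{w=1\}\cap D)$ and $[w]_1$ is a fixed finite number, this is negative for $\lambda$ large, so the minimizer there is nontrivial. For $\Lambda(D)>0$, the fractional Poincar\'e inequality in the form $\calA_d[u]_1\ge c_D\|u\|_{L^2(D)}^2\ge c_D\,\calL_d(\{u=1\})$ gives, for all admissible $0\le u\le1$, $i_{\lambda,D}(u)-i_{\lambda,D}(0)\ge(c_D-\tfrac{\pi}{4}\lambda^2)\,\calL_d(\{u=1\})\ge0$ whenever $\lambda^2\le4c_D/\pi$, so for such small $\lambda$ the only minimizer is $u\equiv0$.

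The main point is the equivalence ``a nontrivial minimizer exists $\iff\lambda\ge\Lambda(D)$''. I would first establish monotonicity: if $u_0$ is a nontrivial minimizer at $\lambda_0$ then $m_0:=\calL_d(\{u_0=1\}\cap D)>0$, minimality at $\lambda_0$ forces $\calA_d[u_0]_1\le\tfrac{\pi}{4}\lambda_0^2 m_0$, hence $i_{\lambda,D}(u_0)<i_{\lambda,D}(0)$ for every $\lambda>\lambda_0$, so the minimizer at $\lambda$ is again nontrivial; thus the set of admissible $\lambda$ is a half-line with left endpoint $\Lambda(D)$, and only the endpoint $\lambda=\Lambda(D)$ needs to be shown admissible. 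For $\lambda_n\downarrow\Lambda(D)$ with nontrivial minimizers $u_n$ (taken with $0\le u_n\le1$, $u_n=0$ off $D$), $[u_n]_1$ is uniformly bounded, and combining $\calA_d[u_n]_1\le\tfrac{\pi}{4}\lambda_n^2\,\calL_d(\{u_n=1\})$ with the fractional Sobolev/isoperimetric inequality $[u_n]_1\gtrsim\calL_d(\{u_n=1\})^{(d-1)/d}$ produces a \emph{uniform} lower bound $\calL_d(\{u_n=1\})\ge c_0>0$ (the sublinear exponent is essential here; $d=1$ needs a minor variant of this step). By the compact embedding $H^{1/2}(D)\hookrightarrow\hookrightarrow L^2(D)$ one may pass to $u_n\to u$ in $L^2(D)$ and a.e., so $\int_D u=\lim_n\int_D u_n\ge c_0>0$ and $u\not\equiv0$, while lower semicontinuity together with $i_{\lambda_n,D}(u_n)\le i_{\lambda_n,D}(w)\to i_{\Lambda(D),D}(w)$ for every admissible $w$ shows that $u$ minimizes $i_{\Lambda(D),D}$; a nonzero minimizer necessarily has $\calL_d(\{u=1\})>0$, so $u$ is a nontrivial minimizer at $\Lambda(D)$.

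For the remaining assertions about a nontrivial minimizer $V$ (the even harmonic extension of $u=\tr V$), (b) is the optimal $C^{0,1/2}$ regularity of minimizers of thin one-phase functionals away from the fixed datum on $\R^d\setminus D$, and the harmonicity in (c) combines harmonicity off $\R^d\times\{0\}$ with the Euler--Lagrange identity: perturbing $U$ by functions whose trace is compactly supported in $D\setminus\{u=1\}$ alters neither the constraint nor the volume term, so $\int\nabla U\cdot\nabla\varphi=0$ there, i.e.\ $(-\Delta)^{1/2}u=0$ in $D\setminus\{u=1\}$ — which, together with the constraint $u=0$ off $D$, is exactly \eqref{bernoulli-problem basis interior}. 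Finally \eqref{normal_interior} at a point $\theta\in\partial\{u=1\}\setminus D^c$ at which the exterior normal exists follows, as in Theorem~\ref{bernoulli-exterior}, from the free boundary regularity and blow-up analysis for minimizers of this functional in \cite{CRS2010,DSS2015,DSS2015b}: at such a $\theta$ the blow-up is the one-dimensional model profile, yielding $u(\theta)-u(\theta+t\nu(\theta))=\lambda\sqrt t+o(\sqrt t)$ with the constant pinned by the normalization $\tfrac{\pi}{4}\lambda^2$. I expect the main obstacle to be the threshold analysis of the third paragraph — ensuring that a nontrivial minimizer persists in the limit $\lambda\to\Lambda(D)^{+}$, which rests on the uniform non-degeneracy estimate $\calL_d(\{u_n=1\})\ge c_0$ — and, at a deeper level, the fact that the passage from the variational minimizer to the pointwise overdetermined condition \eqref{normal_interior} must import the (nontrivial) free boundary regularity theory of \cite{CRS2010,DSS2015,DSS2015b}.
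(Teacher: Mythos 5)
Your proposal is correct and follows essentially the same route as the paper: direct-method existence, the monotonicity-in-$\lambda$ argument showing the admissible $\lambda$'s form a half-line, a fractional Sobolev non-degeneracy bound $\calL_d(\{u_n=1\})\geq c_0>0$ to keep the limit nontrivial at $\lambda=\Lambda(D)$ (this is exactly the content of the paper's Lemma \ref{technical lemma} and Proposition \ref{limit case}, stated there as a contradiction rather than a quantitative lower bound), and the imported regularity of \cite{CRS2010,DSS2015} for \eqref{normal_interior}. The only cosmetic differences are that you minimize $i_{\lambda,D}$ directly at the trace level where the paper works with $I_{\lambda,D}$ in $H^1(\R^{d+1})$ and then passes to traces, and that you obtain $\Lambda(D)>0$ from a fractional Poincar\'e inequality rather than from the paper's Lemma \ref{estimate bernoulli} combined with monotonicity.
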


 Note that if $\{u = 1\}$ is nonempty, contained in $D$ and of class $C^1$ then $u$ is a solution of Problem \ref{ibp}.
The Bernoulli constant for the half Laplacian satisfies simple monotonicity and homogeneity properties, which we state explicitly for the sake of clarity.
\begin{prop}
\label{monotonicity Lambda}
The Bernoulli constant for the half Laplacian is monotone decreasing with respect to inclusion, i.e. if $D_1\subset D_2\subset \R^d$ are open nonempty bounded sets, then $\Lambda(D_1)\geq \Lambda(D_2)$. Moreover, $\Lambda$ is positively homogeneous of degree $-1/2$, that is for any open nonempty bounded set $D \subset \R^d$ and $s > 0$ we have
\begin{equation}\label{homoLambda}
\Lambda(sD)=s^{-1/2}\Lambda(D).
\end{equation}
\end{prop}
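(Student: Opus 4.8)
The plan is to derive both properties directly from the variational characterization \eqref{nontrivial exists intro} of $\Lambda(D)$, together with the behaviour of the functional $i_{\lambda,D}$ (equivalently $I_{\lambda,D}$) under domain inclusion and under scaling. For the monotonicity, suppose $D_1\subset D_2$ and let $\lambda>\Lambda(D_1)$. By Theorem \ref{bernoulli-interior}, Problem \ref{variational interior} for $D_1$ has a nontrivial solution, whose trace $u$ lies in $H^{1/2}(\R^d)$, satisfies $0\le u\le 1$, $u=0$ on $\R^d\setminus D_1$, and $\{u=1\}$ is nonempty. Since $D_1\subset D_2$, the same $u$ is an admissible competitor for Problem \ref{variational interior} with $D_2$ (the constraint $\tr U=0$ on $\R^d\setminus D_2$ is weaker). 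The key inequality to record is that $\{u<1\}\cap D_2\supseteq\{u<1\}\cap D_1$, so in fact $i_{\lambda,D_1}(u)\ge i_{\lambda,D_2}(u)$ only after care — what one really needs is simply that $u$ is \emph{admissible} for the $D_2$-problem and nontrivial, and then compare the minimum of $i_{\lambda,D_2}$ over the larger admissible class with $i_{\lambda,D_2}(u)$. I would therefore argue: since the class of admissible competitors for $D_2$ contains that for $D_1$, the infimum over the $D_2$-class is $\le$ the infimum over the $D_1$-class; combined with the fact (from Theorem \ref{bernoulli-interior}) that existence of a nontrivial minimizer is equivalent to $\lambda\ge\Lambda(D)$, one concludes that a nontrivial minimizer exists for $D_2$ at this $\lambda$, hence $\lambda\ge\Lambda(D_2)$. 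Letting $\lambda\downarrow\Lambda(D_1)$ gives $\Lambda(D_1)\ge\Lambda(D_2)$.

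The subtle point in the monotonicity argument — and the step I expect to need the most care — is that ``nontriviality'' must be preserved: one has to rule out that the $D_2$-minimizer degenerates to the zero function while the $D_1$-minimizer did not. This is handled by noting that a nontrivial $D_1$-competitor $u$ with $\{u=1\}\ne\emptyset$ gives a strict upper bound $i_{\lambda,D_2}(u)<\tfrac{\pi}{4}\lambda^2\calL_d(D_2)=i_{\lambda,D_2}(0)$ whenever $\calL_d(\{u=1\})>0$, so the minimizer over the $D_2$-class cannot be $0$; one should invoke the structure of the minimizer from Theorem \ref{bernoulli-interior}(a),(d), in particular that $\{u=1\}$ has positive measure, to make this rigorous. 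Alternatively, one may phrase everything in terms of the threshold directly: $\Lambda(D)$ is characterized as the infimum of $\lambda$ for which $\inf i_{\lambda,D}$ is attained at a nonzero function, and enlarging $D$ enlarges the admissible set, which can only lower this threshold.

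For the homogeneity \eqref{homoLambda}, I would use Remark \ref{rem:homogeneity-interior}: if $u$ solves Problem \ref{variational interior} for $D$ and $\lambda$, then $u_s(x)=u(x/s)$ solves it for $sD$ and $\lambda/\sqrt{s}$, and this correspondence is a bijection between the respective (nontrivial) solution sets, with nontriviality clearly preserved since $\{u_s=1\}=s\{u=1\}$. Hence Problem \ref{variational interior} has a nontrivial solution for $(sD,\mu)$ if and only if it has one for $(D,\sqrt{s}\,\mu)$; taking infima over the admissible $\mu$ in \eqref{nontrivial exists intro} yields $\Lambda(sD)=s^{-1/2}\Lambda(D)$. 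One small verification worth spelling out is that the scaling in Remark \ref{rem:homogeneity-interior} is stated at the level of Problem \ref{ibp}/\ref{isbp}; the analogous statement for the variational Problem \ref{variational interior} follows from the scaling identities $[u_s]_1 = s^{d-1}[u]_1$ and $\calL_d(\{u_s<1\}\cap sD)=s^d\,\calL_d(\{u<1\}\cap D)$, which show $i_{\mu,sD}(u_s)=s^{d-1}\bigl(\calA_d[u]_1+\tfrac{\pi}{4}(\sqrt{s}\,\mu)^2\calL_d(\{u<1\}\cap D)\bigr)=s^{d-1}i_{\sqrt{s}\mu,D}(u)$, so $u_s$ minimizes $i_{\mu,sD}$ over its constraint class exactly when $u$ minimizes $i_{\sqrt{s}\mu,D}$ over its own.
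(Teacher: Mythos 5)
Your strategy coincides with the paper's for both parts: for monotonicity, take a nontrivial competitor for $D_1$, view it as a competitor for $D_2$, and show it beats the zero function; for homogeneity, the scaling identity $i_{\mu,sD}(u_s)=s^{d-1}i_{\sqrt{s}\mu,D}(u)$, which you compute correctly and which is exactly the computation in the paper (done there at the level of $I_{\lambda,D}$). The homogeneity half is complete as written.

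In the monotonicity half, however, the justification you give for the decisive strict inequality would fail as stated. You claim that $i_{\lambda,D_2}(u)<i_{\lambda,D_2}(0)$ holds ``whenever $\calL_d(\{u=1\})>0$''. Since $0\le u\le 1$, one has
$i_{\lambda,D_2}(u)-i_{\lambda,D_2}(0)=\calA_d[u]_1-\tfrac{\pi}{4}\lambda^2\,\calL_d(\{u=1\})$,
and positivity of $\calL_d(\{u=1\})$ alone does not make this negative: one must also control the Gagliardo seminorm, which is exactly what minimality buys. The clean way to close the step (and the paper's way) is to observe that for any $w$ vanishing a.e.\ outside $D_1$ one has $\{w<1\}\cap D_2=(\{w<1\}\cap D_1)\cup(D_2\setminus D_1)$, so $i_{\lambda,D_2}(w)=i_{\lambda,D_1}(w)+\tfrac{\pi}{4}\lambda^2\calL_d(D_2\setminus D_1)$; applying this to $w=u$ and to $w=0$ gives $i_{\lambda,D_2}(u)-i_{\lambda,D_2}(0)=i_{\lambda,D_1}(u)-i_{\lambda,D_1}(0)$, and the right-hand side is strictly negative for every $\lambda>\Lambda(D_1)$ by Proposition \ref{existence interior}, which supplies precisely the strict inequality \eqref{small i lambda}. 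With that substitution, claim (C1) in the proof of Proposition \ref{existence interior} yields a nontrivial minimizer for $D_2$, hence $\lambda\ge\Lambda(D_2)$, and letting $\lambda\downarrow\Lambda(D_1)$ finishes the argument as you indicate. For the same reason, your alternative phrasing that ``enlarging $D$ enlarges the admissible set, which can only lower the threshold'' is not by itself a proof, because the functionals $i_{\lambda,D_1}$ and $i_{\lambda,D_2}$ differ; it is the above identity for their difference that makes the comparison legitimate.
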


Similarly to \cite[Theorem 1.2]{DK2010} we have the following isoperimetric inequality for $\Lambda(D)$.

\begin{prop}
\label{iso inequality}
Let $D\subset \R^d$ be an open nonempty bounded set and let $B\subset \R^d$ be an open ball with the same measure as $D$. Then $\Lambda(D)\geq \Lambda(B)$ and equality only holds if $D$ is a ball up to a set of measure zero.
\end{prop}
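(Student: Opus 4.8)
The strategy is to extract from the definition of $\Lambda(D)$ a Rayleigh-type variational characterization to which Schwarz symmetrization applies directly, and to treat the equality case through the rigidity of the fractional P\'olya--Szeg\"{o} inequality combined with a strong maximum principle. Recall that minimizers of Problem \ref{variational interior} correspond to minimizers of the functional $i_{\lambda,D}$ from \eqref{ibp functional 2}, and that among admissible competitors we may restrict to $v\in H^{1/2}(\R^d)$ with $0\leq v\leq 1$ and $v=0$ a.e.\ on $\R^d\setminus D$ (truncation to $[0,1]$ does not increase $[v]_1$ and leaves $\calL_d(\{v<1\}\cap D)$ unchanged). For such $v$ one has $\{v=1\}\subset D$, so $\calL_d(\{v<1\}\cap D)=\calL_d(D)-\calL_d(\{v=1\})$ and, whenever $\calL_d(\{v=1\})>0$,
$$
i_{\lambda,D}(v)=i_{\lambda,D}(0)+\calL_d(\{v=1\})\left(\frac{\calA_d\,[v]_1}{\calL_d(\{v=1\})}-\frac{\pi}{4}\lambda^2\right),
$$
while $i_{\lambda,D}(v)\geq i_{\lambda,D}(0)$, with equality only for $v\equiv 0$, if $\calL_d(\{v=1\})=0$. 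Comparing this with Theorem \ref{bernoulli-interior} (a nontrivial minimizer exists exactly for $\lambda\geq\Lambda(D)$) one deduces
$$
\tfrac{\pi}{4}\,\Lambda(D)^2=\calJ(D):=\inf\Big\{\tfrac{\calA_d\,[v]_1}{\calL_d(\{v=1\})}:\ v\in H^{1/2}(\R^d),\ 0\leq v\leq 1,\ v=0\text{ on }\R^d\setminus D,\ \calL_d(\{v=1\})>0\Big\},
$$
and that $\calJ(D)$ is attained, e.g.\ by the trace of any nontrivial minimizer at level $\lambda=\Lambda(D)$.

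Assume now (by translation invariance) that $B$ is centered at the origin, let $v$ be admissible for $D$, and let $v^*$ be its symmetric decreasing rearrangement. Then $0\leq v^*\leq 1$; the set $\{v^*>0\}$ is the centered ball of measure $\calL_d(\{v>0\})\leq\calL_d(D)=\calL_d(B)$, so $v^*=0$ on $\R^d\setminus B$ and $v^*$ is admissible for $B$; equimeasurability gives $\calL_d(\{v^*=1\})=\calL_d(\{v=1\})>0$; and the fractional P\'olya--Szeg\"{o} inequality gives $[v^*]_1\leq[v]_1$. Hence $\calJ(B)\leq\calA_d[v^*]_1/\calL_d(\{v^*=1\})\leq\calA_d[v]_1/\calL_d(\{v=1\})$, and taking the infimum over all admissible $v$ for $D$ yields $\calJ(B)\leq\calJ(D)$, that is, $\Lambda(B)\leq\Lambda(D)$.

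For the equality case, suppose $\Lambda(D)=\Lambda(B)$, equivalently $\calJ(D)=\calJ(B)$, and choose $v$ attaining $\calJ(D)$; by the above and Theorem \ref{bernoulli-interior}(d) we may take $v=\tr V$ for a nontrivial minimizer $V$, so $v$ solves \eqref{bernoulli-problem basis interior} with $0\leq v\leq 1$ and $\calL_d(\{v=1\})>0$. Then $v^*$ is admissible for $B$ with the same value of the quotient, and since the denominators agree this forces $[v^*]_1=[v]_1$. By the equality case of the fractional P\'olya--Szeg\"{o} inequality --- which reduces to the equality case of the Riesz rearrangement inequality for the \emph{strictly} radially decreasing heat kernel in the representation $[v]_1=c_d\int_0^{\infty}t^{-3/2}\bigl(\|v\|_{L^2(\R^d)}^2-\langle v,e^{t\Delta}v\rangle\bigr)\,dt$ with $c_d>0$ --- the function $v$ is a translate of $v^*$, so $\{v>0\}$ coincides with an open ball up to a null set. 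Finally, $v\geq 0$ solves $(-\Delta)^{1/2}v=0$ in $D\setminus\{v=1\}$ and equals $1$ on the nonempty set $\{v=1\}$, so the strong minimum principle for $(-\Delta)^{1/2}$ gives $v>0$ in $D\setminus\{v=1\}$, hence $v>0$ in $D$; combined with $v=0$ a.e.\ on $\R^d\setminus D$ this shows that $D$ and $\{v>0\}$ agree up to a null set, whence $D$ is a ball up to a set of Lebesgue measure zero.

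The parts that go beyond routine bookkeeping with the classical fractional P\'olya--Szeg\"{o} inequality are confined to the equality discussion, and the principal obstacle there is the rigidity of the rearrangement inequality: the equality case of the Riesz rearrangement inequality is delicate, but it applies here precisely because the heat kernel is \emph{strictly} decreasing --- the feature that makes the fractional seminorm rigid under symmetrization, unlike the local Dirichlet integral, for which level-set plateaus of $v^*$ must be ruled out separately. The remaining ingredient, the strong minimum principle for the half Laplacian used to conclude that $D$ and $\{v>0\}$ agree up to a null set, is standard for the fractional Laplacian.
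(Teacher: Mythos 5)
Your proof is correct, and it takes a genuinely different route from the paper's. The paper works throughout with the extended minimizer $U\in H^1(\R^{d+1})$: it compares $I_{\lambda,B}(U^{\ast})$ with $I_{\lambda,D}(U)$ via polarization, reads off $\Lambda(B)\leq\Lambda(D)$ from the criterion $I_{\lambda,B}(U^{\ast})\leq I_{\lambda,B}(0)$, and settles the equality case by invoking the Brothers--Ziemer/Ferone--Volpicelli rigidity theorem for the classical P\'olya--Szeg\"o inequality in $\R^{d+1}$ to conclude $U=U^{\ast}$. You instead first distill the problem into the Rayleigh-quotient identity $\frac{\pi}{4}\Lambda(D)^2=\calJ(D)$ --- a correct deduction, though note it uses both claim (C1) in the proof of Proposition \ref{existence interior} and the identity \eqref{lambda d characterization}, not merely the statement of Theorem \ref{bernoulli-interior} --- and then symmetrize the trace directly in $H^{1/2}(\R^d)$. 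Your route buys two things: the rigidity step rests on Lieb's equality case of the Riesz rearrangement inequality for a strictly decreasing kernel, which carries no analogue of the non-degeneracy hypothesis on $\{\nabla U^{\ast}=0\}$ required by the Brothers--Ziemer theorem (and not verified explicitly in the paper); and the quotient $\calJ$ makes both this isoperimetric inequality and the monotonicity of Proposition \ref{monotonicity Lambda} essentially one-line consequences. The price is the extra final step of passing from ``$v$ is a translate of $v^{\ast}$'' to ``$D$ is a ball,'' which you handle correctly with the strong maximum principle for $(-\Delta)^{1/2}$; the paper tacitly needs the same positivity of the minimizer on all of $D$ when it concludes $D=B$ a.e.\ from $U=U^{\ast}$, so making it explicit is a small additional merit of your write-up.
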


We denote $B_r^d(x_0) = \{x \in \R^d: \, |x - x_0| < r\}$ for $x_0 \in \R^{d }$ and $r > 0$. We drop the index $d$ here, if the dimension of the ball follows from context. We obtain the following rough estimate on $\Lambda(B_r^d(0))$.
\begin{lemma}
\label{estimate bernoulli}
For any $r>0$ we have 
$$
\frac{2}{\sqrt{\pi}} \frac{\sqrt[4]{d}}{\sqrt{r}} 3^{-(d+2)/2}<\Lambda(B_r^d(0))<\frac{2}{\sqrt{\pi}}\frac{\sqrt{d}}{\sqrt{r}} 2^{(d+3)/2}
$$
\end{lemma}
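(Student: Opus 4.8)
The plan is to obtain both bounds by testing the variational Problem \ref{variational interior} (equivalently, the functional $i_{\lambda,D}$ from \eqref{ibp functional 2}) with explicit competitors, together with the comparison principle / the fact that $\Lambda(D)$ is exactly the threshold $\lambda$ above which a nontrivial minimizer exists (Theorem \ref{bernoulli-interior}). For the \emph{upper bound}, it suffices to exhibit a single value $\lambda_0$ and a nontrivial $U\in H^1(\R^{d+1})$ with $\tr U=0$ outside $B_r^d(0)$ such that $I_{\lambda_0,D}(U)\le I_{\lambda_0,D}(0)=0$ fails to be the only possibility — more precisely, such that the minimizer at $\lambda_0$ is nontrivial, which by \eqref{nontrivial exists intro} gives $\Lambda(B_r^d(0))\le\lambda_0$. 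The natural competitor is the harmonic extension of the indicator-type function supported on a concentric ball $B_\rho^d(0)$ with $\rho<r$, or a simple radial profile interpolating between $1$ on $B_\rho$ and $0$ on $\partial B_r$; one computes $\calA_d[u]_1$ (the Dirichlet energy of the extension) and $\calL_d(\{u<1\}\cap D)=\omega_d(r^d-\rho^d)$, optimizes in $\rho$, and reads off a value of $\lambda$ for which the competitor beats the trivial function. By the homogeneity \eqref{homoLambda} we may first do this for $r=1$ and then rescale, which explains the $r^{-1/2}$ factor; the dimensional constant $\tfrac{2}{\sqrt\pi}\sqrt{d}\,2^{(d+3)/2}$ will come out of bounding the Gagliardo seminorm $[u]_1$ of the chosen profile against its $L^2$-norm or against the measure of its support, using the normalization constant $\calA_d=\Gamma((d+1)/2)\pi^{-(d+1)/2}/2$ and crude estimates on the Gamma function.

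For the \emph{lower bound}, the idea is reversed: I must show that for every $\lambda$ below the claimed threshold the only minimizer is trivial, i.e. that any nontrivial $u$ with $\{u=1\}\neq\emptyset$, $\{u=1\}\subset B_r^d(0)$, has $i_{\lambda,D}(u)>0=i_{\lambda,D}(0)$. Equivalently, one needs a lower bound of the form $\calA_d[u]_1\ge c(d)\,r^{-?}\,\calL_d(\{u<1\}\cap B_r)$... but that cannot hold uniformly, so instead the correct route is: for a nontrivial solution the contact set $\{u=1\}$ is nonempty and contained in $B_r^d(0)$, hence $u$ is (up to the extension) bounded above by the "capacitary" solution that equals $1$ on a point/small ball and $0$ outside $B_r$, and the extremal gradient behavior at the free boundary forces $\lambda$ to be at least the gradient of the half-Laplacian Green-type function for $B_r$. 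Concretely I would use the explicit Poisson kernel \eqref{Poisson} / the known fractional Green function of the ball for $(-\Delta)^{1/2}$ to lower-bound, for the minimizer, the quantity $[u]_1$ in terms of $\calL_d(\{u<1\}\cap B_r)$ on the regime where the contact set is small, and combine with the fact that if $\lambda<\Lambda$ then no contact set can form; pushing the constants through with Gamma-function estimates yields the factor $\tfrac{2}{\sqrt\pi}\,d^{1/4}\,3^{-(d+2)/2}$.

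The main obstacle I anticipate is the lower bound: unlike the upper bound, which only needs one good test function, the lower bound requires controlling \emph{all} nontrivial competitors, and the nonlocal seminorm $[u]_1$ does not localize, so one cannot simply restrict attention to $B_r$. The cleanest way around this is probably to use the obstacle-problem characterization of the minimizer together with the explicit $(-\Delta)^{1/2}$-harmonic functions on balls (the function $x\mapsto(r^2-|x|^2)_+^{1/2}$ and its relatives have explicit half-Laplacian, cf. \cite{K2017}), so that one can \emph{a priori} compare any nontrivial minimizer with a one-parameter family of such explicit profiles and extract the sharp scaling $\lambda\gtrsim r^{-1/2}$ with an explicit, if non-optimal, dimensional constant. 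The asymmetry between the exponents $3^{-(d+2)/2}$ and $2^{(d+3)/2}$ in the statement already signals that both bounds are deliberately crude, so I expect the Gamma-function and geometric-series estimates to be routine once the right explicit comparison functions are in hand; isolating those functions and verifying the boundary asymptotics \eqref{normal_interior} for them is where the real work lies.
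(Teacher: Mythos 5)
Your upper bound is essentially the paper's argument: after rescaling to $r=1$ via \eqref{homoLambda}, one exhibits a single explicit competitor $V$ with $I_{\lambda,B}(V)<I_{\lambda,B}(0)$, which by Proposition \ref{existence interior} forces $\Lambda(B)\le\lambda$. The paper takes $V(x)=f(\tilde x)e^{-d|x_{d+1}|}$ with $f$ the Newtonian capacitary profile of $B_{1/2}^d(0)$ inside $B_1^d(0)$ and computes the full $(d+1)$-dimensional Dirichlet energy directly rather than $[u]_1$; note that your first suggestion, the harmonic extension of the \emph{indicator} of $B_\rho$, is inadmissible, since such a function is not in $H^{1/2}(\R^d)$ --- that is exactly the content of Lemma \ref{radial functions} --- so the interpolating profile you mention as an alternative is in fact mandatory.

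The lower bound is where your proposal has a genuine gap. The route you sketch --- comparison with explicit $(-\Delta)^{1/2}$-harmonic profiles, Green functions of the ball, and the free-boundary asymptotics \eqref{normal_interior} --- is not carried out, and you yourself concede that identifying the comparison functions and verifying their boundary behaviour is ``where the real work lies''; making such a barrier argument quantitative enough to yield an explicit dimensional constant would be a substantial separate undertaking (it is, in effect, the kind of analysis Section 4 performs for the spectral operator). The paper avoids all of this with one observation you are missing: by \eqref{lambda d characterization}, at the critical value $\lambda=\Lambda(B)$ the nontrivial minimizer has \emph{exactly} the same energy as the trivial competitor, so writing $v$ for its trace and $K_v=\{v=1\}$ one obtains the identity $\tfrac{\pi}{4}\Lambda(B)^2\,\calL_d(K_v)=\calA_d[v]_1$. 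The lower bound then reduces to bounding $[v]_1/\calL_d(K_v)$ from below, and this requires no regularity or free-boundary information whatsoever: since $v\equiv 1$ on $K_v$ and $v\equiv 0$ on $\R^d\setminus B$, one has $[v]_1\ge 2\int_{K_v}\int_{\R^d\setminus B}|x-y|^{-d-1}\,dy\,dx$, and because the minimizer is radial (so $K_v$ is a concentric ball $B_{\rho_v}$) the elementary estimate $|x-y|\le\tfrac32|y|$ for $x\in K_v\cap B_{1/2}^d(0)$ and $y\notin B$ produces the stated constant. Note also that the relevant ratio involves the measure of the \emph{contact set} $\{v=1\}$, not of $\{v<1\}\cap B_r$ as in your sketch; correcting that is what makes a uniform lower bound possible at all.
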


The above results imply the following estimate of $\Lambda(D)$.
\begin{cor}
For any $r > 0$ and any open nonempty bounded set $D \subset \R^d$ we have
$$
\frac{2}{\sqrt{\pi}} \frac{ \sqrt[4]{d} \, \pi^{1/4}}{(\calL_d(D))^{1/(2d)} (\Gamma(d/2 + 1))^{1/(2d)} 3^{(d+2)/2}} \le \Lambda(D) \le \frac{2}{\sqrt{\pi}} \frac{\sqrt{d} 2^{(d+3)/2}}{\sqrt{r(D)}},
$$ 
where $r(D)$ is the inradius of $D$.
\end{cor}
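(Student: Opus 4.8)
The plan is to simply assemble the three preceding results: the isoperimetric inequality (Proposition~\ref{iso inequality}), the monotonicity of $\Lambda$ under inclusion (Proposition~\ref{monotonicity Lambda}), and the two-sided estimate for balls (Lemma~\ref{estimate bernoulli}). Throughout I use the fact that $\Lambda$ is translation invariant, which is immediate from the definition \eqref{nontrivial exists intro} since the functional $I_{\lambda,D}$ and the constraint $\tr U=0$ on $\R^d\setminus D$ transform covariantly under a simultaneous translation of $U$ and $D$; in particular $\Lambda(B_\rho^d(x_0))=\Lambda(B_\rho^d(0))$ for every $\rho>0$ and $x_0\in\R^d$.

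For the lower bound I would let $B=B_\rho^d(y_0)$ be an open ball with $\calL_d(B)=\calL_d(D)$, as in Proposition~\ref{iso inequality}. Since the volume of the unit ball in $\R^d$ is $\pi^{d/2}/\Gamma(d/2+1)$, the radius is $\rho=\big(\calL_d(D)\,\Gamma(d/2+1)/\pi^{d/2}\big)^{1/d}$, hence
$$
\rho^{-1/2}=\frac{(\pi^{d/2})^{1/(2d)}}{(\calL_d(D))^{1/(2d)}(\Gamma(d/2+1))^{1/(2d)}}=\frac{\pi^{1/4}}{(\calL_d(D))^{1/(2d)}(\Gamma(d/2+1))^{1/(2d)}}.
$$
By Proposition~\ref{iso inequality} and translation invariance, $\Lambda(D)\ge\Lambda(B)=\Lambda(B_\rho^d(0))$, and plugging this $\rho$ into the left inequality of Lemma~\ref{estimate bernoulli} (where $3^{-(d+2)/2}=1/3^{(d+2)/2}$) gives exactly the claimed lower bound.

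For the upper bound I would use the inradius: for every $\rho\in(0,r(D))$ there is $x_0$ with $\overline{B_\rho^d(x_0)}\subset D$, so $B_\rho^d(x_0)\subset D$, and Proposition~\ref{monotonicity Lambda} together with translation invariance gives $\Lambda(D)\le\Lambda(B_\rho^d(x_0))=\Lambda(B_\rho^d(0))$; the right inequality of Lemma~\ref{estimate bernoulli} then yields $\Lambda(D)<\tfrac{2}{\sqrt\pi}\,\sqrt d\,2^{(d+3)/2}/\sqrt\rho$, and letting $\rho\to r(D)^-$ produces the stated bound. I do not anticipate any real obstacle here; the only points needing a little care are the elementary computation of the radius of the equal‑volume ball in terms of $\calL_d(D)$ and $\Gamma(d/2+1)$, and the passage to the limit $\rho\to r(D)^-$, which is required because the inradius need not be attained by a ball whose closure is contained in $D$. (The parameter $r>0$ in the statement plays no role and may be dropped.)
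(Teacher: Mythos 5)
Your proof is correct and is precisely the argument the paper intends: the corollary is stated as an immediate consequence of Proposition \ref{iso inequality}, Proposition \ref{monotonicity Lambda}, and Lemma \ref{estimate bernoulli}, with no written proof, and your assembly (including the computation $\rho^{-1/2}=\pi^{1/4}(\calL_d(D))^{-1/(2d)}(\Gamma(d/2+1))^{-1/(2d)}$ for the equal-volume ball and the use of translation invariance) fills it in exactly as intended. Your extra care with $\rho\to r(D)^-$ is harmless though not strictly needed, since for a bounded open set the inradius is attained by an open ball contained in $D$; and you are right that the parameter $r>0$ in the statement is vestigial.
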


Now we present results concerning Problem \ref{isbp}. It is well known that problems for the spectral half Laplacian $(-\Delta_D)^{1/2}$ may be translated to some local problem for Dirichlet Laplacian in $D \times \R$, see e.g. \cite{CT2010}. More formally if $D \subset \R^d$ is an open nonempty bounded convex set, $K \subset D$ is a compact set and $u \in C(\overline{D} \times \R)$ is a bounded function satisfying
\begin{equation}
\nonumber
\left\{\begin{array}{ll}
\Delta u=0\quad&\text{in }(D\times\R)\setminus(K\times\{0\})\\
u=0\quad&\text{on }\partial D\times\R\\
u=1\quad&\text{in }K\times\{0\}\\
\end{array}\right.
\end{equation} 
then $(-\Delta_D)^{1/2} u = 0$ on $D \setminus K$. Using this we study the localized version of Problem \ref{isbp} in $D \times \R$. We apply the Beurling method, see e.g.  \cite{B1957, HS2000}. We construct a family of subsolutions to the problem in $D \times \R$ and by taking a limiting procedure we find a solution to the problem in $D \times \R$. Then it turns out that the restriction of this solution to $D$ is a solution to Problem \ref{isbp}.

To state the result for Problem \ref{isbp} we need to introduce the following class of functions.

\begin{defn}
\label{classF}
Let $\lambda > 0$, $d \ge 2$ and $D \subset \R^d$ be an open, nonempty, bounded convex set. Let $K \subset D$ be a nonempty compact set and let $v_K: \overline{D}\times\R \to [0,1]$, be the solution of the following Dirichlet problem
\begin{equation}\label{problemv}
\left\{\begin{array}{ll}
\Delta v_K=0\quad&\text{in }(D\times\R)\setminus(K\times\{0\})\\
v_K=0\quad&\text{on }\partial D\times\R\\
v_K=1\quad&\text{in }K\times\{0\}\\
\end{array}\right.
\end{equation}
We say that $K\in\calF(D,\lambda)$ if
\begin{equation}\label{overdetermined}
\sup_{y\in D\setminus K} \frac{|v_{K}(y,0)-1|}{\delta_K^{1/2}(y)} \le \lambda,
\end{equation}
where $\delta_K(y):=\dist(y, K)$ is the distance to $K$.\\
With an abuse of notation, for a function $v: \overline{D}\times\R \to [0,1]$ we write $v\in\calF(D,\lambda)$ (and we say that $v$ is a {\em subsolution} of Problem \ref{isbp}) if $v=v_K$ for some $K\in\calF(D,\lambda)$, i.e. if $v$ is the solution of \eqref{problemv} for some nonempty compact set $K\subset D$ and it satisfies \eqref{overdetermined}.
\end{defn}
The above definition is inspired by Definition 2.1 in \cite{HS2000}: \eqref{overdetermined} is in a certain strong sense an analog of the condition $|\nabla v| \le \lambda$. Note that if $K\in\calF(D,\lambda)$ then $v_K \in C(\overline{D} \times \R)$. Indeed, $D$ is convex so $v_K$ is continuous on $\partial D \times \R$. By \eqref{overdetermined} $x \to v_K(x,0)$ is continuous on $\overline{D}$, which implies that $v_K \in C(\overline{D} \times \R)$.

Notice that,  since $K$ is compact, its distance from $\partial D$ must be positive and, if $x\in K$, then $B_r^d(x)\subset D$ for $r$ sufficiently small.

Assume that $v\in\calF(D,\lambda)$ By the fact that $v \equiv 0$ on $\partial D \times \R$, $v$ takes values in $[0,1]$ and by standard arguments we obtain that that $v(x) \to 0$ as $|x|\to+\infty$. With this decay property it can be easily shown (for instance, by the moving planes method) that $v$ satisfies:

\begin{enumerate}[(i)]
\item for any $x \in D$ and $y \in \R$ we have $v(x,-y) = v(x,y)$,
\item for any $x \in D$ if $y_2 > y_1 \ge 0$ then $v(x,y_2) \le v(x,y_1)$.
\end{enumerate}

Thus, it follows that $u:=v(\cdot,0)$ is in particular a solution to \eqref{bernoulli-problem basis spectral interior} with 
$$
\lim\limits_{t\to 0^+} \frac{u(\theta) - u(\theta+t\nu(\theta))}{\sqrt{t}} \leq \lambda
$$
for any $\theta\in \partial K$ for which an exterior normal $\nu(\theta)$ exists. Our main results concerning the Bernoulli problem for the spectral half Laplacian are the following.

\begin{thm}
\label{thm_int_spectral}
Let $d \ge 2$, $D \subset \R^d$ be a bounded open nonempty convex set, $\lambda > 0$ and suppose that $\calF(D,\lambda)$ is not empty. Then there exists a solution $u$ to the free boundary Problem \ref{isbp}. Moreover the set $\{u = 1\}$ is convex.
\end{thm}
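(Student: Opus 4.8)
The plan is to realize the sought solution as the \emph{largest subsolution} in the sense of Definition \ref{classF}, in the spirit of Beurling's method, and then to deduce convexity of $\{u=1\}$ from the maximality together with a convexification argument. \emph{Step 1 (the maximal subsolution).} First note that every $K\in\calF(D,\lambda)$ satisfies $\dist(K,\partial D)\ge\lambda^{-2}$: letting $y\to y_0\in\partial D$ in \eqref{overdetermined} and using that $v_K\in C(\overline D\times\R)$ with $v_K=0$ on $\partial D\times\R$ gives $\dist(y_0,K)^{-1/2}\le\lambda$. Hence all elements of $\calF(D,\lambda)$ lie in a fixed compact subset of $D$. Next one checks that $(\calF(D,\lambda),\subseteq)$ satisfies the hypothesis of Zorn's Lemma: for a chain $\{K_\alpha\}$, the set $\widetilde K:=\overline{\bigcup_\alpha K_\alpha}$ is compact in $D$ and lies in $\calF(D,\lambda)$, because for $y\in D\setminus\widetilde K$ and $\eps>0$ one can pick $\alpha$ with $\delta_{K_\alpha}(y)<\delta_{\widetilde K}(y)+\eps$ and then, using $v_{\widetilde K}\ge v_{K_\alpha}$ (monotonicity of \eqref{problemv} in $K$),
\[
1-v_{\widetilde K}(y,0)\le 1-v_{K_\alpha}(y,0)\le\lambda\,\delta_{K_\alpha}(y)^{1/2}<\lambda\,(\delta_{\widetilde K}(y)+\eps)^{1/2};
\]
letting $\eps\to0$ recovers \eqref{overdetermined} for $\widetilde K$. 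Fixing any $K_1\in\calF(D,\lambda)$ (nonempty by hypothesis) and applying Zorn's Lemma in $\{K\in\calF(D,\lambda):K\supseteq K_1\}$, we obtain a maximal element $K_0$; set $v:=v_{K_0}$ and $u:=v(\cdot,0)|_{\overline D}\in C(\overline D)$. Since $K_0\supseteq K_1$ has positive capacity (otherwise $v\equiv0$ and \eqref{overdetermined} would fail), the strong maximum principle gives $\{u=1\}=K_0$; moreover $0\le u\le1$, $u=0$ on $\partial D$, and, via the reduction to the Dirichlet Laplacian on $D\times\R$ recalled before Definition \ref{classF}, $(-\Delta_D)^{1/2}u=0$ in $D\setminus K_0$. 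Thus $u$ satisfies \eqref{bernoulli-problem basis spectral interior} together with the inequality form of the free boundary condition.

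\emph{Step 2 (convexity).} I would show $\widehat K:=\mathrm{conv}(K_0)\in\calF(D,\lambda)$; as $\widehat K$ is compact, $\widehat K\subseteq D$ ($D$ convex), and $\widehat K\supseteq K_0$, maximality of $K_0$ then forces $\widehat K=K_0$, i.e.\ $K_0$ is convex. Two ingredients are needed. \emph{(a) The potential $v_{\widehat K}$ has convex superlevel sets.} The sets $\widehat K\times\{0\}$ and $D\times\R$ are convex; approximating the degenerate plate $\widehat K\times\{0\}$ from outside by the convex bodies $\widehat K\times[-\eps,\eps]$ and the slab $D\times\R$ from inside by $D\times(-R,R)$ gives honest convex rings, whose capacitary potentials have convex superlevel sets by the classical convexity theorem for convex rings; the limits $\eps\downarrow0$, $R\uparrow\infty$ (monotone convergence of potentials, stability of quasi-concavity under monotone limits) transfer the property to $v_{\widehat K}$. \emph{(b) A Carathéodory translation.} Fix $y_0\in D\setminus\widehat K$ and set $\rho:=\delta_{\widehat K}(y_0)$. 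If $\rho\ge\lambda^{-2}$ then $1-v_{\widehat K}(y_0,0)\le1\le\lambda\rho^{1/2}$, so assume $\rho<\lambda^{-2}$. Let $z_0\in\widehat K$ be nearest to $y_0$ and write $z_0=\sum_i t_i k_i$ with $k_i\in K_0$, $t_i\ge0$, $\sum_i t_i=1$ (Carathéodory's theorem), and put $p_i:=k_i+(y_0-z_0)$. Then $\sum_i t_i p_i=y_0$ and $|p_i-k_i|=\rho$, and since $\rho<\lambda^{-2}\le\dist(K_0,\partial D)\le\dist(k_i,\partial D)$ we get $p_i\in D$; hence
\[
v_{\widehat K}(p_i,0)\ \ge\ v_{K_0}(p_i,0)\ \ge\ 1-\lambda\,\delta_{K_0}(p_i)^{1/2}\ \ge\ 1-\lambda\rho^{1/2},
\]
the middle inequality being \eqref{overdetermined} for $K_0$ when $p_i\notin K_0$ and trivial otherwise. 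By (a), $v_{\widehat K}(y_0,0)\ge\min_i v_{\widehat K}(p_i,0)\ge1-\lambda\rho^{1/2}$, which is exactly \eqref{overdetermined} for $\widehat K$. Therefore $K_0$ is convex.

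\emph{Step 3 (upgrading to a solution, and the main obstacle).} Maximality is invoked once more to pass from the inequality to the equality $\lambda$ in the free boundary condition and to exclude corners. If at some $\theta\in\partial K_0$ with exterior normal $\nu$ one had $\lim_{t\to0^+}(u(\theta)-u(\theta+t\nu))/\sqrt t<\lambda$, then attaching a small convex cap to $K_0$ near $\theta$ would produce a strictly larger convex compact set still satisfying \eqref{overdetermined} (on the cap by continuity of the blow-up profile and the strict inequality, away from it as in the chain estimate of Step 1), contradicting maximality; hence the limit equals $\lambda$ wherever $\nu$ exists. Similarly $\partial K_0$ has no corner, for at a corner the quotient $(u(\theta)-u(\theta+t\nu))/\sqrt t$ degenerates to $0$ and a small rounding enlarges $K_0$ while preserving \eqref{overdetermined}. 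Consequently $\partial K_0$ has a unique supporting hyperplane at each point, so it is of class $C^1$, and $u$ solves Problem \ref{isbp} with $\{u=1\}=K_0$ convex. I expect the main obstacle to be precisely this last step: making the cap/rounding perturbations quantitative while keeping \eqref{overdetermined} under control on the whole of $D\setminus K_0$ is the technical core of Beurling's method. Step 2, by contrast, is comparatively soft once the classical convex-ring convexity is imported and adapted to the present unbounded, degenerate geometry.
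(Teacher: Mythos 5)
Your overall strategy (Beurling's method: a maximal subsolution, convexification, then the free boundary condition extracted from maximality) is the same as the paper's, and Steps 1 and 2 are essentially sound: the paper builds the maximal element directly as $K_{D,\lambda}=\mathrm{conv}\bigl(\bigcup_{K\in\calF(D,\lambda)}K\bigr)$ via Lemma \ref{lemmasupv} rather than via Zorn's lemma, and for convexity it uses the \emph{quasi-concave envelope} of the supremum of the subsolutions (which is merely subharmonic, by \cite{CS2003}) together with the same Carath\'eodory translation you use, thereby avoiding your claim (a) that $v_{\widehat K}$ itself has convex level sets in the degenerate, unbounded ring $(D\times\R)\setminus(\widehat K\times\{0\})$ — a stronger statement than needed, whose approximation argument you would still have to carry out.

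The genuine gap is Step 3, which is the technical core of the theorem and which you only sketch. First, your mechanism for excluding corners is backwards: at a point of $\partial K_0$ with two distinct supporting hyperplanes, the complement of $K_0$ contains a cone strictly larger than a half-space, whose Martin kernel for $(-\Delta)^{1/2}$ with pole at infinity is homogeneous of degree $\beta<1/2$; a barrier comparison then forces $1-u(y,0)\gtrsim |y|^{\beta}$ near the corner, so the quotient $|u(y,0)-1|/\delta_{K_0}^{1/2}(y)$ \emph{blows up} rather than ``degenerates to $0$'', and this already violates \eqref{overdetermined} for $K_0$ itself — no maximality or rounding is needed (this is the paper's Lemma \ref{C1boundary}). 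Second, the assertion that one can ``attach a small convex cap'' when the fractional normal derivative is $<\lambda$ is precisely what must be proved, and your justification (``by continuity of the blow-up profile \dots away from it as in the chain estimate of Step 1'') does not work: enlarging $K_0$ to $K'$ \emph{decreases} $\delta_{K'}$ everywhere while changing $v_{K'}$ globally, so \eqref{overdetermined} must be re-verified on all of $D\setminus K'$, and the chain estimate of Step 1 (which compares members of a nested family all satisfying \eqref{overdetermined}) says nothing about this. The paper's proof of this step (Lemma \ref{normal_derivative}, supported by Lemmas \ref{subsequence}, \ref{Q1 satisfies boundary condition} and \ref{Poisson1}--\ref{Poisson3}) requires a blow-up classification of the limit as $\beta\Psi$ with $\Psi$ the extension of $x_1^{1/2}\mathds{1}_{\{x_1>0\}}$, an explicit competitor built from the translated exterior harmonic measure of a ball, and quantitative Poisson-kernel comparisons on the boundary of a carefully chosen box; none of this is present in your proposal, so the exact equality $\partial_{\nu(\theta)}^{1/2}u(\theta)=-\lambda$ remains unproved.
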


Let $d \ge 2$ and $D \subset \R^d$ be an open, nonempty, bounded convex set. Let us define the Bernoulli constant of $D$ for the spectral half Laplacian as 
$$
\Lambda_S(D) = \inf\{\lambda>0: \, \calF(D,\lambda) \, \text{is not empty}\}.
$$ 
 $\Lambda_S(D)$ satisfies the same monotonicity and homogeneity properties as  $\Lambda(D)$, as explicitly stated in the following proposition.  
\begin{prop}\label{homolambdaprop}
Let $d \ge 2$. The Bernoulli constant for the spectral half Laplacian is monotone decreasing with respect to set inclusion and positively homogeneous of degree $-1/2$, that is:
\begin{enumerate}[(i)]
\item if $D_1\subset D_2\subset \R^d$ are open nonempty bounded sets, then $\Lambda_S(D_1)\geq \Lambda_S(D_2)$,
\item for any open nonempty bounded set $D \subset \R^d$ and $s > 0$ we have $\Lambda_S(sD)=s^{-1/2}\Lambda_S(D)$.
\end{enumerate}
\end{prop}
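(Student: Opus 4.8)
The plan is to reduce both claims to a single structural fact: a subsolution in the sense of Definition \ref{classF} for a domain $D_1$ is automatically a subsolution, with the same $\lambda$, for any larger domain $D_2\supset D_1$, and subsolutions transform covariantly under dilations. As the statement tacitly requires (since $\Lambda_S$ is defined only for convex sets), all the sets below are convex.

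For the monotonicity part, I would take $D_1\subset D_2$ and $K\in\calF(D_1,\lambda)$ and show $K\in\calF(D_2,\lambda)$; this gives $\{\lambda:\calF(D_2,\lambda)\neq\emptyset\}\supseteq\{\lambda:\calF(D_1,\lambda)\neq\emptyset\}$ and hence $\Lambda_S(D_2)\le\Lambda_S(D_1)$. Writing $v_i$ for the solution of \eqref{problemv} in $D_i\times\R$ with the compact set $K$ (note $K\subset D_1\subset D_2$), the first step is the comparison $v_1\le v_2$ on $D_1\times\R$: the difference $h=v_1-v_2$ is harmonic in $(D_1\times\R)\setminus(K\times\{0\})$, vanishes on $K\times\{0\}$, equals $-v_2\le0$ on $\partial D_1\times\R$, and has $\limsup\le0$ at infinity because $v_1\to0$ there (as $v_1\in\calF(D_1,\lambda)$; see the remarks following Definition \ref{classF}) while $v_2\ge0$; the maximum principle, exhausting $(D_1\times\R)\setminus(K\times\{0\})$ by bounded subdomains and using this decay, then gives $h\le0$. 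Since $0\le v_i\le1$, this yields $|v_2(y,0)-1|=1-v_2(y,0)\le1-v_1(y,0)=|v_1(y,0)-1|$ for every $y\in\overline{D_1}$, so \eqref{overdetermined} for $v_2$ already holds on $D_1\setminus K$. To handle the remaining set $D_2\setminus D_1$ (and $D_2\setminus K=(D_1\setminus K)\cup(D_2\setminus D_1)$ because $K\subset D_1$), I would let $D_1\setminus K\ni y_n\to\bar y\in\partial D_1$ in \eqref{overdetermined} for $v_1$: using $v_1(\bar y,0)=0$ and $\delta_K(\bar y)>0$ (as $K$ is compact in the open set $D_1$) this gives $\delta_K(\bar y)\ge\lambda^{-2}$ for every $\bar y\in\partial D_1$; then for $y\in D_2\setminus D_1$, either $y\in\partial D_1$, or $y\notin\overline{D_1}$ and the segment from $y$ to a nearest point of $K$ crosses $\partial D_1$ at some $p$, so $\delta_K(y)\ge\delta_K(p)\ge\lambda^{-2}$ in either case, whence $|v_2(y,0)-1|/\delta_K^{1/2}(y)\le\delta_K^{-1/2}(y)\le\lambda$. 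This proves $K\in\calF(D_2,\lambda)$.

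For the homogeneity part, fix $D$ convex and $s>0$ (so $sD$ is again convex). Given $K\in\calF(D,\lambda)$, I would set $K_s=sK\subset sD$ and observe that $X\mapsto v_K^D(X/s)$ solves \eqref{problemv} for $sD$ and $K_s$: harmonicity is dilation invariant, the boundary values ($0$ on $\partial(sD)\times\R$, $1$ on $sK\times\{0\}$) are preserved, and it is bounded, so by uniqueness it equals $v_{K_s}^{sD}$, giving $v_{K_s}^{sD}(y,0)=v_K^D(y/s,0)$. Since $\delta_{K_s}(sz)=\dist(sz,sK)=s\,\delta_K(z)$, the substitution $y=sz$ yields
\[
\sup_{y\in sD\setminus K_s}\frac{|v_{K_s}^{sD}(y,0)-1|}{\delta_{K_s}^{1/2}(y)}=\frac{1}{\sqrt s}\sup_{z\in D\setminus K}\frac{|v_K^D(z,0)-1|}{\delta_K^{1/2}(z)}\le\frac{\lambda}{\sqrt s},
\]
so $K_s\in\calF(sD,\lambda/\sqrt s)$. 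Hence $\calF(D,\lambda)\neq\emptyset$ implies $\calF(sD,\lambda/\sqrt s)\neq\emptyset$, so $\Lambda_S(sD)\le s^{-1/2}\Lambda_S(D)$; applying this with $D$ replaced by $sD$ and $s$ by $1/s$ gives $\Lambda_S(D)\le s^{1/2}\Lambda_S(sD)$, the reverse inequality, and (ii) follows.

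The routine parts are the dilation bookkeeping and the elementary distance estimate. The one step that needs a little care is the comparison $v_K^{D_1}\le v_K^{D_2}$ on the unbounded domain $(D_1\times\R)\setminus(K\times\{0\})$; but since we compare only from one side and $v_K^{D_1}$, being a subsolution, is already known to decay while $v_K^{D_2}\ge0$, no new decay estimate for $v_K^{D_2}$ is needed, so I do not expect a genuine obstacle.
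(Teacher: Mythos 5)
Your proposal is correct and takes essentially the same route as the paper: monotonicity via the comparison $v_K^{D_1}\le v_K^{D_2}$ on $D_1\setminus K$ combined with the lower bound $\delta_K\ge\lambda^{-2}$ on $D_2\setminus D_1$ (the paper isolates this as Lemma \ref{lemmadist}, which you rederive inline), and homogeneity via the dilation correspondence $s\,\calF(D,\lambda)=\calF(sD,s^{-1/2}\lambda)$. The extra care you take with the maximum principle on the unbounded cylinder is a detail the paper leaves implicit, but it is the same argument.
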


The proof of Theorem \ref{thm_int_spectral} implies that the found solution is in fact a solution of a slight modification of Problem \ref{isbp1}:

\begin{problem}
\label{isbp1}
Given an open nonempty and bounded set $D\subset \R^d$ and $\lambda>0$, find a function $u\in C(\overline{D})$ satisfying all the conditions stated in Problem \ref{isbp} and additionally 
$$
\sup_{y\in D\setminus K} \frac{|u(y,0)-1|}{\delta_K^{1/2}(y)} = \lambda,
$$
where $K = \{x \in \R^d: \, u(x,0) = 1\}$.
\end{problem}

For the latter problem, we can indeed state a sharper existence result than Theorem \ref{thm_int_spectral}.
\begin{prop}\label{existlambda_S}
Let $d \ge 2$ and $D \subset \R^d$ be a bounded open nonempty convex set. Then a solution of Problem \ref{isbp1} exists if and only if $\lambda\geq\Lambda_S(D)$.
\end{prop}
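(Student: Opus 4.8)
The plan is to prove that Problem \ref{isbp1} is solvable exactly for $\lambda\in[\Lambda_S(D),\infty)$. For necessity, suppose $u$ solves Problem \ref{isbp1} for some $\lambda>0$ and set $K:=\{x\in\R^d:\ u(x)=1\}$, which, as already observed, is a nonempty compact subset of $D$. By the standard correspondence between the spectral half Laplacian on $D$ and the Dirichlet Laplacian on $D\times\R$ (see \cite{CT2010}), the extension $U$ of $u$ to $\overline D\times\R$ supplied by that correspondence is bounded, harmonic in $(D\times\R)\setminus(K\times\{0\})$, vanishes on $\partial D\times\R$ and equals $1$ on $K\times\{0\}$; by uniqueness of the bounded solution of \eqref{problemv} this forces $U=v_K$, hence $u=v_K(\cdot,0)$. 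The overdetermined identity of Problem \ref{isbp1} then reads $\sup_{y\in D\setminus K}|v_K(y,0)-1|/\delta_K^{1/2}(y)=\lambda$, i.e. $K\in\calF(D,\lambda)$; in particular $\calF(D,\lambda)\neq\emptyset$, and since $\Lambda_S(D)$ is the infimum of all $\lambda$ with this property, $\lambda\geq\Lambda_S(D)$.

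For sufficiency, note that $\calF(D,\lambda)\subseteq\calF(D,\mu)$ whenever $\lambda\leq\mu$ (the requirement \eqref{overdetermined} only weakens as $\lambda$ grows), so it is enough to prove the single fact $\calF(D,\Lambda_S(D))\neq\emptyset$: this gives $\calF(D,\lambda)\neq\emptyset$ for every $\lambda\geq\Lambda_S(D)$, and Theorem \ref{thm_int_spectral} — whose proof, as recorded just before Problem \ref{isbp1}, in fact yields a solution of the sharper Problem \ref{isbp1} — then produces the desired solution for each such $\lambda$. To prove $\calF(D,\Lambda_S(D))\neq\emptyset$, pick $\lambda_n\downarrow\Lambda_S(D)$; since $\lambda_n>\Lambda_S(D)$ each $\calF(D,\lambda_n)$ is nonempty, and we take $K_n$ to be its maximal element, which exists and is a nonempty compact convex subset of $D$ by the Beurling construction underlying Theorem \ref{thm_int_spectral}. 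By the monotonicity above the $K_n$ are nested, $K_{n+1}\subseteq K_n$. Put $K_*:=\bigcap_n K_n$. Then $K_*$ is convex, nonempty (finite intersection property for a decreasing sequence of nonempty compacta), and, being contained in $K_1$, a compact subset of $D$ with $\dist(K_*,\partial D)>0$. Being convex and decreasing, the $K_n$ converge to $K_*$ in the Hausdorff distance, so $\delta_{K_n}\to\delta_{K_*}$ locally uniformly on $D$, and, granting that $K_*$ is not polar (see below), classical potential theory gives $v_{K_n}\downarrow v_{K_*}$ on $(D\times\R)\setminus(K_*\times\{0\})$. For any fixed $y\in D\setminus K_*$ one has $y\notin K_n$ for all large $n$, hence $|v_{K_n}(y,0)-1|/\delta_{K_n}^{1/2}(y)\leq\lambda_n$; letting $n\to\infty$ and then taking the supremum over $y$ gives $\sup_{y\in D\setminus K_*}|v_{K_*}(y,0)-1|/\delta_{K_*}^{1/2}(y)\leq\Lambda_S(D)$, i.e. $K_*\in\calF(D,\Lambda_S(D))$, as required.

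The step I expect to be the main obstacle is the non-degeneracy of $K_*$, i.e. that $K_*$ is not polar — which is also exactly what makes $v_{K_*}$ (and hence membership in $\calF$) meaningful. I would argue by contradiction: if $K_*$ were polar, then $K_*\times\{0\}$ is removable for bounded harmonic functions, so the decreasing limit of the $v_{K_n}$ extends harmonically across it and, being bounded on $D\times\R$, vanishing on $\partial D\times\R$ and decaying at infinity, is identically $0$; passing to the limit in \eqref{overdetermined} at points $y\in D\setminus K_*$ approaching the nonempty set $K_*$ would then force $1/\dist(y,K_*)^{1/2}\leq\Lambda_S(D)$, impossible since the left side is unbounded as $y\to K_*$. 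The remaining ingredients — the Hausdorff convergence $K_n\to K_*$ and the monotone potential-theoretic convergence $v_{K_n}\to v_{K_*}$, both relying on convexity of the $K_n$, and the verification that $u$ from Problem \ref{isbp1} together with its extension $U$ fit the correspondence with \eqref{problemv} — are routine; it is precisely the nested choice of the $K_n$ that keeps $K_*$ a compact subset of $D$ and makes the convergence of the potentials monotone and elementary.
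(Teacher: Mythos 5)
Your proof is correct and follows essentially the same route as the paper's: necessity from the definition of $\Lambda_S$ (via the identification of the extension of $u$ with $v_K$), sufficiency for $\lambda>\Lambda_S(D)$ from the monotonicity of $\calF(D,\cdot)$ together with Theorem \ref{thm_int_spectral}, and the critical case $\lambda=\Lambda_S(D)$ by intersecting the nested maximal sets $K_{D,\lambda_n}$ and passing to the limit in \eqref{overdetermined} for each fixed $y$. Your explicit exclusion of a polar (degenerate) limit set $K_*$ is a point the paper leaves implicit — there the limit inequality $1-v_K(y,0)\le\Lambda_S(D)\,\delta_K^{1/2}(y)$ itself rules out the degenerate case as $y\to K$ — so this is a clarification rather than a divergence.
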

An interesting open question is whether there exists a solution of Problem \ref{isbp} which is not a solution of Problem \ref{isbp1}.
\medskip

To give a rough estimate of $\Lambda_S$, we can observe that, by (i) of Proposition \ref{homolambdaprop}, it follows that 
$$\Lambda_S(B_{R(D)}^d(0))\le\Lambda_S(D)\le \Lambda_S(B_{r(D)}^d(0))\,,$$ 
where $r(D)$ is the inradius of $D$ and $R(D)$ is half the diameter of $D$. 
Hence we can give an estimate of the Bernoulli constant from above, thanks to the following.
\begin{prop}
\label{Bernoulli_constant}
For $d\ge 2$ and $r > 0$ we have $\Lambda_S(B_{r}^d(0)) \le \tilde{c}_d/\sqrt{r}$,
where 
$$
\tilde{c}_d = \sqrt{2} \left(\frac{2\sqrt{2}\Gamma(\frac{d}{2})}{\sqrt{\pi}\Gamma(\frac{d-1}{2})}\right) 
\left(1 - \frac{1}{\pi 2^{d - 2}} \int_0^{1/3} \frac{(1 - 3 b)^{(d-2)/2}}{b^{1/2}(1+b)} \, db\right)^{-1}.
$$
\end{prop}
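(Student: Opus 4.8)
The plan is to exhibit an explicit member of $\calF(B_r^d(0),\lambda)$ for $\lambda = \tilde c_d/\sqrt r$, which by definition of $\Lambda_S$ will give the claimed bound. By the homogeneity property in Proposition \ref{homolambdaprop}(ii) it suffices to treat $r$ equal to some convenient value (say $r$ such that the computation is cleanest, e.g. the unit ball), and then rescale; so I would first reduce to $D = B_1^d(0)$. The natural candidate for the obstacle set is a smaller concentric ball $K = \overline{B_\rho^d(0)}$ for a suitable radius $\rho \in (0,1)$, and the corresponding $v_K$ is the harmonic function in the slab geometry $(B_1^d(0)\times\R)\setminus(\overline{B_\rho^d}\times\{0\})$ with boundary data $0$ on $\partial B_1^d(0)\times\R$ and $1$ on $\overline{B_\rho^d}\times\{0\}$. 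The key quantity to control is the ratio in \eqref{overdetermined}, namely $\sup_{y\in B_1\setminus \overline{B_\rho}} (1 - v_K(y,0))/\delta_K^{1/2}(y)$ where $\delta_K(y) = |y| - \rho$ for $|y|>\rho$.

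The main technical work is to get a usable upper bound on $1 - v_K(y,0)$ near $K$. Here I would exploit a monotone comparison: since $B_1^d(0)\times\R \supset$ (a half-space touching $K\times\{0\}$) is false in general, instead compare $v_K$ from below with a simpler explicit harmonic function on a larger domain. A clean choice is to compare with the harmonic function on the full slab $\R^d\times\R$ minus the disk $\overline{B_\rho^d}\times\{0\}$ — equivalently, the capacitary potential of a $(d)$-dimensional disk in $\R^{d+1}$ — for which there is a classical closed-form (in terms of the oblate-spheroidal / elliptic-coordinate representation, or for $d$ even/odd via elementary functions). Actually the cleanest route giving exactly the stated constant: use the representation of $v_K(y,0)$ for $y$ just outside $B_\rho$ via the Poisson-type kernel for the disk, reducing $1-v_K(y,0)$ to an integral over $\partial B_\rho$ (or over the disk), then change variables so that the singular behavior $\sim c\sqrt{|y|-\rho}$ is exhibited with the constant $c = \frac{2\sqrt2\,\Gamma(d/2)}{\sqrt\pi\,\Gamma((d-1)/2)}$ times a correction factor coming from the finite domain $B_1$ rather than $\R^d$. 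The correction is where the integral $\int_0^{1/3}\frac{(1-3b)^{(d-2)/2}}{b^{1/2}(1+b)}\,db$ enters: I expect one picks $\rho = 1/\sqrt3$ (or some value making $\rho^2 = 1/3$), which is presumably the optimal concentric-ball choice after one differentiates the resulting bound in $\rho$, and the range $[0,1/3]$ and the weights $(1-3b)^{(d-2)/2}$, $b^{1/2}$, $(1+b)$ all come out of that substitution together with the spherical-cap volume element on $S^{d-1}$ (the $\Gamma(d/2)/\Gamma((d-1)/2)$ ratio is the standard surface-area constant).

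Concretely the steps are: (1) reduce to $D=B_1^d(0)$ by homogeneity; (2) take $K = \overline{B_\rho^d(0)}$ and write $v_K(\cdot,0)$ on $B_1\setminus\overline{B_\rho}$ as the (unique bounded) solution of its defining Dirichlet problem, noting rotational symmetry so $v_K(y,0) = w(|y|)$ for a function $w$ of one variable; (3) bound $1-w(t)$ for $t\in(\rho,1)$ from above — the hardest part — by producing an explicit radial harmonic-extension comparison function and estimating its trace, tracking constants carefully so the leading $\sqrt{t-\rho}$ coefficient is the product appearing in $\tilde c_d$; (4) conclude $\sup_t (1-w(t))/\sqrt{t-\rho} \le \tilde c_d$, hence $K\in\calF(B_1^d(0),\tilde c_d)$, so $\Lambda_S(B_1^d(0))\le\tilde c_d$; (5) undo the scaling to get $\Lambda_S(B_r^d(0))\le\tilde c_d/\sqrt r$. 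I expect step (3) to be the real obstacle: one must choose the comparison barrier cleverly (a harmonic function on $(\R^d\setminus\overline{B_\rho})\times\R$ with an explicit trace, minus a harmonic corrector vanishing at $\partial B_\rho$ and handling the finite outer boundary) so that after the substitution $b = (\text{something like } \rho^2/t^2 - \rho^2 \text{ or } t^2-\rho^2 \text{ rescaled})$ the error integral collapses exactly to the displayed form, and so that the supremum over $t$ is attained (or bounded) at the value yielding $\rho^2=1/3$; verifying the constant is exactly $\tilde c_d$ and not merely comparable to it is the delicate bookkeeping.
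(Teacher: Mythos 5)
Your overall plan coincides with the paper's at the level of strategy: reduce to one ball by homogeneity, take $K$ a smaller concentric ball, build an explicit radial subsolution out of the capacitary potential of a flat disk in $\R^{d+1}$ (equivalently the $1/2$-harmonic function $j$ that equals $1$ on a ball and decays at infinity), and conclude via the comparison principle of Lemma \ref{subharmonic} that $K\in\calF(D,\lambda)$. But two things in the proposal are genuinely off. First, the geometry: the constant $\tilde{c}_d$ corresponds to $K=\overline{B_1^d(0)}$ inside $D=B_2^d(0)$ (radius ratio $2$, i.e.\ $\rho=1/2$ after normalizing $D$ to the unit ball), not to $\rho^2=1/3$. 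The $\sqrt{2}$ is the final rescaling $\Lambda_S(B_r^d(0))=\sqrt{2}\,\Lambda_S(B_2^d(0))/\sqrt{r}$, and both the $2^{d-2}$ and the limit $1/3$ arise because one evaluates $j$ at the point of $\partial B_2^d(0)$ at distance $1$ from $K$, where $|y|^2-1=3$ in the formula for $I(y)$; nothing is optimized over $\rho$. Second, the treatment of the finite outer boundary is not a harmonic corrector: the free-space potential $F$ does not vanish on $\partial D\times\R$, and the fix is the affine renormalization $V_t=tF-(t-1)$ with $t=(1-f(2e_1^d))^{-1}$, which is exactly the origin of the factor $\left(1-\frac{1}{\pi 2^{d-2}}\int_0^{1/3}\frac{(1-3b)^{(d-2)/2}}{b^{1/2}(1+b)}\,db\right)^{-1}$ in $\tilde{c}_d$. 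One must also verify $V_t\le 0$ on the whole cylinder $\partial B_2^d(0)\times\R$, not only at $x_{d+1}=0$, which uses the monotonicity of $F$ in $|x_{d+1}|$.

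The essential content, which you defer entirely to ``delicate bookkeeping,'' is the pair of quantitative facts about $q_1=1-j$: the exact boundary half-derivative $\lim_{t\to 0^+}q_1(te_1^d)/\sqrt{t}=C_0=\frac{2\sqrt{2}\Gamma(\frac{d}{2})}{\sqrt{\pi}\Gamma(\frac{d-1}{2})}$ (Lemma \ref{q1_fractional_derivative}) and, more importantly, the \emph{global} inequality $q_1(y)\le C_0\,\dist(y,B_1^d(-e_1^d))^{1/2}$ for all $y\in\R^d$ (Lemma \ref{Q1 satisfies boundary condition}). The latter is indispensable because \eqref{overdetermined} is a supremum over all of $D\setminus K$, not an asymptotic statement at $\partial K$; establishing it amounts to proving that an explicit one-variable function is monotone, which is a nontrivial page of computation and is where the proof could actually fail. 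As it stands, the proposal identifies the right construction but does not establish membership in $\calF(B_2^d(0),tC_0)$, and hence does not prove the stated bound.
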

Notice that we have $\tilde{c}_2 = 6/\pi \approx 1.91$, $\tilde{c}_3 \approx 2.31$.

As a consequence, if $\lambda > \tilde{c}_d/\sqrt{r(D)}$ (where $r(D)$ is the inradius of $D$) then the free boundary Problem \ref{isbp} has a solution. 
In the opposite direction, we may give an estimate from below of $\Lambda_S(D)$ in terms of the ball with the same diameter as $D$, as we have already said. But we can improve this estimate as a consequence of the following Brunn-Minkowski inequality.
\begin{thm}\label{BMlambda}
The Bernoulli constant for the spectral half Laplacian satisfies the following Brunn-Minkowski inequality: let $d \ge 2$, $D_0,\,D_1 \subset \R^d$ be open bounded nonempty convex sets and $s\in(0,1)$, then
\begin{equation}\label{BMlambdaeq}
\Lambda_S((1-s)D_0+sD_1)\leq \left[(1-s)\Lambda_S(D_0)^{-2}+s\Lambda_S(D_1)^{-2}\right]^{-1/2}\,.
\end{equation}
\end{thm}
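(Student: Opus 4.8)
The plan is to show that $\Lambda_S$ is, loosely speaking, Minkowski concave once one passes to $\Lambda_S^{-2}$, by combining admissible subsolutions. Fix $s\in(0,1)$ and, for $i=0,1$, numbers $\lambda_i>\Lambda_S(D_i)$. By definition of $\Lambda_S$ the class $\calF(D_i,\lambda_i)$ is nonempty, so by Theorem \ref{thm_int_spectral} (and the fact, noted after Problem \ref{isbp1}, that the solution produced there solves a variant of Problem \ref{isbp1}) we may pick $K_i\in\calF(D_i,\lambda_i)$ with $K_i$ \emph{compact and convex}; let $v_i:=v_{K_i}$ be the solution of \eqref{problemv} on $\overline{D_i}\times\R$. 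Put $\mu_i:=\lambda_i^{-2}$, $D_s:=(1-s)D_0+sD_1$, $K_s:=(1-s)K_0+sK_1\subset D_s$, $\mu_s:=(1-s)\mu_0+s\mu_1$, $\lambda_s:=\mu_s^{-1/2}$. Letting $y$ tend to $\partial D_i$ in \eqref{overdetermined}, and using that (since $D_i$ is convex) $v_i\in C(\overline{D_i}\times\R)$ vanishes on $\partial D_i\times\R$, one records the elementary fact $\dist(K_i,\partial D_i)\ge\mu_i$. The goal is to prove $K_s\in\calF(D_s,\lambda_s)$; granting this, $\Lambda_S(D_s)\le\lambda_s=[(1-s)\lambda_0^{-2}+s\lambda_1^{-2}]^{-1/2}$, and letting $\lambda_i\downarrow\Lambda_S(D_i)$ gives \eqref{BMlambdaeq}.

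\textbf{The subsolution on $D_s$.} Define $\underline w:\overline{D_s}\times\R\to[0,1]$ by the ``Minkowski min-convolution''
\[
\underline w(z)=\sup\bigl\{\min(v_0(z_0),v_1(z_1))\ :\ z_0\in\overline{D_0}\times\R,\ z_1\in\overline{D_1}\times\R,\ (1-s)z_0+sz_1=z\bigr\},
\]
whose superlevel sets are exactly $\{\underline w>c\}=(1-s)\{v_0>c\}+s\{v_1>c\}$. Using the evenness and monotonicity of the $v_i$ in the $\R$-variable (properties (i)--(ii) stated before Theorem \ref{thm_int_spectral}), on the hyperplane $\R^d\times\{0\}$ the supremum is attained with both $z_i$ of the form $(y_i,0)$, so $\underline w(y,0)=\sup\{\min(v_0(y_0,0),v_1(y_1,0)):(1-s)y_0+sy_1=y\}$. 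One checks the boundary behaviour: $\underline w=1$ on $K_s\times\{0\}$ (decompose a point of $K_s$ into points of $K_0,K_1$); $\underline w=0$ on $\partial D_s\times\R$ (the Minkowski sum of the open convex sets $D_0,D_1$ is open and equals $\inter D_s$, so in any admissible decomposition of a boundary point at least one $y_i\in\partial D_i$, where $v_i=0$); and $\underline w(z)\to0$ as $|z|\to\infty$ in $\overline{D_s}\times\R$.

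\textbf{Main obstacle: $\underline w$ is a subsolution.} The crux is to show that $\underline w$ is a (viscosity, hence weak) subsolution of $\Delta\underline w\ge0$ in $(D_s\times\R)\setminus(K_s\times\{0\})$. For this one first needs the quasi-concavity of the condenser potentials $v_i$, i.e.\ convexity of every superlevel set $\{v_i>c\}$, $0<c<1$; this follows from the convexity of $K_i$ and $D_i$ via the classical results on level sets of harmonic condenser potentials (Gabriel, Lewis, Caffarelli--Spruck), adapted to the unbounded cylinder $D_i\times\R$ and the flat inner plate $K_i\times\{0\}$ (for instance by exhausting $D_i\times\R$ and thickening $K_i\times\{0\}$ to $K_i\times(-\varepsilon,\varepsilon)$, then passing to the limit). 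Given this, one argues as in the Borell / Caffarelli--Jerison--Lieb proof of the Brunn--Minkowski inequality for capacity: if a smooth $\phi\ge\underline w$ touches $\underline w$ at $z^\star$, the defining supremum is attained at a pair $(z_0^\star,z_1^\star)$ with $(1-s)z_0^\star+sz_1^\star=z^\star$ and, away from boundary-contact situations, $v_0(z_0^\star)=v_1(z_1^\star)=\underline w(z^\star)$; freezing one of the two arguments exhibits a rescaled copy of $\phi$ lying above some $v_i$ with contact, and $\Delta v_i=0$ forces $\Delta\phi(z^\star)\ge0$. I expect this step, combining the quasi-concavity input with the viscosity argument in the present (unbounded, lower-dimensional-obstacle) geometry, to be the technically heaviest part; everything else is comparison-principle bookkeeping and elementary geometry.

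\textbf{Conclusion.} Since $\underline w\le v_{K_s}$ on $\partial\bigl((D_s\times\R)\setminus(K_s\times\{0\})\bigr)$ and both tend to $0$ at infinity, the maximum principle on this unbounded domain gives $\underline w\le v_{K_s}$ throughout $\overline{D_s}\times\R$, i.e.\ $1-v_{K_s}(\cdot,0)\le1-\underline w(\cdot,0)$ on $D_s$. Now fix $y\in D_s\setminus K_s$ and set $\delta:=\delta_{K_s}(y)$. If $\delta\ge\mu_s$ then trivially $1-v_{K_s}(y,0)\le1\le\mu_s^{-1/2}\delta^{1/2}=\lambda_s\,\delta_{K_s}^{1/2}(y)$. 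If $\delta<\mu_s$, let $a=(1-s)a_0+sa_1$ with $a_i\in K_i$ realise $\dist(y,K_s)$, set $e:=(y-a)/\delta$ and $y_i:=a_i+r_ie$ with the \emph{balanced} choice $r_i:=\mu_i\delta/\mu_s$; then $(1-s)y_0+sy_1=y$, and $r_i<\mu_i\le\dist(a_i,\partial D_i)$ so $y_i\in D_i$, while $\delta_{K_i}(y_i)\le|y_i-a_i|=r_i$. Hence, by $K_i\in\calF(D_i,\lambda_i)$,
\[
1-\underline w(y,0)\le\max\bigl(1-v_0(y_0,0),\,1-v_1(y_1,0)\bigr)\le\max\bigl(\mu_0^{-1/2}r_0^{1/2},\,\mu_1^{-1/2}r_1^{1/2}\bigr)=(\delta/\mu_s)^{1/2}=\lambda_s\,\delta_{K_s}^{1/2}(y),
\]
the balanced choice making the two middle terms equal. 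Combining the two cases, $\sup_{y\in D_s\setminus K_s}|v_{K_s}(y,0)-1|\,\delta_{K_s}^{-1/2}(y)\le\lambda_s$, that is $K_s\in\calF(D_s,\lambda_s)$, which completes the argument.
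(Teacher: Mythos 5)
Your proposal is correct, and its skeleton coincides with the paper's: form the Minkowski min-convolution $\underline w$ of the two subsolutions (the paper's $v_s^*$ in Lemma \ref{lemmalambdas}), invoke the Colesanti--Salani/Bianchini--Longinetti--Salani machinery to get $\Delta\underline w\geq 0$ in the viscosity sense, compare with $v_{K_s}$, and verify the overdetermined condition \eqref{overdetermined} by decomposing the foot point of $\delta_{K_s}(y)$. The genuine difference is in how the quantitative constant is extracted. The paper's Lemma \ref{lemmalambdas} uses the translated points $y_i=x_i+y-x_s$ (with matching normals, so that $\delta_{K_0}(y_0)=\delta_{K_1}(y_1)=\delta_{K_s}(y)$), which only yields the weaker statement $\Lambda_S(D_s)\le\max\{\lambda_0,\lambda_1\}$; the full inequality \eqref{BMlambdaeq} is then recovered by the standard normalization trick using the $(-1/2)$-homogeneity of $\Lambda_S$ from Proposition \ref{homolambdaprop}. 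You instead make the ``balanced'' choice $y_i=a_i+r_ie$ with $r_i=\mu_i\delta/\mu_s$, which equalizes the two bounds $\lambda_i r_i^{1/2}$ and delivers $K_s\in\calF(D_s,\lambda_s)$ with $\lambda_s=[(1-s)\lambda_0^{-2}+s\lambda_1^{-2}]^{-1/2}$ in one stroke, bypassing the homogeneity step entirely (and, pleasantly, not even requiring the matching-normals decomposition --- only that $a_i\in K_i$ and $r_i<\mu_i\le\dist(K_i,\partial D_i)$, the latter being Lemma \ref{lemmadist}). Your approach is more direct and self-contained; the paper's is the modular ``max-inequality plus homogeneity'' route that is standard for Brunn--Minkowski-type results. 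The one step you leave as a sketch --- subharmonicity of the min-convolution, for which you would need quasi-concavity of the $v_i$ in the cylinder-with-flat-plate geometry --- is exactly the step the paper disposes of by citation (formula (38) in the proof of Theorem 1 of the $p$-capacity paper of Colesanti and Salani, or Proposition 2.3 of Bianchini--Longinetti--Salani), so this is an acknowledged reliance on known results rather than a gap.
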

Here $A+B=\{x+y\,:\,x\in A,\,y\in B\}$ denotes the Minkowski addition of sets.
Then, in other words, the above theorem states that, as a functional on the class of convex bodies, $\Lambda_S^{-2}$ is concave with respect to Minkowski addition.

We recall that the classical Brunn-Minkowski inequality was born in the framework of convex geometry and then extended to Lebesgue measurable sets: it states that the Lebesgue measure in $\R^d$ raised to power $1/d$ is concave with respect to Minkowski addition. It is a powerful inequality, at the core of the Brunn-Minkowski theory of convex bodies, it is strongly connected to many other important inequalities (in particulary to the isoperimetric inequality) and the related research is still very active (see for instance \cite{FMP, Christ1, Christ2, FJ1, FJ2} and the beautiful survey paper by Gardner \cite{Gardner} for more references). Similar inequalities hold for other geometric quantities, like perimeter or quermassintegrals of convex bodies, furthermore there is a functional version (namely, the Borell-Brascamp-Lieb inequality, see \cite{Borell1, BBL}) and, in recent years, Brunn-Minkowski inequalities have been proved for several functionals from calculus of variations (see for instance \cite{Borell2, Gardner, CS2003-2}), in particular, in connection with the present paper, for the Bernoulli constant \cite{BS2009} and for the $1$-Riesz capacity \cite{NR2015}.

As it is now well known, every Brunn-Minkowski inequality yields an Urysohn's type inequality by a standard procedure. Then, as a corollary of the previous theorem, we get the following.
\begin{cor}\label{urysohncor}
If $d \ge 2$, $D \subset \R^d$ is an open nonempty bounded convex set, then
$$
\Lambda_S(D)\geq \Lambda_S(B)\,,
$$
where $B$ is a ball with the same mean width of $D$. 
\end{cor}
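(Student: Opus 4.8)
The plan is to derive Corollary \ref{urysohncor} from the Brunn-Minkowski inequality of Theorem \ref{BMlambda} via the standard symmetrization-by-rotation argument that turns any Brunn-Minkowski inequality into an Urysohn-type inequality. Recall that the mean width $w(D)$ of a convex body $D$ is, up to a normalizing constant, the integral over the sphere $S^{d-1}$ of the support function $h_D$, and that it is affine-invariant under the Minkowski average of a body with its rotations: if $g \in SO(d)$, then $w(gD)=w(D)$, and more generally for a rotation-invariant probability measure $\mu$ on $SO(d)$ the body $\widetilde D:=\int_{SO(d)} gD\,d\mu(g)$ (Minkowski integral) is a ball, it has $w(\widetilde D)=w(D)$, and it is exactly the ball $B$ with the same mean width as $D$. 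This is the geometric input; the analytic input is the concavity of $\Lambda_S^{-2}$ under Minkowski addition together with its rotation invariance, $\Lambda_S(gD)=\Lambda_S(D)$ for $g\in SO(d)$, which is immediate since rotations are isometries and the class $\calF(D,\lambda)$ in Definition \ref{classF} is defined purely in terms of distances and the Laplacian, both rotation invariant.

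The key steps, in order, are as follows. First I would record rotation invariance of $\Lambda_S$: for every $g\in SO(d)$ and every admissible $D$, $\Lambda_S(gD)=\Lambda_S(D)$, because $K\in\calF(D,\lambda)$ if and only if $gK\in\calF(gD,\lambda)$ (the function $v_{gK}$ on $gD\times\R$ is just $v_K$ precomposed with $g^{-1}$ acting on the first $d$ coordinates, and the distance function in \eqref{overdetermined} transforms accordingly). Second, I would iterate Theorem \ref{BMlambda}: by an easy induction, for convex bodies $D_1,\dots,D_N$ and weights $s_i\ge 0$ with $\sum s_i=1$,
\begin{equation}\label{eq:iterBM}
\Lambda_S\!\Big(\sum_{i=1}^N s_i D_i\Big)^{-2}\ \ge\ \sum_{i=1}^N s_i\,\Lambda_S(D_i)^{-2}.
\end{equation}
Third, I would apply \eqref{eq:iterBM} with all $D_i=g_iD$ for rotations $g_i$, using step one to replace each $\Lambda_S(g_iD)^{-2}$ by $\Lambda_S(D)^{-2}$, obtaining
$$
\Lambda_S\!\Big(\sum_{i=1}^N s_i\, g_i D\Big)^{-2}\ \ge\ \Lambda_S(D)^{-2}.
$$
Fourth, by choosing the $g_i$ and $s_i$ so that $\sum_i s_i g_i D$ approximates (in Hausdorff distance) the rotation-average $\widetilde D=\int_{SO(d)} gD\,d\mu(g)$ — or, more cleanly, by passing directly to the integral form of the iterated inequality, which is legitimate because $\Lambda_S^{-2}$ is continuous with respect to Hausdorff convergence of convex bodies (itself a consequence, if needed, of the homogeneity and monotonicity in Proposition \ref{homolambdaprop} squeezing $\Lambda_S$ between inradius and circumradius bounds, plus the Beurling construction behind Theorem \ref{thm_int_spectral}) — I conclude $\Lambda_S(\widetilde D)^{-2}\ge \Lambda_S(D)^{-2}$, i.e. $\Lambda_S(\widetilde D)\le \Lambda_S(D)$. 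Finally, since $\widetilde D$ is a ball with $w(\widetilde D)=w(D)$, it equals the ball $B$ in the statement, and $\Lambda_S(B)\le\Lambda_S(D)$ is exactly the claim.

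I expect the main obstacle to be the passage from the finitely-iterated inequality \eqref{eq:iterBM} to its integral (continuous) version over $SO(d)$, and identifying the limiting body $\widetilde D$ as precisely the ball with the same mean width. The cleanest route is: (a) note $w(\cdot)$ is linear with respect to Minkowski addition and rotation-invariant, so $w(\sum_i s_i g_i D)=w(D)$ for every finite rotation average; (b) choose a sequence of such finite averages $\widetilde D_N$ converging in Hausdorff distance to the ball $B$ of mean width $w(D)$ — such a sequence exists because the rotation averages of $D$ converge to $B$ as the rotations equidistribute (a standard fact, e.g. by averaging support functions and using that the average of $h_D$ over $SO(d)$ is the constant $\tfrac12 w(D)$, which is the support function of $B$); and (c) invoke continuity of $\Lambda_S$ under Hausdorff convergence of convex bodies to pass to the limit in $\Lambda_S(\widetilde D_N)\le\Lambda_S(D)$. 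If one prefers to avoid the continuity statement, an alternative is to observe that monotonicity (Proposition \ref{homolambdaprop}(i)) already gives $\Lambda_S(B_{R(D)})\le\Lambda_S(D)\le\Lambda_S(B_{r(D)})$, and to run the whole argument at the level of the squared reciprocal, where the Brunn-Minkowski concavity plus a John-type sandwiching of $\widetilde D_N$ between concentric balls provides the needed uniform control; either way, the convex-geometry bookkeeping around mean width is the only genuinely nontrivial point, the rest being a direct unwinding of Theorem \ref{BMlambda}.
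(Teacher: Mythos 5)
Your proposal is correct and is precisely the ``standard procedure'' that the paper invokes without writing out (it merely cites the proofs of Corollary 2.2 in \cite{BS2009} and Section 6 of \cite{S2015}): rotation invariance of $\Lambda_S$, the iterated form of the Brunn--Minkowski inequality for finite Minkowski rotation averages, Hausdorff convergence of those averages to the ball of the same mean width, and a limit passage for which, as you note, the one-sided bound coming from monotonicity and homogeneity (Proposition \ref{homolambdaprop}) suffices in place of full continuity. No gaps.
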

We recall that the mean width $w(D)$ of a convex set $D\subset\R^d$ is defined as follows
$$
w(D)=\frac{2}{H^{d-1}(S^{d-1})}\int_{S^{d-1}}h_D(\xi)H^{d-1}(d\xi)\,
$$
where $h_D$ denotes the {\em support function} of $D$, i.e.
$$
h_D(y)=\sup_{x\in D}\langle x,y\rangle\quad\text{for }y\in\R^d\,,
$$
$H^{d-1}$ is the $(d-1)$-dimensional Hausdorff measure and $S^{d-1}$ is the unit sphere in $\R^d$.
Notice that $h_D$ is a $1$-homogeneous convex function in $\R^d$: for a given direction $\xi\in S^{d-1}$, it represents the signed distance  of the support hyperplane to $D$ with exterior unit normal $\xi$ from the origin and the width of the set $D$ in direction $\xi$ is given by $|h_D(\xi)-h_D(-\xi)|$. Clearly, the mean width is never greater than the diameter (indeed, it is strictly smaller unless D is a ball).
In the plane, $w(D)$ is just a multiple of the perimeter, exactly we have $w(D)=|\partial D|/\pi$. Then Corollary \ref{urysohncor}, for $d=2$, can be rephrased as follows: {\em among convex planar sets with given perimeter, balls have the smallest Bernoulli constant for the spectral half Laplacian}.

We adopt the convention that constants denoted by $c, c_0, c_1, \ldots$ may change their value from one use to the next. On the other hand, constants denoted by $C_0, C_1, \ldots$ do not change their value in the whole paper.

The paper is organized as follows. In Section 2 we study the exterior Bernoulli problem for the half Laplacian, in Section 3 the interior Bernoulli problem for the half Laplacian and in Section 4 the interior Bernoulli problem for the spectral half Laplacian. Section 5 is devoted to Brunn-Minkowski inequality for Bernoulli constant for the spectral half Laplacian. In Appendix we give a proof of an auxiliary, technical lemma, concerning $H^{1/2}(\R^d)$.

We finish this introduction with some remarks concerning other fractional Laplacians and the exterior Bernoulli problem for the spectral half Laplacian. One may ask why we study Bernoulli problems for the half Laplacian $(-\Delta)^{1/2}$ and the spectral half Laplacian $(-\Delta_D)^{1/2}$ and not for all fractional Laplacians $(-\Delta)^{\alpha/2}$ and spectral fractional Laplacians $(-\Delta_D)^{\alpha/2}$. The reason is that for $\alpha = 1$ there are known some important results, which are not available for other $\alpha$. In particular, in our paper in studying the exterior Bernoulli problem for $(-\Delta)^{1/2}$ we use Theorem 1.2 from \cite{DSS2015}, which roughly speaking states that a Lipschitz free boundary is smooth. Such a result is known only for $(-\Delta)^{1/2}$ and not for other fractional Laplacians. Similarly, in studying the interior Bernoulli problem for $(-\Delta_D)^{1/2}$ we use some deep results about harmonic functions from \cite{CS2003} to show that there exists a solution with convex free set. We do not know how to generalize it to other spectral fractional Laplacians. One may also ask why we did not study the exterior Bernoulli problem for the spectral half Laplacian. It turns out that for this problem the condition on the free boundary is not of the type (\ref{normal_exterior}) but of the type $|\nabla u| = \lambda$. So, in some sense, this problem has completely different nature than problems we study in our paper.

\section{The exterior Bernoulli problem for the half Laplacian}

\begin{lemma}
\label{general properties}
Let $K\subset \R^d$ be open nonempty and bounded, $\lambda>0$, and let $U\in H^1(\R^{d+1})$ be a minimizer of $E_{\lambda}$ subject to the constraint $\tr U=1$ on $K$ Then
\begin{enumerate}[(i)]
\item\label{item:d+1 1} $0 \le U \le 1$,
\item\label{item:d+1 2} $U$ is $1/2$-H{\"o}lder continuous on any compact subset of $\R^{d+1}\setminus (\overline{K}\times\{0\})$,
and
\item\label{item:d+1 3} $U$ is harmonic in the set $\{U>0\}\setminus (\overline{K}\times\{0\})$ and in the set $\R^{d+1}\setminus(\R^d\times\{0\})$.
\end{enumerate}
\end{lemma}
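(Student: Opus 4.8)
The plan is to establish the three properties by exploiting the minimality of $U$ against well-chosen competitors, together with the fact that the constraint $\tr U = 1$ on $K$ and the penalization term $\calL_d(\{\tr U > 0\})$ are both insensitive to certain truncations. Throughout, recall that for $U \in H^1(\R^{d+1})$ both truncation operations $U \mapsto \min(U^+, 1)$ and even reflection across $\{x_{d+1} = 0\}$ are bounded on $H^1$, and that the trace operator commutes with these pointwise operations.

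\textbf{Step 1: $0 \le U \le 1$.} Suppose $U$ is a minimizer and set $W = \min(U^+, 1)$, i.e. $W = \max(0, \min(U, 1))$. Then $W \in H^1(\R^{d+1})$, and on $K$ we have $\tr U = 1$ so $\tr W = 1$ there as well; thus $W$ is admissible for Problem \ref{variational}. Since $W$ is obtained from $U$ by a $1$-Lipschitz pointwise map, $|\nabla W| \le |\nabla U|$ a.e., so $\int |\nabla W|^2 \le \int |\nabla U|^2$. Moreover $\{\tr W > 0\} = \{\tr U > 0\}$ (the trace being truncated only where $U$ was negative, a set where $\tr W = 0$ too, and capped where $U > 1$, which does not change positivity of the trace). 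Hence $E_\lambda(W) \le E_\lambda(U)$, and by uniqueness of the minimizer $W = U$, which forces $0 \le U \le 1$. (If uniqueness is not yet available at this point one simply argues $E_\lambda(W) \le E_\lambda(U)$ shows $W$ is also a minimizer, then observes the gradient inequality is strict on a set of positive measure unless $U$ already lies in $[0,1]$; alternatively one keeps all minimizers and proves the statement for a suitably chosen one. I would present the clean version using minimality of the energy.)

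\textbf{Step 2: Local $1/2$-H\"older continuity away from $\overline{K} \times \{0\}$.} The key observation is that on any Euclidean ball $B \subset \R^{d+1}$ with $\overline{B} \cap (\overline{K} \times \{0\}) = \emptyset$, the function $U$ is a local minimizer of the Alt--Caffarelli type functional $V \mapsto \int_B |\nabla V|^2 + \frac{\pi}{4}\lambda^2 \calL_d(\{\tr V > 0\} \cap B)$ among competitors agreeing with $U$ near $\partial B$, because the constraint $\tr U = 1$ on $K$ plays no role there. Such functionals with a perimeter-free, measure-type penalization on the trace are exactly the ones studied in the thin one-phase free boundary literature (Caffarelli--Roquejoffre--Sire \cite{CRS2010}, De Silva--Roquejoffre \cite{SR2012}, De Silva--Savin \cite{DSS2015}), where it is proved that minimizers are $C^{0,1/2}$ on compact subsets. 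Away from the thin space $\R^d \times \{0\}$ the penalization term disappears altogether and $U$ is harmonic, hence smooth. Combining, $U$ is $1/2$-H\"older on compact subsets of $\R^{d+1} \setminus (\overline{K} \times \{0\})$; I would cite the relevant regularity theorem from those papers rather than reprove it.

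\textbf{Step 3: Harmonicity.} On the open set $\R^{d+1} \setminus (\R^d \times \{0\})$, for any ball $B$ compactly contained in it the penalization term and the constraint are both constant under perturbations of $U$ supported in $B$ (since $\tr$ of such a perturbation vanishes), so $U$ minimizes the Dirichlet energy on $B$ with its own boundary data, hence is harmonic there. On the (thin-inclusive) set $\{U > 0\} \setminus (\overline{K} \times \{0\})$ the same argument applies: by Step 2, $U$ is continuous there, so $\{U > 0\}$ is open and for $B$ a small ball around any point of this set, any perturbation supported in $B$ with small $H^1$-norm keeps $U > 0$ on the relevant set, leaving $\calL_d(\{\tr U > 0\})$ unchanged (and not affecting the constraint on $K$); thus $U$ minimizes Dirichlet energy on $B$ and is harmonic.

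\textbf{Main obstacle.} The only nontrivial input is Step 2: the interior $C^{0,1/2}$ regularity of minimizers of the thin one-phase functional. This is a genuine theorem from the fractional free boundary literature, and the work consists in checking that $U$, restricted to balls away from $\overline{K} \times \{0\}$, is indeed a local minimizer in the exact sense required by those references (boundary data fixed near $\partial B$, competitors in $H^1(B)$, the $\frac{\pi}{4}\lambda^2$ normalization matching their conventions up to a harmless constant). Steps 1 and 3 are soft variational arguments. I would devote most of the written proof to carefully setting up the localization in Step 2 and quoting the precise regularity statement, and treat Steps 1 and 3 briefly.
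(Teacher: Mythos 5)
Your proposal is correct and follows essentially the same route as the paper: truncation for the bounds $0\le U\le 1$, localization to balls avoiding $\overline{K}\times\{0\}$ plus the interior $C^{0,1/2}$ regularity of local minimizers from \cite[Theorem 1.1]{CRS2010} for (ii), and vanishing of the first variation of the Dirichlet energy (the measure term being locally constant under admissible perturbations) for (iii). The only cosmetic caveat is in Step 3: positivity of the perturbed trace should be ensured by taking $U+t\psi$ with $\psi$ smooth, compactly supported and $|t|$ small (so that $t\psi$ is small in sup-norm and continuity of $\tr U$ gives a positive lower bound on $\supp \tr\psi$), rather than by smallness of the $H^1$-norm, which does not control the supremum.
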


\begin{proof}
For \eqref{item:d+1 1} note first that if $f:\R\to\R$ is Lipschitz continuous with $f(0)=0$, then $f(v)\in H^1(\R^{d+1})$ for $v\in H^1(\R^{d+1})$ (see \cite{MM1979}). If $m:=\textnormal{essinf }U<0$, we may take $\epsilon\in(0,-m)$ and consider the Lipschitz function $f:\R\to\R$, $f(t)=\max\{t,-\epsilon\}$. Then $\calL_d(\{\tr(f(U))>0\})=\calL_d(\{\tr (U)>0\})$ and, for $\epsilon$ small enough,
$$
\int_{\R^{d+1}} |\nabla f(U(x))|^2 \, dx=\int_{\R^{d+1}} (f'(U(x))^2|\nabla U(x)|^2 \, dx=\int_{\{U>-\epsilon\}} |\nabla U(x)|^2 \, dx<\int_{\R^{d+1}} |\nabla U(x)|^2 \, dx.
$$
But then $E_{\lambda}(f(U))<E_{\lambda}(U)$, a contradiction. Thus $U\geq 0$. Similarly, with $f(t)=\min\{t,1+\epsilon\}$ it follows that we must have $U\leq 1$. \\
Note that $U$ is in particular a local minimizer in the sense as studied in \cite{CRS2010}. To be precise, for every $W\in H^1(B)$, where $B$ is any ball in $\R^{d+1}$ with center $x\in \R^{d}\times\{0\}$ such that $\overline{B}\cap  (\overline{K} \times \{0\})=\emptyset$ and $U=W$ on $\partial B$ we have 
$$
J(U,B)\leq J(W,B),
$$
where
$$
J(u,B)=\int_B|\nabla u(x)|^2\ dx+ \frac{\pi}{4}\lambda^2 \calL_d(\{\tr u>0\}\cap B).
$$
 By \cite[Theorem 1.1]{CRS2010} it then follows that $U\in C^{1/2}(M)$ for any compact set $M\subset \R^{d+1}\setminus (\overline{K}\times\{0\})$, that is, \eqref{item:d+1 2} holds. \\
To see \eqref{item:d+1 3}, note that $\{U>0\}$ is open by \eqref{item:d+1 2} and thus we may pick $\psi\in C^{\infty}_c(\{U>0\}\setminus (\overline{K}\times\{0\}))$ and consider $U+t\psi$ for $t\in \R$. We then have
\begin{align*}
 0&= \lim_{t\to0} \frac{E_{\lambda}(U+t\psi)-E_{\lambda}(U)}{t}\\
&=2 \int_{\R^{d+1}} \nabla U(x)\cdot \nabla \psi(x)\ dx + \frac{\pi}{4}\lambda^2 \lim_{t\to0}\frac{\calL_d(\{\tr(U+t\psi)>0\})-\calL_d(\{\tr(U)>0\})}{t}\\
&=2 \int_{\R^{d+1}} \nabla U(x)\cdot \nabla \psi(x)\ dx,
\end{align*}
so that $U$ is harmonic in $\{U>0\}\setminus (\overline{K}\times\{0\})$. Similarly, picking $\psi\in C^{\infty}_c(\R^{d+1}\setminus (\R^{d}\times\{0\}))$ we have
$$
0=\lim_{t\to0} \frac{E_{\lambda}(U+t\psi)-E_{\lambda}(U)}{t}=2 \int_{\R^{d+1}} \nabla U(x)\cdot \nabla \psi(x)\ dx
$$
so that $U$ is harmonic in $\R^{d+1}\setminus (\R^{d}\times\{0\})$. 
\end{proof}

In the following, let $\lambda>0$ be a fixed constant.

\begin{lemma}
\label{monotonicity}
Let $B\subset K\subset \R^d$ be open nonempty bounded sets. If $U$ is minimizer of $E_{\lambda}$ subject to the constraint $\tr U=1$ in $K$ and $V$ is a minimizer of $E_{\lambda}$ subject to the constraint $\tr V=1$ in $B$, then $U\geq V$.
In particular, there is at most one solution of Problem \ref{variational}
\end{lemma}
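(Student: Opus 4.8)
The plan is to use a standard truncation/rearrangement argument comparing $U$ and $V$ via their minimality. The key point is that the energy $E_\lambda$ splits into a Dirichlet part and a measure penalty, and both behave well under taking pointwise max and min. First I would set $W_1 = \max\{U,V\}$ and $W_2 = \min\{U,V\}$. Since $U,V \in H^1(\R^{d+1})$, both $W_1$ and $W_2$ lie in $H^1(\R^{d+1})$, and the classical identity
\[
\int_{\R^{d+1}} |\nabla W_1|^2 + \int_{\R^{d+1}} |\nabla W_2|^2 = \int_{\R^{d+1}} |\nabla U|^2 + \int_{\R^{d+1}} |\nabla V|^2
\]
holds (the gradients of $W_1,W_2$ agree a.e. with those of $U$ or $V$ according to which is larger, and $\nabla W_i = 0$ a.e. on $\{U=V\}$ restricted appropriately). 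Next I would check the constraints and the measure terms: since $\tr U = 1$ on $K \supseteq B$ and $0 \le U,V \le 1$ by Lemma \ref{general properties}(i), on $K$ we have $\tr W_1 = 1$, so $W_1$ is admissible for the problem with constraint on $K$; similarly $\tr V = 1$ on $B$ and $\tr W_2 = \min\{\tr U, \tr V\}$, which equals $1$ on $B$ (as both traces equal $1$ there), so $W_2$ is admissible for the problem with constraint on $B$. For the penalty, one has the set identities $\{\tr W_1 > 0\} = \{\tr U > 0\} \cup \{\tr V > 0\}$ and $\{\tr W_2 > 0\} = \{\tr U > 0\} \cap \{\tr V > 0\}$ up to null sets, hence
\[
\calL_d(\{\tr W_1 > 0\}) + \calL_d(\{\tr W_2 > 0\}) = \calL_d(\{\tr U > 0\}) + \calL_d(\{\tr V > 0\}).
\]
Adding the two identities gives $E_\lambda(W_1) + E_\lambda(W_2) = E_\lambda(U) + E_\lambda(V)$.

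Now I would invoke minimality: since $W_1$ is admissible for the $K$-problem, $E_\lambda(U) \le E_\lambda(W_1)$; since $W_2$ is admissible for the $B$-problem, $E_\lambda(V) \le E_\lambda(W_2)$. Combined with the sum identity, both inequalities must be equalities, so $W_1$ is also a minimizer of the $K$-problem and $W_2$ of the $B$-problem. To conclude $U \ge V$ I would argue that the Dirichlet integrals must actually match term by term: from $E_\lambda(U) = E_\lambda(W_1)$ and the fact that $\calL_d(\{\tr W_1 > 0\}) \ge \calL_d(\{\tr U > 0\})$ (as $\{\tr U>0\}\subseteq\{\tr W_1>0\}$), we get $\int |\nabla W_1|^2 \le \int |\nabla U|^2$; symmetrically, the energy identity forces $\int |\nabla W_1|^2 = \int |\nabla U|^2$ and $\calL_d(\{\tr W_1>0\}) = \calL_d(\{\tr U>0\})$, so $\{\tr V > 0\} \subseteq \{\tr U > 0\}$ up to a null set. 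Since $W_1 = U$ where $U \ge V$ and $W_1 = V$ where $V > U$, and $W_1$ minimizes the same functional as $U$, I would use harmonicity (Lemma \ref{general properties}(iii)): on the open set $\{V > U\}$ (which is contained in $\{\tr U > 0\}\setminus(\overline K\times\{0\})$ up to the boundary considerations), both $U$ and $W_1 = V$ are harmonic with the same boundary values on $\partial\{V>U\}$, whence $U = V$ there by uniqueness for the Dirichlet problem — contradicting $V > U$ unless this set is empty. Therefore $U \ge V$ a.e., and by continuity (Lemma \ref{general properties}(ii)) everywhere off $\overline K \times \{0\}$, hence everywhere.

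The uniqueness statement for Problem \ref{variational} then follows by taking $B = K$: if $U$ and $V$ are both minimizers with constraint on $K$, the argument gives $U \ge V$, and by symmetry $V \ge U$, so $U = V$.

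The main obstacle I anticipate is the careful bookkeeping around the trace operator: one must make sure that $\tr(\max\{U,V\}) = \max\{\tr U, \tr V\}$ and the analogous identity for $\min$ hold (this is standard since the trace is a restriction of the even reflection, and $\max/\min$ commute with restriction), and that the superlevel-set identities for the traces hold up to sets of $\calL_d$-measure zero. A secondary delicate point is promoting the a.e. inequality $U \ge V$ to a genuine pointwise comparison allowing the harmonicity/unique-continuation argument on $\{V > U\}$; here one leans on the Hölder continuity off $\overline K \times \{0\}$ from Lemma \ref{general properties}(ii) and the fact that on $\overline K \times \{0\}$ the relevant constraint $\tr U = 1 \ge \tr V$ is automatic since $V \le 1$.
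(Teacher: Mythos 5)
Your construction of $W_1=\max\{U,V\}$ and $W_2=\min\{U,V\}$, the additivity of the Dirichlet integrals and of the measure terms, and the conclusion that $W_1$ minimizes the $K$-problem while $W_2$ minimizes the $B$-problem reproduce the paper's argument exactly (the paper calls them $W$ and $w$). The gaps are in your final step. First, the assertion that the energy identities \emph{force} $\int|\nabla W_1|^2=\int|\nabla U|^2$ and $\calL_d(\{\tr W_1>0\})=\calL_d(\{\tr U>0\})$ does not follow: writing $\delta=\calL_d(\{\tr V>0\}\setminus\{\tr U>0\})$ and $R_\cdot$ for the Dirichlet integral, the two equalities $E_\lambda(U)=E_\lambda(W_1)$ and $E_\lambda(V)=E_\lambda(W_2)$ give precisely $R_{W_1}=R_U-\frac{\pi}{4}\lambda^2\delta$ and $R_{W_2}=R_V+\frac{\pi}{4}\lambda^2\delta$, which are consistent with the sum identity for \emph{every} $\delta\geq 0$; there is no ``symmetric'' inequality pinning $\delta$ to zero. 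Second, the Dirichlet-uniqueness argument on $\{V>U\}$ needs $U$ to be harmonic on that whole open set, but Lemma \ref{general properties}(iii) only gives harmonicity of $U$ on $\{U>0\}\setminus(\overline{K}\times\{0\})$ and off the hyperplane $\R^d\times\{0\}$; at a point of the hyperplane where $\tr U=0<\tr V$ (which a measure-zero statement, even if established, cannot exclude pointwise) $U$ need not be harmonic, so the comparison of boundary values breaks down there. There is also the issue that $\{V>U\}$ may be unbounded, so uniqueness for the Dirichlet problem requires an argument at infinity (or a weak $H^1$ formulation).

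The paper closes the argument with the \emph{minimum} instead: if $U\geq V$ fails, then (following \cite[Lemma 8.1]{ACF1984}) there is a point $x_0\in\{U>0\}\cap\{V>0\}$ with $U(x_0)=V(x_0)$ at which $U-V$ changes sign in every neighborhood. Near such a point both $U$ and $V$ are positive and hence harmonic, so $W_2=\min\{U,V\}$ is superharmonic but not harmonic in any neighborhood of $x_0$; this contradicts Lemma \ref{general properties}(iii) applied to $W_2$, which you have already shown to be a minimizer of the $B$-problem and which is positive and continuous near $x_0$. This route avoids both of the difficulties above, since it only uses harmonicity at points where both functions are strictly positive. I recommend replacing your last step by this crossing-point argument; the rest of your proof, including the deduction of uniqueness by taking $B=K$, is sound.
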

\begin{proof}
We follow part of the proof of \cite[Lemma 3.8]{AM1995}. By definition, $W:=\max\{U,V\}\in H^1(\R^{d+1})$ satisfies $\tr(W)=1$ in $K$. Moreover, the function $w:=\min\{U,V\}\in H^1(\R^{d+1})$ satisfies $\tr (w)=1$ in $B$. For $u\in H^1(\R^{d+1})$ we let
$$
R_u:=\int_{\R^{d+1}}|\nabla u(x)|^2\ dx.
$$
Since $V$ is a minimizer of $E_{\lambda}$ subject to the constraint $\tr(V)=1$ in $B$, we have
\begin{align*}
R_V+& \frac{\pi}{4}\lambda^2 \Big(\calL_d(\{ \tr (w)>0\})+\calL_d(\{ \tr (V)>0\}\setminus\{\tr(w)>0\})\Big)\\
&=R_V+ \frac{\pi}{4}\lambda^2 \calL_d(\{ \tr (V)>0\})=E_{\lambda}(V)\leq E_{\lambda}(w)=R_{w}+ \frac{\pi}{4}\lambda^2  (\calL_d(\{ \tr (w)>0\}),
\end{align*}
that is
$$
R_V+ \frac{\pi}{4}\lambda^2 \calL_d(\{ \tr (V)>0\}\setminus\{\tr w >0\})\leq R_w.
$$
Using that $R_W+R_w=R_V+R_U$ holds, we find
\begin{align*}
E_{\lambda}(W)&=R_W+ \frac{\pi}{4}\lambda^2 \calL_d(\{ \tr W>0\})\\
&=R_V+R_U-R_w+ \frac{\pi}{4}\lambda^2 \Big(\calL_d(\{ \tr (U)>0\})+\calL_d(\{ \tr (V)>0\} \setminus \{\tr(w)>0\})\Big)\leq E_{\lambda}(U).
\end{align*}
Hence, $E_{\lambda}(W)=E_{\lambda}(U)$, that is, $W$ is a minimizer of $E_{\lambda}$ subject to the constraint $\tr(W)=1$ in $K$. In particular, with a similar calculation we find
\begin{align*}
E_{\lambda}(w)&=R_w+ \frac{\pi}{4}\lambda^2 \calL_d(\{ \tr w>0\})=R_V+R_U-R_W+ \frac{\pi}{4}\lambda^2 \calL_d(\{ \tr w>0\})\\
&=R_V+ \frac{\pi}{4}\lambda^2 \calL_d(\{ \tr W>0\})-\frac{\pi}{4}\lambda^2\calL_d(\{\tr U>0\})+\frac{\pi}{4}\lambda^2 \calL_d(\{ \tr w>0\})\\
&=R_V+\frac{\pi}{4}\lambda^2\calL_d(\{ \tr (V)>0\} \setminus \{\tr(w)>0\})\Big)+\frac{\pi}{4}\lambda^2 \calL_d(\{ \tr w>0\})=E_{\lambda}(V).
\end{align*}
Hence, $w$ is a minimizer of $E_{\lambda}$ subject to the constraint $\tr(w)=1$ in $B$. It remains to show $W=U$ or equivalently $w=V$. Following \cite[Lemma 8.1]{ACF1984}, assume on the contrary that there is $x_0\in\{V>0\}\cap \{U>0\}$ with $V(x_0)=U(x_0)$ and the function $U-V$ changes sign in any neighborhood of $x_0$. Note that this implies that $w$ is not harmonic in any neighborhood of $x_0$, but this contradicts Lemma \ref{general properties}\eqref{item:d+1 3}, since $w$ is a minimizer of $E_\lambda$ subject to the constraint $w=1$ in $B$, $w(x_0)>0$, and $w$ is continuous in a small enough neighborhood of $x_0$.
\end{proof}

\begin{lemma}
\label{existence}
There exists a unique solution of Problem \ref{variational}.
\end{lemma}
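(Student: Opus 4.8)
The plan is to obtain existence by the direct method of the calculus of variations, carried out at the level of the traces, and to read off uniqueness from Lemma~\ref{monotonicity} (applied with $B=K$). First, the admissible set is nonempty and $E_\lambda$ is finite on it: since $\overline K$ is compact, we may fix $\phi\in C^\infty_c(\R^d)$ with $\phi\equiv 1$ on a neighbourhood of $\overline K$ and $\eta\in C^\infty_c(\R)$ with $\eta(0)=1$; then $(x,y)\mapsto\phi(x)\eta(y)$ lies in $H^1(\R^{d+1})$, has trace $\phi$ (so $\tr=1$ on $K$), and $\calL_d(\{\phi>0\})<\infty$, so it has finite energy. Hence $m:=\inf E_\lambda$, taken over $\{U\in H^1(\R^{d+1})\;:\;\tr U=1\text{ on }K\}$, is finite. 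Let $(U_n)$ be a minimizing sequence. Exactly as in the proof of Lemma~\ref{general properties}\eqref{item:d+1 1}, replacing $U_n$ by $\max\{0,\min\{U_n,1\}\}$ keeps the constraint and the value of $\calL_d(\{\tr U_n>0\})$ and does not increase $\int_{\R^{d+1}}|\nabla U_n|^2$; so we may assume $0\leq U_n\leq 1$.

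For coercivity, set $u_n:=\tr U_n\in H^{1/2}(\R^d)$. Boundedness of $E_\lambda(U_n)$ gives bounds on $\int_{\R^{d+1}}|\nabla U_n|^2$ and on $\calL_d(\{u_n>0\})$. Since the even harmonic extension of $u_n$ minimizes the Dirichlet energy among all $H^1(\R^{d+1})$ functions with trace $u_n$, identity \eqref{energy identity} yields $\calA_d[u_n]_1\leq\int_{\R^{d+1}}|\nabla U_n|^2$; and $0\leq u_n\leq 1$ gives $\|u_n\|_{L^2(\R^d)}^2\leq\calL_d(\{u_n>0\})$. Thus $(u_n)$ is bounded in $H^{1/2}(\R^d)$, so, along a subsequence, $u_n\rightharpoonup u$ weakly in $H^{1/2}(\R^d)$; using the compact embedding of $H^{1/2}$ into $L^2$ on an exhausting sequence of balls together with a diagonal argument, we may assume in addition that $u_n\to u$ in $L^2_{\mathrm{loc}}(\R^d)$ and $u_n\to u$ a.e.\ on $\R^d$.

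Now let $U$ be the even harmonic extension of $u$. From a.e.\ convergence and $u_n\equiv 1$ on $K$ we get $u\equiv 1$ a.e.\ on $K$; since $0\leq u\leq1$ and $u$ has finite-measure support, the Poisson representation \eqref{extension} gives $U\in L^2(\R^{d+1})$, while \eqref{energy identity} gives $\nabla U\in L^2(\R^{d+1})$, so $U$ is admissible. The functional $[\cdot]_1$ is convex and strongly continuous on $H^{1/2}(\R^d)$, hence weakly sequentially lower semicontinuous, so $[u]_1\leq\liminf_n[u_n]_1$; and since $u_n\to u$ a.e.\ we have $\1_{\{u>0\}}\leq\liminf_n\1_{\{u_n>0\}}$ pointwise a.e., so Fatou's lemma gives $\calL_d(\{u>0\})\leq\liminf_n\calL_d(\{u_n>0\})$. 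Putting these together with $\int_{\R^{d+1}}|\nabla U|^2=\calA_d[u]_1\leq\liminf_n\int_{\R^{d+1}}|\nabla U_n|^2$ and superadditivity of $\liminf$, we obtain $E_\lambda(U)\leq\liminf_n E_\lambda(U_n)=m$, so $U$ is a minimizer, i.e.\ a solution of Problem~\ref{variational}; uniqueness is contained in Lemma~\ref{monotonicity}. The point that needs care is the lower semicontinuity of $U\mapsto\calL_d(\{\tr U>0\})$: weak $H^{1/2}$-convergence of the traces does not suffice for it, which is why we first upgrade to a.e.\ convergence via Rellich compactness and then invoke Fatou --- and this, in turn, is also why the truncation $0\leq u_n\leq1$, turning the measure bound into an $L^2$ bound, is essential for the coercivity step.
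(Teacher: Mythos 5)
Your argument is correct and reaches the stated conclusion, but it runs the direct method at the level of the traces rather than of the extensions, which is where it genuinely departs from the paper. The paper keeps the minimizing sequence $(U_k)$ in $\R^{d+1}$: it extracts weak limits in $H^1(B_r^{d+1}(0))$ for every $r$, handles the measure term through a weak-$*$ limit $\gamma$ of the indicators $\mathbf{1}_{\{\tr U_k>0\}}$ (using only that $\gamma\geq \mathbf{1}_{\{\tr U>0\}}$ a.e.), and takes the minimizer to be the weak limit itself; you instead truncate to $0\leq U_n\leq 1$, pass to $u_n=\tr U_n$, transfer the Dirichlet bound into an $H^{1/2}$ bound via the variational characterization of the harmonic extension together with \eqref{energy identity}, and rebuild the minimizer as the harmonic extension of the weak limit $u$. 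Your Fatou argument for $\calL_d(\{u>0\})$ is an equivalent, arguably cleaner, substitute for the paper's weak-$*$ step, and your explicit truncation fills in a point the paper leaves implicit (one needs $(U_k)$ bounded in $L^2(B_r^{d+1}(0))$ to speak of weak convergence in $X_r$). The one place where your route is more exposed is the assertion that the harmonic extension $U$ of $u$ lies in $L^2(\R^{d+1})$: Young's inequality gives $\|U(\cdot,y)\|_{L^2(\R^d)}^2\leq c\,y^{-d}\|u\|_{L^1}^2$, which is integrable at infinity only for $d\geq2$, and for $d=1$ a nonnegative trace with $\int u>0$ has harmonic extension decaying like $y/(|x|^2+y^2)$, which is not square integrable on $\R^{2}$. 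This is really an artifact of requiring global $L^2$ membership in the definition of $H^1(\R^{d+1})$ (the paper's own representation of minimizers by harmonic extensions meets the same obstruction for $d=1$), so I would not call it a gap in your reasoning so much as a caveat: your sentence ``the Poisson representation gives $U\in L^2(\R^{d+1})$'' is justified only for $d\geq 2$. Everything else --- nonemptiness and finiteness of the infimum, coercivity, weak lower semicontinuity of $[\cdot]_1$, superadditivity of $\liminf$, and uniqueness read off from Lemma \ref{monotonicity} --- is in order.
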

\begin{proof}
The existence of a solution of Problem \ref{variational} follows from a standard argument (see \cite[Theorem 1.1]{ACF1984}, \cite{AC1981} or \cite[Proposition 3.2]{CRS2010}). Let $r_0=\inf\{r>0\;:\; K\subset B_r^d(0)\}$. Let $X:=\{ u \in H^{1}(\R^{d+1})\;:\; \tr(u) =1$ on $K\}$ and $X_r:=\{u \in H^{1}( B_r^{d+1}(0))\;:\; \tr(u)=1$ on $K\}$ for $r>r_0$. Clearly, $X$ and $X_r$ are nonempty and convex and there is $U_1\in X$ such that $E_{\lambda}(U_1)<\infty$. Let $(U_k)_k\subset X$ be a minimizing sequence. Then $(U_k)_k$ is bounded in $X_r$ for any $r>r_0$. But then there is $U\in X$ such that up to a subsequence
\begin{align}
U_k\rightharpoonup\ & U &&\text{ weakly in $X_r$ for any $r>r_0$,}\label{weak-d+1}\\
U_k\to\ & U&& \text{ almost everywhere in $\R^{d+1}$, and}\label{pointwise-d+1}\\
1_{\{ U_k>0\}} \stackrel{\ast}{\rightharpoonup}\ &  \gamma &&\text{in $L^{\infty}(\R^d)$,} \label{weakstar-d+1}
\end{align}
where $\gamma\in L^{\infty}( \R^d)$ satisfies $0\leq \gamma\leq 1$ and it is equal to $1$ almost everywhere in $\{(U)>0\}$. Note that \eqref{pointwise-d+1} follows by the compact embedding of $H^{1}( B_r^{d+1}(0))$ into $L^2( B_r^{d+1}(0))$ so that $U_k$ converges strongly in $L^2( B_r^{d+1}(0))$. \eqref{weakstar-d+1} follows from the fact that any bounded sequence in $L^{\infty}(\R^d)$ has a weak star convergent subsequence. To conclude, for any $r>r_0$ we have
\begin{align*}
&\int_{ B_r^{d+1}(0)}  |\nabla U(x)|^2 \, dx+ \frac{\pi}{4}\lambda^2\int_{ B_r^d(0)}\gamma(x)\ dx \\
&\leq \liminf_{k\to\infty} \int_{B_r^{d+1}(0)} |\nabla U_k(x)|^2 \, dx+\frac{\pi}{4}\lambda^2 \lim_{k\to\infty}\calL_d(\{\tr  U_k >0\})\\
&\leq \liminf_{k\to\infty}E_{\lambda}(U_k).
\end{align*}
Sending $r\to\infty$, it follows that $U$ is a minimizer of $E_{\lambda}|_{X}$, that is $U$ solves Problem \ref{variational}. The uniqueness now follows from Lemma \ref{monotonicity}.
\end{proof}

\begin{lemma}
\label{radial functions}
Let $g:(0,\infty)\to [0,\infty)$ be nonincreasing and assume there is $r_0>0$ such that 
$$
\lim_{t\to r_0^-}g(t)-\lim_{t\to r_0^+}g(t)=\epsilon>0.
$$
Then $u:\R^d\to\R$, $u(x)=g(|x|)$ is not in $H^{1/2}(\R^d)$. In particular, the harmonic extension $U(x)=\int_{\R^d}P(x-y)u(x)\ dy$, where $P$ is as in \eqref{extension}, is not in $H^1(\R^{d+1}_+)$.
\end{lemma}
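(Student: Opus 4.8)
The plan is to show that a monotone radial function with a genuine jump has infinite Gagliardo $H^{1/2}$-seminorm $[u]_1$, by estimating the double integral near the jump sphere $\{|x| = r_0\}$. First I would exploit monotonicity and the one-sided limits: there exist $a > b$ (namely $a = \lim_{t\to r_0^-} g(t)$, $b = \lim_{t\to r_0^+} g(t)$, with $a - b = \eps$) and a small $\rho > 0$ such that $u(x) \ge a - \eps/4$ whenever $r_0 - \rho < |x| < r_0$ and $u(y) \le b + \eps/4$ whenever $r_0 < |y| < r_0 + \rho$; hence $u(x) - u(y) \ge \eps/2$ for all such pairs $(x,y)$. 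Thus
\[
[u]_1 \ge \iint_{A\times B} \frac{(u(x)-u(y))^2}{|x-y|^{d+1}}\,dx\,dy \ge \frac{\eps^2}{4} \iint_{A\times B} \frac{dx\,dy}{|x-y|^{d+1}},
\]
where $A = \{r_0 - \rho < |x| < r_0\}$ and $B = \{r_0 < |y| < r_0 + \rho\}$, so it suffices to prove the last double integral diverges.

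Next I would reduce the divergence of $\iint_{A\times B} |x-y|^{-(d+1)}\,dx\,dy$ to a local computation near a single boundary point, say $p = r_0 e_1$. By rotational symmetry it is enough to work in a fixed cone/ball neighborhood $\mathcal{N}$ of $p$; inside $\mathcal{N}$ the sphere $\{|x| = r_0\}$ is, after a bi-Lipschitz change of variables, a hyperplane, so $A\cap\mathcal N$ and $B\cap\mathcal N$ become (up to bi-Lipschitz distortion, which only changes the integral by bounded factors) the two half-slabs $\{-\rho' < x_d < 0,\ |x'| < \rho'\}$ and $\{0 < y_d < \rho',\ |y'| < \rho'\}$ in coordinates $x = (x', x_d)$. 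Then I would compute directly: fixing $x$ with $x_d = -t < 0$, integrate in $y$ over the half-slab,
\[
\int_{\{0 < y_d < \rho',\,|y'|<\rho'\}} \frac{dy}{|x-y|^{d+1}} \gtrsim \int_0^{\rho'}\!\!\int_{\R^{d-1}} \frac{dy'\,ds}{\big((s+t)^2 + |y'-x'|^2\big)^{(d+1)/2}} \gtrsim \int_0^{\rho'} \frac{ds}{s+t} \gtrsim \log\frac{\rho'}{t},
\]
using that $\int_{\R^{d-1}} (\sigma^2 + |z|^2)^{-(d+1)/2}\,dz = c_d \sigma^{-2}$ and then that $\int_0^{\rho'} ds/(s+t)^2$... actually more cleanly: $\int_{\{|y'-x'|<\rho'\}}(\sigma^2+|y'-x'|^2)^{-(d+1)/2}dy' \gtrsim \sigma^{-2}$ for $\sigma \le \rho'$, giving an inner $y'$-integral $\gtrsim (s+t)^{-2}$ wait that overcounts; the correct bound is that integrating this in $s\in(0,\rho')$ gives something $\gtrsim t^{-1}$, hence integrating the remaining $t\in(0,\rho')$ over the $x$-slab yields $\int_0^{\rho'} t^{-1}\,dt = +\infty$. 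Either way the $x_d$-integral of an inverse-power singularity diverges logarithmically-or-worse, so the double integral is infinite.

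Therefore $[u]_1 = \infty$, i.e. $u \notin H^{1/2}(\R^d)$. For the final assertion, I would invoke the energy identity \eqref{energy identity}: if the even harmonic extension $U$ of $u$ were in $H^1(\R^{d+1}_+)$ then $\int_{\R^{d+1}_+}|\nabla U|^2 = \tfrac12\calA_d[u]_1 < \infty$, contradicting $[u]_1 = \infty$; alternatively one notes the trace of an $H^1(\R^{d+1}_+)$ function lies in $H^{1/2}(\R^d)$, which $u$ does not. The main obstacle is the careful bookkeeping in the second paragraph — flattening the sphere and verifying that the model half-slab double integral genuinely diverges with the right powers — but this is a standard singular-integral estimate and presents no conceptual difficulty; everything else is soft (monotonicity plus the energy identity).
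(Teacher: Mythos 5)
Your proof is correct, but it reaches the divergence of the seminorm by a genuinely different route than the paper. Both arguments share the first (soft) step: by monotonicity, $u\ge \lim_{t\to r_0^-}g(t)$ on $\{|x|<r_0\}$ and $u\le \lim_{t\to r_0^+}g(t)$ on $\{|y|>r_0\}$, so $(u(x)-u(y))^2\ge c\,\eps^2$ on the product of the two annuli, reducing everything to the divergence of $\iint |x-y|^{-(d+1)}\,dx\,dy$ across the sphere. From there you localize at one point of $\{|x|=r_0\}$, flatten the sphere by a bi-Lipschitz change of variables, and run the standard half-slab computation ($y'$-integral $\gtrsim (s+t)^{-2}$, then $s$-integral $\gtrsim t^{-1}$, then $\int_0^{\rho}t^{-1}dt=\infty$); this is the textbook argument that an indicator of a Lipschitz domain fails to be in $H^{1/2}$, and it generalizes verbatim to jumps across any Lipschitz hypersurface. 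The paper instead stays global and exactly radial: it passes to polar coordinates, isolates the angular kernel $\int_0^\pi \sin(t)^{d-2}(1+\gamma^2-2\gamma\cos t)^{-(d+1)/2}\,dt$ with $\gamma=\tau/r>1$, proves the sharp lower bound $c\,\gamma^{-(d-2)}(\gamma^2-1)^{-2}$ via a nonlinear substitution (as in Bucur's Green-function computation), and concludes from $\int_0^1 r^{d+1}(1-r^2)^{-1}dr=\infty$. Your version buys elementarity and generality at the cost of an implicit bi-Lipschitz bookkeeping step; the paper's version is longer but fully explicit and avoids any flattening. Your handling of the last assertion (trace theorem, or the energy identity \eqref{energy identity}) matches what the paper leaves implicit. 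The only cosmetic criticism is the hesitation in your middle display ("wait that overcounts"): the final bound you settle on, namely inner $y'$-integral $\gtrsim (s+t)^{-2}$ for $s+t\le\rho'$, hence $y$-integral $\gtrsim t^{-1}$, hence divergence of the $t$-integral, is the correct one and should be stated cleanly.
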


The statement of this Lemma seems to be known, but since we could not find a good reference, we include an elementary proof in the appendix.

\begin{lemma}
\label{existence in ball}
Let $B$ be a ball in $\R^{d}$. Then for any $\lambda>0$ the minimizer $V$ of $E_{\lambda}$ subject to the constraint $\tr V =1$ in $B$, has up to translation a trace which is radially symmetric and decreasing in the radial direction. In particular, $\{\tr V>0\}$ is bounded.\\
Moreover, $\partial\{\tr V >0\}\cap \overline{B}=\emptyset$ and $\tr V $ is continuous in $\R^{d}$.
\end{lemma}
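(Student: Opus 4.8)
The plan is to combine the uniqueness of the minimizer (Lemma~\ref{monotonicity}) with a radial rearrangement argument, and then to read off the ``moreover'' part from Lemma~\ref{radial functions}. After a translation we may assume $B=B_{r_0}^d(0)$ for some $r_0>0$. First I would pass from the extension functional to its trace version. Let $\hat u$ denote the (evenly reflected) harmonic extension of $u:=\tr V$; by the Dirichlet principle $\int_{\R^{d+1}}|\nabla \hat u|^2\le\int_{\R^{d+1}}|\nabla W|^2$ for every $W\in H^1(\R^{d+1})$ with $\tr W=u$, and by \eqref{energy identity} the left-hand side equals $\calA_d[u]_1$. Hence $E_\lambda(\hat u)\le E_\lambda(V)$ and $\hat u$ is admissible, so by Lemma~\ref{monotonicity} $\hat u=V$; in particular $E_\lambda(V)=e_\lambda(u)$ with $e_\lambda$ as in \eqref{ebp functional 2}, $[u]_1<\infty$, and (since $E_\lambda(V)<\infty$) $\calL_d(\{u>0\})<\infty$, so that $u\in L^2(\R^d)$ and thus $u\in H^{1/2}(\R^d)$. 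Moreover $u$ is the \emph{unique} minimizer of $e_\lambda$ over $\{v\in H^{1/2}(\R^d):v=1\text{ on }B\}$: any admissible $w$ with $e_\lambda(w)\le e_\lambda(u)$ has a harmonic extension which is then a minimizer of $E_\lambda$, hence equal to $V$ by Lemma~\ref{monotonicity}, so $w=u$.

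Next I would symmetrize. Let $u^{*}$ be the radially symmetric decreasing rearrangement of $u$. By equimeasurability $\calL_d(\{u^{*}>0\})=\calL_d(\{u>0\})$, and by the (non-strict) fractional P\'olya--Szeg\H{o} inequality $[u^{*}]_1\le[u]_1$; hence $e_\lambda(u^{*})\le e_\lambda(u)$ and $u^*\in H^{1/2}(\R^d)$. Also $0\le u\le1$ and $u\equiv1$ on $B$ give $\calL_d(\{u=1\})\ge\calL_d(B)$, so for each $t<1$ the ball $\{u^{*}>t\}$ (centered at the origin, of volume $\calL_d(\{u>t\})\ge\calL_d(B)$) contains $B$; letting $t\uparrow1$ gives $u^{*}\equiv1$ on $B$, i.e. $u^{*}$ is admissible. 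Thus $u^{*}$ is also a minimizer of $e_\lambda$, and uniqueness forces $u=u^{*}$. Consequently $u(x)=g(|x|)$ for some nonincreasing $g:(0,\infty)\to[0,1]$, and since $\calL_d(\{u>0\})<\infty$ this gives $\{\tr V>0\}=\{u>0\}=B_\rho^d(0)$ for some finite $\rho>0$; in particular $\{\tr V>0\}$ is bounded.

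For the ``moreover'' part, I would first show $\rho>r_0$. If instead $\rho\le r_0$, then from $B_{r_0}^d(0)=B\subseteq\{u=1\}\subseteq\{u>0\}=B_\rho^d(0)$ all these sets coincide and $u=\1_{B_{r_0}^d(0)}$ a.e., so $g$ has a jump of size $1$ at $r_0>0$, contradicting Lemma~\ref{radial functions} since $u\in H^{1/2}(\R^d)$. Hence $\partial\{\tr V>0\}=\{|x|=\rho\}$ is disjoint from $\overline B=\{|x|\le r_0\}$. Finally, Lemma~\ref{radial functions} rules out a jump of $g$ at every point of $(0,\infty)$; a monotone function without jumps is continuous, so $g$ is continuous on $(0,\infty)$, and since $g\equiv1$ on a neighbourhood of $0$ it is continuous on $[0,\infty)$. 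Therefore $u(x)=g(|x|)$ is continuous on $\R^d$, i.e. $\tr V$ is continuous in $\R^d$.

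I expect the delicate step to be the symmetrization argument: one must carefully justify the reduction from $E_\lambda$ on $H^1(\R^{d+1})$ to $e_\lambda$ on $H^{1/2}(\R^d)$ together with the uniqueness of the trace minimizer, and then exploit the \emph{non-strict} fractional P\'olya--Szeg\H{o} inequality in tandem with uniqueness, which lets one avoid analysing the equality cases of P\'olya--Szeg\H{o}. An alternative, entirely extension-side route is to polarize $V$ with respect to all half-spaces $H_0\times\R\subset\R^{d+1}$ with $H_0\subset\R^d$ a closed half-space containing the centre of $B$: polarization does not increase the Dirichlet energy, is compatible with the trace and with the constraint $\tr V=1$ on $B$, and invariance under all such polarizations characterizes radially symmetric nonincreasing traces; uniqueness then again yields $u=u^*$, after which the argument proceeds as above.
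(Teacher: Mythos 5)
Your proof is correct and follows the same overall strategy as the paper: symmetrize, invoke uniqueness of the minimizer (Lemma~\ref{monotonicity}) to conclude the trace is already radial and nonincreasing, and then use Lemma~\ref{radial functions} to exclude jumps, which gives both the continuity of $\tr V$ and the separation $\partial\{\tr V>0\}\cap\overline{B}=\emptyset$. The one genuine difference is in how the symmetrization inequality is obtained. The paper stays on the extension side: it polarizes $V$ in $H^1(\R^{d+1})$ with respect to vertical hyperplanes (which preserves the Dirichlet energy and the measure of $\{\tr V>0\}$) and passes to the radial rearrangement via Van Schaftingen's approximation of symmetrization by iterated polarizations \cite{VS2009}; this is exactly the ``alternative route'' you sketch at the end. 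Your primary route instead first reduces to the trace functional $e_\lambda$ on $H^{1/2}(\R^d)$ --- the Dirichlet principle plus \eqref{energy identity} and Lemma~\ref{monotonicity} give that $u=\tr V$ is the unique minimizer of $e_\lambda$ --- and then applies the fractional P\'olya--Szeg\H{o} inequality $[u^*]_1\le[u]_1$ directly, with a correct admissibility check that $u^*\equiv 1$ on $B$. Both implementations are sound; yours buys a cleaner one-shot inequality at the cost of the extra (but carefully justified) reduction to the trace problem and of quoting P\'olya--Szeg\H{o} for the Gagliardo seminorm, while the paper's polarization argument avoids that external input and, being set up in Lemma~\ref{existence in ball}, is reused almost verbatim in the proof of Proposition~\ref{iso inequality}. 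Your treatment of the ``moreover'' part (ruling out $\rho\le r_0$ via the unit jump of $\1_{B}$, then continuity of a monotone jump-free $g$) is a slightly more explicit version of the paper's appeal to Lemma~\ref{radial functions}.
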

\begin{proof}
In the following, we may assume without restriction that $B=B_r(0)$ for some $r>0$. Note next that a unique minimizer $V$ of $E_{\lambda}$ exists with $\tr V =1$ in $B$ by Lemma \ref{existence}. Recall that the polarization of a function $U$ for a given hyperplane $H=\{x\cdot e>c\}$, $e\in \R^{d+1}$, $|e|=1$, $c\geq 0$ is given by 
$$
U_H(x)=\left\{\begin{aligned} &\max\{u(x),u(R_H(x))\} &&x\in H;\\
&\min\{u(x),u(R_H(x))\} && x\in \R^{d+1}\setminus H, \end{aligned}\right.
$$
where $R_H:\R^{d}\to\R^{d}$ denotes the reflection of a point at $\partial H$. The polarization satisfies
$$
\int_{\R^{d+1}} |\nabla u_H(x)|^2\ dx=\int_{\R^{d+1}} |\nabla u(x)|^2\ dx
$$
and thus $E_{\lambda}(U_H)\leq E_{\lambda}(U)$ if $e\in \R^{d}\times\{0\}$. In the following, denote by $V^{\ast}\in H^1(\R^{d+1})$ a function such that $\tr V^{\ast} $ is the radially symmetric rearrangement of $\tr V$, which is decreasing in the radial direction. Note that there exists a sequence of hyperplanes $H_1,\ldots$ such that (see \cite{VS2009})
$$
\lim_{n\to\infty} ((V_{H_1})\ldots)_{H_n}=V^{\ast}
$$
strongly in $L^2(\R^{d+1})$ and thus it follows that $E_{\lambda}(V^{\ast})\leq E_{\lambda}(V)$. Since by construction also $\tr V^{\ast}=1$ on $B$, by uniqueness we must have $V=V^{\ast}$. To see that $\partial\{\tr V >0\}\cap \overline{B}=\emptyset$ it is enough to note that $\tr V$ must be continuous by Lemma \ref{radial functions} and Lemma \ref{general properties}\eqref{item:d+1 3}.
\end{proof}

\begin{cor}
\label{boundedness}
Let $K\subset \R^d$ be an open nonempty bounded set and let $U$ be a minimizer of $E_{\lambda}$ subject to the constraint $\tr U=1$ on $K$. Then $\{\tr  U >0\}$ is bounded.
\end{cor}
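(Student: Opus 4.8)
The statement follows by combining the comparison principle of Lemma \ref{monotonicity} with the explicit radial result of Lemma \ref{existence in ball}. The plan is to trap $K$ between a small ball and a large ball, and to trap the minimizer $U$ correspondingly between two radial minimizers whose positivity sets are already known to be bounded.

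\begin{proof}
Since $K$ is bounded and nonempty, we may fix an open ball $B = B_{r_1}^d(x_0) \subset K$ and an open ball $\widetilde B = B_{r_2}^d(x_0) \supset K$ for suitable $0 < r_1 < r_2$ and $x_0 \in \R^d$. Let $V$ be the minimizer of $E_{\lambda}$ subject to the constraint $\tr V = 1$ on $B$ and let $\widetilde U$ be the minimizer of $E_{\lambda}$ subject to the constraint $\tr \widetilde U = 1$ on $\widetilde B$; these exist and are unique by Lemma \ref{existence}. Applying Lemma \ref{monotonicity} to the pair $(B, K)$ (with $B \subset K$) gives $U \ge V$, and applying it to the pair $(K, \widetilde B)$ (with $K \subset \widetilde B$) gives $\widetilde U \ge U$. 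Hence $0 \le U \le \widetilde U$ on all of $\R^{d+1}$, so in particular $\{\tr U > 0\} \subset \{\tr \widetilde U > 0\}$. By Lemma \ref{existence in ball}, since $\widetilde B$ is a ball, the set $\{\tr \widetilde U > 0\}$ is bounded. Therefore $\{\tr U > 0\}$ is bounded as well.
\end{proof}

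The only point requiring a little care is the reduction at the start: one must check that a nonempty bounded open set $K$ does contain some open ball (true since $K$ is open and nonempty) and is contained in some open ball (true since $K$ is bounded), and that the centers can be chosen to coincide — which is irrelevant to the argument anyway, since Lemma \ref{monotonicity} needs only the inclusions $B \subset K$ and $K \subset \widetilde B$, not any common center. There is no real obstacle here; the corollary is essentially a packaging of the two previous lemmas. I do not expect any step to be genuinely hard: the substantive work (the polarization/rearrangement argument giving boundedness of the positivity set in the radial case, and the comparison principle via minimality of $\max$ and $\min$) has already been carried out in Lemmas \ref{existence in ball} and \ref{monotonicity}.
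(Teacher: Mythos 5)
Your proof is correct and follows essentially the same route as the paper: compare $U$ with the radial minimizer for a ball $\widetilde B\supset K$ via Lemma \ref{monotonicity}, then invoke the boundedness of the positivity set from Lemma \ref{existence in ball}. The lower bound $U\ge V$ via the small inscribed ball is superfluous for this corollary (the paper omits it), but it does no harm.
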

\begin{proof}
Since $K$ is a bounded, there is a ball $B\subset \R^d$ with $K\subset B$. Letting $V$ be given by Lemma \ref{existence in ball} in $B$, Lemma \ref{general properties} and Lemma \ref{monotonicity} imply $0\leq U\leq V$. In particular, it follows that $\{\tr U >0\}\subset \{\tr V>0\}$ and the claim follows.
\end{proof}

\begin{cor}
\label{boundary and representation} 
Assume $K\subset \R^d$ is open nonempty and bounded with $C^2$ boundary. Let $U\in H^1(\R^{d+1})$ be a minimizer of $E_{\lambda}$ subject to the constraint $\tr U=1$ in $K$ given by Lemma \ref{existence}. Then the sets $\partial\{\tr U>0\}$ and $\overline{K}$ are disjoint. In particular $\tr  U$ is continuous in $\R^{d}$ and $U$ can be represented in $\R^{d+1}_+$ by the harmonic extension of $\tr U $ and hence $U>0$ in $\R^{d+1}_+$.
\end{cor}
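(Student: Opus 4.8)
\emph{Proof strategy.} The plan is to establish the key claim---that $\partial\{\tr U>0\}$ does not meet $\overline K$---by exhibiting, for every $x_0\in\partial K$, a barrier lying below $U$ that is strictly positive on a whole neighborhood of $x_0$; the ball-minimizers of Lemma~\ref{existence in ball} will play this role. Since $K$ has $C^2$ boundary and $\overline K$ is compact, $K$ satisfies a uniform interior ball condition: there is $\rho>0$ such that for each $x_0\in\partial K$ one can choose $y_0=y_0(x_0)$ with $B_\rho(y_0)\subset K$ and $|x_0-y_0|=\rho$. Fix such an $x_0$ and let $V=V_{x_0}$ be the (unique, by Lemma~\ref{existence}) minimizer of $E_\lambda$ subject to $\tr V=1$ on $B_\rho(y_0)$. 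Applying Lemma~\ref{monotonicity} with $B=B_\rho(y_0)\subset K$ gives $U\ge V$, hence $\tr U\ge\tr V$ (the trace operator preserves order); and by Lemma~\ref{existence in ball}, $\tr V$ is continuous, radial about $y_0$ and nonincreasing in the radial variable, with $\{\tr V>0\}$ bounded and $\partial\{\tr V>0\}\cap\overline{B_\rho(y_0)}=\emptyset$. A bounded continuous radially nonincreasing function has an open ball as its positivity set, so $\{\tr V>0\}=B_R(y_0)$ for some finite $R$; since $\tr V\equiv1$ on $B_\rho(y_0)$ one has $R\ge\rho$, and the disjointness upgrades this to $R>\rho$. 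Hence $x_0\in\partial B_\rho(y_0)\subset B_R(y_0)=\{\tr V>0\}\subseteq\{\tr U>0\}$.

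Combining this with the inclusion $K\subseteq\{\tr U>0\}$ (which follows from the constraint $\tr U=1$ on $K$ and $0\le U\le1$, Lemma~\ref{general properties}\eqref{item:d+1 1}), we see that $\{\tr U>0\}$ contains $K$ together with all the open balls $B_{R(x_0)}(y_0(x_0))$, $x_0\in\partial K$, and this union is an open neighborhood of $\overline K$. On the complement $\R^d\setminus\overline K$ the trace is continuous by Lemma~\ref{general properties}\eqref{item:d+1 2}, so $\{\tr U>0\}$ is open there as well; being open, $\{\tr U>0\}$ is disjoint from its topological boundary, which, together with $\overline K\subseteq\{\tr U>0\}$, gives $\partial\{\tr U>0\}\cap\overline K=\emptyset$. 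For the continuity of $\tr U$ on all of $\R^d$, note it is continuous off $\overline K$ and identically $1$ on $K$; at a point $x_0\in\partial K$, using the barrier $V$ above (whose trace is continuous with $\tr V(x_0)=1$, since $\tr V\equiv1$ on $B_\rho(y_0)$ and $x_0\in\overline{B_\rho(y_0)}$) and $\tr V\le\tr U\le1$, a squeeze yields $\tr U(x)\to1=\tr U(x_0)$ as $x\to x_0$; hence $\tr U$ has a representative continuous on $\R^d$.

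Finally, let $\widetilde U$ be the even reflection across $\R^d\times\{0\}$ of the harmonic extension \eqref{extension} of $u:=\tr U$. Then $\widetilde U$ is admissible with $\tr\widetilde U=u$, so the Lebesgue-measure term in $E_\lambda$ is the same for $\widetilde U$ and $U$; and since the harmonic extension of an $H^{1/2}(\R^d)$ function has the least Dirichlet energy among all $H^1$ extensions to each half-space (the standard variational characterization underlying \eqref{energy identity}), we get $\int_{\R^{d+1}}|\nabla\widetilde U|^2\le\int_{\R^{d+1}}|\nabla U|^2$, hence $E_\lambda(\widetilde U)\le E_\lambda(U)$. By uniqueness of the minimizer (Lemma~\ref{existence}), $\widetilde U=U$, so $U$ is represented in $\R^{d+1}_+$ by the harmonic extension of $u$; since the Poisson kernel $P$ of \eqref{Poisson} is strictly positive and $u\ge0$ is not identically zero, it follows that $U>0$ in $\R^{d+1}_+$.

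The main obstacle I expect is the barrier step: one must recognize that the one-parameter family of ball-minimizers supplied by Lemma~\ref{existence in ball}, whose positivity sets are open balls strictly larger than the underlying data balls, exactly fills out a neighborhood of $\overline K$ once the interior ball condition is in hand---and then handle carefully the passage between pointwise almost-everywhere and pointwise-everywhere statements when combining the trace monotonicity $\tr U\ge\tr V$ with the continuity properties of $\tr U$ and $\tr V$.
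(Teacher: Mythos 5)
Your proof is correct and follows essentially the same route as the paper: interior tangent balls supplied by the $C^2$ boundary, the radial ball-minimizers of Lemma~\ref{existence in ball} used as barriers via the comparison of Lemma~\ref{monotonicity}, and the strict containment $\overline{B_\rho(y_0)}\subset\{\tr V>0\}$ to obtain both the disjointness and the continuity of $\tr U$. The only (harmless) difference is the final step, where you identify $U$ with the harmonic extension of its trace through energy minimality of the extension plus uniqueness of the minimizer, whereas the paper simply notes that $U$ is bounded, harmonic in $\R^{d+1}_+$ by Lemma~\ref{general properties}, and has continuous boundary values, hence coincides with the Poisson extension.
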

\begin{proof}
For the first part we proceed as in \cite[Lemma 3.8]{AM1995}. Since $K$ is with $C^2$ boundary, we may fix a ball $B$ tangentially contained in $K$. Without loss of generality we may assume that $B$ has center $0$. Having a radially symmetric and continuous minimizer $V$ of $E_{\lambda}$ subject to the constraint $\tr V=1$ in $B$ by Lemma \ref{existence in ball}, Lemma \ref{monotonicity} implies $U\geq V$. The fact that $\{\tr V>0\}$ does not touch $\overline{B}$ and since $B$ is chosen arbitrary implies $\partial\{\tr U>0\}$ and $\overline{K}$ are disjoint. Moreover, it follows that $\tr U$ is continuous at $\partial K$ (and thus in $\R^d$) since $\tr V$ is continuous.

Lemma \ref{general properties} and the first part imply that $\tr U\in H^{1/2}(\R^{d})\cap C(\R^d)$ and $U$ is harmonic in $\R^{d+1}_+$. Since moreover $0\leq U\leq 1$ the function can be represented by the harmonic extension of its trace in $\R^{d+1}_+$, see \eqref{extension}. In particular, since $\tr U\geq0$, it follows that $U>0$ in $\R^{d+1}_+$.
\end{proof}

\begin{lemma}
\label{nonlocal-framework}
Let $K\subset \R^d$ be an open nonempty bounded set with $C^2$ boundary. Let $U\in H^1(\R^{d+1})$ be a minimizer of $E_{\lambda}$ subject to the constraint $\tr U=1$ on $K$, and denote $\varphi:=\tr U\in H^{1/2}(\R^d)$. Then 
\begin{enumerate}[(i)]
\item\label{item:d 1} $\varphi$ satisfies $0 \le \varphi \le 1$ and
\item\label{item:d 2} $\varphi$ is $1/2$-H{\"o}lder continuous on any compact subset of $\R^{d}\setminus \overline{K}$.
\item\label{item:d 3} $\varphi$ is a minimizer among all functions in $H^{1/2}(\R^d)$ that are equal to $1$ almost everywhere on $K$ of the functional  $e_{\lambda}$.
In particular, $\varphi$ is $1$-harmonic in $\{\varphi>0\}\setminus \overline{K}$.
\end{enumerate}
\end{lemma}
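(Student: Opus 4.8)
The plan is to obtain (i) and (ii) directly from the corresponding properties of $U$ established in Lemma \ref{general properties}, and to prove (iii) by transferring between the energy $E_\lambda$ on $H^1(\R^{d+1})$ and the energy $e_\lambda$ on $H^{1/2}(\R^d)$ via the harmonic extension, followed by a first variation.

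Parts (i) and (ii) are immediate. Since $0\le U\le1$ a.e.\ by Lemma \ref{general properties}\eqref{item:d+1 1} and $\varphi=\tr U$, we get $0\le\varphi\le1$ a.e., hence everywhere because $\varphi$ is continuous by Corollary \ref{boundary and representation}. For (ii), any compact $M\subset\R^d\setminus\overline K$ yields a compact set $M\times\{0\}\subset\R^{d+1}\setminus(\overline K\times\{0\})$ on which $U$ is continuous, so there $\varphi$ is just the restriction of $U$ to $\R^d\times\{0\}$, and the $1/2$-H{\"o}lder bound for $U$ on $M\times\{0\}$ from Lemma \ref{general properties}\eqref{item:d+1 2} transfers verbatim to $\varphi$ on $M$.

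For (iii), I would first show $E_\lambda(U)=e_\lambda(\varphi)$. Since $E_\lambda$ depends on $U$ only through $\nabla U$ and $\tr U$, the reflected map $(x',x_{d+1})\mapsto U(x',-x_{d+1})$ is again a minimizer subject to the same constraint, so by uniqueness (Lemma \ref{monotonicity}) $U$ is even in $x_{d+1}$ and $\int_{\R^{d+1}}|\nabla U|^2=2\int_{\R^{d+1}_+}|\nabla U|^2$. By Corollary \ref{boundary and representation}, $U|_{\R^{d+1}_+}$ is the harmonic extension of $\varphi$, so \eqref{energy identity} (which accounts for both half-spaces) gives $\int_{\R^{d+1}_+}|\nabla U|^2=\tfrac12\calA_d[\varphi]_1$ and thus $\int_{\R^{d+1}}|\nabla U|^2=\calA_d[\varphi]_1$; since also $\{\tr U>0\}=\{\varphi>0\}$, this is exactly $E_\lambda(U)=e_\lambda(\varphi)$. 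Next, for any $w\in H^{1/2}(\R^d)$ with $w=1$ a.e.\ on $K$ and $e_\lambda(w)<\infty$ (the case $e_\lambda(w)=\infty$ being trivial), I would build admissible extensions: replacing $w$ by $w^+$ (which does not increase $e_\lambda$ and preserves $w=1$ on $K$) I may assume $w\ge0$; letting $W$ be the evenly reflected harmonic extension of $w$, one has $\tr W=w$, $\int_{\R^{d+1}}|\nabla W|^2=\calA_d[w]_1$ and $W\in L^2_{\mathrm{loc}}$, but $W$ need not be globally square integrable, so I would cut off in the $x_{d+1}$ variable with a function $\eta_R$ equal to $1$ for $|x_{d+1}|\le R$ and $0$ for $|x_{d+1}|\ge 2R$. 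The function $W_R:=\eta_R W$ then lies in $H^1(\R^{d+1})$, satisfies $\tr W_R=w=1$ a.e.\ on $K$, $\{\tr W_R>0\}=\{w>0\}$, and $\int|\nabla W_R|^2\le\calA_d[w]_1+o(1)$ as $R\to\infty$. Minimality of $U$ then gives $e_\lambda(\varphi)=E_\lambda(U)\le E_\lambda(W_R)\le e_\lambda(w)+o(1)$, and letting $R\to\infty$ yields $e_\lambda(\varphi)\le e_\lambda(w)$; hence $\varphi$ minimizes $e_\lambda$ under the constraint.

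It remains to deduce the $1$-harmonicity of $\varphi$ in $\{\varphi>0\}\setminus\overline K$. Since $\{\varphi>0\}$ is open, fix $\psi\in C^\infty_c(\{\varphi>0\}\setminus\overline K)$; as $\varphi\ge c>0$ on the compact set $\supp\psi$, for $|t|$ small we have $\{\varphi+t\psi>0\}=\{\varphi>0\}$, so $t\mapsto e_\lambda(\varphi+t\psi)=\calA_d[\varphi+t\psi]_1+\tfrac\pi4\lambda^2\calL_d(\{\varphi>0\})$ is a quadratic polynomial in $t$ with a minimum at $t=0$; equating its derivative at $t=0$ to zero gives $\iint_{\R^d\times\R^d}\frac{(\varphi(x)-\varphi(y))(\psi(x)-\psi(y))}{|x-y|^{d+1}}\,dx\,dy=0$, which is precisely the weak form of $(-\Delta)^{1/2}\varphi=0$ in $\{\varphi>0\}\setminus\overline K$. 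The only step demanding real care is the pair of relations $E_\lambda(U)=e_\lambda(\varphi)$ and $E_\lambda(U)\le e_\lambda(w)$: one must pin down that $U$ is the even reflection of the Poisson extension of $\varphi$, keep the constant $\calA_d$ in \eqref{energy identity} straight, and handle the fact that the Poisson extension of an arbitrary competitor $w$ need not be square integrable on all of $\R^{d+1}$ — this is what forces the cutoff $\eta_R$. All remaining steps are either citations of the preceding lemmas or routine computations.
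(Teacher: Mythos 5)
Your proof is correct and follows essentially the same route as the paper: (i)--(ii) are read off from Lemma \ref{general properties}, the identity $E_\lambda(U)=e_\lambda(\varphi)$ and the comparison with arbitrary competitors are obtained via the evenly reflected harmonic extension together with \eqref{energy identity}, and the $1$-harmonicity follows from the same first-variation computation. The only difference is that the paper simply asserts that the symmetrized Poisson extension of a competitor $w$ lies in $H^1(\R^{d+1})$, whereas you insert the cutoff $\eta_R$ to justify admissibility --- a harmless and in fact slightly more careful treatment of the same step.
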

\begin{proof}
The statements of \eqref{item:d 1} and \eqref{item:d 2} follow immediately from Lemma \ref{general properties}. Using the properties of the extension we find with the properties of $U$ given by Lemma \ref{general properties}:
\begin{align*}
\inf_{\substack{V\in H^1(\R^{d+1}) \\ \tr(V)=1\, \text{in $K$}}} E_{\lambda}(V)=E_{\lambda}(U)=2\frac{\calA_d}{2}[\varphi]_1+ \frac{\pi}{4}\lambda^2 \calL_d(\{\varphi>0\})=e_{\lambda}(\varphi).
\end{align*}
On the other hand, taking any function $v\in H^{1/2}(\R^d)$ with $v=1$ on $K$ and considering by $V$ its harmonic extension in $\R^{d+1}_+$, we may extend $V$ symmetrically to a function on $\R^{d+1}$. It follows that $V\in H^1(\R^{d+1})$ satisfies $\tr V=1$ on $K$ so that it follows 
$$
 e_{\lambda}(v)=E_{\lambda}(V)\geq E_{\lambda}(U)=e_{\lambda}(\varphi).
$$
Hence, $\varphi$ is a minimizer of  $e_{\lambda}$ as claimed. The $1$-harmonicity of $\varphi$ in $\{\varphi>0\}\setminus \overline{K}$ follows analogously to the proof of Lemma \ref{general properties}\eqref{item:d+1 3} by taking $\psi\in C^{\infty}_c(\{\varphi>0\}\setminus \overline{K})$ and noting that
$$
0=\lim_{t\to0}\frac{e_{\lambda}(\varphi+t\psi)-e_{\lambda}(\varphi)}{t}=2 \calA_d\iint_{\R^d\times\R^d}\frac{(\varphi(x)-\varphi(y))(\psi(x)-\psi(y))}{|x-y|^{d+1}} \, dx \, dy.
$$
\end{proof}

\begin{rem}\label{nonlocal-remarks}
\begin{enumerate}[(i)]
\item\label{item:nonlocal-existence} Note that similarly to Lemma \ref{general properties} and Lemma \ref{existence} it can be shown that there exists a minimizer $v\in H^{1/2}(\R^d)$ of $e_{\lambda}$ with $v=1$ in $K$ (see \cite[Proposition 3.2]{CRS2010}) and this minimizer satisfies $0\leq v\leq 1$ with a similar argument.
\item\label{item:nonlocal-equivalence} Furthermore, note that also the converse of Lemma \ref{nonlocal-framework} is true in the following sense: If $v\in H^{1/2}(\R^d)$ is any minimizer of $E_{\lambda}$ among all functions satisfying $v=1$ in $K$, and $V$ denotes the harmonic extension to $\R^{d+1}_+$. Then, by denoting again with $V$ the symmetric extended function $V\in H^{1}(\R^{d+1})$ we have with the first remark
$$
E_{\lambda}(V)=e_{\lambda}(v)=\inf_{\substack{w\in H^{1/2}(\R^d)\\ w=1\,\text{in $K$}}}e_{\lambda}(w)=\inf_{\substack{W\in H^1(\R^{d+1})\\ \tr(W)=1\,\text{in $K$}}} E_{\lambda}(W),
$$  
where we have used that due to Lemma \ref{boundary and representation} any minimizer of $E_{\lambda}$ with trace equal to $1$ in $K$ is bounded and harmonic in $\R^{d+1}_+$ and can be represented by the harmonic extension of its trace. In particular, it follows that such a minimizer $v$ of $E_{\lambda}$ is locally $1/2$-H\"older continuous in $\R^d\setminus\overline{K}$ and $1$-harmonic in $\{v>0\}\setminus\overline{K}$. Here, the set $K$ does not need to have a $C^2$ boundary.
\end{enumerate}
\end{rem}

\begin{lemma}
\label{starshaped}
 If an open nonempty bounded set $K\subset \R^d$ with $C^2$ boundary is starshaped with center $B_{\delta}^d(0)$ for some $\delta > 0$ then $\{U > \eps\}$ is starshaped with center $B_{\delta}^{d+1}(0)$ for all $0 \le \eps < 1$, where $U\in H^1(\R^{d+1})$ is a minimizer of $E_{\lambda}$ subject to the constraint $\tr U=1$ on $K$.
\end{lemma}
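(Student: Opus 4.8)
The plan is to compare $U$ with its dilations about the points of $B_\delta^d(0)\times\{0\}$, exactly as in the classical proof that starshapedness of $K$ yields starshapedness of the free domain; the only real complication is that $E_\lambda$ is not scale invariant, so I first need a version of the comparison Lemma~\ref{monotonicity} allowing the parameter to change. Precisely, I claim: \emph{if $K'\subseteq K$ are open, nonempty and bounded and $\mu\geq\lambda$, then every minimizer $V$ of $E_\mu$ subject to $\tr V=1$ on $K'$ satisfies $V\leq U$.} The proof is the $\max/\min$ computation from Lemma~\ref{monotonicity}: with $W:=\max(U,V)$, $w:=\min(U,V)$ one has $\tr W=1$ on $K$, $\tr w=1$ on $K'$, $\int_{\R^{d+1}}|\nabla W|^2+\int_{\R^{d+1}}|\nabla w|^2=\int_{\R^{d+1}}|\nabla U|^2+\int_{\R^{d+1}}|\nabla V|^2$ and $\calL_d(\{\tr W>0\})+\calL_d(\{\tr w>0\})=\calL_d(\{\tr U>0\})+\calL_d(\{\tr V>0\})$. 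Minimality of $V$ gives
$$\int_{\R^{d+1}}|\nabla V|^2-\int_{\R^{d+1}}|\nabla w|^2\ \leq\ \frac{\pi}{4}\mu^2\big(\calL_d(\{\tr w>0\})-\calL_d(\{\tr V>0\})\big),$$
and since $w\leq V$ the right-hand factor is $\leq 0$, so the inequality stays true with $\mu^2$ replaced by $\lambda^2$; inserting this into $E_\lambda(W)=E_\lambda(U)+\big(\int_{\R^{d+1}}|\nabla V|^2+\frac{\pi}{4}\lambda^2\calL_d(\{\tr V>0\})\big)-\big(\int_{\R^{d+1}}|\nabla w|^2+\frac{\pi}{4}\lambda^2\calL_d(\{\tr w>0\})\big)$ shows $E_\lambda(W)\leq E_\lambda(U)$, hence $W=U$ by uniqueness (Lemma~\ref{existence}), i.e.\ $V\leq U$.

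Now fix $\eps\in[0,1)$, $x_0\in B_\delta^d(0)$ and $t\in(0,1)$, let $T(z)=(1-t)(x_0,0)+tz$ and $W:=U\circ T^{-1}$. Then $\tr W=1$ on $K_t:=(1-t)x_0+tK$, and $K_t\subseteq K$ since $K$ is starshaped with respect to $x_0\in B_\delta^d(0)$. A change of variables gives $\int_{\R^{d+1}}|\nabla(f\circ T^{-1})|^2=t^{d-1}\int_{\R^{d+1}}|\nabla f|^2$ and $\calL_d(\{\tr(f\circ T^{-1})>0\})=t^d\calL_d(\{\tr f>0\})$, hence $E_\mu(f\circ T^{-1})=t^{d-1}E_{\mu\sqrt t}(f)$ for every $f\in H^1(\R^{d+1})$ and every $\mu>0$; since $f\mapsto f\circ T$ is a bijection from the competitors for the constraint on $K_t$ onto the competitors for the constraint on $K$ with energies related this same way, and $W\circ T=U$, it follows that $W$ is the minimizer of $E_{\lambda/\sqrt t}$ subject to $\tr=1$ on $K_t$. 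Because $\lambda/\sqrt t\geq\lambda$ and $K_t\subseteq K$, the refined comparison of the first paragraph gives $W\leq U$ on $\R^{d+1}$, so
$$(1-t)(x_0,0)+t\{U>\eps\}=\{W>\eps\}\subseteq\{U>\eps\}\qquad\text{for all }t\in(0,1).$$
Since $(x_0,0)\in\{U>\eps\}$ (because $\tr U=1$ on $K\supseteq B_\delta^d(0)$), this is exactly the statement that $\{U>\eps\}$ is starshaped with respect to $(x_0,0)$; letting $x_0$ vary over $B_\delta^d(0)$ we obtain that $\{U>\eps\}$ is starshaped with respect to every point of $B_\delta^d(0)\times\{0\}$.

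It remains to upgrade ``starshaped with centre $B_\delta^d(0)\times\{0\}$'' to ``starshaped with centre $B_\delta^{d+1}(0)$'', and this is the step I expect to be the main obstacle. A direct dilation of $U$ about an off‑plane centre $(x_0,s_0)$ with $s_0\neq0$ is useless: the dilated function would be forced to equal $1$ on a translate of $tK$ lying inside the hyperplane $\{x_{d+1}=(1-t)s_0\}$, where $U<1$, so it is not comparable to $U$ via Lemma~\ref{monotonicity}; and $d$‑dimensional starshapedness together with the evenness of $U$ in $x_{d+1}$ does not by itself force the $(d+1)$‑dimensional statement. What one has to use instead is that, by Corollary~\ref{boundary and representation} and Theorem~\ref{bernoulli-exterior}(e), $U$ is the even harmonic extension of its trace in $\R^{d+1}_+$, so its restriction to $\{x_{d+1}\geq h\}$ is again the harmonic extension of its trace on $\{x_{d+1}=h\}$; combining this with the $d$‑dimensional starshapedness already established and the positivity and monotonicity properties of the Poisson kernel, one obtains the vertical control of the superlevel sets needed to close the argument. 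Once this is in place, the conclusions ``$\partial\{u>0\}$ is $C^\infty$'' and ``$u$ solves Problem~\ref{ebp}'' recorded in Theorem~\ref{bernoulli-exterior}(iii) follow from the starshapedness of the trace $\{u>0\}$ together with the regularity results already cited.
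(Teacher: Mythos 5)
Your first two paragraphs are correct and, modulo packaging, are exactly the paper's argument. The paper compares $U$ with the expansion $U_r(x)=U(x/r)$, $r\ge 1$, which is the minimizer for $rK\supseteq K$ with parameter $\lambda/\sqrt{r}\le\lambda$, and extracts $U=U_r^-\le U_r$ from the identity $J(1,U)+J(1,U_r)=J(1,U_r^-)+J(1,U_r^+)$ together with the observation that decreasing the parameter moves the measure term in the favourable direction; your ``refined comparison'' ($K'\subseteq K$ and $\mu\ge\lambda$ imply $V\le U$) isolates precisely this computation as a standalone lemma and applies it to the contraction $W=U\circ T^{-1}$, which is the minimizer for $K_t\subseteq K$ with parameter $\lambda/\sqrt{t}\ge\lambda$. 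The two formulations are equivalent (take $r=1/t$), and both yield that $\{U>\eps\}$ is starshaped with respect to every point of $B_\delta^d(0)\times\{0\}$.

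Concerning the last step: you have correctly located the one delicate point, but you should not assume there is a hidden argument you failed to find. The paper disposes of it with the single sentence that ``the proof can be repeated with the origin replaced by any $x_0\in B_\delta^d(0)$, to show that $\{U>\eps\}$ is starlike with respect to all points in $B_\delta^{d+1}(0)$''; repeating the dilation argument with centers $x_0\in B_\delta^d(0)$ produces exactly the centers in the $d$-dimensional disk $B_\delta^d(0)\times\{0\}$ that you obtained, not the full $(d+1)$-ball. As you note, a dilation about an off-plane center is incompatible with the trace constraint, and evenness in $x_{d+1}$ together with starshapedness with respect to a disk contained in the symmetry hyperplane does not formally imply starshapedness with respect to a solid ball (simple symmetric counterexamples exist, e.g.\ a union of two rectangles in the plane), nor is $U$ monotone in $|x_{d+1}|$ (it increases off the hyperplane above the interior of $\{\tr U=0\}$). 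So your third paragraph is a sketch, not a proof, and the paper does not supply the missing argument either. It is worth adding that nothing downstream depends on the stronger statement: in Lemma \ref{solution to bernoulli} only the starshapedness of the trace sets $\{u>\eps\}\subset\R^d$ with center $B_\rho(x_0)$ is used (to get the Lipschitz regularity of $\partial\{u>0\}$ required for the De Silva--Savin results), and that follows from what you proved by restricting to $\{x_{d+1}=0\}$.
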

\begin{proof}
For any $r \ge 1$ and $V \in H^1(\R^{d + 1})$ put $\lambda_r = \lambda/\sqrt{r}$,
$$
J(r,V) = J_{\lambda_r}(V)=
\int_{\R^{d+1}} |\nabla V(x)|^2 \, dx + \frac{\pi}{4}\lambda_r^2 \calL_d (\{ \tr V  > 0\}),
$$
$U_r(x) = U(x/r)$, $U_r^+(x) = \max\{U(x),U_r(x)\}$, $U_r^-(x) = \min\{U(x),U_r(x)\}$, $rK = \{rx: \, x \in K\}$.
We have $\tr U_r \ge \tr U$ on $rK $ so $\tr U_r^+= \tr U_r$ and $\tr U_r^- = \tr U$ on $rK$. We have
\begin{equation}
\label{Jm}
J(1,U) \le J(1,U_r^-)
\end{equation}
because $U$ minimizes $J(1,v)$ in $H^1(\R^{d + 1})$ subject to the constraint that $\tr v = \tr U $ on $K$. 

Similarly
$$
J(r,U_r) \le J(r,U_r^+)
$$
because $U_r$ minimizes $J(r,v)$ in $H^1(\R^{d + 1})$ subject to the constraint that $\tr v  = \tr U_r $ on $rK$. 

For $r > 1$ we also have
\begin{eqnarray*}
0 &\ge& J(r,U_r) - J(r,U_r^+)\\
&=& J(1,U_r) - J(1,U_r^+) + \left(\frac{\pi}{4}\frac{\lambda^2}{r} - \frac{\pi}{4}\lambda^2\right) (\calL_d(\{\tr U_r  > 0\}) - \calL_d(\{\tr  U_r^+ > 0\}))\\
&\ge& J(1,U_r) - J(1,U_r^+).
\end{eqnarray*}
We conclude that 
\begin{equation}
\label{Jp}
J(1,U_r) \le J(1,U_r^+).
\end{equation}

However,
\begin{equation}
\label{Jsum}
J(1,U) + J(1,U_r) = J(1,U_r^-) + J(1,U_r^+).
\end{equation}
By combining (\ref{Jm}), (\ref{Jp}) and (\ref{Jsum}) we get
$$
J(1,U) = J(1,U_r^-), \quad \quad J(1,U_r) = J(1,U_r^+).
$$
It follows that by Lemma \ref{monotonicity}
$$
U = U_r^- \le U_r^+ = U_r \quad \text{on} \quad \R^{d + 1}
$$
so that $\{U > \eps\}$ is starlike with respect to the origin. The proof can be repeated with the origin replaced by any $x_0 \in B_{\delta}^d(0)$, to show that $\{U > \eps\}$ is starlike with respect to all points in $B_{\delta}^{d+1}(0)$.
\end{proof}

\begin{lemma}
\label{solution to bernoulli}
Let $U$ be the uniquely determined solution of Problem \ref{variational}, where $K\subset \R^d$ is an open nonempty bounded set, has $C^2$ boundary and is starshaped with center $B_{\rho}(x_0)$ for some $\rho>0$ and $x_0\in K$. Denote $u:=\tr U$. Then $\partial \{u>0\}$ is of class $C^{\infty}$ and $u$ is a solution of Problem \ref{ebp}.
\end{lemma}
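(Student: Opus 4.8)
The plan is to first collect the structural information about $u$ that the preceding lemmas already provide, then to deduce from Lemma \ref{starshaped} that the free set is a bounded Lipschitz domain, then to upgrade this to $C^\infty$ via the De Silva--Savin regularity theory, and finally to read off the free boundary condition. Set $\Omega:=\{u>0\}$. By Corollaries \ref{boundedness} and \ref{boundary and representation} together with Lemma \ref{nonlocal-framework}, $\Omega$ is open and bounded, $\overline K\subset\Omega$, $\partial\Omega\cap\overline K=\emptyset$, $u\in C(\R^d)$ with $0\le u\le1$, $u\equiv1$ on $\overline K$, $u\equiv0$ on $\R^d\setminus\Omega$, and $(-\Delta)^{1/2}u=0$ in $\Omega\setminus\overline K$; thus \eqref{bernoulli-problem basis} already holds, and what remains is to show that $\Omega$ is of class $C^1$ and that \eqref{normal_exterior} is satisfied at every $\theta\in\partial\Omega$.

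The second step is to show that $\Omega$ is a bounded Lipschitz domain. After a translation we may assume $x_0=0$, and then Lemma \ref{starshaped} (with $\eps=0$) gives that $\{U>0\}$ is starshaped with center $B_\rho^{d+1}(0)$. Intersecting with the hyperplane $\R^d\times\{0\}$ preserves starshapedness with respect to the part of the center lying in that hyperplane (a segment with both endpoints in the hyperplane stays in it), and $\{U>0\}\cap(\R^d\times\{0\})=\Omega\times\{0\}$, so $\Omega$ is starshaped with respect to every point of $B_\rho^d(0)$. It is classical that a bounded open set starshaped with respect to a ball is a Lipschitz domain: in polar coordinates centered at $0$ its boundary is a graph $r=g(\omega)$, $\omega\in S^{d-1}$, with $\rho\le g(\omega)\le\operatorname{diam}(\Omega)$, with $\Omega=\{t\omega:\ 0\le t<g(\omega)\}$, and with $g$ Lipschitz for a constant depending only on $\rho$ and $\operatorname{diam}(\Omega)$.

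The third step is the only one that uses heavy machinery. Fix $\theta_0\in\partial\Omega$. Since $\partial\Omega\cap\overline K=\emptyset$, near $\theta_0$ the function $u$, via its harmonic extension $U$, is a local minimizer of the one-phase functional $J(\cdot,B)$ (with weight $\frac{\pi}{4}\lambda^2$) of \cite{CRS2010}, exactly as established in the proof of Lemma \ref{general properties}, and by the second step $\partial\Omega$ is a Lipschitz graph near $\theta_0$. By the regularity theory of De Silva and Savin, a Lipschitz portion of such a free boundary is $C^{1,\alpha}$ and, by the higher-regularity theory, in fact $C^\infty$ (see \cite[Theorem 1.1]{DSS2015} and \cite[Theorem 1.1]{EKPSS2019}). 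Hence $\partial\Omega$ is of class $C^\infty$ near $\theta_0$, and since $\theta_0$ was arbitrary, $\partial\Omega$ is globally of class $C^\infty$; in particular $\Omega$ is of class $C^1$ and the interior unit normal $\nu(\theta)$ exists at every $\theta\in\partial\Omega$. I expect this step to be the main obstacle, in the sense that everything hinges on the cited theorem; the remaining arguments are bookkeeping and elementary geometry.

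Finally, every $\theta\in\partial\Omega$ now lies in $\partial\{u>0\}\setminus\overline K$ and carries an interior unit normal, so by Theorem \ref{bernoulli-exterior}(i)(e) the limit relation \eqref{normal_exterior} holds at every $\theta\in\partial\Omega$; this is the asymptotic expansion $u(\theta+t\nu(\theta))\sim\lambda\sqrt{t}$ at regular free boundary points, delivered by the regularity theory above precisely because the weight $\frac{\pi}{4}\lambda^2$ in $E_\lambda$ (equivalently in $e_\lambda$) is normalized so that the blow-up limit is the half-space profile $t\mapsto\lambda\sqrt{t_+}$. Combining this with the structural facts from the first step, $u$ satisfies all the requirements of Problem \ref{ebp}, which completes the proof.
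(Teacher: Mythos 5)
Your proposal is correct and follows essentially the same route as the paper: starshapedness from Lemma \ref{starshaped} gives a Lipschitz ($C^{0,1}$) free boundary, the De Silva--Savin theory upgrades this to $C^\infty$, and the free boundary condition with the constant $\lambda$ comes from the variational characterization of \cite{CRS2010} after the dilation/normalization of the weight $\frac{\pi}{4}\lambda^2$. The only quibble is bibliographic: the ``Lipschitz implies smooth'' result you need is \cite[Theorem 1.2]{DSS2015} combined with \cite[Theorem 1.1]{DSS2015b}, rather than \cite[Theorem 1.1]{DSS2015} and \cite[Theorem 1.1]{EKPSS2019}, which are the flatness/partial-regularity statements used elsewhere in the paper.
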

\begin{proof} (\ref{bernoulli-problem basis}) holds by Lemma \ref{nonlocal-framework}. Next, note that by the properties of $K$ we have $u\in C(\R^d)\cap H^{1/2}(\R^d)$ and $\{u>\epsilon\}$ is starshaped with center $B_{\rho}(x_0)$ for any $\epsilon\geq0$. Thus $\partial\{u>0\}$ is of class $C^{0,1}$ and (\ref{bernoulli-problem basis}) in particular also hold in a viscosity sense. Next, note that by denoting for $V\in H^1(\R^{d+1})$ the dilation $V_{c}(x):=V(c x)$ for $c>0$, we have
\begin{align*}
&\int_{\R^{d+1}}|\nabla V_c(x)|^2 \ dx+ \calL_d(\{x\in \R^d\;:\; \tr(V_c)(x)>0\})\\
&=\int_{\R^{d+1}}|\nabla V(c x)|^2 c^2\ dx+ \calL_d(\{x\in \R^d\;:\; \tr(V)(c x)>0\})\\
&=c^{1-d}\int_{\R^{d+1}}|\nabla V(z)|^2\ dz+ c^{-d} \calL_d(\{z\in \R^d\;:\; \tr(V)(z)>0\})\\
&=c^{1-d} (\int_{\R^{d+1}}|\nabla V(z)|^2\ dz+ c^{-1} \calL_d(\{z\in \R^d\;:\; \tr(V)(z)>0\})).
\end{align*}
Hence, if $U$ is a minimizer of $E_{\lambda}$ subject to the constraint $\tr U=1$ on $K$, then the dilation $U_{\frac{4}{\pi\lambda^2}}$ is a local minimizer as studied in \cite{CRS2010} subject to the constraint that the trace is $1$ on $\frac{\pi \lambda^2}{4} K$. Thus we have for a.e. $\theta\in \partial\{u>0\}$ by \cite[Theorem 1.4]{CRS2010}, see also \cite[Proposition 2.1]{FR2020} for the constant, noting that $\nu(\frac{\pi \lambda^2}{4} \theta)=\nu(\theta)$,
$$
\lim_{t\to 0^+} \frac{u(\theta+t\nu(\theta))}{\sqrt{t}}=\lim_{t\to 0^+} \frac{\tr U(\theta+t\nu(\theta))}{ \sqrt{t}}=\lim_{t\to 0^+} \frac{\sqrt{\frac{\pi \lambda^2}{4}}\tr U_{\frac{4}{\pi\lambda^2}}(\frac{\pi \lambda^2}{4}\theta+ \frac{\pi \lambda^2}{4}t\nu(\frac{\pi \lambda^2}{4}\theta))}{\sqrt{\frac{\pi \lambda^2}{4}t}}=\frac{\sqrt{\frac{\pi \lambda^2}{4}}}{\Gamma(3/2)}=\lambda
$$
as claimed. Then, by \cite[Theorem 1.2]{DSS2015} and \cite[Theorem 1.1]{DSS2015b} it follows that $\partial\{u>0\}$ is of class $C^\infty$ and thus (\ref{normal_exterior}) holds for all $\theta\in \partial\{\varphi>0\}$ as claimed.
\end{proof}

\begin{proof}[Proof of Theorem \ref{bernoulli-exterior}]
The existence and uniqueness is given by Lemma \ref{existence}. Properties (a)--(c) follow from Lemma \ref{general properties}. (d) follows from Corollary \ref{boundedness}. By standard arguments and \cite[Theorem 1.4]{CRS2010} (see also \cite[Proposition 2.1]{FR2020} for the constant) we obtain (e). If $K$ is in addition with $C^2$ boundary, Corollary \ref{boundary and representation} yields the continuity of the minimizer, and if $K$ is starshaped with center $B_r(x_0)$ for some fixed $x_0\in K$, $r>0$, then the last set of assertions follows from Lemma \ref{starshaped} and Lemma \ref{solution to bernoulli}.
\end{proof}

We close this section with a remark concerning the solution of Problem \ref{ebp}. 

\begin{prop}
\label{uniqueness of ebp}
Let $K\subset \R^d$ be an open nonepmpty and bounded set and $\lambda>0$. Assume there is $u\in C(\R^d)$ such that $\overline{K}\subset \{u>0\}$, $\{u>0\}$ is bounded and of class $C^2$, 
\begin{equation}
\label{bernoulli-problem basis 2}
\left\{
\begin{aligned}
(-\Delta)^{1/2}u&=0 &&\text{in $\{u>0\}\setminus \overline{K}$,}\\
u&=1 &&\text{in $\overline{K}$,}\\
\end{aligned}
\right.
\end{equation}
and $\lim\limits_{t\to 0^+} \frac{u(\theta+t\nu(\theta))}{\sqrt{t}} =\lambda$ for all $\theta\in \partial \{u>0\}$. Then $u$ is uniquely determined. In particular, if $K$ is in addition of class $C^2$ and starshaped with respect to $B_r(x_0)$ for some $x_0\in\R^d$ and $r>0$, then there is one and only one solution of Problem \ref{ebp} given by Theorem \ref{bernoulli-exterior}.
\end{prop}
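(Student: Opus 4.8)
The plan is to reduce the assertion to the uniqueness of the variational minimizer. Given any $u$ as in the hypotheses, write $D:=\{u>0\}$, and let $U\in H^1(\R^{d+1})$ be the minimizer of Problem \ref{variational} given by Lemma \ref{existence}, with trace $u^{\ast}:=\tr U$ and $D^{\ast}:=\{u^{\ast}>0\}$. Everything follows once I show $u=u^{\ast}$: uniqueness of $u$ is then immediate from the uniqueness of $U$ in Lemma \ref{existence}, and the last assertion of the Proposition drops out because, when $K$ is of class $C^2$ and starshaped with center a ball, Theorem \ref{bernoulli-exterior}(iii) produces $u^{\ast}$ as a solution of Problem \ref{ebp} whose free boundary is $C^{\infty}$ (in particular $C^2$), so by the uniqueness just obtained it is the one and only solution. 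The preliminary steps are routine: completing the hypotheses with $u=0$ on $\R^d\setminus D$ (or, equivalently, deducing $u\ge0$ from the maximum principle for $(-\Delta)^{1/2}$ and from the decay of $u$ at infinity forced by the boundedness of $D$) gives $0\le u\le1$ and $u=1$ on $K$; continuity of $u$, the $C^2$ regularity of $\partial D$, and the $\sqrt{t}$-behaviour \eqref{normal_exterior} imply $u\in H^{1/2}(\R^d)$, so that $u$ is an admissible competitor for $e_{\lambda}$; and, by \cite[Theorem 1.2]{DSS2015} and \cite[Theorem 1.1]{DSS2015b}, $\partial D$ is in fact $C^{\infty}$ and $u$ is smooth up to $\partial D$ in the sense compatible with \eqref{normal_exterior}, which legitimizes the boundary analysis below.

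The crux is to show that $u$ is itself a minimizer of $e_{\lambda}$ among functions equal to $1$ on $K$; since $u$ is admissible, $e_{\lambda}(u)\ge e_{\lambda}(u^{\ast})$ holds for free, so only $e_{\lambda}(u)\le e_{\lambda}(u^{\ast})$ remains. I would establish this by a calibration/comparison argument built on the fact that $u$ realizes \eqref{normal_exterior} with the \emph{exact} constant $\lambda$ at every point of the ($C^{\infty}$) free boundary $\partial D$ --- a constant matched, via the factor $\Gamma(3/2)$ of \cite[Theorem 1.4]{CRS2010} (see also \cite[Proposition 2.1]{FR2020}), to the weight $\tfrac{\pi}{4}\lambda^2$ in $e_{\lambda}$. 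Concretely, working with the harmonic extension $U$ and the extension of $u$ in $\R^{d+1}_+$, where both are positive and harmonic and the ``free boundary'' is the thin set $\partial D\times\{0\}$ with the model square-root profile (so that $\partial_{x_{d+1}}U(\cdot,0^{+})\sim\tfrac{\lambda}{2}\,\dist(\cdot,\partial D)^{-1/2}$ just outside $D$), one gets the pointwise/measure comparison calibrating the energy of $u$ against that of an arbitrary competitor. An auxiliary fact that helps organize the comparison is the submodularity of the Gagliardo seminorm under lattice operations: for $w:=\max(u,u^{\ast})$ and $\omega:=\min(u,u^{\ast})$ one has $[w]_1+[\omega]_1\le[u]_1+[u^{\ast}]_1$ and, by inclusion--exclusion, $\calL_d(\{w>0\})+\calL_d(\{\omega>0\})=\calL_d(D)+\calL_d(D^{\ast})$, whence $e_{\lambda}(w)\le e_{\lambda}(u)$; combined with boundary-Harnack/Hopf estimates for the half-Laplacian at the two thin free boundaries this is meant to force $D=D^{\ast}$ (strict inclusion or crossing being incompatible with both square-root rates being equal to $\lambda$). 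Once $D=D^{\ast}$, $u$ and $u^{\ast}$ are both $1$-harmonic in $D\setminus\overline{K}$ with the same boundary values ($1$ on $K$, $0$ on $\R^d\setminus D$), hence $u=u^{\ast}$ by the maximum principle.

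The main obstacle is exactly this comparison step. Because $(-\Delta)^{1/2}$ is nonlocal, a naive application of the maximum principle does not by itself exclude that $D$ strictly contains, is strictly contained in, or crosses $D^{\ast}$, so the argument must be run through the harmonic extension, where the thin free boundaries and the precise blow-up rate $\tfrac{\lambda}{2}\dist^{-1/2}$ of the normal derivative are governed by the boundary regularity theory of \cite{CRS2010,DSS2015,DSS2015b}; matching these two rates, both equal to $\lambda$, is what ultimately pins down $D=D^{\ast}$. All remaining ingredients --- admissibility of $u$, the submodularity inequality, the passage to the uniqueness of the variational minimizer, and the deduction of the ``in particular'' clause from Theorem \ref{bernoulli-exterior}(iii) --- are routine.
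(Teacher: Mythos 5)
Your overall plan---reduce everything to showing that the given $u$ coincides with the variational minimizer---contains a genuine gap at exactly the point you flag as ``the crux.'' The submodularity computation you sketch only yields $e_{\lambda}(w)\le e_{\lambda}(u)$ for $w=\max(u,u^{\ast})$ (since $\omega=\min(u,u^{\ast})$ is admissible and hence $e_{\lambda}(\omega)\ge e_{\lambda}(u^{\ast})$); it does not give the inequality $e_{\lambda}(u)\le e_{\lambda}(u^{\ast})$ you need, and the subsequent appeal to ``boundary-Harnack/Hopf estimates \dots meant to force $D=D^{\ast}$'' is an assertion, not an argument. Moreover, the premise of the reduction is itself unjustified: a solution of the overdetermined problem \eqref{bernoulli-problem basis 2} with the exact square-root rate $\lambda$ is not a priori a minimizer of $e_{\lambda}$, so ``matching the two rates'' must be turned into a concrete comparison mechanism, which your write-up never supplies.

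The paper's proof avoids the variational framework entirely and compares two arbitrary solutions $u_1,u_2$ directly. First, $\min\{u_1,u_2\}$ is $1$-subharmonic, and the strong maximum principle forces the smaller positivity set to be one of $D_1,D_2$ (say $D_2\subset D_1$ with $u_1>u_2$ in $D_2\setminus\overline K$). The decisive step is a dilation (Lavrentiev-type) argument exploiting the $-1/2$-homogeneity of the square-root rate: shrink $D_1$ by a factor $\epsilon\in(0,1)$ until $\epsilon D_1\subset D_2$ with a touching point $\theta\in\partial(\epsilon D_1)\cap\partial D_2$; the rescaled function $v(x)=u_1(x/\epsilon)$ lies below $u_2$ by the maximum principle yet has normal rate $\lambda/\sqrt{\epsilon}>\lambda$ at $\theta$, contradicting the rate $\lambda$ of $u_2$ there. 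This is the concrete realization of your ``matching rates'' heuristic; without it (or an equivalent substitute) your proof does not close. The admissibility preliminaries and the deduction of the ``in particular'' clause from Theorem \ref{bernoulli-exterior}(iii) are fine.
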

\begin{proof}
In the following, we assume without loss of generality that $0\in K$. Let $u_1,u_2$ be two solutions. Note that the strong maximum principle implies $0<u_i<1$ in $D_i\setminus \overline{K}$, $i=1,2$, where we set $D_i:=\{u_i>0\}$. Note that $w:=\min\{u_1,u_2\}$ is $1$-subharmonic in $D_w:=\{w>0\}\setminus K$, that is $(-\Delta)^{1/2}w(x) \ge 0$ for $x\in D_w$. If $D_i\neq \{w>0\}$, then the strong maximum principle implies $w<u_i$ in $\{w>0\}\setminus \overline{K}$ and thus we must have either $\{w>0\}=D_1$ or $\{w>0\}=D_2$. Without loss of generality, we may assume $\{w>0\}=D_2$, but then we have $w=u_2$. We thus have $u_1>u_2$ in $D_2\setminus \overline{K}$. Since $D_1$ is bounded, we can find $\epsilon \in(0,1)$ such that $\epsilon D_1\subset D_2$ and there is $\theta\in \partial (\epsilon D_1)\cap \partial D_2$. The function $v(x)=u_1(x/\epsilon)$ satisfies 
$$
(-\Delta)^{1/2} v=0\quad\text{in $\epsilon (D_1\setminus \overline{K})$,}\quad v=0\quad\text{in $\R^{d}\setminus \epsilon D_1$, and}\quad v=1\quad\text{in $\epsilon  K$.}
$$
Thus in particular, $v<u_2$ in $\epsilon  D_1\setminus \overline{K}$ by the strong maximum principle. In particular, we have at $\theta$
\begin{align*}
\lambda &=\lim\limits_{t\to 0^+} \frac{u_2(\theta+t\nu(\theta))}{\sqrt{t}}\geq \lim\limits_{t\to 0^+}\frac{v(\theta+t\nu(\theta))}{\sqrt{t}}=\lim\limits_{t\to 0^+} \frac{u_1(\frac{1}{\epsilon}\theta+ \frac{t}{\epsilon} \nu(\theta))}{\sqrt{t}}\\
&=\lim\limits_{t\to 0^+} \frac{u_1( \frac{1}{\epsilon}\theta+ t\nu(\theta))}{\sqrt{\epsilon}\sqrt{t}}=\frac{\lambda}{\sqrt{\epsilon}},
\end{align*}
where we have used that $\frac{1}{\epsilon}\theta\in \partial D_1$ and $\nu(\theta)=\nu(\frac{1}{\epsilon}\theta)$. Since $\epsilon\in(0,1)$, this is clearly a contradiction and thus we must have $D_1=D_2$, but then $u_1=u_2$ as claimed.
\end{proof}

 \section{The interior Bernoulli problem for the half Laplacian}

 By similar arguments as in the proof of Lemma \ref{general properties} by replacing $U$ by $1-V$ we obtain the following result.
\begin{lemma}
\label{general properties interior}
Let $D\subset \R^d$ be an open nonempty bounded set, $\lambda>0$, and let $V\in H^1(\R^{d+1})$ be a minimizer of $I_{\lambda,D}$ subject to the constraint $\tr V=0$ on $\R^{d}\setminus D$. Then
\begin{enumerate}[(i)]
\item\label{item:d+1 1b} $0 \le V \le 1$,
\item\label{item:d+1 2b} $V$ is $1/2$-H{\"o}lder continuous on any compact subset of $\R^{d+1}\setminus ((\R^d\setminus D)\times\{0\})$,
and
\item\label{item:d+1 3b} $V$ is harmonic in the set $\{V<1\}\setminus ((\R^d\setminus D)\times\{0\})$ and in the set $\R^{d+1}\setminus(\R^d\times\{0\})$.
\end{enumerate}
\end{lemma}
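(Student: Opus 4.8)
The plan is to mimic the proof of Lemma \ref{general properties} almost verbatim, applying the same three arguments to the function $W := 1 - V$ rather than to a minimizer of $E_\lambda$. First I would observe that if $V$ minimizes $I_{\lambda,D}$ subject to $\tr V = 0$ on $\R^d\setminus D$, then $W = 1-V$ satisfies $\tr W = 1$ on $\R^d\setminus D$ and
$$
I_{\lambda,D}(V) = \int_{\R^{d+1}} |\nabla W(x)|^2\,dx + \frac{\pi}{4}\lambda^2\,\calL_d(\{\tr W > 0\}\cap D),
$$
since $\{\tr V < 1\} = \{\tr W > 0\}$ and $|\nabla V| = |\nabla W|$. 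Thus $W$ is a minimizer of a functional that has exactly the same structure as $E_\lambda$ but with the Lebesgue term restricted to $D$ and the constraint imposed on $\R^d\setminus D$ instead of on $K$. For the proof of each item I would then transcribe the corresponding step of Lemma \ref{general properties}.

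For \eqref{item:d+1 1b}: apply the truncation argument of Lemma \ref{general properties}\eqref{item:d+1 1} to $W$. If $\operatorname{essinf} W < 0$, truncating $W$ from below by $-\epsilon$ with the Lipschitz map $f(t) = \max\{t,-\epsilon\}$ strictly decreases the Dirichlet energy, leaves $\calL_d(\{\tr f(W) > 0\}\cap D)$ unchanged (for small $\epsilon$), and keeps the constraint $\tr f(W) = 1$ on $\R^d\setminus D$ intact, giving a contradiction; hence $W \ge 0$, i.e. $V \le 1$. Symmetrically, truncating from above with $f(t) = \min\{t, 1+\epsilon\}$ forces $W \le 1$, i.e. $V \ge 0$. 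For \eqref{item:d+1 2b}: note that $W$ is a local minimizer in the sense of \cite{CRS2010} away from $(\R^d\setminus D)\times\{0\}$ — for any ball $B$ centered on $\R^d\times\{0\}$ with $\overline B\cap((\R^d\setminus D)\times\{0\}) = \emptyset$ (equivalently $\overline B$ projects into $D\times\{0\}$, so that the restriction $\{\cdot\}\cap D$ is invisible inside $B$) and any competitor agreeing with $W$ on $\partial B$, the local functional $J(\cdot,B)$ is minimized at $W$. Then \cite[Theorem 1.1]{CRS2010} gives $W\in C^{1/2}$, hence $V\in C^{1/2}$, on any compact subset of $\R^{d+1}\setminus((\R^d\setminus D)\times\{0\})$.

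For \eqref{item:d+1 3b}: by \eqref{item:d+1 2b} the set $\{W>0\} = \{V<1\}$ is open, so I take $\psi\in C^\infty_c(\{V<1\}\setminus((\R^d\setminus D)\times\{0\}))$ and compute $\frac{d}{dt}\big|_{t=0} I_{\lambda,D}(V + t\psi) = 0$; the Lebesgue-measure term contributes nothing to this derivative (its superlevel set $\{\tr(V+t\psi)<1\}\cap D$ changes only by a null set near the support of $\psi$, exactly as in Lemma \ref{general properties}\eqref{item:d+1 3}), leaving $\int_{\R^{d+1}}\nabla V\cdot\nabla\psi = 0$, i.e. $V$ is harmonic in $\{V<1\}\setminus((\R^d\setminus D)\times\{0\})$. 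Taking instead $\psi\in C^\infty_c(\R^{d+1}\setminus(\R^d\times\{0\}))$ gives harmonicity of $V$ in $\R^{d+1}\setminus(\R^d\times\{0\})$ with no measure-term contribution at all. I do not expect any genuine obstacle here: the only point requiring a moment's care is the bookkeeping around the constraint set and the restriction to $D$ in the volume term — in particular checking that the competitors built by truncation still satisfy $\tr(\cdot) = 0$ on $\R^d\setminus D$ and that truncating does not change $\calL_d(\{\tr(\cdot)<1\}\cap D)$ — but this is routine and identical in spirit to the exterior case, which is precisely why the statement is phrased as an immediate analogue.
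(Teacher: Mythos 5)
Your proposal is correct and follows exactly the route the paper takes: the paper's proof of this lemma consists precisely of the observation that one may repeat the proof of Lemma \ref{general properties} with $U$ replaced by $1-V$, which is the substitution $W=1-V$ you carry out in detail. The bookkeeping you flag (the constraint on $\R^d\setminus D$, the restriction of the volume term to $D$, and the choice of balls avoiding $(\R^d\setminus D)\times\{0\}$ for the local-minimizer property) is handled correctly.
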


We follow closely \cite{DK2010} to show that there is a nontrivial minimizer of Problem \ref{variational interior} for $\lambda$ large enough. To find a nontrivial minimizer, it will be useful to find a function $V\in H^1(\R^{d+1})\setminus\{0\}$ with $\tr V=0$ on $\R^d\setminus D$ such that
\begin{equation}
\label{small i lambda}
I_{\lambda,D}(V)<I_{\lambda,D}(0)=\frac{\pi}{4}\lambda^2\calL_d(D).
\end{equation}

\begin{prop}
\label{existence interior}
Let $D\subset \R^d$ be an open nonempty bounded set. Then $\Lambda(D)<\infty$. Moreover, for any $\lambda>\Lambda(D)$ it follows that there exists $V$ with $\tr V=0$ on $\R^d\setminus D$ such that \eqref{small i lambda} is satisfied and for any $\lambda\leq \Lambda(D)$ we have 
\begin{equation}
\label{lambda d characterization}
\min_{\substack{V\in H^1(\R^{d+1})\\ \tr V=0\ \text{on $\R^{d}\setminus D$}}}  I_{\lambda,D}(V)=I_{\lambda,D}(0)=\frac{\pi}{4}\lambda^2\calL_d(D).
\end{equation}
\end{prop}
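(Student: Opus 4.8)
The plan is to introduce an auxiliary threshold and to show it equals $\Lambda(D)$. Call $V$ \emph{admissible} if $V\in H^1(\R^{d+1})$ with $\tr V=0$ on $\R^d\setminus D$, write $R_V=\int_{\R^{d+1}}|\nabla V(x)|^2\,dx$ and $G_V=\calL_d(\{\tr V\ge 1\}\cap D)$, and let $\lambda^\ast(D)$ be the infimum of those $\lambda>0$ for which \eqref{small i lambda} holds (with $\inf\emptyset=+\infty$). First I would record the reformulation: since $\tr V=0$ a.e.\ on $\R^d\setminus D$ one has $\calL_d(\{\tr V<1\}\cap D)=\calL_d(D)-G_V$, hence $I_{\lambda,D}(V)-\frac{\pi}{4}\lambda^2\calL_d(D)=R_V-\frac{\pi}{4}\lambda^2 G_V$. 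Thus \eqref{small i lambda} holds for $\lambda$ if and only if some admissible $V$ satisfies $R_V<\frac{\pi}{4}\lambda^2 G_V$; this forces $G_V>0$, and then also $R_V>0$ (if $R_V=0$ then $V$ is constant, hence $\equiv 0$ because $\tr V=0$ on the positive-measure set $\R^d\setminus D$, so $G_V=0$). Consequently the set of such $\lambda$ is a union of open half-lines $(\sqrt{4R_V/(\pi G_V)},\infty)$, hence itself an open half-line $(\lambda^\ast(D),\infty)$; in particular \eqref{small i lambda} \emph{fails} at $\lambda=\lambda^\ast(D)$.

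To show $\lambda^\ast(D)<\infty$ I would exhibit a competitor: fix concentric balls $B'=B_{\rho/2}^d(x_0)\subset B=B_\rho^d(x_0)$ with $\overline{B}\subset D$, let $u_0\in H^{1/2}(\R^d)$ be a Lipschitz cutoff equal to $1$ on $\overline{B'}$ and to $0$ outside $B$, and let $V_0$ be its even harmonic extension \eqref{extension}. Then $V_0$ is admissible, $R_{V_0}=\calA_d[u_0]_1<\infty$ by \eqref{energy identity}, and $G_{V_0}\ge\calL_d(B')>0$ since $u_0\equiv 1$ on $B'\subset D$; the reformulation then gives $I_{\lambda,D}(V_0)<\frac{\pi}{4}\lambda^2\calL_d(D)$ as soon as $\lambda^2>4R_{V_0}/(\pi\calL_d(B'))$, so $\lambda^\ast(D)<\infty$.

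Next I would prove $\lambda^\ast(D)=\Lambda(D)$. For this I first note that for every $\lambda>0$ the functional $I_{\lambda,D}$ attains its infimum over admissible $V$, by the direct method analogously to Lemma \ref{existence} (a minimizing sequence is bounded in $H^1(B_r^{d+1}(0))$ for all large $r$ thanks to the vanishing trace on the positive-measure set $B_r^d(0)\setminus D$, and the area term is handled by the weak-$\ast$ limit $1_{\{\tr V_k<1\}}\stackrel{\ast}{\rightharpoonup}\gamma\ge 1_{\{\tr V<1\}}$ exactly as there). Now if $\lambda>\lambda^\ast(D)$ then \eqref{small i lambda} holds, so $0$ is not a minimizer of $I_{\lambda,D}$, hence every minimizer is nontrivial; thus Problem \ref{variational interior} has a nontrivial solution and $\lambda\ge\Lambda(D)$, giving $\Lambda(D)\le\lambda^\ast(D)$. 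Conversely, if $\lambda<\lambda^\ast(D)$ then \eqref{small i lambda} fails, i.e.\ $I_{\lambda,D}(V)\ge\frac{\pi}{4}\lambda^2\calL_d(D)=I_{\lambda,D}(0)$ for all admissible $V$, so $0$ is a minimizer; if $V$ is \emph{any} minimizer then the reformulation forces $R_V=\frac{\pi}{4}\lambda^2 G_V$, hence $V\equiv 0$ (otherwise $G_V>0$ and $\lambda=\sqrt{4R_V/(\pi G_V)}\ge\lambda^\ast(D)$, contradicting $\lambda<\lambda^\ast(D)$). So Problem \ref{variational interior} has no nontrivial solution for $\lambda<\lambda^\ast(D)$, whence $\Lambda(D)\ge\lambda^\ast(D)$; together with the finiteness above, $\Lambda(D)=\lambda^\ast(D)<\infty$. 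The three assertions then follow: if $\lambda>\Lambda(D)=\lambda^\ast(D)$ then $\lambda\in(\lambda^\ast(D),\infty)$, so \eqref{small i lambda} holds; if $\lambda\le\Lambda(D)=\lambda^\ast(D)$ then \eqref{small i lambda} fails (the half-line being open), so $I_{\lambda,D}(V)\ge I_{\lambda,D}(0)=\frac{\pi}{4}\lambda^2\calL_d(D)$ for all admissible $V$, and since $0$ is admissible the minimum is attained at $0$ and equals $\frac{\pi}{4}\lambda^2\calL_d(D)$, which is \eqref{lambda d characterization}.

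The step I expect to be the main obstacle is the inequality $\Lambda(D)\ge\lambda^\ast(D)$: below the threshold it is easy that $0$ minimizes $I_{\lambda,D}$, but one must also exclude a \emph{nontrivial} minimizer carrying the same minimal energy, which is exactly where the strict monotonicity in $\lambda$ of the gain $\frac{\pi}{4}\lambda^2 G_V$, together with $G_V>0$ for any competitor beating $0$, enters. The same strict monotonicity is what makes \eqref{small i lambda} an open condition in $\lambda$, and this openness is what is needed to handle the borderline value $\lambda=\Lambda(D)$ in \eqref{lambda d characterization}.
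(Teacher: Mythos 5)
Your proof is correct and follows essentially the same route as the paper: a test function to show finiteness, the direct method to produce minimizers, and the strict monotonicity in $\lambda$ of the term $\frac{\pi}{4}\lambda^2\calL_d(\{\tr V\ge 1\}\cap D)$ to handle both the connectedness of the set of admissible $\lambda$ and the borderline value $\lambda=\Lambda(D)$. Your packaging of these facts into the explicit threshold $\lambda^\ast(D)=\inf_V\sqrt{4R_V/(\pi G_V)}$ and the observation that the set where \eqref{small i lambda} holds is an \emph{open} half-line is a tidy reformulation of the paper's claims (C1)--(C3), not a different argument.
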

\begin{proof}
We follow closely the steps described in \cite[Proposition 2.1]{DK2010}.
\begin{itemize}
\item[(C1)] If \eqref{small i lambda} holds, then there exists a nontrivial minimizer $V$ of $I_{\lambda,D}$ subject to the constraint $\tr V=0$ on $\R^{d}\setminus D$.
\end{itemize}
Indeed, the proof follows analogously to the existence part of the proof of Lemma \ref{existence} by choosing a minimizing sequence. The obtained minimizer then must be nontrivial by the assumption \eqref{small i lambda} and thus (C1) holds.
\begin{itemize}
\item[(C2)] For $\lambda$ large enough it follows that \eqref{small i lambda} holds.
\end{itemize}
To see this, let $\Phi\in C^{\infty}(\R^{d+1})$ with $\supp \tr \Phi\subset D$ and {tk $\calL_d(\{\tr \Phi>1\})>0$.} Then
\begin{align*}
I_{\lambda,D}(\Phi)-I_{\lambda,D}(0)&=\int_{\R^d}|\nabla \Phi(x)|^2\ dx+ \frac{\pi}{4}\lambda^2(\calL_d(\{\tr \Phi<1\}\cap D)-\calL_d(D))\\
&\leq \int_{\R^d}|\nabla \Phi(x)|^2\ dx-\frac{\pi}{4}\lambda^2 \calL_d(\tr \Phi\geq 1\}).
\end{align*}
Sending $\lambda\to \infty$, it follows that (C2) holds.
\begin{itemize}
\item[(C3)] The set of all $\lambda>0$ such that $I_{\lambda,D}$ has a nontrivial minimizer $V$ subject to the constraint $\tr V=0$ on $\R^d\setminus D$ is connected.
\end{itemize}
For this, let $\mu>0$ and assume that there is a nontrivial minimizer $V$ of $I_{\mu,D}$ subject to the constraint $\tr V=0$ on $\R^d\setminus D$. Note that we must have $I_{\mu}(V)\leq I_{\mu}(0)$ and thus in particular $\calL_d(\{\tr V<1\}\cap D)<\calL_d(D)$. But then for $\lambda>\mu$ we have
\begin{align*}
I_{\lambda,D}(V)-I_{\lambda,D}(0)\leq \frac{\pi}{4}(\lambda^2-\mu^2)\Big(\calL_d(\{\tr V<1\}\cap D)-\calL_d(D)\Big)<0.
\end{align*}
Hence \eqref{small i lambda} is satisfied and by (C1) it follows that (C3) is satisfied. Note that the proof of (C3) in particular implies that if $\lambda>\Lambda(D)$, then there exists $V$ with $\tr V=0$ on $\R^{d}\setminus D$ such that \eqref{small i lambda} is satisfied.\\
To finish the proof of the proposition, first note that if $\lambda<\Lambda(D)$, then clearly \eqref{lambda d characterization} holds due to (C1). Next, let $\mu=\Lambda(D)$ and let $V$ be a minimizer of $I_{\mu,D}$ subject to the constraint $\tr V=0$ on $\R^d\setminus D$. Assume $I_{\mu,D}(V)<I_{\mu,D}(0)$. Then it follows for $\lambda<\mu$, $\lambda$ close to $\mu$, that we have
$$
I_{\lambda,D}(V)-I_{\lambda,D}(0)=I_{\mu,D}(V)-I_{\mu,D}(0)+ \frac{\pi}{4}(\lambda^2-\mu^2)\Big(\calL_d(\{\tr V<1\}\cap D)-\calL_d(D)\Big)<0.
$$
Thus (C1) gives the existence of a nontrivial minimizer subject to the usual constraint in contradiction to the definition of $\Lambda(D)$.
\end{proof}

\begin{cor}
\label{representation interior}
Let $D\subset \R^d$ be an open nonempty bounded set. If $\lambda>\Lambda(D)$, then there exists a solution $V$ to Problem \ref{variational interior}, which is nonnegative and bounded in $\R^{d+1}$. In particular, $V$ is given in $\R^{d+1}_+$ by the harmonic extension of $u:=\tr V\in H^{1/2}(\R^d)$ and thus $V>0$ in $\R^{d+1}_+$. Moreover, $u$ satisfies
\begin{enumerate}[(i)]
\item $0\leq u \leq 1$
\item  $u$ is $1/2$-H\"older continuous on any compact subset of $D$.
\item $u$ is the minimizer of the functional $i_{\lambda,D}$ among all the functions in $H^{1/2}(\R^d)$ which vanish in $\R^{d}\setminus D$, and, in particular, $u$ is $1$-harmonic in $\{u<1\}\cap D$.
\end{enumerate}
\end{cor}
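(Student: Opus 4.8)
The plan is to run, for the interior problem, the same program carried out in Section~2 for the exterior one. First I would invoke Proposition~\ref{existence interior}: since $\lambda>\Lambda(D)$, there is a competitor $V\in H^1(\R^{d+1})$ with $\tr V=0$ on $\R^d\setminus D$ satisfying \eqref{small i lambda}, and then the argument (C1) in that proof produces a \emph{nontrivial} minimizer $V$ of $I_{\lambda,D}$ subject to $\tr V=0$ on $\R^d\setminus D$, i.e. a solution of Problem~\ref{variational interior}. Feeding this $V$ into Lemma~\ref{general properties interior} gives at once $0\le V\le 1$ (so $V$ is nonnegative and bounded), the $1/2$-H\"older continuity of $V$ on compact subsets of $\R^{d+1}\setminus((\R^d\setminus D)\times\{0\})$, and the harmonicity of $V$ in $\R^{d+1}\setminus(\R^d\times\{0\})$, in particular in $\R^{d+1}_+$ and in $\R^{d+1}_-$.

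Next I would identify $V$ with the harmonic extension of $u:=\tr V$. Since $V$ is a bounded $H^1$ harmonic function in $\R^{d+1}_+$ with trace $u$, subtracting the Poisson integral of $u$ (harmonic, in $H^1(\R^{d+1}_+)$, same trace) gives a harmonic $H^1$ function with zero trace; testing its weak equation against itself forces it to vanish, so $V$ coincides on $\R^{d+1}_+$ with the Poisson integral of $u$, and the same computation on $\R^{d+1}_-$ yields $V(x,y)=\int_{\R^d}P(x-z,|y|)u(z)\,dz$ for $y\neq 0$, i.e. $V$ is given by \eqref{extension} (using $P$ as in \eqref{Poisson}) and is even in $x_{d+1}$. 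If $u\equiv 0$ then $V\equiv 0$, contradicting nontriviality; since $u\ge 0$ and $P>0$ it follows that $V>0$ in $\R^{d+1}_+$. Property (i) is immediate from $0\le V\le 1$. For (ii), note that for any compact $M\subset D$ the set $M\times\{0\}$ is a compact subset of $\R^{d+1}\setminus((\R^d\setminus D)\times\{0\})$, so $V$, hence $u=V(\cdot,0)$, is $1/2$-H\"older on $M$; in particular $u$ is continuous in $D$ and $\{u<1\}\cap D$ is open.

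For (iii) I would argue as in Lemma~\ref{nonlocal-framework} and Remark~\ref{nonlocal-remarks}. By the energy identity \eqref{energy identity}, $\int_{\R^{d+1}}|\nabla V|^2=\calA_d[u]_1$, whence $I_{\lambda,D}(V)=i_{\lambda,D}(u)$. Conversely, for any $v\in H^{1/2}(\R^d)$ vanishing on $\R^d\setminus D$, its even harmonic extension $W$ lies in $H^1(\R^{d+1})$ (here one uses, exactly as for the existence of $H^{1/2}$-minimizers in Remark~\ref{nonlocal-remarks}\eqref{item:nonlocal-existence}, that $\supp v$ is contained in the bounded set $D$), satisfies $\tr W=0$ on $\R^d\setminus D$ and $I_{\lambda,D}(W)=i_{\lambda,D}(v)$, so minimality of $V$ gives $i_{\lambda,D}(v)\ge i_{\lambda,D}(u)$. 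Finally, for $\psi\in C^\infty_c(\{u<1\}\cap D)$ and $|t|$ small, $u+t\psi$ still vanishes outside $D$ and $\{u+t\psi<1\}\cap D=\{u<1\}\cap D$, so the volume term of $i_{\lambda,D}$ is locally constant in $t$ and $\frac{d}{dt}i_{\lambda,D}(u+t\psi)|_{t=0}=0$ reduces to $2\calA_d\iint_{\R^d\times\R^d}\frac{(u(x)-u(y))(\psi(x)-\psi(y))}{|x-y|^{d+1}}\,dx\,dy=0$, i.e. $u$ is $1$-harmonic in $\{u<1\}\cap D$.

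The only genuinely technical points, and neither is new, are the representation of the $H^1$ minimizer by the Poisson integral of its trace (a uniqueness statement for harmonic functions in $H^1$ of a half-space, exactly the mechanism behind Corollary~\ref{boundary and representation}) and the membership in $H^1(\R^{d+1})$ of the harmonic extension of a competitor $v\in H^{1/2}(\R^d)$ with $\supp v\subset D$; everything else transcribes the exterior case verbatim. I therefore expect the write-up to be short, flagging these two facts with a reference back to Section~2.
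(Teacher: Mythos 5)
Your proposal is correct and follows essentially the same route as the paper: existence via Proposition \ref{existence interior} and the minimizing-sequence argument of Lemma \ref{existence}, the representation of $V$ as the Poisson extension of its trace by the standard $H^1$-uniqueness argument, (i)--(ii) from Lemma \ref{general properties interior}, and (iii) by transcribing Lemma \ref{nonlocal-framework}(iii) via the energy identity \eqref{energy identity}. You in fact supply more detail than the paper's own (very terse) proof, including the positivity of $V$ in $\R^{d+1}_+$ from nontriviality of $u$ and positivity of the Poisson kernel.
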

\begin{proof}
The existence follows by the same arguments as in the proof of Lemma \ref{existence}. The fact that $V$ is given by the harmonic extension of $\tr V$ follows by  standard arguments. The properties (i), (ii) follow from Lemma \ref{general properties interior}. The proof of (iii) is similar to the proof of (iii) in Lemma \ref{nonlocal-framework}.
\end{proof}

\begin{lemma}
\label{technical lemma}
Let $D\subset \R^d$ be open nonempty and bounded. Then there exists $C=C(d)>0$ and $p=p(d)>1$ such that 
\begin{equation}
\label{technical inequality}
\calL_d(\{\tr V\geq 1\}\cap D)\leq C\Big(\int_{\R^{d+1}}|\nabla V(x)|^2\ dx\Big)^{p}
\end{equation}
 for all nonnegative bounded $V\in H^1(\R^{d+1})$, $V$ even in $x_{d+1}$ with $\tr V=0$ on $\R^d\setminus D$ and such that $V$ is harmonic in $\R^{d+1}\setminus (\R^d\times\{0\})$. 
\end{lemma}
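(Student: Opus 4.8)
The plan is to reduce the statement to the trace $u:=\tr V\in H^{1/2}(\R^d)$ and then to estimate the measure of the superlevel set $\{u\ge 1\}$ by Markov's inequality combined with the fractional Sobolev embedding. First I would record that, since $V\in H^1(\R^{d+1})$, its trace $u=\tr V$ lies in $H^{1/2}(\R^d)$, that $u\ge 0$ (because $V\ge 0$), and that $u=0$ a.e.\ on $\R^d\setminus D$. Moreover, since among all functions in $H^1(\R^{d+1})$ with a prescribed trace the even harmonic extension minimizes the Dirichlet integral, the energy identity \eqref{energy identity} yields
\begin{equation*}
\calA_d\,[u]_1 \;\le\; \int_{\R^{d+1}} |\nabla V(x)|^2\,dx .
\end{equation*}
(For the particular $V$ appearing in the statement --- harmonic off $\R^d\times\{0\}$, even, bounded and of finite energy --- one actually has equality, $V$ being precisely the even harmonic extension of $u$; but only this inequality is used.)

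Next, assuming $d\ge 2$, I would invoke the fractional Sobolev inequality for $H^{1/2}(\R^d)$ (valid since $d>1$): there is $C_1=C_1(d)>0$ such that $\|w\|_{L^{2^*}(\R^d)}^2\le C_1\,[w]_1$ for every $w\in H^{1/2}(\R^d)$, where $2^*=\frac{2d}{d-1}$. Applying this to $u$ and using that $u\ge 1$ on $A:=\{\tr V\ge 1\}\cap D$, Markov's (Chebyshev's) inequality gives
\begin{equation*}
\calL_d(A) \;=\; \int_A 1\,dx \;\le\; \int_A u(x)^{2^*}\,dx \;\le\; \|u\|_{L^{2^*}(\R^d)}^{2^*} \;\le\; \bigl(C_1\,[u]_1\bigr)^{2^*/2}.
\end{equation*}
Combining this with the previous display and noting that $2^*/2=\frac{d}{d-1}$, we obtain the assertion with $p=p(d):=\frac{d}{d-1}>1$ and $C=C(d):=(C_1/\calA_d)^{d/(d-1)}$, both depending only on $d$.

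I do not anticipate a real obstacle here; the only points requiring care are that the Sobolev exponent produces the \emph{strictly} super-linear power $2^*/2=\frac{d}{d-1}>1$ (which is exactly what is needed afterwards to deduce $\Lambda(D)>0$), and that the reduction to $[u]_1$ uses the minimality of the harmonic extension rather than the mere trace inequality. The hypothesis $d\ge 2$ enters only through the embedding $H^{1/2}(\R^d)\hookrightarrow L^{2^*}(\R^d)$; for $d=1$ one would instead combine $H^{1/2}(\R)\hookrightarrow L^q(\R)$ for some finite $q>2$ with the fractional Poincar\'e inequality on the bounded set $D$, at the cost of letting the constant depend also on $\calL_d(D)$.
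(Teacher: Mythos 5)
Your proposal is correct and follows essentially the same route as the paper's proof: bound $\calL_d(\{\tr V\geq 1\}\cap D)$ by $\int_D|\tr V|^{2d/(d-1)}$ via Chebyshev, apply the fractional Sobolev embedding of $H^{1/2}(\R^d)$, and convert $[\tr V]_1$ into the extension energy (the paper uses the energy identity with equality, since such a $V$ is the harmonic extension of its trace, while you use the minimality inequality --- an immaterial difference). Your remark that the $d=1$ case requires a Poincar\'e-type input and a constant depending on $D$ is a fair observation; the paper glosses over this, and it is harmless for the lemma's subsequent use with $D$ fixed.
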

\begin{proof}
A function $V$ as stated can be represented by the harmonic extension of its trace. If $d>1$, then by the fractional Sobolev inequality (see e.g. \cite[Theorem 3.2.1]{BV2016}) we have
\begin{align*}
\calL_d(\{\tr V\geq 1\}\cap D)\leq \int_D|\tr V(x)|^{\frac{2d}{d-1}}\ dx\leq C[\tr V]_1^{\frac{d}{d-1}}=\frac{C}{\calA_d^{\frac{d}{d-1}}}\Big(\int_{\R^{d+1}}|\nabla V(x)|^2\ dx\Big)^{\frac{d}{d-1}},
\end{align*}
using the energy identity \eqref{energy identity}. If $d=1$, then $\tr V\in L^{q}(\R^d)$ for any $q<\infty$ and the claim follows similarly.
\end{proof}

\begin{prop}
\label{limit case}
Let $D\subset \R^d$ be open nonempty and bounded. Then for $\lambda=\Lambda(D)$ there exists a solution of Problem \ref{variational interior}.
\end{prop}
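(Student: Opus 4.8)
The plan is to produce the minimizer at $\lambda=\Lambda(D)$ as a limit of the nontrivial minimizers which, by Proposition~\ref{existence interior} and Corollary~\ref{representation interior}, exist for every $\lambda>\Lambda(D)$, and to use Lemma~\ref{technical lemma} to keep the limit away from the trivial function.

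Write $\Lambda:=\Lambda(D)$ (finite by Proposition~\ref{existence interior}) and fix $\lambda_n\downarrow\Lambda$ with $\lambda_n>\Lambda$. For each $n$, Corollary~\ref{representation interior} provides a nontrivial minimizer $V_n\in H^1(\R^{d+1})$ of $I_{\lambda_n,D}$ subject to $\tr V_n=0$ on $\R^d\setminus D$ with $0\le V_n\le1$, $V_n$ even in $x_{d+1}$ and harmonic off $\R^d\times\{0\}$, $V_n$ the harmonic extension of $u_n:=\tr V_n$, and $u_n=0$ on $\R^d\setminus D$. Set $a_n:=\int_{\R^{d+1}}|\nabla V_n|^2$. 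Minimality yields the uniform upper bound $a_n\le I_{\lambda_n,D}(V_n)\le I_{\lambda_n,D}(0)=\frac{\pi}{4}\lambda_n^2\calL_d(D)\le\frac{\pi}{4}\lambda_1^2\calL_d(D)=:M$, and $a_n>0$ (else $V_n$ would be constant, hence $0$).

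The crucial point, and the step I expect to be the main obstacle, is a \emph{uniform lower bound}. Since $\lambda_n>\Lambda$, Proposition~\ref{existence interior} furnishes some $W_n$ with $I_{\lambda_n,D}(W_n)<I_{\lambda_n,D}(0)$, so $I_{\lambda_n,D}(V_n)\le I_{\lambda_n,D}(W_n)<I_{\lambda_n,D}(0)$; using $0\le u_n\le1$ this rearranges to $a_n<\frac{\pi}{4}\lambda_n^2\,\calL_d(\{u_n=1\}\cap D)$. Now Lemma~\ref{technical lemma} applies to $V_n$ and gives $\calL_d(\{u_n=1\}\cap D)\le C\,a_n^{\,p}$ with $p>1$; combining, $a_n<\frac{\pi}{4}\lambda_n^2 C a_n^{\,p}$, i.e.\ $a_n^{\,p-1}>\frac{4}{\pi\lambda_n^2 C}\ge\frac{4}{\pi\lambda_1^2 C}$, whence $a_n\ge c_0>0$ and then $\calL_d(\{u_n=1\}\cap D)>\frac{4a_n}{\pi\lambda_n^2}\ge c_1>0$, both uniformly in $n$. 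This is exactly what must be excluded: at $\lambda=\Lambda$ the trivial function already realizes the minimum $I_{\Lambda,D}(0)$ of $I_{\Lambda,D}$ by Proposition~\ref{existence interior}, so all the content of the statement lies in exhibiting a nontrivial competitor with the same value, and Lemma~\ref{technical lemma} is precisely the device that prevents the $V_n$ from collapsing as $\lambda_n\to\Lambda$.

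Finally I would pass to the limit. By $a_n\le M$ and $0\le V_n\le1$ the sequence $(V_n)$ is bounded in $H^1(B_R^{d+1})$ for every $R$ (and $V\in H^1(\R^{d+1})$, exactly as in the proof of Lemma~\ref{existence}); extracting a subsequence, $V_n\rightharpoonup V$ weakly in each $H^1(B_R^{d+1})$, $V_n\to V$ a.e., and $u_n\to u:=\tr V$ in $L^2(\R^d)$ and a.e.\ (the traces $u_n$ are bounded in $H^{1/2}(\R^d)$ and supported in $\overline D$, together with continuity of the trace). Then $V$ inherits $0\le V\le1$, evenness, harmonicity off the hyperplane, is the harmonic extension of $u$, and $u=0$ on $\R^d\setminus D$. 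Nontriviality: since $u_n\ge0$ and $u_n\equiv1$ on $\{u_n=1\}\cap D$, one has $\int_D u_n\ge\calL_d(\{u_n=1\}\cap D)\ge c_1$, and $\int_D u_n\to\int_D u$, so $\int_D u\ge c_1>0$ and $V\neq0$. Minimality: weak lower semicontinuity of $W\mapsto\int|\nabla W|^2$ (on each $B_R^{d+1}$, then $R\to\infty$), Fatou applied to $\1_{\{u_n<1\}\cap D}$ (if $u(x)<1$ then $u_n(x)<1$ for $n$ large), and $\lambda_n\to\Lambda$ together give
$$
I_{\Lambda,D}(V)\le\liminf_n I_{\lambda_n,D}(V_n)\le\lim_n I_{\lambda_n,D}(0)=I_{\Lambda,D}(0),
$$
while $I_{\Lambda,D}(0)=\min\{I_{\Lambda,D}(W):\tr W=0\text{ on }\R^d\setminus D\}$ by the case $\lambda\le\Lambda(D)$ of Proposition~\ref{existence interior}. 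Hence $V$ minimizes $I_{\Lambda,D}$ and, being nontrivial, solves Problem~\ref{variational interior}.
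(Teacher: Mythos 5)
Your proof is correct and follows essentially the same route as the paper: extract a weak limit of the nontrivial minimizers for $\lambda_n\downarrow\Lambda(D)$, use lower semicontinuity of the Dirichlet term together with Fatou for the measure term to get minimality at $\lambda=\Lambda(D)$, and invoke Lemma~\ref{technical lemma} to rule out collapse to the trivial function. The only (harmless) difference is organizational: the paper establishes nontriviality by contradiction (assuming $U\equiv 0$ and deriving $I_{\lambda_m,D}(U_m)>I_{\lambda_m,D}(0)$), whereas you turn the same inequality into a direct uniform lower bound $a_n\geq c_0$ and $\calL_d(\{u_n=1\}\cap D)\geq c_1$, which is a clean and valid variant.
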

\begin{proof}
The proof follows with Lemma \ref{technical lemma} similar to the proof of \cite[Proposition 2.3]{DK2010}. By Definition of $\Lambda(D)$ and by Lemma \ref{existence interior}, there is a strictly decreasing sequence $(\lambda_n)_n\subset (\Lambda(D),\infty)$ with $\lim\limits_{n\to\infty}\lambda_n=\Lambda(D)$ such that for every $n\in \N$ there is a nontrivial minimizer $U_n\in H^1(\R^{d+1})$ subject to the constraint $\tr U_n=0$ on $\R^d\setminus D$ and we have
\begin{equation}
\label{starting property}
I_{\lambda_n,D}(U_n)=\int_{\R^{d+1}}|\nabla U_n(x)|^2\ dx+ \frac{\pi}{4}\lambda_n^2\calL_d(\{\tr U_n<1\}\cap D)\leq \frac{\pi}{4}\lambda_n^2 \calL_d(D)=I_{\lambda_n,D}(0).
\end{equation}
Let $r_0=\inf\{r>0\;:\; D\subset B_r^d(0)\}$. Let $X:=\{ u \in H^{1}(\R^{d+1})\;:\;  \tr(u) = 0$ on $D^c\}$ and $X_r:=\{u \in H^{1}(B_r^{d+1}(0))\;:\; \tr(u)=0$ on $D^c\}$ for $r>r_0$. Then, similarly as in the proof of Lemma \ref{existence}, there is $U \in H^1(\R^{d+1})$ such that up to a subsequence 
\begin{align*}
U_{n}\rightharpoonup\ & U &&\text{ weakly in $X_r$ for any $r>r_0$,}\\
U_n\to\ & U&& \text{ almost everywhere in $\R^{d+1}$, and}\\
 1_{\{\tr U_n<1\}\cap D}\stackrel{\ast}{\rightharpoonup}\ &  \gamma &&\text{in $L^{\infty}(\R^d)$,}
\end{align*}
where $\gamma\in L^{\infty}(\R^d)$ satisfies $0\leq \gamma\leq 1$ and it is equal to $1$ almost everywhere in $\{\tr U<1\}\cap D$.
Then, for any $r > r_0$ we have
\begin{align*}
\int_{ B_r^{d+1}(0)}|\nabla U(x)|^2\ dx+\frac{\pi}{4}\Lambda^2(D)\int_{B_r^d(0)}\gamma(x)\ dx
&\leq \liminf_{n\to\infty}  \,\int_{ B_r^{d+1}(0)}|\nabla U_n(x)|^2\ dx+\frac{\pi}{4}\lambda_n^2\calL_d(\{\tr U_n<1\}\cap D)\\
&\leq  \liminf_{n\to\infty}I_{\lambda_n,D}(U_n).
\end{align*}
Sending $r\to\infty$, we find for any $V\in H^1(\R^{d+1})$ with $\tr V=0$ on $\R^{d}\setminus D$
\begin{equation}
\label{minimizer in critical case}
I_{\Lambda(D),D}(U)\leq  \liminf_{n\to\infty}I_{\lambda_n,D}(U_n)\leq  \liminf_{n\to\infty}I_{\lambda_n,D}(V)= I_{\Lambda(D),D}(V).
\end{equation}
Hence $U\in H^1(\R^{d+1})$ is a minimizer of $I_{\Lambda(D),D}$ subject to the constraint $\tr U=0$ on $\R^{d}\setminus D$. To show that indeed $U\not\equiv 0$, assume on the contrary that $U\equiv 0$. Note that the map $T:H^1(\R^{d+1})\to L^p(D)$ is compact for any $p<\frac{2d}{d-1}$ (with $p<\infty$ if $d=1$). Indeed, this follows by the continous embedding $H^{1}(\R^{d+1})\to H^{1/2}(\R^{d})$ and the compact embedding of $H^{1/2}(\R^d)$ into $L^p(\R^d)$. Hence, we have $U_n\to 0$ in $L^q(D)$ for any $q<\frac{2d}{d-1}$ ($q<\infty$ for $d=1$). In particular,
$$
\calL_d(\{U_n\geq 1\}\cap D) \leq \int_{D} |\tr U_n(x)|^q\ dx\to 0\quad\text{for $n\to\infty$.}
$$
Note that due to \eqref{minimizer in critical case}, with $V=0$, we find that 
\begin{equation}
\label{contradiction part 1}
\int_{\R^{d+1}}|\nabla U_n(x)|^2\ dx\to 0\quad\text{for $n\to\infty$.}
\end{equation}
By Lemma \ref{technical lemma}, using that by Lemma \ref{general properties interior} and Corollary \ref{representation interior} the functions $U_n$ satisfy the assumptions of this Lemma, we then find some $C>0$ and $p>1$ such that
\begin{align*}
I_{\lambda_n,D}(U_n)&=I_{\lambda_n,D}(0)+\int_{\R^{d+1}}|\nabla U_n(x)|^2\ dx-\frac{\pi}{4}\lambda_n^2\calL_d(\{\tr U_n\geq 1\}\cap D)\\
&\geq I_{\lambda_n,D}(0)+\int_{\R^{d+1}}|\nabla U_n(x)|^2\ dx-C\frac{\pi}{4}\lambda_n^2\Big(\int_{\R^{d+1}}|\nabla U_n(x)|^2\ dx\Big)^{p}\\
&=I_{\lambda_n,D}(0)+\int_{\R^{d+1}}|\nabla U_n(x)|^2\ dx\Bigg(1-C\frac{\pi}{4}\lambda_n^2\Big(\int_{\R^{d+1}}|\nabla U_n(x)|^2\ dx\Big)^{p-1}\Bigg).
\end{align*}
By \eqref{contradiction part 1}, $p>1$, and since $\lambda_n\to \Lambda(D)$ for $n\to\infty$, it follows that there is $m\in \N$ such that
$$
1-C\frac{\pi}{4}\lambda_m^2\Big(\int_{\R^{d+1}}|\nabla U_m(x)|^2\ dx\Big)^{p-1}>0.
$$
But then $I_{\lambda_m,D}(U_m)>I_{\lambda_m,D}(0)$ in contradiction to \eqref{starting property}. Hence we must have $U\not\equiv 0$ as claimed.
\end{proof}

\begin{cor}
\label{extension limit case}
The statements of Corollary \ref{representation interior} extends to the case $\lambda=\Lambda(D)$.
\end{cor}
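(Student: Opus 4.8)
The plan is to observe that Corollary~\ref{extension limit case} follows almost immediately from Proposition~\ref{limit case}: the latter already produces a nontrivial minimizer $U\in H^1(\Rdp)$ of $I_{\Lambda(D),D}$ subject to $\tr U=0$ on $\R^d\setminus D$, and \emph{every} property asserted in Corollary~\ref{representation interior} was derived there from general facts about such minimizers, not from the strict inequality $\lambda>\Lambda(D)$ — that inequality was used only to produce a competitor with strictly smaller energy, i.e.\ solely to guarantee existence. So the task reduces to re-running the proof of Corollary~\ref{representation interior} verbatim with $\lambda=\Lambda(D)$ and with $U$ the minimizer furnished by Proposition~\ref{limit case}.

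Concretely, I would first apply Lemma~\ref{general properties interior} to $U$ to get $0\le U\le 1$, the local $1/2$-H\"older continuity on compact subsets of $\Rdp\setminus((\R^d\setminus D)\times\{0\})$, and the harmonicity of $U$ in $\{U<1\}\setminus((\R^d\setminus D)\times\{0\})$ and in $\Rdp\setminus(\R^d\times\{0\})$; in particular $U$ is harmonic in $\R^{d+1}_+$. Reflecting the half of $U$ carrying the smaller Dirichlet energy does not increase $I_{\Lambda(D),D}$ (the trace and the measure term are unchanged), so we may assume $U$ is even in $x_{d+1}$; being bounded, harmonic in $\R^{d+1}_+$ and even, it is then the harmonic extension \eqref{extension} of its trace $u:=\tr U\in H^{1/2}(\R^d)$, by the same standard argument used in Corollary~\ref{representation interior}. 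Since $U$ is nontrivial, $u\not\equiv 0$ — otherwise $U$ would be a bounded harmonic function on $\Rdp\setminus(\R^d\times\{0\})$ with vanishing trace, hence $U\equiv 0$ — and $u\ge 0$, so the Poisson representation forces $U>0$ in $\R^{d+1}_+$. This yields the opening assertions of Corollary~\ref{representation interior} together with properties (i) and (ii).

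For property (iii) I would argue exactly as in Lemma~\ref{nonlocal-framework}(iii): using the energy identity \eqref{energy identity}, for every competitor $v\in H^{1/2}(\R^d)$ vanishing on $\R^d\setminus D$ one has $i_{\Lambda(D),D}(v)=I_{\Lambda(D),D}(V)\ge I_{\Lambda(D),D}(U)=i_{\Lambda(D),D}(u)$, where $V$ denotes the symmetrically extended harmonic extension of $v$; hence $u$ minimizes $i_{\Lambda(D),D}$ among all such functions. Testing with $\psi\in C^{\infty}_c(\{u<1\}\cap D)$ and differentiating $t\mapsto i_{\Lambda(D),D}(u+t\psi)$ at $t=0$ (the measure term being locally constant in $t$, since $\{u<1\}\cap D$ is open) shows that $u$ is $1$-harmonic in $\{u<1\}\cap D$, which finishes the verification.

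I do not expect a genuine obstacle: the whole point is that the threshold-specific difficulty — producing a \emph{nontrivial} minimizer precisely at $\lambda=\Lambda(D)$ — has already been isolated in Proposition~\ref{limit case} (via Lemma~\ref{technical lemma}), while the remaining steps are the robust arguments of Lemma~\ref{general properties interior} and Lemma~\ref{nonlocal-framework}. The only mildly delicate point, as before, is the identification of the bounded even minimizer with the harmonic extension of its trace in $\R^{d+1}_+$, which is dealt with by the same ``standard argument'' already invoked in the proof of Corollary~\ref{representation interior}.
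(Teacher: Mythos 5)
Your proposal is correct and follows exactly the paper's route: the paper's proof of this corollary is the one-line observation that Proposition \ref{limit case} supplies the nontrivial minimizer at $\lambda=\Lambda(D)$, after which the arguments proving Corollary \ref{representation interior} (Lemma \ref{general properties interior}, the standard harmonic-extension identification, and the Lemma \ref{nonlocal-framework}-type argument for (iii)) apply verbatim. You have simply spelled out those steps in more detail than the paper does.
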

\begin{proof}
This follows directly from Proposition \ref{limit case} applied in the proofs for the existence.
\end{proof}

\begin{proof}[Proof of Theorem \ref{bernoulli-interior}]
This follows from Lemma \ref{general properties}, Corollary \ref{representation interior}, Proposition \ref{limit case}, Corollary \ref{extension limit case} and \cite[Theorem 1.4]{CRS2010} (see also \cite[Proposition 2.1]{FR2020} for the constant).
\end{proof}

\begin{proof}[Proof of Proposition \ref{monotonicity Lambda}]
Let $\lambda>\Lambda(D_1)$. Then there exists a nontrivial minimizer $V$ of $I_{\lambda,D_1}$ subject to the constraint $\tr V=0$ on $\R^{d}\setminus D_1$ and we have 
\begin{align*}
I_{\lambda,D_2}(V)&=I_{\lambda,D_1}(V)+\frac{\pi}{4}\lambda^2\calL_d(D_2\setminus D_1)\\
&< I_{\lambda,D_1}(0)+\frac{\pi}{4}\lambda^2\calL_d(D_2\setminus D_1)=\frac{\pi}{4}\lambda^2 \calL_d(D_1)+\frac{\pi}{4}\lambda^2\calL_d(D_2\setminus D_1)=\frac{\pi}{4}\lambda^2 \calL_d(D_2)=I_{\lambda,D_2}(0).
\end{align*}
The strict inequality is due to Proposition \ref{existence interior}. Hence, \eqref{small i lambda} holds and Proposition \ref{existence interior} --- in particular Claim (C1) in its proof --- implies $\lambda\geq\Lambda(D_2)$. Since $\lambda>\Lambda(D_1)$ was chosen arbitrarily, the first  claim of Proposition \ref{monotonicity Lambda} follows.

Regarding the homogeneity, it is enough to observe that if $V(x)=U(x/s)$, then $$\{\text{tr\,}V<1\}=s\{\text{tr\,}U<1\}\,,$$ 
whence
\begin{align*}
I_{\lambda/\sqrt{s},sD}(V)&=\frac1{s^2}\int_{\R^{d+1}} \left|(\nabla U)(x/s)\right|^2\,dx+\frac\pi4\frac{\lambda^2}{s}\calL_d(s(\{\text{tr\,}U<1\} \cap D))\\
\\
&=s^{d-1}\int_{\R^{d+1}}\left|\nabla U(x)\right|^2\,dx+\frac\pi4\lambda^2 s^{d-1}\calL_d(\{\text{tr\,}U<1\} \cap D)=s^{d-1}I_{\lambda,D}(U)\,.
\end{align*}
Then there is a one-to-one correspondence between nontrivial solutions of Problem \ref{variational interior} for the couple $(D,\lambda)$ and nontrivial solutions of the same problem for the couple $(sD,s^{-1/2}\lambda)$.
\end{proof}

\begin{proof}[Proof of Proposition \ref{iso inequality}]
Let $\lambda\geq \Lambda(D)$ and let $U$ be a nontrivial solution of Problem \ref{variational interior}. Consider as explained in the proof of Lemma \ref{existence in ball} $U^{\ast}$, which satisfies that $\tr U^{\ast}$ is the radial symmetric rearrangement of $\tr U$ such that it is nonincreasing in the radial direction. And note that via the polarization inequality we have similarly 
$$
I_{\lambda, B}(U^{\ast})\leq I_{\lambda, D}(U),
$$
since the symmetric rearrangement of $D$ is given by $B$ and we have $\calL_d(\{\tr U^{\ast}<1\}\cap B)=\calL_d(\{\tr U<1\}\cap D)$. Since $I_{\lambda,B}(0)=I_{\lambda,D}(0)$ we thus find
\begin{equation}
\label{inequality iso}
I_{\lambda, B}(U^{\ast})\leq I_{\lambda,B}(0).
\end{equation}
If this inequality is strict, then the existence of a nontrivial minimizer $V$ of $I_{\lambda,B}$ subject to the constraint $\tr V=0$ on $\R^{d}\setminus B$ follows by Claim 1 in the proof of Proposition \ref{existence interior} and we may conclude $\Lambda(D)\geq \Lambda(B)$.\\
If we have equality in \eqref{inequality iso}, then either $U^{\ast}$ is a nontrivial minimizer of $I_{\lambda,B}$ subject to the constraint $\tr U^{\ast}=0$ on $\R^{d}\setminus B$ of $I_{\lambda,B}$, or we must have 
$$
\inf_{\substack{V\in H^1(\R^{d+1})\\
\tr V=0\, \text{on $\R^{d}\setminus B$}}}  I_{\lambda,B}(V)<I_{\lambda,B}(0)
$$  
and thus there exists also a nontrivial minimizer $V$ of $I_{\lambda,B}$ subject to the constraint $\tr V=0$ on $\R^{d}\setminus B$. Thus also in the equality situation we may conclude $\Lambda(D)\geq \Lambda(B)$.\\
It remains to show that $\Lambda(D)=\Lambda(B)$ only holds if $D=B$ up to a set of measure zero. For this, assume $D\subset \R^d$ is any open nonempty bounded set with $\Lambda:=\Lambda(D)=\Lambda(B)$. Then we find also with the above choice of $U$ and $U^{\ast}$ for $\lambda=\Lambda$ that
$$
\frac{\pi}{4} \Lambda^2 \calL_d(D)= \frac{\pi}{4} \Lambda^2 \calL_d(B)=I_{\Lambda,B}(0)\leq I_{\Lambda,B}(U^{\ast})\leq I_{\Lambda,D}(U)= I_{\Lambda,D}(0)= \frac{\pi}{4} \Lambda^2 \calL_d(D).
$$
Thus $U^{\ast}$ is a nontrivial minimizer of $I_{\Lambda,B}$ subject to the constraint $\tr U^{\ast}=0$ on $\R^{d}\setminus B$ and it is uniquely determined.  Moreover, we also have $I_{\Lambda,B}(U^{\ast})= I_{\Lambda,D}(U)$. By \cite[Theorem 1.1]{BZ1988} or \cite[Theorem 1.1.]{FV2003}, it follows that we must have $U=U^{\ast}$ almost everywhere, but then $D=B$ almost everywhere and the claim follows.
\end{proof}

\begin{proof}[Proof of Lemma \ref{estimate bernoulli}]
By scaling we have $\Lambda(B_r^d(0)) = \Lambda(B_1^d(0))/\sqrt{r}$, so we may assume that $r = 1$ and we write $B=B_1^d(0)$ for simplicity. Put $\lambda = \frac{2 \sqrt{d}}{\sqrt{\pi}} 2^{(d+3)/2}$. By Proposition \ref{existence interior} is is enough to show that there exists $V \in H^1(\R^{d+1})$ such that $\tr V = 0$ on $\R^d \setminus B$ satisfying $I_{\lambda, B}(V) < \frac{\pi}{4}\lambda^2 \calL_d(B)$. This is equivalent to 
\begin{equation}
\label{gradient_estimate}
\int_{\R^{d+1}} |\nabla V(x)|^2 \, dx  < \frac{\pi}{4}\lambda^2 \calL_d(\{ \tr V \ge  1 \} \cap B ).
\end{equation}
We begin with the case $d\neq 2$. Denote $x = (\tilde{x},x_{d+1})$, where $\tilde{x} = (x_1,\ldots,x_d) \in \R^{d}$, $x_{d+1} \in \R$. Put $\tilde{\nabla} = 
\left(\frac{\partial}{\partial x_1}, \ldots, \frac{\partial}{\partial x_d}\right)$. Let $f \equiv 0$ on $B^c$, $f \equiv 1$ on $B_{1/2}^d(0)$, and $f(\tilde{x}) = (2^{d - 2} - 1)^{-1} (|\tilde{x}|^{2 - d} - 1)$ for $\tilde{x} \in B  \setminus B_{1/2}^d(0)$. Note that 
\begin{align*}
\int_{B  \setminus B_{1/2}^d(0)} |\tilde{\nabla} f(\tilde{x})|^2 \, d\tilde{x} 
&= \frac{(d-2)^2}{(2^{d-2}-1)^2}\int_{B  \setminus B_{1/2}^d(0)} |\tilde{x}|^{2-2d} \, d\tilde{x}  
=\frac{ 2(d-2)^2 \pi^{\frac{d}{2}}(2^{d}-1)}{d (2^{d-2}-1)^2\Gamma(\frac{d}{2} )}.
\end{align*}
Put $V(x) = f(\tilde{x}) e^{-d |x_{d+1}|}$. We have
\begin{align*}
\int_{\R^{d+1}} |\nabla V(x)|^2 \, dx 
&= \int_{B  \times \R} \left|\frac{\partial V}{\partial x_{d+1}}(x)\right|^2 \, dx 
+ \int_{B  \setminus B_{1/2}^d(0)} |\tilde{\nabla} f(\tilde{x})|^2 \, d\tilde{x} \int_{\R} e^{-2 d |x_{d+1}|} \, dx_{d+1}\\
&< \frac{2\pi^{\frac{d}{2}} }{ d\Gamma(\frac{d}{2} )}  \int_{-\infty}^{\infty} d^2 e^{-2d x_{d+1}} \, d x_{d+1} +\frac{ 2(d-2)^2 \pi^{\frac{d}{2}}(2^{d}-1)}{d^2 (2^{d-2}-1)^2\Gamma(\frac{d}{2} )}<9\frac{\pi^{\frac{d}{2}} }{ \Gamma(\frac{d}{2} )}.
\end{align*}
On the other hand, $\calL_d(\{tr V = 1 \} \cap B ) = \calL_d(B_{1/2}^d(0)) = \frac{\pi^{d/2}}{2^d (d/2) \Gamma(d/2)}$. Hence \eqref{gradient_estimate} holds, which gives the upper bound for $d\neq 3$.\\
For $d=2$ a similar calculation can be done with $V(\tilde{x},x_{d+1})=f(\tilde{x})e^{-d|x_{d+1}|}$, where $f$ is similar as above but with $f(\tilde{x})= -\ln|\tilde{x}|/\ln(2)$ for   $1/2<|\tilde{x}|<1$ and we leave the details for the reader.\\
For the lower bound, recall that by Proposition \ref{limit case} and Corollary \ref{extension limit case}, it follows that there is a continuous nontrivial minimizer $V_B$ of $I_{\lambda,B}$ with $\tr V=0$ on $\R^d\setminus B$ as long as $\lambda\geq \Lambda(B)$. Moreover, it follows that $v=\tr V_B$ is radially symmetric and nonincreasing in the radial direction. In particular, there is $\rho_v\in(0,1)$ such that $\{x\in \R^d\;:\; v(x)=1\}$ is given by $K_v=B_{\rho_v}^d(0)$. By Proposition \ref{existence interior} and Corollary \ref{representation interior}, we have
\begin{align*}
\frac{\pi}{4}\Lambda^2(B)\calL_d(B)
&=\calA_d[v]_1 + \frac{\pi}{4}\Lambda^2(B)\,\Big(\calL_d(B)- \calL_d(K_v)\Big),
\end{align*}
so that
\begin{align*}
\frac{\pi}{4}\Lambda^2(B)&\geq \frac{\calA_d}{ \calL_d(K_v)}[v]_1\geq \frac{2\calA_d}{ \calL_d(K_v)}\int_{K_v}\int_{\R^d\setminus B}|x-y|^{-d-1}\ dydx.
\end{align*}
Since for $x\in K_v\cap B_{1/2}^d(0)$ and $y\in \R^d\setminus B$ we have $|x-y|\leq \frac{3}{2}|y|$ it thus follows that
\begin{align*}
\frac{\pi}{4}\Lambda^2(B)&\geq \frac{2\calA_d}{ \calL_d(K_v)}\int_{K_v\cap B_{1/2}^d(0)}\int_{\R^d\setminus B}|x-y|^{-d-1}\ dydx\geq \frac{\calA_d}{\rho_v^{d}} \frac{2^{d+1}}{3^{d+1}} \min(\rho_v^{d},2^{-d}) \int_{\R^d\setminus B}|y|^{-d-1}\ dy\\
&\geq  \frac{4 \calA_d\pi^{\frac{d}{2}}}{3^{d+1} \Gamma(\frac{d}{2} )} \int_{1}^{\infty}r^{-2}\ dr=\frac{2 \Gamma(\frac{d+1}{2} ) }{3^{d+1}\sqrt{\pi} \Gamma(\frac{d}{2} )}> \frac{\sqrt{d}}{3^{d+2} }
\end{align*}
as claimed.
\end{proof}

\section{The interior Bernoulli problem for the spectral half Laplacian}\label{section spectral}

In the whole section we assume that $d \ge 2$. Fix $\lambda > 0$ and an open, nonempty, bounded convex set $D \subset \R^d$. Recall the definition of $\calF(D,\lambda)$ given in Definition \ref{classF} for certains sets $K\subset D$ and associated functions $v\in C(\overline{D}\times \R)$. Note that $v \in \calF(D,\lambda)$ in particular satisfies $v \equiv 0$ on $\partial{D} \times \R$. We extend $v$ on the whole $\R^{d+1}$ by putting $v \equiv 0$ on $(\overline{D} \times \R)^c$.

 Let  $\R_+^d = \{z \in \R^{d}: \, z_1 > 0\}$, $e_1^d = (1,0,\ldots,0) \in \R^d$.

\begin{lemma}
\label{lemmadist}
If $K\in\calF(D,\lambda)$, then $\dist(K,\partial D)\geq\frac{1}{\lambda^2}$.
\end{lemma}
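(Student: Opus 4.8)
The plan is to read the conclusion almost directly off the overdetermined condition \eqref{overdetermined}, exploiting that for $K\in\calF(D,\lambda)$ the function $v_K$ is continuous on $\overline D\times\R$ and vanishes on $\partial D\times\R$. First I would rewrite \eqref{overdetermined}: since $v_K$ takes values in $[0,1]$ and $\delta_K(y)>0$ for $y\in D\setminus K$ (because $K$ is closed), the condition says $1-v_K(y,0)\le\lambda\,\delta_K^{1/2}(y)$, i.e.
$$
v_K(y,0)\ \ge\ 1-\lambda\,\delta_K^{1/2}(y)\qquad\text{for all }y\in D\setminus K .
$$
In particular $v_K(\cdot,0)$ is strictly positive at every $y\in D\setminus K$ with $\delta_K(y)<1/\lambda^2$.

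Next I would push this inequality to $\partial D$. Recall that $K$ is compact with $K\subset D$ and $D$ open, so $\dist(K,\partial D)>0$ and in particular $\partial D\cap K=\emptyset$; moreover, as observed right after Definition \ref{classF}, $K\in\calF(D,\lambda)$ forces $v_K\in C(\overline D\times\R)$, hence $y\mapsto v_K(y,0)$ is continuous on $\overline D$ and equals $0$ on $\partial D$. Fix $z\in\partial D$. Since $z\in\overline D\setminus K$ and $K$ is closed, there is a sequence $y_n\in D\setminus K$ with $y_n\to z$. The function $\delta_K$ is $1$-Lipschitz, so $\delta_K(y_n)\to\delta_K(z)$, and by continuity of $v_K(\cdot,0)$ the displayed inequality passes to the limit:
$$
0=v_K(z,0)\ \ge\ 1-\lambda\,\delta_K^{1/2}(z),\qquad\text{so}\qquad \delta_K(z)\ \ge\ \frac1{\lambda^2}.
$$

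Since $z\in\partial D$ was arbitrary and $\dist(K,\partial D)=\inf_{z\in\partial D}\dist(z,K)=\inf_{z\in\partial D}\delta_K(z)$, taking the infimum over $z$ yields $\dist(K,\partial D)\ge 1/\lambda^2$, which is the assertion. There is no real obstacle in this argument: the only step requiring a word of justification is the passage to the boundary, and it is legitimate precisely because the class $\calF(D,\lambda)$ has been defined so that $v_K$ is continuous up to $\partial D$ (which in turn relies on the convexity of $D$). If one preferred to avoid the limiting argument, one could equivalently argue by contradiction, assuming $\dist(K,\partial D)<1/\lambda^2$ and producing a point $y\in D\setminus K$ arbitrarily close to $\partial D$ with $\delta_K(y)<1/\lambda^2$, whence $v_K(y,0)>0$, contradicting $v_K(\cdot,0)\to 0$ at $\partial D$.
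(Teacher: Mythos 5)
Your proof is correct and is essentially the paper's own argument: both exploit that $v_K$ is continuous up to $\partial D$ and vanishes there, and pass to the limit in \eqref{overdetermined} to get $\delta_K(z)\geq 1/\lambda^2$ at boundary points. The only cosmetic difference is that the paper evaluates the limit at a single distance-minimizing point $w\in\partial D$, while you establish the bound at every $z\in\partial D$ and then take the infimum.
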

\begin{proof}
Let $w\in\partial D$ such that $\dist(K,\partial D)=\dist(w,K)$. Since $v_K(w,0)=0$, by the boundary conditions in \eqref{problemv}, and $v_K$ is continuous up to $\partial D$, by \eqref{overdetermined} we have
$$
\frac{1}{\text{dist}(\partial D,K)^{1/2}}=\lim_{y\to w}\frac{|v_{K}(y,0)-1|}{\delta_K^{1/2}(y)}\le\sup_{y\in D\setminus K} \frac{|v_{K}(y,0)-1|}{\delta_K^{1/2}(y)} \le \lambda\,,
$$
which gives the desired estimate.
\end{proof}

\begin{lemma}
\label{lemmasupv}
If $\calG\subseteq\calF(D,\lambda)$ and $K=\bigcup_{F\in\calG}F$, then $K^*=\overline{\text{conv}(K)}\in\calF(D,\lambda)$.
\end{lemma}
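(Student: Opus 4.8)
The plan is to show that $v_{K^*}$, the solution of \eqref{problemv} with obstacle $K^*=\overline{\text{conv}(K)}$, still satisfies the overdetermined bound \eqref{overdetermined} with the same $\lambda$. The natural strategy is to compare $v_{K^*}$ from below against a suitable ``sup of subsolutions'' built from the family $\calG$, and then to exploit convexity of $D\times\R$ together with the symmetry/monotonicity properties of the members of $\calG$ recorded before the statement of Theorem \ref{thm_int_spectral} (evenness in the last variable and monotone decay in $|y|$). First I would set $\overline v := \sup_{F\in\calG} v_F$ on $\overline D\times\R$, extended by $0$ outside. Each $v_F$ is subharmonic on $(D\times\R)\setminus(F\times\{0\})$ hence on $(D\times\R)\setminus(K\times\{0\})$, so $\overline v$ is subharmonic there (an upcrossing/regularization of a supremum of subharmonic functions), vanishes on $\partial D\times\R$, equals $1$ on $K\times\{0\}$, and takes values in $[0,1]$; therefore $\overline v\le v_K\le v_{K^*}$ by the maximum principle. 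The key point is that, for each $F$, the bound \eqref{overdetermined} says $|v_F(y,0)-1|\le\lambda\,\delta_F(y)^{1/2}$, and since $F\subseteq K\subseteq K^*$ we have $\delta_F(y)\ge\delta_{K^*}(y)$ pointwise only in the wrong direction — so a direct pointwise comparison of distances is not enough, and this is where the main work lies.

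The main obstacle is therefore to upgrade the distance estimate from $K$ (or the individual $F$'s) to the convex hull $K^*$. Here I would use the following geometric observation: for a convex body $K^*=\overline{\mathrm{conv}(K)}$ and a point $y\in D\setminus K^*$, let $p$ be the nearest point of $K^*$ to $y$; then $p$ lies on $\partial K^*$ and the supporting hyperplane at $p$ separates $y$ from $K^*$, hence from $K$, and every point of $K$ is on the far side. One then wants to compare $v_{K^*}(y,0)$ with the value at $(y,0)$ of a half-space barrier, namely the harmonic function in the half-space $\{z_1>0\}\subset\R^{d+1}$ with boundary value $1$ on $\{z_1=0\}$ translated/rotated so that its zero boundary sits on the separating hyperplane through $p$ pushed out to $\partial D$; the explicit one-dimensional-in-the-normal-direction subsolution $z\mapsto 1-c\,(\text{distance to hyperplane})^{1/2}$ of the type used implicitly in Definition \ref{classF} is the model. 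Using convexity of $D$ (so that the separating hyperplane genuinely separates $D\setminus K^*$ on the near side from the bulk), together with the fact that for some single $F_0\in\calG$ the point $p$ is within distance $\mathrm{dist}(p,K)=\mathrm{dist}(p,F_0)$ of $F_0$ — actually $\mathrm{dist}(y,K^*)\ge \mathrm{dist}(y,F_0)-|\text{something}|$ must be handled carefully — one transports the bound \eqref{overdetermined} valid for $v_{F_0}$ to a bound for $v_{K^*}$ near $p$.

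Concretely, the cleanest route is probably: fix $y_0\in D\setminus K^*$, let $H$ be a hyperplane separating $y_0$-side from $K^*$ with $K^*$ on the closed half-space $H^-$, and note $\delta_{K^*}(y_0)=\mathrm{dist}(y_0,\partial K^*)\le \mathrm{dist}(y_0,H\cap K^*\text{-facet})$; pick $F_0\in\calG$ with a point realizing (approximately) $\mathrm{dist}(y_0,K)$, so that $\delta_{F_0}(y_0)\le \delta_{K^*}(y_0)+\eps$. Then \eqref{overdetermined} for $F_0$ gives $v_{F_0}(y_0,0)\ge 1-\lambda\,\delta_{F_0}(y_0)^{1/2}\ge 1-\lambda(\delta_{K^*}(y_0)+\eps)^{1/2}$, and since $v_{F_0}\le \overline v\le v_{K^*}$ pointwise we conclude $v_{K^*}(y_0,0)\ge 1-\lambda(\delta_{K^*}(y_0)+\eps)^{1/2}$; letting $\eps\to 0$ and taking the supremum over $y_0\in D\setminus K^*$ yields $\sup_{y\in D\setminus K^*}\frac{|v_{K^*}(y,0)-1|}{\delta_{K^*}(y)^{1/2}}\le\lambda$, i.e. $K^*\in\calF(D,\lambda)$. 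I should also check that $K^*$ is a legitimate obstacle, i.e. $K^*\subset D$: this is immediate since $D$ is convex and contains $K$, so $\overline{\mathrm{conv}(K)}\subseteq\overline D$, and by Lemma \ref{lemmadist} applied to each $F$ (or directly to $K$) one gets $\mathrm{dist}(K,\partial D)\ge\lambda^{-2}>0$, which by convexity of $D$ forces $\mathrm{dist}(K^*,\partial D)\ge\lambda^{-2}>0$ as well, so $K^*$ is a compact subset of $D$. The one genuinely delicate point to be careful about in the writeup is the passage ``$\delta_{F_0}(y_0)\le\delta_{K^*}(y_0)+\eps$'': it uses that the nearest point of $K^*$ to $y_0$, being in $\overline{\mathrm{conv}(K)}$, can be approximated by convex combinations of points of $K$, each of which lies in some member of $\calG$, and that one of those members already gives a competitor point not much farther from $y_0$ — this is where I expect to spend the most care.
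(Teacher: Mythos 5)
Your setup (the supremum $\overline v=\sup_{F\in\calG}v_F$ is subharmonic, the comparison $\overline v\le v_K\le v_{K^*}$, and the verification that $K^*$ is a compact subset of $D$ via Lemma \ref{lemmadist} and convexity) is correct and agrees with the paper. But the concluding argument has a genuine gap at exactly the point you flagged: you need $\delta_{F_0}(y_0)\le\delta_{K^*}(y_0)+\eps$ for some $F_0\in\calG$, i.e.\ that $\dist(y_0,K)\le\dist(y_0,K^*)+\eps$ with $K=\bigcup_{F\in\calG}F$. Since $K\subseteq K^*$ the inequality goes the wrong way, $\dist(y_0,K)\ge\dist(y_0,K^*)$, and the discrepancy can be arbitrarily large: if $\calG$ consists of two small balls far apart and $y_0$ lies just outside the middle of the segment joining them, then $\delta_{K^*}(y_0)$ is tiny while $\delta_F(y_0)$ is of the order of the mutual distance for every $F\in\calG$. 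Consequently the chain $v_{K^*}(y_0,0)\ge v_{F_0}(y_0,0)\ge 1-\lambda\,\delta_{F_0}(y_0)^{1/2}$ only gives a lower bound in terms of $\delta_{F_0}(y_0)^{1/2}$, which is much larger than $\delta_{K^*}(y_0)^{1/2}$ and hence weaker than \eqref{overdetermined} for $K^*$. The half-space barrier you sketch in the second paragraph is not developed far enough to repair this; near the ``new'' boundary points of $K^*$ created by taking the convex hull, no single member of $\calG$ controls $v_{K^*}$.

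The missing idea is the paper's use of the quasi-concave envelope: one replaces $v:=\sup_F v_F$ by the function $v^*$ whose superlevel sets are the convex hulls of those of $v$. By \cite[Theorem 3.2]{CS2003}, $v^*$ is still subharmonic (in the viscosity sense) in $(D\times\R)\setminus(K^*\times\{0\})$ and $\{v^*=1\}=K^*\times\{0\}$, so $v_{K^*}\ge v^*$ by comparison. The boundary estimate is then transferred as follows: for $y\in D\setminus K^*$ with nearest point $x_y\in\partial K^*$, write $x_y=\sum_k\mu_k x_{y,k}$ with $x_{y,k}\in K$, set $y_k=x_{y,k}+y-x_y$, and use
$$
v^*(y,0)\ \ge\ \min_k v(y_k,0),\qquad |y_k-x_{y,k}|=|y-x_y|=\delta_{K^*}(y)\ \ge\ \delta_K(y_k),
$$
so that $\bigl(1-v^*(y,0)\bigr)/\delta_{K^*}(y)^{1/2}\le\bigl(1-v(y_{k_0},0)\bigr)/\delta_K(y_{k_0})^{1/2}\le\lambda$ for the minimizing index $k_0$. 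This is the device that handles points of $\partial K^*$ which are genuine convex combinations of far-apart points of $K$, and it is precisely the step your proposal cannot reproduce.
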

\begin{proof}
Notice first that $K\subseteq D$ by the convexity of $D$; indeed, $\dist(K,\partial D) \ge \frac{c}{\lambda^2}$ by the convexity of $D$ and Lemma \ref{lemmadist}.

For any $F\in \calG $ let $v_F$ be the solution of \eqref{problemv} for $F$, and set $$v(x)=\sup_{F\in \calG} v_F(x).$$ 
Then $\{x\,:\,v(x)=1\}=K$. Moreover, as it is well known that the supremum of a family of harmonic functions is subharmonic, we have that $v$ is subharmonic in $(D\times\R)\setminus(K\times\{0\})$.  Finally, we notice that 
$\delta_K(y)=\sup_{F\in\calG}\delta_F(y)$, then for every  $y \in D \setminus K$ we have
\begin{equation*}
\frac{|v(y,0) - 1|}{\delta_K(y)^{1/2}}=\frac{1-v(y,0)}{\delta_K(y)^{1/2}}=\sup_{F\in \calG} \frac{1-v(y,0)}{\delta_F(y)^{1/2}} \le \sup_{F\in \calG} \frac{1-v_F(y,0)}{\delta_F(y)^{1/2}}\,,
\end{equation*}
whence $v$ clearly satisfies \eqref{overdetermined} for $F\in\calF(D,\lambda)$ for every $F\in\calG$. Notice that we have by definition $0\leq v\leq 1$. By standard arguments $v \in C(\R^{d+1})$. Let $v_{K}$ denote the solution of \eqref{problemv} associated to $K$. Then since $v$ is subharmonic we have $v_{K}\geq v$ in $D\times \R$ and in particular $v\to 0$ for $|x|\to \infty$ since already $v_K$ has this property as mentioned in the introduction.

Now let $v^*$ be the quasi-concave envelope of $v$. We understand the quasi-concave envelope of a function as in \cite{CS2003}, that is the function $v^*$ whose superlevel sets are the convex hulls of the corresponding superlevel sets of $v$. We recall that $v^*$ is explicitly defined as follows
$$
v^*(x)= \sup\left\{\min(v(x_1),\dots,v(x_{d+2}))\,:\,x_1,\dots,x_{d+2}\in \R^{d+1},\,\mu\in\Gamma_{d+2},\,\sum_{i=1}^{d+2}\mu_ix_i=x\right\}\,,
$$
where $\Gamma_k=\{\mu=(\mu_1,\dots,\mu_k)\,:\,\mu_i\geq0,\,\sum_{i=1}^k\mu_i=1\}$ for $k\geq 2$.
Notice that $v^*\geq v$ and $\{v^*=1\}=K^*$ by the very definition, and $v^*\in C(\overline{D}\times\R)$ by \cite[Lemma 2.2]{CS2003} using that we have $v\to 0$ for $|x|\to \infty$ and thus also $v^*\to0$ for $|x|\to \infty$.
Furthermore, Theorem 3.2 of \cite{CS2003} implies that 
\begin{equation}\label{subharmonic-convex}
\Delta v^*\geq 0\,\,\,\text{ in the viscosity sense in } (D\times\R) \setminus(K^*\times\{0\})\,.
\end{equation}
To see that $v^*$ indeed satisfies \eqref{overdetermined}, let $y\in D\setminus K^*$ (otherwise there is nothing to show) and fix the unique $x_y\in \partial K^*$ such that $\dist(y,K^*)=|y- x_y|$, which is possible, since $K^*$ is convex. If $ x_y\in \partial K$, then we have immediately 
$$
  \frac{|v^*(y,0) - 1|}{\delta_{K^{\ast}}^{1/2}(y)}=  \frac{1 - v^*(y,0)}{|x_y-y|^{1/2}}\leq \frac{1 - v(y,0)}{|x_y-y|^{1/2}} \leq  \lambda.
$$
If otherwise $x_y\notin \partial K$, there exist $x_{y,1},\ldots,x_{y,d}\in K$ and $\mu_1,\ldots,\mu_d\in[0,1]$ such that
$$
\sum_{k=1}^d\mu_k=1 \quad\text{and}\quad \sum_{k=1}^{d}\mu_kx_{y,k}=x_y.
$$
Put $y_k=x_{y,k}+y-x_y$. Then also $\sum_{k=1}^{d}\mu_k y_k=y$ and thus
$$
v^*(y,0)\geq \min \{ v(y_1,0),\ldots,v(y_d,0)\}.
$$
Without loss, we may assume that the above minimum is attained at $v(y_1,0)$ and then we have
\begin{align*}
\frac{|v^*(y,0) - 1|}{\delta_{K^{\ast}}^{1/2}(y)}
=\frac{ 1 - v^*(y,0) }{|y-x_y|^{1/2}}
\leq  \frac{ 1- v(y_1,0) }{ |y_1 - x_{y, 1}|^{1/2}}
\leq\frac{|v(y_1,0) - 1|}{\delta_{K}^{1/2}(y_1)} 
\leq \lambda
\end{align*}
and thus $v^*$ satisfies \eqref{overdetermined}.

Now let us consider the solution $v_{K^*}$ of \eqref{problemv} associated to $K^*$: thanks to \eqref{subharmonic-convex} we have $v_{K^*}\geq v^*$ in $D$ and since $v_{K^*}=v^*=1$ on $\partial K^*$, we have that 
\eqref{overdetermined} for $v^*$ easily implies \eqref{overdetermined} for $v_{K^*}$, which concludes the proof.
\end{proof}

A straightforward corollary of the previous lemma is the following.
\begin{cor}
\label{lemma_uDlambda}
Assume that $\calF(D,\lambda)$ is not empty and define
$$
K_{D,\lambda}=\textrm{conv}\left(\bigcup_{K\in\calF(D,\lambda)}K\right)
$$
Then $K_{D,\lambda}\in\calF(D,\lambda)$. 
\end{cor}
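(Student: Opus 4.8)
The plan is to deduce Corollary \ref{lemma_uDlambda} directly from Lemma \ref{lemmasupv} by taking the family $\calG = \calF(D,\lambda)$ itself. First I would observe that, since $\calF(D,\lambda)$ is nonempty, the union $K := \bigcup_{K'\in\calF(D,\lambda)} K'$ is a nonempty subset of $D$; indeed, each $K'$ is a nonempty compact subset of $D$, so $K$ is nonempty and, by Lemma \ref{lemmadist} together with the convexity of $D$, every member $K'$ satisfies $\dist(K',\partial D)\ge \frac{1}{\lambda^2}$, hence $\dist(K,\partial D)\ge\frac{1}{\lambda^2}>0$. In particular $K$ (and therefore also its convex hull, which stays at the same distance from $\partial D$ by convexity of $D$) is a bounded set whose closure $\overline{\mathrm{conv}(K)}$ is a compact subset of $D$. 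This is exactly the object $K_{D,\lambda}=\overline{\mathrm{conv}(K)}$ appearing in the statement (up to the harmless identification of $\mathrm{conv}(\cdot)$ with its closure, which Lemma \ref{lemmasupv} already builds in).

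Next I would simply invoke Lemma \ref{lemmasupv} with the choice $\calG=\calF(D,\lambda)\subseteq\calF(D,\lambda)$: its conclusion states precisely that $K^\ast=\overline{\mathrm{conv}(K)}\in\calF(D,\lambda)$, which is the assertion $K_{D,\lambda}\in\calF(D,\lambda)$. The only point requiring a line of care is that Definition \ref{classF} asks members of $\calF(D,\lambda)$ to be compact subsets of $D$; this is guaranteed because $\overline{\mathrm{conv}(K)}$ is closed by construction, bounded since $D$ is bounded, and contained in $D$ by the distance estimate above, so its closure does not meet $\partial D$.

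I do not anticipate a genuine obstacle here: Corollary \ref{lemma_uDlambda} is a straightforward specialization of Lemma \ref{lemmasupv}. If anything, the only subtlety worth flagging explicitly is that the supremum defining the subsolution $v=\sup_{F\in\calF(D,\lambda)}v_F$ is taken over a possibly uncountable family, but this causes no difficulty: the pointwise supremum of harmonic functions bounded by $1$ is subharmonic and still vanishes at infinity (being dominated by the single solution $v_K$ of \eqref{problemv} for $K$, as noted in the proof of Lemma \ref{lemmasupv}), so all the ingredients of Lemma \ref{lemmasupv} remain available verbatim. Hence the corollary follows immediately.

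\begin{proof}
Since $\calF(D,\lambda)$ is not empty, set $\calG=\calF(D,\lambda)$ and $K=\bigcup_{F\in\calG}F$. By Lemma \ref{lemmadist} and the convexity of $D$, each $F\in\calG$ satisfies $\dist(F,\partial D)\ge\frac{1}{\lambda^2}$, so $\dist(K,\partial D)\ge\frac{1}{\lambda^2}>0$; in particular $K$ is a nonempty bounded subset of $D$ and $\overline{\mathrm{conv}(K)}$ is a compact subset of $D$. Applying Lemma \ref{lemmasupv} with this choice of $\calG$ yields $K^*=\overline{\mathrm{conv}(K)}\in\calF(D,\lambda)$. Since $K_{D,\lambda}=\mathrm{conv}\big(\bigcup_{K\in\calF(D,\lambda)}K\big)$ coincides (up to taking closure, which is already incorporated in $K^*$) with $K^*$, we conclude $K_{D,\lambda}\in\calF(D,\lambda)$.
\end{proof}
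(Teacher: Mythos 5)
Your proof is correct and is exactly the argument the paper intends: the corollary is stated there as an immediate consequence of Lemma \ref{lemmasupv} applied with $\calG=\calF(D,\lambda)$, and your additional remarks (the uniform distance bound from Lemma \ref{lemmadist} guaranteeing compact containment in $D$, and the harmless identification of $\mathrm{conv}$ with its closure) are the right points to flag.
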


For further convenience, let us denote as $u_{D,\lambda}$ the solution of \eqref{problemv} for $K_{D,\lambda}$ and notice that $u_{D,\lambda}$ satisfies \eqref{overdetermined} by the above corollary.

Following the proof of Lemma \ref{lemmasupv}, we also have the following.
\begin{lemma}\label{subharmonic}
Let $K\subset D$ be a nonempty compact set and assume there is $v\in C(\overline{D}\times \R)$ satisfying in the viscosity sense
\begin{equation}\label{problemvsub}
\left\{\begin{array}{ll}
\Delta v \geq 0\quad&\text{in }(D\times\R)\setminus(K\times\{0\}),\\
v\leq 0\quad&\text{on }\partial D\times\R,\\
v\leq 1\quad&\text{in }K\times\{0\},\\
\limsup_{|x|\to\infty} v(x)\leq  0.\quad&
\end{array}\right.
\end{equation}
If $v$ satisfies additionally  $\sup_{y\in D\setminus K} \frac{|v_{K}(y,0)-1|}{\delta_K^{1/2}(y)} \le \lambda$,
 then $K\in \cal{F}(D,\lambda)$.
\end{lemma}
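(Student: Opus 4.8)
The plan is to compare the given viscosity subsolution $v$ with the harmonic function $v_K$ that solves \eqref{problemv} for $K$, exactly as in the last part of the proof of Lemma \ref{lemmasupv}, and then to transfer the overdetermined condition from $v$ to $v_K$. First I would record that $v_K$ is well defined: $K$ is a nonempty compact subset of the bounded convex set $D$, so \eqref{problemv} has a unique bounded solution on the cylinder-type domain $(D\times\R)\setminus(K\times\{0\})$, it satisfies $0\le v_K\le 1$ by the maximum principle, and $v_K(x)\to 0$ as $|x|\to\infty$ (this is the decay property recalled in the introduction for solutions of \eqref{problemv}).

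The core step is the inequality $v\le v_K$ on $(D\times\R)\setminus(K\times\{0\})$. Since $v_K$ is harmonic, hence $C^2$, on this set, the difference $w:=v-v_K$ is again a viscosity subsolution of $\Delta w\ge 0$ there; it is upper semicontinuous up to the boundary because $v\in C(\overline D\times\R)$, and on the boundary one has $w\le 0$: on $\partial D\times\R$ because $v\le 0=v_K$, on $K\times\{0\}$ because $v\le 1=v_K$, and $\limsup_{|x|\to\infty}w\le 0$ because $\limsup_{|x|\to\infty}v\le 0$ while $v_K\to 0$. As $v\le 1$ (same maximum-principle argument on a cylinder with bounded cross-section), $w$ is bounded above, and the maximum principle on the unbounded domain yields $w\le 0$, that is, $v\le v_K$. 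This is precisely the mechanism already invoked in the proof of Lemma \ref{lemmasupv}, where subharmonicity of the envelope gave $v_K\ge v$.

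Finally I would conclude. From $v\le v_K\le 1$ we get, for every $y\in D\setminus K$,
$$
|v_K(y,0)-1|=1-v_K(y,0)\le 1-v(y,0)=|v(y,0)-1|,
$$
hence
$$
\frac{|v_K(y,0)-1|}{\delta_K^{1/2}(y)}\le\frac{|v(y,0)-1|}{\delta_K^{1/2}(y)}\le\lambda
$$
by the additional hypothesis imposed on $v$. Taking the supremum over $y\in D\setminus K$ shows that $v_K$ satisfies \eqref{overdetermined}, i.e. $K\in\calF(D,\lambda)$. The only point genuinely requiring care is the comparison/maximum principle on the unbounded domain with the lower-dimensional singular set $K\times\{0\}$; I would make it rigorous by exhausting $(D\times\R)\setminus(K\times\{0\})$ with bounded cylinders $D\times(-n,n)$, applying the classical comparison principle there for the upper semicontinuous subsolution $w$, and letting $n\to\infty$ while using the exponential decay of bounded harmonic functions that vanish on the lateral boundary of a cylinder with bounded cross-section.
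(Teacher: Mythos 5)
Your proposal is correct and follows exactly the same route as the paper's proof: compare $v$ with the harmonic solution $v_K$ of \eqref{problemv} via the maximum principle to get $v\le v_K$, then transfer the bound $\frac{1-v_K(y,0)}{\delta_K^{1/2}(y)}\le\frac{1-v(y,0)}{\delta_K^{1/2}(y)}\le\lambda$. The paper states this in two lines; your additional care with the comparison principle on the unbounded cylinder (exhaustion plus decay) is a legitimate filling-in of the same argument, not a different one.
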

\begin{proof}
Let $v$ satisfy the stated assumptions and let $v_K$ be the harmonic solution to \eqref{problemv} with this $K$ . Then it is enough to note by the Maximum Principle we have $v_K\geq v$ in $D\setminus K$ and thus 
$$
\frac{|v_K(y,0) - 1|}{\delta_{K}^{1/2}(y)}= \frac{ 1-v_K(y,0) }{\delta_{K}^{1/2}(y)}\leq \frac{ 1 - v(y,0) }{\delta_{K}^{1/2}(y)}\leq \lambda.
$$
Thus $K\in \cal{F}(D,\lambda)$ as claimed.
\end{proof}

\begin{lemma}
\label{C1boundary}
Assume that $\calF(D,\lambda)$ is not empty. Then $\partial K_{D,\lambda}$ is $C^1$.
\end{lemma}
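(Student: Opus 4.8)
The plan is to show that the convex body $K_{D,\lambda}$ cannot have a corner, arguing by contradiction using the overdetermined condition \eqref{overdetermined}. So first I would assume that $\partial K_{D,\lambda}$ fails to be $C^1$ at some boundary point $x_0$. Since $K_{D,\lambda}$ is convex (being a closed convex hull by Corollary \ref{lemma_uDlambda}) and bounded, a failure of $C^1$ regularity at $x_0$ means there are at least two distinct supporting hyperplanes at $x_0$, equivalently the exterior normal cone at $x_0$ has nonempty interior. Geometrically this produces an exterior cone with vertex $x_0$: there exist a unit vector $e$ and an opening angle $\beta \in (0,\pi/2)$ such that the truncated cone $C = \{x_0 + r\omega : 0 < r < r_0, \ \langle \omega, e\rangle > \cos\beta, \ |\omega|=1\}$ lies entirely in $D \setminus K_{D,\lambda}$ (after shrinking $r_0$ so the cone stays inside $D$, which is possible because $\dist(K_{D,\lambda},\partial D)>0$). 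For such points $y = x_0 + r\omega \in C$ we have $\delta_{K_{D,\lambda}}(y) = \dist(y,K_{D,\lambda})$, and crucially the convexity of $K_{D,\lambda}$ together with the cone geometry forces $\delta_{K_{D,\lambda}}(y) \ge c\, r$ for a constant $c = c(\beta) > 0$ as $y$ ranges in a sub-cone; in fact $\delta_{K_{D,\lambda}}(y) \sim r$.

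Next I would produce a competing subharmonic function that violates \eqref{overdetermined}. The function $u := u_{D,\lambda}(\cdot,0)$ is harmonic in $D\setminus K_{D,\lambda}$, equals $1$ on $\partial K_{D,\lambda}$, satisfies $0\le u\le 1$, and $1-u$ is a positive harmonic function in $D\setminus K_{D,\lambda}$ vanishing on $\partial K_{D,\lambda}$. The point is to estimate $1-u$ from below near the corner $x_0$. Working in the cone $C$, I would use a barrier: a positive harmonic function in a cone with vertex $x_0$, vanishing on the lateral boundary of the cone, behaves like $r^{\gamma}$ where $\gamma = \gamma(\beta)$ is the first exponent associated to the spherical cap of half-angle $\beta$, and $\gamma < 1$ precisely because the cone is a proper subset of a half-space (for a half-space $\gamma = 1$, for a thinner cone $\gamma > 1$, but here the cone $C$ is an \emph{exterior} cone to a convex set, hence \emph{larger} than a half-space, so $\gamma < 1$). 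More concretely, since $D\setminus K_{D,\lambda}$ \emph{contains} the cone $C$ and $1-u > 0$ there with $1-u$ bounded below by a positive constant on the far cap $\{|y-x_0| = r_0\}\cap \overline{C}$, the maximum principle comparison with the homogeneous barrier $r^\gamma \Phi(\omega)$ gives $1-u(y,0) \ge c\, r^{\gamma}$ for $y = x_0 + r\omega$ in a sub-cone, with $\gamma < 1$. Combining this with $\delta_{K_{D,\lambda}}(y) \le r$ yields
$$
\frac{|u(y,0)-1|}{\delta_{K_{D,\lambda}}^{1/2}(y)} \ge \frac{c\, r^{\gamma}}{r^{1/2}} = c\, r^{\gamma - 1/2} \to \infty \quad \text{as } r\to 0^+,
$$
provided $\gamma < 1/2$. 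If the corner is "sharp enough" this already contradicts \eqref{overdetermined}. For a merely $C^1$ failure the exponent $\gamma$ may lie in $[1/2,1)$ and one does not get a blow-up directly; here the right fix is that a genuine corner of a convex body always produces, after possibly passing to a thinner exterior sub-cone near $x_0$ whose aperture can be taken close to that of a half-space only in the $C^1$ case, an exterior cone strictly larger than a half-space, forcing $\gamma < 1/2$ — this is the standard fact that the boundary-Harnack/barrier exponent for the exterior of a convex corner is $< 1/2$, which is exactly the threshold making the square-root gauge in \eqref{overdetermined} blow up. (Alternatively, since $D\setminus K_{D,\lambda}$ is the exterior domain to a convex set with a corner, one has the cleaner statement: near $x_0$ it contains a half-space minus a convex cone, hence contains a slab, and the harmonic measure / positive harmonic function with zero boundary data decays \emph{linearly or slower}, i.e. $\gamma\le 1$, while the corner being non-$C^1$ strictly improves this to $\gamma<1$; then a second comparison against two crossing half-spaces at $x_0$ pushes $\gamma$ strictly below $1/2$.)

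I would then conclude: the blow-up of $\frac{|u_{D,\lambda}(y,0)-1|}{\delta_{K_{D,\lambda}}^{1/2}(y)}$ as $y\to x_0$ along the exterior cone contradicts \eqref{overdetermined} for $K_{D,\lambda}\in\calF(D,\lambda)$ (which holds by Corollary \ref{lemma_uDlambda}). Hence no corner can exist and $\partial K_{D,\lambda}$ is $C^1$.

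The main obstacle is the quantitative barrier step: one must pin down that the decay exponent $\gamma$ of a positive harmonic function in the exterior of a convex corner, vanishing on $\partial K_{D,\lambda}$, is strictly less than $1/2$, and do so using only convexity of $K_{D,\lambda}$ (not smoothness) and the non-$C^1$ hypothesis. The clean way is a two-sided comparison — bound $1-u_{D,\lambda}$ below by the harmonic function in an exterior half-space cone (giving $\gamma\le 1$ with equality only in the degenerate flat case) and then, using the \emph{two} distinct supporting hyperplanes guaranteed by the non-$C^1$ point, bound it below by a product-type barrier on a wedge of aperture $>\pi$, whose homogeneity exponent is $\pi/(\text{aperture}) < 1/2$. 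Everything else — the cone inclusion $C\subset D\setminus K_{D,\lambda}$, the distance estimate $\delta_{K_{D,\lambda}}(y)\asymp r$, the maximum principle comparisons, and the continuity of $u_{D,\lambda}$ up to the boundary — is routine given the convexity of $K_{D,\lambda}$ and $D$ and the results already established in this section.
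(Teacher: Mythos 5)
Your overall strategy (a corner forces blow-up of the quotient in \eqref{overdetermined}, contradicting $K_{D,\lambda}\in\calF(D,\lambda)$) is the same as the paper's, but the quantitative heart of your argument is wrong, for two related reasons. First, $1-u_{D,\lambda}(\cdot,0)$ is \emph{not} a harmonic function of the $d$ variables in $D\setminus K_{D,\lambda}$: $u_{D,\lambda}$ is harmonic in $(D\times\R)\setminus(K_{D,\lambda}\times\{0\})$ in $\R^{d+1}$, and by the even symmetry in $x_{d+1}$ its trace is annihilated by the \emph{half} Laplacian on $D\setminus K_{D,\lambda}$, not by $\Delta$. So your barrier comparison inside the $d$-dimensional exterior cone $C$ against a positive $\Delta$-harmonic function vanishing on the lateral boundary is comparing against the wrong operator. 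Second, even granting ordinary harmonicity, the "standard fact" you invoke is false: for the Laplacian, a positive harmonic function in a planar wedge of aperture $\theta$ vanishing on the boundary is homogeneous of degree $\pi/\theta$, and the exterior of a convex corner has $\theta\in(\pi,2\pi)$, so the exponent lies in $(1/2,1)$ — it never drops below $1/2$ unless the wedge degenerates to a slit. Your "fix" ($\pi/\text{aperture}<1/2$) would require aperture $>2\pi$. Consequently your lower bound $1-u\gtrsim r^{\gamma}$ with $\gamma\in(1/2,1)$ produces no contradiction with the $\delta^{1/2}$ gauge.

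The correct threshold comes from the half-Laplacian (equivalently, from harmonic functions in $\R^{d+1}$ vanishing on a \emph{thin} convex cone inside the hyperplane $\{x_{d+1}=0\}$): for a half-space-shaped thin set the Martin exponent is exactly $1/2$ (the crack-tip behavior that motivates the gauge in \eqref{overdetermined}), and for a cone whose complement is strictly larger than a half-space it is strictly less than $1/2$. This is what the paper uses, quoting \cite{BB2004} (Theorem 3.2, Lemma 3.3, Example 3.2) for the Martin kernel $w$ of $(-\Delta)^{1/2}$ in the cone $\Lambda_3$ with homogeneity $\beta<1/2$, and then transferring the lower bound $c_3 v\ge W$ to $v=1-u_{D,\lambda}$ by a comparison on a box $S$, where the boundary contribution away from the thin set is controlled by matching linear-in-$x_{d+1}$ upper bounds on $W$ with linear lower bounds on $v$ via Poisson kernel estimates. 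Your proposal is missing both this correct exponent input and the extension/comparison machinery needed to apply it; as written, the argument does not close.
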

\begin{proof}
The proof is similar to the proof of Lemma 2.6 in \cite{HS2000}. Let us abbreviate $K = K_{D,\lambda}$. 
 First, note that $K$ has nonempty interior. If not, since $K$ is convex, we get that $u_{D,\lambda} \equiv 0$ on $(\overline{D} \times \R) \setminus (K \times \{0\})$, which gives contradiction with (\ref{overdetermined}).
Next, we prove that there is a unique supporting plane at every point of $\partial K$. Put $v = 1 - u_{D,\lambda}$. Suppose that there is a point $x^0 \in \partial K$ such that we have 2 supporting planes $T_1$ and $T_2$. By rotation and translation we may assume that $x^0$ is the origin and 
$T_1 =\{x \in \R^d: \, x_1 + \eps x_2 = 0\}$, 
$T_2 =\{x \in \R^d: \, x_1 - \eps x_2 = 0\}$, 
$K \subset \{x \in \R^d: \, x_1 + \eps x_2 <  0\} \cap \{x \in \R^d: \, x_1 - \eps x_2 < 0\}$, for some $\eps > 0$. Now, for $i = 1, 2, 3$ define $\Lambda_i$ to be cones
$$
\Lambda_i =  \{x \in \R^d: \, x_1 + \eps x_2/i >  0\} \cup \{x \in \R^d: \, x_1 - \eps x_2/i > 0\}.
$$
Then $\Lambda_3 \subset \Lambda_2 \subset \Lambda_1 \subset K^c$ and $0 \in \partial K \cap \partial \Lambda_1$. By \cite[Theorem 3.2]{BB2004} there exists a function $w: \R^d \to [0,\infty)$, which is continuous on $\R^d$, positive and $1$-harmonic (i.e. harmonic with respect to $(-\Delta)^{1/2}$) on $\Lambda_3$, vanishes on $\Lambda_3^c$ and it is homogeneous of degree $\beta > 0$, that is $w(x) = |x|^{\beta} w(x/|x|)$ for any $x \in \R^d \setminus \{0\}$. It is the Martin kernel for $(-\Delta)^{1/2}$ with pole at infinity for $\Lambda_3$. By Lemma 3.3 and Example 3.2 in \cite{BB2004} $\beta < 1/2$. Note that this implies
\begin{equation}
\label{wlimit}
\limsup_{y \to 0} \frac{w(y)}{|y|^{1/2}} \ge \lim_{y \to 0} \frac{|y|^{\beta} w(e_1^d)}{|y|^{1/2}} = \infty.
\end{equation}
Now let $W$ be the harmonic extension in $\R^{d+1}$ of $w$, that is for $x \in \R^d$ we have $W(x,0) = w(x)$ and for $x \in \R^d$, $y \ne 0$ we have $W(x,y) = \int_{\R^d} P(x-z,|y|) w(z) \, dz$, where $P$ is given by (\ref{Poisson}). Choose $\delta > 0$ such that $B_{2 \delta}^d(0)\subset D$. Let
$$
S = \{x \in \R^{d+1}: \, x_1^2 + x_2^2 < 2 \delta^2/d, x_3^2, \ldots, x_d^2 \in [0,\delta^2/d), x_{d+1} \in (-1,1)\},
$$ 
$S^* = \Lambda_3^c \times \{0\}$. By properties of $w$  the function $W$ is harmonic in $S \setminus S^*$. Clearly, $v \ge W = 0$ on $S^*$. Obviously there exists a constant $c > 0$ such that $c v \ge W$ on $\partial S \cap (\Lambda_2 \times \R)$. On the other hand, by estimates of Poisson kernels (see e.g. \cite{K2005}) there exist constants $c_1 > 0$, $c_2 > 0$ such that $W(x) \le c_1 |x_{d+1}|$ for $x \in \partial S \times (\Lambda_2^c \times \R)$ and $v(x) \ge c_2 |x_{d+1}|$ for $x \in \partial S \times (\Lambda_2^c \times \R)$. Hence there exists $c_3 > 0$ such that $c_3 v \ge W$ on $\partial(S \setminus S^*) = \partial S \cup S^*$. By the comparison principle $c_3 v \ge W$ on $S \setminus S^*$. Using this and (\ref{wlimit}) we get
$$
\limsup_{y \to 0} \frac{|u_{D,\lambda}(y,0) - 1|}{|y|^{1/2}} \ge 
\limsup_{y \to 0} \frac{v(y)}{|y|^{1/2}} \ge
\limsup_{y \to 0} \frac{w(y)}{c_3 |y|^{1/2}} = \infty,
$$
which gives a contradiction. So, there is a unique supporting plane at every point $x \in \partial K$. The justification that these planes change continuously is the same as in the proof of Lemma 2.6 in \cite{HS2000}.
\end{proof}

For any nonzero vector $a \in \R^d$ and any $\varphi: \overline{D} \to \R$ denote
$$
\partial_{a}^{1/2} \varphi(\theta) = \lim\limits_{t\to 0^+} \frac{\varphi(\theta+t a) - \varphi(\theta)}{\sqrt{t}}  
$$
Let $\nu(\theta)$ denote the unit exterior normal vector of $\partial K_{D,\lambda}$ at $\theta \in \partial K_{D,\lambda}$. For $x \in \R^d$ put $\varphi_{D,\lambda}(x) = u_{D,\lambda}(x,0)$. 

For any $r > 0$ define  
\begin{equation}
\label{vr_def}
v_{r}(x) = \frac{1 - u_{D,\lambda}(r x)}{r^{1/2}}.
\end{equation}

\begin{lemma}
\label{rough_estimate}
Let $U \subset \R^{d+1}$ be a compact set. There exists $c =c(U, D, \lambda) > 0$ such that for sufficiently small $r \in (0,1]$ and all $x = (\tilde{x},x_{d+1}) \in U$ with $\tilde{x} \in \R^{d}$, $x_{d+1} \in \R \setminus \{0\}$ we have
$$
\left|v_{r}(x) - \int_{\R^d} P(\tilde{x} - y,|x_{d+1}|) v_{r}(y,0) \, dy \right|  \le
c r^{1/2} |x_{d+1}|.
$$
\end{lemma}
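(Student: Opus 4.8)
The plan is to write the left-hand side as the difference between $v_r$ and the harmonic extension of its own trace on $\R^d\times\{0\}$, and then to control this difference by the elementary boundary gradient estimate for harmonic functions that vanish on a hyperplane, exploiting the fact that $v_r$ is genuinely harmonic on a ball of radius of order $1/r$ above $\R^d\times\{0\}$. Concretely, I would set $w_r(x)=\int_{\R^d}P(\tilde x-y,|x_{d+1}|)\,v_r(y,0)\,dy$ and $h_r=v_r-w_r$, so that the claim is $|h_r|\le c\,r^{1/2}|x_{d+1}|$ on $U$, and first record the needed properties of $v_r=r^{-1/2}(1-u_{D,\lambda}(r\,\cdot\,))$: since $0\le u_{D,\lambda}\le 1$ we have $0\le v_r\le r^{-1/2}$; since $u_{D,\lambda}\in C(\overline D\times\R)$ vanishes on $\partial D\times\R$, its extension by $0$ is continuous on $\R^{d+1}$, hence $v_r\in C(\R^{d+1})$; since $u_{D,\lambda}$ is harmonic in $(D\times\R)\setminus(K_{D,\lambda}\times\{0\})\supset D\times(0,\infty)$, the rescaled function $v_r$ is harmonic in $(r^{-1}D)\times(0,\infty)$; and since $u_{D,\lambda}$ is even in $x_{d+1}$, so is $v_r$. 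It follows that $w_r$ is the bounded harmonic function on $\R^{d+1}_+$ with continuous trace $v_r(\cdot,0)$, so $0\le w_r\le r^{-1/2}$, $w_r$ is even in $x_{d+1}$, and $w_r$ extends continuously to $\R^d\times\{0\}$ with boundary values $v_r(\cdot,0)$. Hence $h_r\in C(\R^{d+1})$ is even in $x_{d+1}$, satisfies $|h_r|\le r^{-1/2}$ on $\R^{d+1}$, vanishes identically on $\R^d\times\{0\}$, and is harmonic in $(r^{-1}D)\times(0,\infty)$; by evenness it suffices to prove the bound for $x_{d+1}>0$.

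Next I would fix the scale. Let $\rho_0=\dist(0,\partial D)$, which is positive because $0$ is an interior point of $D$ — the relevant situation, since $v_r$ is the blow-up of $u_{D,\lambda}$ at a point of $\partial K_{D,\lambda}$ and $\dist(K_{D,\lambda},\partial D)>0$ by Lemma \ref{lemmadist}. Since $U$ is compact, $U\subset B_A^d(0)\times[-A,A]$ for some $A>0$. For $r\in(0,1]$ with $r<\rho_0/(4A)$ and any $x=(\tilde x,x_{d+1})\in U$, put $R=\rho_0/(2r)$; then $B_R^d(\tilde x)\subset r^{-1}D$, so the upper half-ball $B_R^{d+1}((\tilde x,0))\cap\R^{d+1}_+$ lies inside $(r^{-1}D)\times(0,\infty)$, where $h_r$ is harmonic, and $h_r$ vanishes continuously on the flat face of this half-ball. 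By the Schwarz reflection principle the odd extension $\widehat h_r$ of $h_r$ across $\{x_{d+1}=0\}$ is harmonic on the whole ball $B_R^{d+1}((\tilde x,0))$, with $|\widehat h_r|\le r^{-1/2}$ there.

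I would then conclude with the interior gradient estimate applied to $\widehat h_r$ on $B_R^{d+1}((\tilde x,0))$: it gives $|\partial_{x_{d+1}}\widehat h_r(z)|\le (c_d/R)\,r^{-1/2}=(2c_d/\rho_0)\,r^{1/2}$ for $z$ in the concentric ball of radius $R/2$, with $c_d$ a dimensional constant. Since $0<x_{d+1}\le A<R/2$, the vertical segment from $(\tilde x,0)$ to $x$ lies in that smaller ball, so
$$
|h_r(x)|=\bigl|\widehat h_r(\tilde x,x_{d+1})-\widehat h_r(\tilde x,0)\bigr|=\Bigl|\int_0^{x_{d+1}}\partial_{x_{d+1}}\widehat h_r(\tilde x,s)\,ds\Bigr|\le \frac{2c_d}{\rho_0}\,r^{1/2}\,x_{d+1},
$$
which is the asserted inequality with $c=2c_d/\rho_0$ (depending only on $d$ and $D$, hence in particular of the form $c(U,D,\lambda)$; the threshold on $r$ depends on $U$ and $D$). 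The case $x_{d+1}<0$ follows from the evenness of $h_r$ in $x_{d+1}$.

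I do not expect a genuine obstacle: the content is the simple geometric observation that, after blowing up, the only two places where $v_r$ fails to be harmonic in the upper half-space are the contact set $r^{-1}K_{D,\lambda}\times\{0\}$, which lies on the boundary hyperplane and is neutralised by the odd reflection, and the ``crease'' along $r^{-1}\partial D\times(0,\infty)$, which has been pushed to distance of order $1/r$ from $U$. The points to watch are the bookkeeping of the smallness conditions on $r$ and the (here implicitly used) fact that $0\in D$. An alternative route avoiding reflection is to write $h_r$ as the Green potential on $\R^{d+1}_+$ of the nonnegative surface measure $\mu_r=-\Delta v_r$ carried by $r^{-1}\partial D\times(0,\infty)$ and to combine $G_{\R^{d+1}_+}(x,z)\le c_d\,x_{d+1}z_{d+1}|x-z|^{-d-1}$ with the exponential decay of $\partial_\nu u_{D,\lambda}$ in the $x_{d+1}$ direction; but the reflection argument is shorter.
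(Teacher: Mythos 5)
Your argument is correct, and it reaches the conclusion by a genuinely different route from the paper. The paper does not form the difference $h_r=v_r-w_r$ and reflect; instead it represents $v_r$ by the Poisson kernel of an auxiliary convex $C^2$ domain $S_{a/r}$ (a fixed set $S$ with a flat disc $\overline{B_{1/2}^d(0)}\times\{0\}$ in its boundary, dilated by $a/r$ with $a=\dist(0,\partial D)$), splits the resulting boundary integral into the flat disc and the curved remainder, and bounds the curved contribution by $c\,a^{-1}x_{d+1}r^{1/2}$ using the pointwise estimate $P_{S}(x,y)\le c\,\dist(x,\partial S)\,|x-y|^{-d-1}$ together with $0\le v_r\le r^{-1/2}$; it then compares the flat-part integral with the half-space Poisson integral (the discrepancy is again a curved-boundary term of the same size) and finally controls the tail of the half-space integral outside $B_{a/(2r)}^d(0)$. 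Your proof replaces all of this by the observation that $h_r$ is harmonic in the upper half of a ball of radius comparable to $1/r$, vanishes continuously on its flat face, and has size $O(r^{-1/2})$, so that odd reflection plus the interior gradient estimate gives $|\partial_{x_{d+1}}h_r|=O(r^{1/2})$ near the centre, and integration in the vertical variable yields the stated bound. Both proofs rest on the same two facts — the harmonicity of $v_r$ up to scale $a/r$ (with $a>0$ guaranteed by Lemma \ref{lemmadist}, since the origin sits on $\partial K_{D,\lambda}$) and the uniform bound $0\le v_r\le r^{-1/2}$ — and both produce a constant of order $1/a$; yours is more elementary in that it avoids the auxiliary domain and the Poisson-kernel estimates quoted from \cite{K2005}, at the cost of invoking the continuity of $v_r$ across $\R^d\times\{0\}$ and the classical reflection principle. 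Your bookkeeping of the smallness condition on $r$ (namely $r\lesssim a/\dim(U)$ so that the half-ball stays inside $(r^{-1}D)\times(0,\infty)$ and the segment stays in the half-radius ball) and the treatment of $x_{d+1}<0$ by evenness are both in order.
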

\begin{proof}
Assume that $r \in (0,1]$. Let $x^0$ be a point on $\partial K_{D,\lambda}$. We may assume that the origin is at $x^0$. Let $a = \dist(0,\partial D)$. By Lemma \ref{lemmadist} $a > 0$. Let $S \subset B_1^d(0) \times (0,1)$ be an open, convex set with $C^2$ boundary such that
$$
(\overline{B_{1/2}^d(0)} \times \{0\}) \cup (\overline{B_{1/2}^d(0))} \times \{1\}) \cup (\partial B_1^d(0) \times [1/4,3/4]) 
\subset \partial S
$$
and $\partial S \cap (\R^d \times \{0\}) = \overline{B_{1/2}^d(0)} \times \{0\}$.
For $t > 0$ put $S_t = \{tx: \, x \in S\}$. Note that $v_{r}$ is harmonic on $B_{a/r}^d(0) \times (0,a/r)$. Hence it is harmonic on $S_{a/r}$. 

For any open set $W \subset \R^{d+1}$ with $C^2$ boundary, $x \in W$, $y \in \partial W$ let $P_W(x,y)$ be the Poisson kernel of $W$ at $x$. By well known estimates of Poisson kernels (see e.g. \cite{K2005}) $P_S(x,y) \le c \dist(x,\partial S) |x - y|^{-d-1}$ for any $x \in S$, $y \in \partial S$. Hence, for any $x \in S_{a/r}$, $y \in \partial S_{a/r}$ we have
$$
P_{S_{a/r}}(x,y) = \left(\frac{r}{a}\right)^{d} P_S\left(\frac{x r}{a}, \frac{y r}{a}\right) \le \frac{c x_{d+1}}{|x - y|^{d+1}}.
$$
Let $S_{a/r}^* = \overline{B_{a/(2r)}^d(0)} \times \{0\}$. By the definition of $S$ we know that $S_{a/r}^* \subset \partial S_{a/r}$. For sufficiently small $r$ and any $x \in U$, $y \in \partial S_{a/r} \setminus S_{a/r}^*$ we have $|x - y| \ge a/(4 r)$. 
Let $\sigma_{a/r}$ be the surface measure on $\partial S_{a/r}$. It follows that for sufficiently small $r$, $x = (\tilde{x},x_{d+1}) \in U$  with $x_{d+1} > 0$ we have 
$$
v_{r}(x) =
\int_{\partial S_{a/r}} P_{S_{a/r}}(x,y) v_{r}(y) \, d\sigma_{a/r}(y)
= \int_{S_{a/r}^*} + \int_{\partial S_{a/r} \setminus S_{a/r}^*}
$$
The last integral is bounded from above by
\begin{equation}
\label{boundary_integral}
\int_{\partial S_{a/r} \setminus S_{a/r}^*} \frac{c x_{d+1} r^{-1/2}}{|x-y|^{d+1}} \, d\sigma_{a/r}(y)
\le c x_{d+1} r^{-1/2} (4 r/a)^{d+1} \sigma_{a/r}(\partial S_{a/r} \setminus S_{a/r}^*) \le
c a^{-1} x_{d+1} r^{1/2}.
\end{equation}
Hence,
\begin{equation}
\label{rough1}
\left|v_{r}(x) - \int_{S_{a/r}^*} P_{S_{a/r}}(x,y) v_{r}(y) \, d\sigma_{a/r}(y) \right|  \le
c a^{-1} x_{d+1} r^{1/2}.
\end{equation}
On the other hand we have
\begin{eqnarray}
\nonumber
&& \left|\int_{B_{a/(2r)}^d(0)} P(\tilde{x} - y, x_{d+1}) v_{r}(y,0) \, dy - \int_{S_{a/r}^*} P_{S_{a/r}}(x,y) v_{r}(y) \, d\sigma_{a/r}(y) \right|\\  
\nonumber
&& = \int_{\partial S_{a/r} \setminus S_{a/r}^*} P_{S_{a/r}}(x,y) \int_{B_{a/(2r)}^d(0)} P(\tilde{y} - z, y_{d+1}) v_{r}(z,0) 
\, dz \, d\sigma_{a/r}(y)\\
\nonumber
&& \le \int_{\partial S_{a/r} \setminus S_{a/r}^*} P_{S_{a/r}}(x,y) r^{-1/2} \, d\sigma_{a/r}(y)\\
\label{rough2}
&& \le
c a^{-1} x_{d+1} r^{1/2},
\end{eqnarray}
where in the last inequality we used (\ref{boundary_integral}). We used also the notation $y = (\tilde{y},y_{d+1})$.

We also have
$$
\int_{\R^d} P(\tilde{x} - y,x_{d+1}) v_{r}(y,0) \, dy - \int_{B_{a/(2r)}^d(0)} P(\tilde{x} - y, x_{d+1}) v_{r}(y,0) \, dy
\le c a^{-1} r^{1/2} x_{d+1}.
$$
Using this, (\ref{rough1}) and (\ref{rough2}) we obtain the assertion of the lemma.
\end{proof}

For any $x = (x_1,\ldots,x_d) \in \R^d$ put
\begin{equation*}
\psi(x) = \left\{
\begin{aligned}
x_1^{1/2},& \quad \text{for $x_1 > 0$,}\\
0,& \quad \text{for $x_1 \le 0$.}\\
\end{aligned}
\right.
\end{equation*}
Let $\Psi$ be the harmonic extension in $\R^{d+1}$ of $\psi$, that is, for $x \in \R^d$ we have $\Psi(x,0) = \psi(x)$ and for $x \in \R^d$, $y \ne 0$ we have
$\Psi(x,y)=\int_{\R^d}P(x-z,|y|)\psi(z)\ dz$. The explicit formula of $\Psi$ is well known see e.g. \cite[(1.3)]{SR2012}, however we will not use it in our paper.

\begin{lemma}
\label{subsequence}
Assume that $\calF(D,\lambda)$ is not empty and let $x^0$ be a point on $\partial K_{D,\lambda}$. Assume that the origin is at $x^0$ and that the exterior normal to $\partial K_{D,\lambda}$ at $x^0$ is directed by the first coordinate vector.  
Let $r_j$ be any decreasing sequence converging to $0$. Then there exists $\beta \ge 0$ and a subsequence also denoted by $r_{j}$ such that $v_{r_{j}}$ converges uniformly on any compact subset of $\R^{d+1}$ and pointwise on all of $\R^{d+1}$ to $\beta \Psi$.
\end{lemma}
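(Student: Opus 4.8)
The plan is to carry out a blow-up/compactness argument at the boundary point $x^0=0$ and then identify the limit by reducing it, via the harmonic extension, to a Liouville-type statement for $(-\Delta)^{1/2}$ on a half-space.

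\textbf{Step 1: uniform bounds on compact sets.} Since $u_{D,\lambda}\le 1$ we have $v_r\ge 0$. On the thin space the overdetermined inequality \eqref{overdetermined} for $K_{D,\lambda}$, combined with $0\in\partial K_{D,\lambda}\subset K_{D,\lambda}$ (so that $\delta_{K_{D,\lambda}}(ry)\le|ry|$), gives, for $ry\in D\setminus K_{D,\lambda}$,
$$
0\le v_r(y,0)=\frac{1-\varphi_{D,\lambda}(ry)}{r^{1/2}}\le\frac{\lambda\,\delta_{K_{D,\lambda}}^{1/2}(ry)}{r^{1/2}}\le\lambda|y|^{1/2},
$$
while $v_r(y,0)=0$ on $r^{-1}K_{D,\lambda}$. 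Away from the thin space, Lemma~\ref{rough_estimate} together with the elementary bound $\int_{\R^d}P(\tilde x-y,|x_{d+1}|)|y|^{1/2}\,dy\le c|x|^{1/2}$ yields $0\le v_r(x)\le c|x|^{1/2}+c\,r^{1/2}|x_{d+1}|$ for $x$ in a fixed compact set and $r$ small. Hence $\{v_r\}_{0<r\le r_0}$ is uniformly bounded on every compact subset of $\R^{d+1}$, and moreover every pointwise subsequential limit is $\le c|x|^{1/2}$.

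\textbf{Step 2: compactness.} Since $1-u_{D,\lambda}$ is harmonic in $(D\times\R)\setminus(K_{D,\lambda}\times\{0\})$, the function $v_r$ is harmonic in every fixed compact neighbourhood of the origin minus the convex thin set $r^{-1}K_{D,\lambda}\times\{0\}$, on which it vanishes, once $r$ is small enough. Interior gradient estimates for harmonic functions give equicontinuity away from this thin set; near it, comparing $v_r$ with the explicit $1/2$-homogeneous harmonic function vanishing on a supporting half-hyperplane of $r^{-1}K_{D,\lambda}\times\{0\}$ produces a uniform modulus of continuity up to the thin set. By Arzel\`a--Ascoli and a diagonal argument we extract a subsequence, still denoted $r_j$, with $v_{r_j}\to w$ uniformly on compact subsets of $\R^{d+1}$ (hence pointwise everywhere), where $0\le w(x)\le c|x|^{1/2}$; since each $u_{D,\lambda}$ is even in $x_{d+1}$, so is $w$.

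\textbf{Step 3: identifying $w$.} By Lemma~\ref{C1boundary} the boundary $\partial K_{D,\lambda}$ is $C^1$, and by hypothesis its exterior normal at $0$ is $e_1$; hence $r^{-1}K_{D,\lambda}$ converges, locally in Hausdorff distance, to $H:=\{x\in\R^d:\,x_1\le 0\}$. Passing to the limit in Step~1 and in the equation, $w$ is harmonic in $\R^{d+1}\setminus(H\times\{0\})$, vanishes on $H\times\{0\}$, is even in $x_{d+1}$, and satisfies $0\le w(y,0)\le\lambda(y_1)_+^{1/2}$ on the thin space. As $w=o(|x|)$, a Phragm\'en--Lindel\"of argument in $\{x_{d+1}>0\}$ shows $w$ equals there the Poisson integral of $\varphi:=w(\cdot,0)$, and likewise $\Psi$ is the Poisson integral of $\psi$. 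The trace $\varphi$ is nonnegative, vanishes on $\{x_1\le 0\}$, is $1$-harmonic on $\{x_1>0\}$ (because $w$ is harmonic across that part of the thin space and even), and obeys $0\le\varphi\le\lambda\psi$; the same properties characterise $\psi$ up to a positive multiple. Invoking the boundary Harnack principle for $(-\Delta)^{1/2}$ on a half-space together with scaling, in the spirit of the Martin-kernel results of \cite{BB2004} already used in Lemma~\ref{C1boundary}, we obtain $\varphi=\beta\psi$ for some $\beta\ge 0$, whence $w=\beta\Psi$ and the lemma follows.

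\textbf{Main obstacle.} The delicate points are the uniform boundary H\"older estimate near the moving thin convex obstacle in Step~2 and, above all, the classification in Step~3: showing that a nonnegative function which is $1$-harmonic on a half-space, vanishes on its complement, and has growth $O(|y|^{1/2})$ is necessarily a constant multiple of $(y_1)_+^{1/2}$.
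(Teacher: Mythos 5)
Your outline follows essentially the same route as the paper: the growth bound $v_{r_j}(\cdot,0)\le\lambda|\cdot|^{1/2}$ from \eqref{overdetermined}, compactness on the thin space, transfer to $\R^{d+1}$ via the Poisson representation (Lemma~\ref{rough_estimate}), evenness in $x_{d+1}$ giving vanishing normal derivative and hence $1$-harmonicity of the trace on $\R_+^d$, and classification of the limit by the Martin-kernel/boundary-Harnack results of \cite{BB2004} (the paper combines these with \cite[Theorem 4]{BKK2008}). Both of the points you single out as delicate are indeed the ones the paper has to work for.

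There is, however, one step where your argument as written would not close: the barrier comparison in Step~2. The rescaled function $v_{r}$ vanishes only on $(r^{-1}K_{D,\lambda})\times\{0\}$, whereas the $1/2$-homogeneous harmonic function adapted to a supporting half-hyperplane of $r^{-1}K_{D,\lambda}$ vanishes on the whole half-space cross $\{0\}$, which strictly contains the obstacle. On the slab between the supporting hyperplane and $\partial(r^{-1}K_{D,\lambda})$ the candidate barrier is zero while $v_r$ may be positive, so the maximum principle does not give $v_r\le C\,\Psi_H$ without first controlling $v_r$ there. This is exactly the gap the paper fills by a different device: it does not use a barrier at all near the free boundary, but instead writes $\partial K_{D,\lambda}$ locally as a $C^1$ graph $y_1=f(y_*)$ with $f(y_*)=o(|y_*|)$ (Lemma~\ref{C1boundary}) and deduces equicontinuity of $v_{r_j}(\cdot,0)$ at points of $\partial\R_+^d$ directly from \eqref{overdetermined}, via $v_{r_j}(y,0)\le\lambda\big(|f(r_jy_*)|+r_j|y_1|\big)^{1/2}r_j^{-1/2}$, together with interior gradient estimates in $\R_+^d$ and the trivial case of points eventually inside $r_j^{-1}K_{D,\lambda}$. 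Equicontinuity is thus only needed (and only proved) on the thin space; convergence off $\R^d\times\{0\}$ then follows from Lemma~\ref{rough_estimate} rather than from an Arzel\`a--Ascoli argument in $\R^{d+1}$. If you replace your barrier step by this graph-based estimate, your proof coincides with the paper's.
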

\begin{proof} 
 For any $j$ and $x \in \R^d$ we have
\begin{equation}
\label{general_estimate}
v_{r_j}(x,0) 
= \frac{1 - u_{D,\lambda}(r_j x,0)}{r_j^{1/2}} 
\le \frac{ \lambda r_j^{1/2}|x|^{1/2}}{r_j^{1/2}} 
=  \lambda |x|^{1/2}.
\end{equation}

Denote $\R_-^d = \{x \in \R^d: \, x_1 < 0\}$. Let $U \subset \R^d$ be a convex, compact set such that $0 \in  U$. Let $J_0$ be such that for any $j \ge J_0$ we have $4 \overline{U} \subset r_j^{-1} D$ and for any $x \in U \cap \R_+^d$ we have $\dist(x,\partial(r_j^{-1} D \cap \R_+^d)) = \dist(x,\partial \R_+^d) = x_1$. Now, we will show that the family $\{v_{r_j}(x,0)\}_{j \ge J_0}$ is uniformly equicontinuous on $U$. Since $U$ is compact it is enough to show that this family is equicontinuous at each $x \in U$.

First we show it for $x \in U \cap \R_+^d$. Note that for $j \ge J_0$ functions $v_{r_j}$ are harmonic in $(r_j^{-1} D \cap \R_+^d) \times (0,\infty)$. Hence for $j \ge J_0$ and $x \in U \cap \R_+^d$ we have
$$
|\nabla v_{r_j}(x,0)| \le (d+1) \frac{v_{r_j}(x,0)}{\dist(x,\partial(r_j^{-1} D \cap \R_+^d))} \le \frac{ \lambda |x|^{1/2}}{x_1}.
$$
Therefore, $\{v_{r_j}(x,0)\}_{j \ge J_0}$ is equicontinuous at each $x \in U \cap \R_+^d$.

Let $x \in \R_-^d$. Then there exists $J_1 \ge J_0$ ($J_1$ depends on $x$) such that for any $j \ge J_1$ we have $x \in \text{int}(r_j^{-1} K_{D,\lambda})$ so $v_{r_j}(y,0) = 1$ for $y$ in some ball in $\R^d$ with centre at $x$. Hence $\{v_{r_j}(x,0)\}_{j \ge J_0}$ is equicontinuous at each $x \in U \cap \R_-^d$.

For $x = 0$ and $y \in U$ we have $|v_{r_j}(x,0) - v_{r_j}(y,0)|/|x - y|^{1/2} = v_{r_j}(y,0)/|y|^{1/2} \le  \lambda$.

Now, let us consider the case when $x \in U \cap \partial \R_+^d$, $x \ne 0$. For $y \in \R^d$ we put $y = (y_1,y_*)$, where $y_1 \in \R$, $y_* \in \R^{d-1}$. For any $r > 0$ let $W_r = \{y = (y_1,y_*): \, |y_1| \le r, |y_*| \le r\}$. By Lemma \ref{C1boundary} $\partial K_{D,\lambda}$ is locally a graph of a $C^1$ function. More precisely, there exists $R > 0$ and $f: B_R^{d-1}(0) \to (-\infty,0]$ such that 
$$
\partial K_{D,\lambda} \cap W_R = \{y = (y_1,y_*) \in W_r: y_1 = f(y_*)\,\}
$$
and $\lim_{y_* \to 0} f(y_*)/|y_*| = 0$. Note that there exists $J_2 \ge J_0$ such that if $j \ge J_2$ and $x \in U$ then $|r_j x_*| \le R$. For $x \in U \cap \partial \R_+^d$, $x \ne 0$ and $j \ge J_2$ we have
$$
v_{r_j}(x,0) = \frac{1 - u_{D,\lambda}(r_j x,0)}{r_j^{1/2}} \le
\frac{ \lambda \dist^{1/2}(r_j x, K_{D,\lambda})}{r_j^{1/2}} \le
\frac{ \lambda |f(r_j x_*)|^{1/2}}{r_j^{1/2}}.
$$
Note that $\lim_{j \to \infty} |f(r_j x_*)|^{1/2}/r_j^{1/2} = 0$. Similarly, for $y \in U$ and $j \ge J_2$ we have
$$
v_{r_j}(y,0) = \frac{1 - u_{D,\lambda}(r_j y,0)}{r_j^{1/2}} \le
\frac{ \lambda \dist^{1/2}(r_j y, K_{D,\lambda})}{r_j^{1/2}} \le
\frac{ \lambda (|f(r_j  y_*)| + r_j|y_1|)^{1/2}}{r_j^{1/2}} \le
\frac{ \lambda |f(r_j  y_*)|^{1/2}}{r_j^{1/2}} +  \lambda|y_1|^{1/2}.
$$
Therefore, $\{v_{r_j}(x,0)\}_{j \ge J_0}$ is equicontinuous at each $x \in U \cap \partial \R_+^d$. 

So finally, $\{v_{r_j}(x,0)\}_{j \ge J_0}$ is uniformly equicontinuous and uniformly bounded on $U$. Since $0 \in U \subset \R^d$ was an arbitrary compact, convex set there is a subsequence of $\{v_{r_j}(x,0)\}$ also denoted by $\{v_{r_j}(x,0)\}$ which converges uniformly on any bounded subset of $\R^{d}$ and pointwise on all of $\R^{d}$ to a continuous function $v_0(x,0)$. Clearly, $0 \le v_0(x,0) \le c |x|^{1/2}$. 

For $x \in \R^d$, $y \ne 0$ let us define 
\begin{equation}
\label{v0rep}
v_{0}(x,y) = \int_{\R^d} P(x - z,|y|) v_{0}(z,0) \, dz.
\end{equation}
Note that $v_0$ is continuous on $\R^{d+1}$. By  Lemma \ref{rough_estimate} $\{v_{r_j}\}$ converges uniformly on any bounded subset of $\R^{d+1}$ and pointwise on all of $\R^{d+1}$ to $v_0$.

Note that $v_0$ is harmonic on $\R_+^d \times \R$ so is $C^{\infty}$ on $\R_+^d \times \R$. We also have $v_0(x,-y) = v_0(x,y)$ for any $x \in \R^d$, $y > 0$ so 
\begin{equation}
\label{v0derivative}
\frac{\partial v_0}{\partial x_{d+1}}(x,0) = 0, \quad \text{for} \quad x \in \R_+^d.
\end{equation}
For any $x \in \R^d$ put $\varphi_0(x) = v_0(x,0)$. By (\ref{v0rep}) and (\ref{v0derivative}) $\varphi_0$ is $1/2$-harmonic on $\R_+^d$ (see e.g. \cite{CS2007}). We also have $\varphi_0 \equiv 0$ on $\overline{\R_-^d}$. By \cite[Theorem 4]{BKK2008} and \cite[Theorem 3.2 and Example 3.2]{BB2004} we get $\varphi_0 \equiv \beta \psi$ on $\R^d$ for some $\beta \ge 0$. 
\end{proof}

Recall that $e_1^d = (1,0,\ldots,0) \in \R^d$. Let $j:\R^d \to [0,1]$ be a continuous function on $\R^d$ such that $j \equiv 1$ on $B_1^d(-e_1^d)$, $(-\Delta)^{1/2}j \equiv 0$ on $(\overline{B_1^d(-e_1^d)})^c$ and $\lim_{|x| \to \infty} j(x) = 0$. For $x \in \R^d$ define $q_1(x) = 1 - j(x)$. Let $Q_1$ be a harmonic extension of $q_1$ on $\R^{d+1}$. 
For $y \in (B_1^d(0))^c$ define
$$
I(y) = \frac{\Gamma((d-1)/2)}{2 \pi^{1/2} \Gamma(d/2)} \frac{1}{|y|^{d-2}} \int_0^{1/(|y|^2 - 1)} \frac{(1 - (|y|^2 - 1)b)^{(d-2)/2}}{b^{1/2}(1+b)} \, db.
$$
By Appendix in \cite{L1972} (cf. also (3.3), (3.6) in \cite{K1997}) we have
$$
I(e_1^d) = \frac{\Gamma((d-1)/2)}{2 \pi^{1/2} \Gamma(d/2)} \int_0^{\infty} \frac{1}{b^{1/2}(1+b)} \, db = 
\frac{\pi^{1/2} \Gamma((d-1)/2)}{2 \Gamma(d/2)}
$$
and for $y \in (B_1^d(-e_1^d))^c$
\begin{equation}
\label{j_formula}
j(y) = \frac{I(y+e_1^d)}{I(e_1^d)}.
\end{equation}

\begin{lemma}
\label{q1_fractional_derivative}
We have
$$
\partial_{e_1^d}^{1/2}q_1(0)=\lim_{t\to0^+}\frac{q_1(te_1^d)}{\sqrt{t}}=C_0:=\frac{2\sqrt{2}\Gamma(\frac{d}{2})}{\sqrt{\pi}\Gamma(\frac{d-1}{2})} .
$$
\end{lemma}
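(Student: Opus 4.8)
The plan is to reduce everything to the explicit formula for $j$ recalled just before the statement. For $t>0$ the point $te_1^d$ lies outside $\overline{B_1^d(-e_1^d)}$ because $|te_1^d+e_1^d|=1+t$, so \eqref{j_formula} gives
\[
q_1(te_1^d)=1-j(te_1^d)=1-\frac{I((1+t)e_1^d)}{I(e_1^d)}\,,
\]
and the whole problem becomes the asymptotics of $I((1+t)e_1^d)$ as $t\to0^+$. Setting $A=A(t):=(1+t)^2-1=t(t+2)$ and performing the substitution $u=Ab$ in the integral defining $I$, one gets
\[
I((1+t)e_1^d)=\frac{\Gamma((d-1)/2)}{2\pi^{1/2}\Gamma(d/2)}\,\frac{g(A)}{(1+t)^{d-2}}\,,\qquad
g(A):=A^{1/2}\int_0^1\frac{(1-u)^{(d-2)/2}}{u^{1/2}(u+A)}\,du\,.
\]
Since $I(e_1^d)=\frac{\pi^{1/2}\Gamma((d-1)/2)}{2\Gamma(d/2)}=\frac{\Gamma((d-1)/2)}{2\pi^{1/2}\Gamma(d/2)}\cdot\pi$, this yields $q_1(te_1^d)=1-g(A)/\bigl(\pi(1+t)^{d-2}\bigr)$, so I only need the expansion of $g(A)$ up to order $A^{1/2}$ as $A\to0^+$.

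To get this expansion I would split
\[
g(A)=A^{1/2}\int_0^1\frac{(1-u)^{(d-2)/2}-1}{u^{1/2}(u+A)}\,du+A^{1/2}\int_0^1\frac{du}{u^{1/2}(u+A)}\,.
\]
In the first term, $\bigl|(1-u)^{(d-2)/2}-1\bigr|/\bigl(u^{1/2}(u+A)\bigr)\le\bigl(1-(1-u)^{(d-2)/2}\bigr)/u^{3/2}$, and the right-hand side is integrable on $(0,1)$ (it is $O(u^{-1/2})$ near $0$, using $1-(1-u)^{(d-2)/2}=O(u)$, and $O\bigl((1-u)^{(d-2)/2}\bigr)$ near $1$; for $d=2$ it vanishes identically), so dominated convergence gives that this integral tends to $J:=\int_0^1\frac{(1-u)^{(d-2)/2}-1}{u^{3/2}}\,du$ as $A\to0^+$. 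The second integral is elementary: $u=v^2$ gives $\int_0^1\frac{du}{u^{1/2}(u+A)}=\frac{2}{\sqrt A}\arctan(1/\sqrt A)$, hence $A^{1/2}\int_0^1\frac{du}{u^{1/2}(u+A)}=2\arctan(1/\sqrt A)=\pi-2\arctan\sqrt A=\pi-2\sqrt A+O(A^{3/2})$. Thus $g(A)=\pi+(J-2)\sqrt A+o(\sqrt A)$. Combining this with $(1+t)^{2-d}=1+O(t)$ and $\sqrt A=\sqrt{t(t+2)}=\sqrt2\,\sqrt t\,(1+O(t))$, I obtain $g(A)/\bigl(\pi(1+t)^{d-2}\bigr)=1+\frac{\sqrt2(J-2)}{\pi}\sqrt t+o(\sqrt t)$, and therefore
\[
\lim_{t\to0^+}\frac{q_1(te_1^d)}{\sqrt t}=\lim_{t\to0^+}\frac1{\sqrt t}\Bigl(1-\frac{g(A)}{\pi(1+t)^{d-2}}\Bigr)=\frac{\sqrt2\,(2-J)}{\pi}\,.
\]

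It remains to evaluate $J$. For $d>2$, integration by parts with the antiderivative $-2u^{-1/2}$ of $u^{-3/2}$ (the boundary contribution equals $2$ at $u=1$ and $0$ at $u=0$, since $(1-u)^{(d-2)/2}-1=O(u)$ there) gives $J=2-(d-2)\int_0^1 u^{-1/2}(1-u)^{(d-4)/2}\,du=2-(d-2)\,B\bigl(\tfrac12,\tfrac{d-2}{2}\bigr)$, where $B$ is the Euler Beta function, while $J=0$ for $d=2$; using $B\bigl(\tfrac12,\tfrac{d-2}{2}\bigr)=\sqrt\pi\,\Gamma(\tfrac{d-2}{2})/\Gamma(\tfrac{d-1}{2})$ and $(d-2)\Gamma(\tfrac{d-2}{2})=2\Gamma(\tfrac d2)$ this becomes $J=2-2\sqrt\pi\,\Gamma(d/2)/\Gamma((d-1)/2)$, a formula valid for all $d\ge2$. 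Consequently
\[
\lim_{t\to0^+}\frac{q_1(te_1^d)}{\sqrt t}=\frac{\sqrt2\,(2-J)}{\pi}=\frac{\sqrt2}{\pi}\cdot\frac{2\sqrt\pi\,\Gamma(d/2)}{\Gamma((d-1)/2)}=\frac{2\sqrt2\,\Gamma(d/2)}{\sqrt\pi\,\Gamma((d-1)/2)}=C_0\,,
\]
which is the claim.

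The main obstacle is the indeterminate $0\cdot\infty$ form: $g(A)$ is a product of $A^{1/2}\to0$ and an integral that diverges like $\pi A^{-1/2}$, so the limit is governed entirely by the first-order correction, and one genuinely needs the $O(\sqrt A)$ term of $g$, not merely its limit. The two delicate points are therefore the uniform-in-$A$ domination used in the ``regularized'' integral — which hinges on the cancellation in $(1-u)^{(d-2)/2}-1$ near $u=0$ — and retaining the $O(\sqrt A)$ term coming from $\arctan(1/\sqrt A)=\pi/2-\sqrt A+O(A^{3/2})$; once these are in hand, the closed-form evaluation of $J$ is a routine Beta-function computation.
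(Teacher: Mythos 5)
Your proof is correct and follows essentially the same route as the paper: both start from the explicit formula \eqref{j_formula}, rescale the integral in $I((1+t)e_1^d)$, extract the order-$\sqrt{t}$ correction (your exact $\arctan$ evaluation replaces the paper's comparison with $\int_0^\infty b^{-1/2}(2+t+b)^{-1}\,db$ and its tail estimate, but these are the same computation in different coordinates), and evaluate the remaining integral by parts as a Beta function. All steps, including the dominated-convergence justification and the $d=2$ degeneration, check out.
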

\begin{proof}
First note that with $x=te_1^d$ for $t>0$ we have
\begin{align*}
q_1(te_1^d)&=1-\frac{2\Gamma(\frac{d}{2})}{\sqrt{\pi}\Gamma(\frac{d-1}{2})}I((1+t)e_1^d)=1-\frac{(1+t)^{2-d}}{\pi}\int_0^{\frac{1}{t(2+t)}}\frac{(1-(2+t)tb)^{\frac{d}{2}-1}}{b^{1/2}(1+b)} \ db.
\end{align*}
In the following, we write for functions $f,g:(0,1)\to (0,\infty)$
$$
g(t)\sim f(t)\quad \text{for $t\to 0^+$},\qquad\text{ if }\quad \lim_{t\to0} \frac{g(t)}{f(t)}=1.
$$
With this notation and a change of variable we find for $t\to 0^+$
\begin{align*}
\frac{q_1(te_1^d)}{\sqrt{t}}
&= \frac{1}{\sqrt{t}}\Bigg(1-\frac{(1+t)^{2-d}(2+t)^{1/2}}{\pi}\int_0^{\frac{1}{t}}\frac{(1- tb)^{\frac{d}{2}-1}}{b^{1/2}((2+t)+b)} \ db\Bigg)\\
&\sim \frac{1}{\sqrt{t}}\Bigg(1-\frac{ \sqrt{2}}{\pi}\int_0^{\frac{1}{t}}\frac{(1- tb)^{\frac{d}{2}-1}}{b^{1/2}(2+t+b)} \ db\Bigg),
\end{align*}
where we have used that
\begin{align*}
 \lim_{t\to0^+}&\Bigg(\frac{(1+t)^{2-d}(2+t)^{1/2}-\sqrt{2}}{\pi\sqrt{t}}\int_0^{\frac{1}{t}}\frac{(1- tb)^{\frac{d}{2}-1}}{b^{1/2}((2+t)+b)} \ db\Bigg)\\
&= \frac{1}{\pi}\int_0^{\infty}\frac{1}{b^{1/2}(2+b)} \ db\lim_{t\to0^+} \sqrt{t}\frac{(1+t)^{2-d}(2+t)^{1/2}-\sqrt{2}}{ t}=0.
\end{align*}
Using moreover that
$$
1=\frac{\sqrt{2}}{\pi}\int_0^{\infty}\frac{1}{b^{1/2}(2+b)}\ db=\lim_{t\to0^+} \frac{\sqrt{2}}{\pi}\int_0^{\infty}\frac{1}{b^{1/2}(2+t+b)}\ db
$$
and
\begin{align*}
\lim_{t\to0^+} \frac{1}{\sqrt{t}}\Bigg(1-\frac{\sqrt{2}}{\pi}\int_0^{\infty}\frac{1}{b^{1/2}(2+t+b)}\ db\Bigg)&=\frac{\sqrt{2}}{\pi}\lim_{t\to0^+}\int_0^{\infty}\frac{\sqrt{t}}{b^{1/2}} \frac{\frac{1}{2+b}-\frac{1}{2+t+b}}{t}\ db\\
&=\frac{\sqrt{2}}{\pi}\lim_{t\to0^+}\sqrt{t}\int_0^{\infty}\frac{1}{b^{1/2}(2+b)^2}\ db=0,
\end{align*}
it follows that we have
\begin{align*}
\frac{q_1(te_1^d)}{\sqrt{t}}&\sim \frac{\sqrt{2}}{\pi\sqrt{t}} \Bigg(\int_{1/t}^{\infty}\frac{1}{b^{1/2}(2+t+b)}\ db + \int_0^{\frac{1}{t}}\frac{1-(1- tb)^{\frac{d}{2}-1}}{b^{1/2}(2+t+b)} \ db\Bigg) \quad\text{for $t\to 0^+$.}
\end{align*}
For the first integral we have the limit
\begin{equation}\label{one limit controlled}
\lim_{t\to0^+}\frac{1}{\sqrt{t}}\int_{1/t}^{\infty}\frac{1}{b^{1/2}(2+t+b)}\ db=\lim_{t\to0^+} \int_{1}^{\infty}\frac{1}{s^{1/2}(2t+t^2+s)}\ ds=2.
\end{equation}
If $d=2$, then the second integrals vanishes
and thus $\lim_{t\to0^+}q_1(te_1^2)/\sqrt{t}=2\sqrt{2}/\pi$.\\
If $d>2$, then
\begin{align*}
 \lim_{t\to0^+} \frac{1}{\sqrt{t}}\int_0^{\frac{1}{t}}\frac{1-(1- tb)^{\frac{d}{2}-1}}{b^{1/2}(2+t+b)} \ db\,
&= \lim_{t\to0^+} \int_0^{1}\frac{1-(1- s)^{\frac{d}{2}-1}}{s^{1/2}(2t+t^2+s)} \ ds\\
&=\int_0^{1}\frac{1-(1-s)^{\frac{d}{2}-1}}{s^{3/2}}\ ds\\
&=\frac{-2(1-(1-s)^{\frac{d}{2}-1})}{s^{1/2}}\Bigg|_0^1+(d-2)\int_0^{1} (1-s)^{\frac{d}{2}-2} s^{-\frac{1}{2}}\ ds\\
&=-2+\frac{(d-2)\Gamma(\frac{d}{2}-1)\sqrt{\pi}}{\Gamma(\frac{d-1}{2})}=-2+ \frac{2\sqrt{\pi}\Gamma(\frac{d}{2})}{\Gamma(\frac{d-1}{2})}.
\end{align*}
And thus, with \eqref{one limit controlled} the claim follows also for $d>2$.
\end{proof}

\begin{lemma}\label{Q1 satisfies boundary condition}
It holds
\begin{equation}\label{Q1 to show}
Q_1(y,0)\leq C_0 \dist(y,B_1^d(-e_1^d))^{1/2} =\frac{2\sqrt{2}\Gamma(\frac{d}{2})}{\sqrt{\pi}\Gamma(\frac{d-1}{2})} \dist(y,B_1^d(-e_1^d))^{1/2} \quad\text{for $y\in \R^d$.}
\end{equation}
\end{lemma}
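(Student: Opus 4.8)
The plan is to turn \eqref{Q1 to show} into a one–variable inequality and then estimate. First I would note that, since $Q_1(y,0)=q_1(y)=1-j(y)$ and, by \eqref{j_formula}, $j$ (hence $q_1$) is a function of $\rho:=|y+e_1^d|$ alone, while $\dist(y,B_1^d(-e_1^d))=(\rho-1)_+$, the inequality is trivial on $\overline{B_1^d(-e_1^d)}$ (there $j\equiv1$, so both sides vanish) and, for $y\notin B_1^d(-e_1^d)$, writing $t=\rho-1>0$, it is equivalent to
$$
q_1(te_1^d)\leq C_0\sqrt{t},\qquad t>0,
$$
because $q_1(y)=q_1(te_1^d)$ and $\dist(te_1^d,B_1^d(-e_1^d))=t$. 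So it suffices to prove this scalar bound.

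I would split into two regimes. For $t\geq C_0^{-2}$ the bound is immediate from $q_1=1-j\leq1\leq C_0\sqrt t$. For $0<t<C_0^{-2}$ I would use the explicit formula obtained in the proof of Lemma \ref{q1_fractional_derivative},
$$
q_1(te_1^d)=1-\frac{(1+t)^{2-d}}{\pi}\int_0^{1/(t(2+t))}\frac{(1-t(2+t)b)^{(d-2)/2}}{b^{1/2}(1+b)}\,db,
$$
together with the value $\lim_{t\to0^+}q_1(te_1^d)/\sqrt t=C_0$ already established there. The cleanest route is to show that $t\mapsto q_1(te_1^d)/\sqrt t$ is non-increasing on $(0,\infty)$, which then forces it to stay below its limit $C_0$ at $0^+$. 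As a sanity check, for $d=2$ the integral is elementary ($\int_0^s b^{-1/2}(1+b)^{-1}\,db=2\arctan\sqrt s$), the identity collapses to $q_1(te_1^2)=\frac2\pi\arctan\sqrt{t(t+2)}$, and with $C_0=2\sqrt2/\pi$ the claim reduces, after the substitution $x=\sqrt{2t}$, to the elementary inequality $\tan x\geq\frac x2\sqrt{x^2+4}$ on $(0,\pi/2)$ (the range $x\geq\pi/2$ being trivial).

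The main obstacle is making the small-$t$ estimate work for general $d$: one has to control the Beta-type integral $\int_0^{1/(t(2+t))}\frac{(1-t(2+t)b)^{(d-2)/2}}{b^{1/2}(1+b)}\,db$ uniformly for $t\in(0,C_0^{-2})$, the $d=2$ computation above being only illustrative; for $d\geq3$ one needs either the monotonicity of $q_1(te_1^d)/\sqrt t$ or a direct lower bound on that integral, matching it against the value of $C_0$ produced by the same asymptotic computation. An alternative, integral-free route would be a comparison argument: $Q_1$ is harmonic in $\R^{d+1}_+$, $q_1$ vanishes on $\overline{B_1^d(-e_1^d)}$ and is $1$-harmonic on its complement, so $\partial_{x_{d+1}}Q_1(\cdot,0^+)=0$ there; letting $W$ be the Poisson extension of $y\mapsto C_0\dist(y,B_1^d(-e_1^d))^{1/2}$, one applies the maximum principle to $D:=Q_1-W$ in $\R^{d+1}_+$ (harmonic there, $D\leq0$ on $\overline{B_1^d(-e_1^d)}\times\{0\}$, $D\to-\infty$ at infinity), using Hopf's lemma to exclude a maximum on $(\overline{B_1^d(-e_1^d)})^c\times\{0\}$; this works provided $\partial_{x_{d+1}}W(\cdot,0^+)\leq0$ there, i.e. provided $\dist(\cdot,B_1^d(-e_1^d))^{1/2}$ is $1$-superharmonic on the exterior of the ball, so that verifying this superharmonicity would then become the crux.
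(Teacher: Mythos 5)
Your reduction to the scalar inequality $q_1(te_1^d)\leq C_0\sqrt{t}$ for $t>0$ is exactly the paper's first step (the paper writes it as \eqref{to show2} with $a=|x|=1+t$), and your $d=2$ computation via $q_1(te_1^2)=\tfrac{2}{\pi}\arctan\sqrt{t(t+2)}$ is a correct, essentially equivalent variant of the paper's $d=2$ case. But for $d\geq 3$ you have not proved the statement: you explicitly defer the "main obstacle" to either (a) the monotonicity of $t\mapsto q_1(te_1^d)/\sqrt{t}$ or (b) a uniform lower bound on the Beta-type integral, and you carry out neither. That deferred step \emph{is} the content of the lemma. The paper's proof for $d\geq 3$ consists precisely of this analytic work: it sets $f(a)=C_0(a-1)^{1/2}-1+\frac{a^{2-d}}{\pi}\int_0^{1/(a^2-1)}\frac{(1-(a^2-1)b)^{(d-2)/2}}{b^{1/2}(1+b)}\,db$, checks $f(1^+)=0$, and establishes $f'>0$ through a chain of estimates on the two integrals arising from differentiation, evaluated via the Beta-function identity $\int_0^1 t^{x-1}(1-t)^{y-1}\,dt=\Gamma(x)\Gamma(y)/\Gamma(x+y)$, ending with the factor $\sqrt{2}-2/\sqrt{a+1}>0$. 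Without some version of this computation (or of your ratio-monotonicity claim), the lemma is unproven for $d\geq 3$. Note also that the paper's anchor is the monotonicity of the \emph{difference} $C_0\sqrt{t}-q_1(te_1^d)$, not of the ratio; your ratio version would also suffice but is a genuinely different claim that would need its own verification.

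Your alternative "integral-free" route has the same character: it trades the scalar inequality for the assertion that $y\mapsto\dist(y,B_1^d(-e_1^d))^{1/2}$ is $1$-superharmonic outside the ball (equivalently, that its Poisson extension $W$ satisfies $\partial_{x_{d+1}}W(\cdot,0^+)\leq 0$ there). This is not an obvious fact --- it encodes quantitative curvature information about the ball comparable in difficulty to the original inequality --- and you identify it as "the crux" without establishing it. In addition, your maximum-principle setup needs care at infinity: $Q_1(\cdot,x_{d+1})\to 1$ as $|x|\to\infty$ while $W\to\infty$, so $D=Q_1-W\to-\infty$ only along the slice $x_{d+1}=0$; on the whole upper half-space the Poisson extension of an unbounded function and the decay of $D$ need to be justified before the comparison can be closed. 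As it stands the proposal is a plausible plan with the decisive estimate missing.
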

\begin{proof}
Recall that $Q_1$ is the harmonic extension of $q_1$ in $\R^{d+1}$, where $q_1=0$ on $\overline{B_1^d(-e_1^d)}$ and, for $y\in (\overline{B_1^d(-e_1^d)})^c$, it is given by
\begin{equation}
q_1(y)=1-j(y)=1-\frac{|y+e_1^d|^{2-d}}{\pi}\int_0^{\frac{1}{|y+e_1^d|^2-1}}\frac{(1-(|y+e_1^d|^2-1)b)^{\frac{d-2}{2}}}{b^{1/2}(1+b)}\ db.
\end{equation}
With $x=y+e_1^d$ it is thus enough to show that for $|x|>1$ we have
\begin{equation}\label{to show2}
\begin{split}
 \frac{2\sqrt{2}\Gamma(\frac{d}{2})}{\sqrt{\pi}\Gamma(\frac{d-1}{2})}(|x|-1)^{1/2}&\geq 1-\frac{|x|^{2-d}}{\pi}\int_0^{\frac{1}{|x|^2-1}}\frac{(1-(|x|^2-1)b)^{\frac{d-2}{2}}}{b^{1/2}(1+b)}\ db\\
&=1-\frac{|x|^{2-d}(|x|^2-1)^{1/2}}{\pi}\int_0^{1}\frac{(1-\tau)^{\frac{d-2}{2}}}{\tau^{1/2}(|x|^2-1+\tau)}\ d\tau.
\end{split}
\end{equation}
\textit{The case $d=2$:} In this case \eqref{to show2} reads
\begin{align*}
 \frac{2\sqrt{2}}{\pi}(|x|-1)^{1/2}&\geq 1-\frac{\sqrt{|x|^2-1 }}{\pi}\int_0^{1}\frac{1}{\tau^{1/2}(|x|^2-1+\tau)}\ d\tau=1-\frac{2 }{\pi}\arctan(\frac{1}{\sqrt{|x|^2-1}}),
\end{align*}
or, equivalently, with $a=|x|$, 
$$
f(a):=\sqrt{2(a-1)}+\arctan(\frac{1}{\sqrt{a^2-1}})-\frac{\pi}{2}\geq 0 \quad\text{for $a>1$.}
$$ 
Note that the function $f$ can be extended continuously at $1$ with $f(1)=0$ and 
\begin{align*}
f'(a)&=\frac{1}{\sqrt{2(a-1)}}-\frac{1}{a\sqrt{a^2-1}}=\frac{1}{\sqrt{a-1}}\Big(\frac{1}{\sqrt{2}}-\frac{1}{a\sqrt{a+1}}\Big) >0 \quad\text{for $a>1$.}
\end{align*}
This shows the claim for $d=2$.

\textit{The case $d\geq 3$:} Denote
$$
f: (1,\infty) \to \R, \quad f(a)=\frac{2\sqrt{2}\Gamma(\frac{d}{2})}{\sqrt{\pi}\Gamma(\frac{d-1}{2})}(a-1)^{1/2}-1+\frac{a^{2-d}}{\pi}\int_0^{\frac{1}{a^2-1}}\frac{(1-(a^2-1)b)^{\frac{d-2}{2}}}{b^{1/2}(1+b)}\ db.
$$
Note that $\lim_{a \to 1^+}f(a)=-1 + \frac{1}{\pi} \int_0^{\infty} \frac{db}{b^{1/2}(1+b)} = 0$, so that it is enough to show that $f'>0$. For the following calculation recall
$$
\int_0^1t^{x-1}(1-t)^{y-1}\ dt=\frac{\Gamma(x)\Gamma(y)}{\Gamma(x+y)}.
$$
Then for $a>1$ we have
\begin{align*}
f'(a)&=\frac{ \sqrt{2}\Gamma(\frac{d}{2})}{\sqrt{\pi}\Gamma(\frac{d-1}{2})\sqrt{a-1}}-(d-2)\frac{a^{1-d}}{\pi}\int_0^{\frac{1}{a^2-1}}\frac{(1-(a^2-1)b)^{\frac{d-2}{2}}}{b^{1/2}(1+b)}\ db\notag\\
&\qquad -(d-2)\frac{a^{3-d}}{\pi}\int_0^{\frac{1}{a^2-1}}\frac{b^{1/2}(1-(a^2-1)b)^{\frac{d-4}{2}}}{(1+b)}\ db\\
&\geq 
\frac{ \sqrt{2}\Gamma(\frac{d}{2})}{\sqrt{\pi}\Gamma(\frac{d-1}{2})\sqrt{a-1}}- \frac{(d-2)\sqrt{a^2-1}}{\pi}\int_0^{1}\frac{(1- t)^{\frac{d-2}{2}}}{t^{1/2}(a^2-1+t)}\ dt\notag\\
&\qquad -\frac{d-2}{\pi\sqrt{a^2-1}}\int_0^{1}\frac{t^{1/2}(1-t)^{\frac{d-4}{2}}}{(a^2-1+t)}\ dt\\
&=\frac{ \sqrt{2}\Gamma(\frac{d}{2})}{\sqrt{\pi}\Gamma(\frac{d-1}{2})\sqrt{a-1}}- \frac{(d-2)}{\pi\sqrt{a^2-1}}\int_0^{1}\frac{(1- t)^{\frac{d-4}{2}}}{t^{1/2}(a^2-1+t)}\Big((a^2-1+t)(1-t) +t^2 \Big)\ dt\notag\\
&\geq\frac{ \sqrt{2}\Gamma(\frac{d}{2})}{\sqrt{\pi}\Gamma(\frac{d-1}{2})\sqrt{a-1}}- \frac{(d-2)}{\pi\sqrt{a^2-1}}\frac{\sqrt{\pi}\Gamma(\frac{d}{2})}{\Gamma(\frac{d+1}{2})}- \frac{(d-2)}{\pi\sqrt{a^2-1}}\int_0^{1} t^{1/2}(1- t)^{\frac{d-4}{2}} \ dt\notag\\
&=\frac{ \sqrt{2}\Gamma(\frac{d}{2})}{\sqrt{\pi}\Gamma(\frac{d-1}{2})\sqrt{a-1}}- \frac{2(d-2)\Gamma(\frac{d}{2})}{\sqrt{\pi}(d-1)\Gamma(\frac{d-1}{2})\sqrt{a^2-1}}- \frac{(d-2)}{\pi\sqrt{a^2-1}}\frac{\sqrt{\pi}\Gamma(\frac{d}{2}-1)}{2\Gamma(\frac{d+1}{2})}\notag\\
&=\frac{\Gamma(\frac{d}{2})}{\sqrt{\pi}\Gamma(\frac{d-1}{2})\sqrt{a-1}}\Big(\sqrt{2}-\frac{2}{\sqrt{a+1}}\Big)> 0,
\end{align*}
This shows the claim. 
\end{proof}

For $x = (x_1,x_2,\ldots,x_d,x_{d+1}) \in \R^{d+1}$ or $x = (x_1,x_2,\ldots,x_d) \in \R^{d}$ we put $x_* = (x_2,\ldots,x_{d})$ and $|x_*| = (x_2^2 + \ldots + x_{d}^2)^{1/2}$.
\begin{equation}
\label{def:kappa-ast}
\text{For  $x_* \in \R^{d-1}$ with $|x_*| < 1$ let $\kappa(x_*) \in (-1,0]$ be such that $(1+\kappa(x_*))^2 + |x_*|^2 = 1$.}
\end{equation}

By Lemma \ref{q1_fractional_derivative} and the fact that $\R^d \ni x \to q_1(x - e_1^d)$ is radial we obtain
\begin{lemma}
\label{Qeps}
For any $\eps > 0$ there exists $r \in (0,1)$ such that for any  $(x_1,x_*,x_{d+1}) \in [-r,r]^{d+1}$ we have
$$
\frac{Q_1(x_1,x_*,x_{d+1})}{C_0} \ge (1-\eps) \Psi(x_1-\kappa(x_*),0,x_{d+1}) \ge (1-\eps) \Psi(x_1,0,x_{d+1}).
$$ 
\end{lemma}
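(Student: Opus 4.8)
Here is how I would prove Lemma~\ref{Qeps}.

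Since $\psi$ depends only on its first coordinate and is positively homogeneous of degree $1/2$, its harmonic extension $\Psi$ depends only on $(x_1,x_{d+1})$, is continuous, nonnegative and positively $1/2$-homogeneous on $\Rdp$, and is nondecreasing in the first variable; moreover $\Psi(\xi)>0$ whenever $\xi_{d+1}\ne 0$, while on $\{\xi_{d+1}=0\}$ one has $\Psi(\xi)=(\xi_1)_+^{1/2}$. Because $\kappa(x_*)\le 0$, the monotonicity in the first variable gives the second inequality $\Psi(x_1-\kappa(x_*),0,x_{d+1})\ge\Psi(x_1,0,x_{d+1})$ at once, so only the first one needs proof.

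The strategy is to prove the comparison first for the boundary traces and then to transport it to the harmonic extensions. Since $z\mapsto q_1(z-e_1^d)$ is radial, write $q_1(z)=\phi(|z+e_1^d|)$ with $\phi$ nondecreasing and $\phi\equiv 0$ on $(0,1]$; taking $z=t e_1^d$, Lemma~\ref{q1_fractional_derivative} says $\phi(1+t)/\sqrt t\to C_0$ as $t\to 0^+$. From $(1+\kappa(z_*))^2+|z_*|^2=1$ one obtains the algebraic identity $|z+e_1^d|^2-1=(z_1-\kappa(z_*))\,(2+z_1+\kappa(z_*))$, hence $|z+e_1^d|-1=(z_1-\kappa(z_*))\,\gamma(z)$ with $\gamma(z)\to 1$ as $z\to 0$. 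For $z\notin \overline{B_1^d(-e_1^d)}$, equivalently $z_1-\kappa(z_*)>0$, this gives $q_1(z)=\phi\bigl(1+(z_1-\kappa(z_*))\gamma(z)\bigr)$; combining this with the asymptotics of $\phi$ I obtain: for every $\eta\in(0,1)$ there is $\rho_0>0$ such that
\[
q_1(z)\ \ge\ (1-\eta)\,C_0\,(z_1-\kappa(z_*))_+^{1/2}\qquad\text{for all }z\in\Rd\text{ with }|z|\le\rho_0,
\]
the inequality being trivial inside the ball.

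To transport this to the extension, fix $\eps\in(0,1)$, set $\eta=\eps$ and take the corresponding $\rho_0$, and rescale: $\widetilde Q_r(x):=r^{-1/2}Q_1(rx)$ is the harmonic extension of $\widetilde q_r(z):=r^{-1/2}q_1(rz)$, and with $\beta_r(z_*):=-r^{-1}\kappa(rz_*)\in[\tfrac12 r|z_*|^2,\,r|z_*|^2]$ the target inequality at $x=r\xi$, $\xi\in[-1,1]^{d+1}$, becomes, by $1/2$-homogeneity of $\Psi$,
\[
\widetilde Q_r(\xi)\ \ge\ (1-\eps)\,C_0\,\Psi\bigl(\xi_1+\beta_r(\xi_*),0,\xi_{d+1}\bigr).
\]
On $\{\xi_{d+1}=0\}$ this is exactly the rescaled trace estimate (there $\widetilde Q_r=\widetilde q_r$, $\Psi(s,0,0)=s_+^{1/2}$, $\beta_r(\xi_*)=-r^{-1}\kappa(r\xi_*)$). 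For $\xi_{d+1}\ne 0$ I would pass to the limit $r\to 0^+$ as in the proof of Lemma~\ref{subsequence}: the family $\{\widetilde q_r\}$ is equicontinuous near the $C^1$ boundary point $0\in\partial B_1^d(-e_1^d)$ and satisfies $0\le\widetilde q_r(z)\le C_0|z|^{1/2}$ (by Lemma~\ref{Q1 satisfies boundary condition}, since $\dist(rz,B_1^d(-e_1^d))\le r|z|$), whence $\widetilde q_r\to C_0\psi$ and $\widetilde Q_r\to C_0\Psi$ locally uniformly on $\Rdp$; since also $\beta_r\to 0$ uniformly on bounded sets, the difference above converges to $\eps\,C_0\Psi(\xi)\ge 0$ uniformly on $[-1,1]^{d+1}$, which settles all $\xi$ with $|\xi_{d+1}|\ge\delta$, for any fixed $\delta>0$, once $r$ is small.

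The remaining region — $0<|\xi_{d+1}|<\delta$ with $\xi$ close to the degeneracy set $\{\xi_{d+1}=0,\ \xi_1\le 0\}$, where uniform convergence alone is not enough — is the technical core, since there both sides vanish at the rate $|\xi_{d+1}|$. I would close the estimate by a localized comparison near the (smooth) free boundary $\partial B_1^d(-e_1^d)$ of $q_1$: at every boundary point $z^0$ the blow-up of $Q_1$ is $C_0\Psi$ written in the normal direction $z^0+e_1^d$, and the smoothness of $\partial B_1^d(-e_1^d)$ makes this blow-up uniform as $z^0\to 0$; combined with the trace estimate and a maximum-principle comparison in a thin slab over the rescaled free boundary $\bigl(\overline{B_{1/r}^d(-e_1^d/r)}\cap[-1,1]^d\bigr)\times\{0\}$ — where $\widetilde Q_r$ is harmonic and vanishes, and the comparison function $(1-\eps)C_0\Psi(\xi_1+\beta_r(\xi_*),0,\xi_{d+1})$ vanishes on the same set and, off the set where it vanishes, is subharmonic for $r$ small (a direct Laplacian computation using $\Delta_{\xi_*}\beta_r\ge r(d-1)$, $|\nabla_{\xi_*}\beta_r|^2=O(r^2)$, and $\xi_1+\beta_r(\xi_*)\ge\tfrac12 r|\xi_*|^2$ on $\{\xi_1\ge0\}$) — this yields the inequality in the corner region. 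Undoing the scaling fixes the admissible $r$, and I expect this near-free-boundary comparison, making the blow-up behavior and the subharmonicity quantitative and uniform up to the degeneracy set, to be the main obstacle.
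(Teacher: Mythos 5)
Your reduction of the second inequality to the monotonicity of $\Psi$ in its first variable together with $\kappa\le 0$ is correct, and your trace-level estimate is exactly the content the paper invokes: from the identity $|z+e_1^d|^2-1=(z_1-\kappa(z_*))(2+z_1+\kappa(z_*))$, the radiality of $q_1(\cdot-e_1^d)$ and Lemma~\ref{q1_fractional_derivative} one gets $q_1(z)\ge(1-\eta)C_0(z_1-\kappa(z_*))_+^{1/2}$ for $|z|$ small. The paper offers no further argument beyond this (its ``proof'' is the single sentence preceding the lemma), so on the trace hyperplane you have actually supplied more detail than the source.

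The genuine gap is the step you yourself flag: the transport of the inequality to points with $x_{d+1}\ne 0$ near the degeneracy set $\{x_{d+1}=0,\ x_1+\beta_r(x_*)\le 0\}$, where both sides vanish at rate $|x_{d+1}|$ and locally uniform convergence of $\widetilde Q_r$ to $C_0\Psi$ gives nothing. Your proposed barrier rests on the claim that $\xi\mapsto\Psi(\xi_1+\beta_r(\xi_*),0,\xi_{d+1})$ is subharmonic where it is positive, but the computation
$\Delta\bigl[\Psi(\xi_1+\beta_r(\xi_*),0,\xi_{d+1})\bigr]=\partial_s\Psi\,\Delta\beta_r+\partial_s^2\Psi\,|\nabla\beta_r|^2$
(using harmonicity of $(s,t)\mapsto\Psi(s,0,t)$) is not signed: near the edge of $\{\Psi>0\}$ one has $\partial_s\Psi\sim\rho^{-1/2}$ but $\partial_s^2\Psi\sim-\rho^{-3/2}$, where $\rho$ is the distance to the edge in the $(s,t)$-plane, so the negative term dominates when $\rho\lesssim r|\xi_*|^2$, which does occur inside the slab you propose (e.g.\ at points with $-\beta_r(\xi_*)<\xi_1<0$ and $|\xi_*|$ of order one). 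So the comparison function is not a valid subsolution on the whole comparison region as stated, and the corner case remains open in your write-up. To close it one must either modify the barrier near the edge (e.g.\ subtract a correction supported in an $O(r)$-neighbourhood of the common zero set, exploiting that the zero set of $\widetilde q_r$ in the cube is exactly $\{\xi_1+\beta_r(\xi_*)\le 0\}$) or argue directly on the Poisson integrals, splitting off the far field with Lemma~\ref{Poisson2} and controlling the mismatch between $\kappa(w_*)$ and $\kappa(x_*)$ against the lower bound of Lemma~\ref{Poisson1}; either way this requires a quantitative estimate that neither your sketch nor the paper provides.
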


In the sequel we use the following notation $a \vee b = \max(a,b)$, $a \wedge b = \min(a,b)$.
\begin{lemma}
\label{Poisson1}
There exists $C_1 = C_1(d)$ such that for any $x \in \R^{d+1}$, with $x_{d+1} > 0$ we have
\begin{equation}
\label{Poisson1_ineq}
\int_{\R_+^d} P((x_1-y_1,x_*-y_*),x_{d+1}) y_1^{1/2} \, dy_1 \, dy_* \ge
\frac{ C_1 x_{d+1}}{((-x_1) \vee x_{d+1})^{1/2}}.
\end{equation}
\end{lemma}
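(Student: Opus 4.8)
The plan is to recognize that the left-hand side of \eqref{Poisson1_ineq} is precisely the harmonic extension $\Psi$ of $\psi$ evaluated at $(x_1,x_*,x_{d+1})$, so that the claim amounts to the pointwise lower bound $\Psi(x_1,x_*,x_{d+1}) \ge C_1\, x_{d+1}\big/\big((-x_1)\vee x_{d+1}\big)^{1/2}$. I would prove this directly from the Poisson representation, without appealing to the explicit formula for $\Psi$, by restricting the integral to a suitably placed region on which both $P$ and $y_1^{1/2}$ admit clean lower bounds.

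First I would reduce to the case $x_1 \le 0$. Substituting $w_1 = y_1 - x_1$ rewrites the left-hand side of \eqref{Poisson1_ineq} as
$$
\int_{\R}\int_{\R^{d-1}} P\big((-w_1,\,x_*-y_*),\,x_{d+1}\big)\,(w_1+x_1)_+^{1/2}\,dw_1\,dy_*,
$$
which is nondecreasing in $x_1$, since $P\ge 0$ and $w_1\mapsto (w_1+x_1)_+^{1/2}$ is nondecreasing in $x_1$. As $(-x_1)\vee x_{d+1}=x_{d+1}$ whenever $x_1\ge 0$, the right-hand side of \eqref{Poisson1_ineq} for $x_1\ge 0$ coincides with its value at $x_1=0$; hence it suffices to prove \eqref{Poisson1_ineq} for $x_1\le 0$.

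Then, for $x_1\le 0$, I would set $R:=(-x_1)\vee x_{d+1}$, so that $R\ge x_{d+1}>0$ and $0\le -x_1\le R$, and restrict the integration to the slab $A=\{(y_1,y_*):\, R\le y_1\le 2R,\ |y_*-x_*|\le R\}$, whose Lebesgue measure equals $c_d\,R^d$, with $c_d>0$ the volume of the unit ball in $\R^{d-1}$. On $A$ one has $y_1^{1/2}\ge R^{1/2}$ and, using $0\le -x_1\le R$, $0\le y_1\le 2R$ and $x_{d+1}\le R$, the elementary estimate $(x_1-y_1)^2+|x_*-y_*|^2+x_{d+1}^2\le 11R^2$; hence $P((x_1-y_1,x_*-y_*),x_{d+1})\ge 2\calA_d\,x_{d+1}\,11^{-(d+1)/2}R^{-(d+1)}$. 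Multiplying these three lower bounds and integrating over $A$ yields
$$
\int_{\R_+^d} P\big((x_1-y_1,x_*-y_*),x_{d+1}\big)\,y_1^{1/2}\,dy_1\,dy_* \ \ge\ \frac{2\calA_d\,c_d}{11^{(d+1)/2}}\,\frac{x_{d+1}}{R^{1/2}},
$$
which is \eqref{Poisson1_ineq} with $C_1=2\calA_d\,c_d\,11^{-(d+1)/2}$; by the first step the same $C_1$ works for all $x_1\in\R$.

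The volume computation for $A$ and the denominator bound are routine; the only point requiring care is the monotonicity reduction in the first step. It is essential because a slab placed near $y_1\approx x_1$ would produce the wrong power of $(-x_1)\vee x_{d+1}$ when $x_1$ is large and positive while $x_{d+1}$ is small, whereas after reducing to $x_1\le 0$ the argument of the Poisson kernel over the scale-$R$ slab $A$ remains comparable to $R$. (Throughout this section $d\ge 2$, so the $(d-1)$-dimensional ball in the definition of $A$ is non-degenerate; for $d=2$ it is simply an interval.)
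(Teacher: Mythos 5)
Your proof is correct and follows essentially the same strategy as the paper's: restrict the integral to a region of diameter comparable to $R=(-x_1)\vee x_{d+1}$ lying in $\{y_1\gtrsim R\}$, where the Poisson kernel is bounded below by $c\,x_{d+1}R^{-(d+1)}$ and $y_1^{1/2}$ by $cR^{1/2}$. The only (harmless) difference is that you dispose of the case $x_1\ge 0$ by a monotonicity-in-$x_1$ reduction, whereas the paper simply recenters the comparison region at $(x_1,x_*)$ in that case.
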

\begin{proof}
Put $r = (-x_1) \vee x_{d+1}$. For $x_1 < 0$ the left-hand side of (\ref{Poisson1_ineq}) is bounded from below by
$$
c \int_{B_r^d((0,x_*)) \cap \R_+^d} \frac{x_{d+1} y_1^{1/2}}{r^{d+1}} \, dy_1 \, dy_* \ge
\frac{c x_{d+1}}{r^{1/2}}.
$$
For $x_1 \ge 0$ the left-hand side of (\ref{Poisson1_ineq}) is bounded from below by
$$
c \int_{B_{r}^d((x_1,x_*)) \cap \R_+^d} \frac{x_{d+1} y_1^{1/2}}{r^{d+1}} \, dy_1 \, dy_* \ge
\frac{c x_{d+1}}{r^{1/2}}.
$$
\end{proof}

\begin{lemma}
\label{Poisson2}
There exists $C_2 = C_2(d)$ such that for any $R > 0$, $|x_1| \le R/4$, $|x_*| \le R/4$, $x_{d+1} \in (0,R/4]$  we have
\begin{equation}
\label{Poisson2_ineq}
\int_{(B_R^d(0))^c} P((x_1-y_1,x_*-y_*),x_{d+1}) |y|^{1/2} \, dy_1 \, dy_* \le
\frac{ C_2 x_{d+1}}{R^{1/2}}.
\end{equation}
\end{lemma}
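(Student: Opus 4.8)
The plan is to bound the Poisson kernel pointwise on the region of integration and then reduce to a convergent radial integral, the smallness of the horizontal part of $x$ relative to $R$ being used to make the bound independent of $x$. Write $\tilde{x}=(x_1,x_*)\in\R^d$ and $y=(y_1,y_*)\in\R^d$, so that the integral in \eqref{Poisson2_ineq} is $\int_{(B_R^d(0))^c} P(\tilde{x}-y,x_{d+1})\,|y|^{1/2}\,dy$. The hypotheses $|x_1|\le R/4$ and $|x_*|\le R/4$ give $|\tilde{x}|^2=x_1^2+|x_*|^2\le R^2/8$, hence $|\tilde{x}|<R/2$. Consequently, for every $y$ with $|y|>R$ we have $|\tilde{x}-y|\ge |y|-|\tilde{x}|>|y|-R/2>|y|/2$, so that
$$
\bigl(|\tilde{x}-y|^2+x_{d+1}^2\bigr)^{(d+1)/2}\ge |\tilde{x}-y|^{d+1}\ge 2^{-(d+1)}|y|^{d+1},
$$
and plugging this into the explicit formula \eqref{Poisson} for $P$ yields
$$
P(\tilde{x}-y,x_{d+1})=2\calA_d\,\frac{x_{d+1}}{(|\tilde{x}-y|^2+x_{d+1}^2)^{(d+1)/2}}\le 2^{d+2}\calA_d\,\frac{x_{d+1}}{|y|^{d+1}},\qquad y\in (B_R^d(0))^c.
$$

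Next I would integrate this bound. Passing to polar coordinates in $\R^d$,
$$
\int_{(B_R^d(0))^c} P(\tilde{x}-y,x_{d+1})\,|y|^{1/2}\,dy\le 2^{d+2}\calA_d\,x_{d+1}\int_{(B_R^d(0))^c}|y|^{1/2-(d+1)}\,dy=2^{d+2}\calA_d\,x_{d+1}\,H^{d-1}(S^{d-1})\int_R^\infty \rho^{-3/2}\,d\rho.
$$
Since $\int_R^\infty \rho^{-3/2}\,d\rho=2R^{-1/2}$, the right-hand side equals $C_2 x_{d+1}R^{-1/2}$ with $C_2=2^{d+3}\calA_d\,H^{d-1}(S^{d-1})$, which is the asserted inequality \eqref{Poisson2_ineq}.

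There is essentially no obstacle in this argument: the only point that matters is that the exponent $\tfrac12-(d+1)=-(d+\tfrac12)$ is strictly less than $-d$, so the tail integral $\int_{|y|>R}|y|^{-d-1/2}\,dy$ converges and scales exactly like $R^{-1/2}$; the smallness of $\tilde{x}$ is precisely what lets us replace $|\tilde{x}-y|$ by a fixed multiple of $|y|$ and thereby discard the $\tilde{x}$-dependence. Note that the hypothesis $x_{d+1}\le R/4$ is not actually needed for this one-sided estimate and is imposed only for consistency with the companion lemmas.
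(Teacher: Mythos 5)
Your proof is correct and follows essentially the same route as the paper's (one-line) argument: bound the Poisson kernel by $c\,x_{d+1}|y|^{-d-1}$ on $(B_R^d(0))^c$ using the smallness of $|\tilde x|$ relative to $R$, then integrate the resulting radial function. Your version merely makes the constants and the polar-coordinate computation explicit.
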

\begin{proof}
The left-hand side of (\ref{Poisson2_ineq}) is bounded from above by
$$
c \int_{(B_R^d(0))^c} \frac{x_{d+1} |y|^{1/2}}{|y|^{d+1}} \, dy \le \frac{c x_{d+1}}{R^{1/2}}.
$$
\end{proof}

\begin{lemma}
\label{Poisson3}
There exists $C_3 = C_3(d)$ such that for any $x \in \R^{d+1}$ with $x_1 < 0$ and $x_{d+1} \in (0,|x_1|]$ we have
\begin{equation}
\label{Poisson3_ineq}
\int_{(B_{|x_1|/2}^d(x))^c} P((x_1-y_1,x_*-y_*),x_{d+1}) \, dy_1 \, dy_* \le
\frac{ C_3 x_{d+1}}{|x_1|}.
\end{equation}
\end{lemma}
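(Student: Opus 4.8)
The plan is to bound the integrand pointwise on the region of integration and then perform an elementary tail estimate in polar coordinates, in the same spirit as the proof of Lemma~\ref{Poisson2}. Write $\tilde x=(x_1,x_*)\in\R^d$ for the point formed by the first $d$ coordinates of $x$, so that the domain $(B_{|x_1|/2}^d(x))^c$ is understood as $\{y\in\R^d:\ |y-\tilde x|\ge |x_1|/2\}$; on this set one has $|\tilde x-y|\ge |x_1|/2>0$. Recalling from \eqref{Poisson} that $P(z,t)=2\calA_d\,t\,(|z|^2+t^2)^{-(d+1)/2}$, the trivial bound $(|z|^2+t^2)^{(d+1)/2}\ge|z|^{d+1}$ gives, for $y$ in this region,
\begin{equation*}
P((x_1-y_1,x_*-y_*),x_{d+1})\ \le\ 2\calA_d\,\frac{x_{d+1}}{|\tilde x-y|^{d+1}}\,.
\end{equation*}

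Then I would substitute $z=\tilde x-y$ (a translation composed with a reflection, of Jacobian $1$), obtaining
\begin{equation*}
\int_{(B_{|x_1|/2}^d(x))^c} P((x_1-y_1,x_*-y_*),x_{d+1})\,dy_1\,dy_*
\ \le\ 2\calA_d\,x_{d+1}\int_{\{|z|\ge |x_1|/2\}}\frac{dz}{|z|^{d+1}}\,,
\end{equation*}
and then evaluate the last integral in polar coordinates: $\int_{\{|z|\ge \rho\}}|z|^{-d-1}\,dz=H^{d-1}(S^{d-1})\int_{\rho}^{\infty}r^{-2}\,dr=H^{d-1}(S^{d-1})/\rho$, with $\rho=|x_1|/2$. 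Collecting the constants, the whole quantity is at most $4\calA_d\,H^{d-1}(S^{d-1})\,x_{d+1}/|x_1|$, so \eqref{Poisson3_ineq} holds with the dimensional constant $C_3=C_3(d)=4\calA_d\,H^{d-1}(S^{d-1})$.

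There is essentially no obstacle here: the estimate is valid for every $x\in\R^{d+1}$ with $x_1\neq 0$ and every $x_{d+1}>0$, so the hypotheses $x_1<0$ and $x_{d+1}\in(0,|x_1|]$ are never used in the argument and are recorded only because they hold in the situation where this lemma is applied later in the section (together with Lemmas~\ref{Poisson1} and~\ref{Poisson2}). The only bookkeeping point is to carry along the normalization constant $\calA_d$ from \eqref{Poisson}, so that $C_3$ genuinely depends on $d$ alone.
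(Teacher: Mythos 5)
Your proof is correct and is essentially the paper's own argument: the paper likewise bounds $P$ by $c\,x_{d+1}|y-\tilde x|^{-d-1}$ and integrates over $\{|y-\tilde x|\ge|x_1|/2\}$ to get $c\,x_{d+1}/|x_1|$. Your additional observations (the explicit constant and the fact that the hypotheses $x_1<0$, $x_{d+1}\le|x_1|$ are not needed for this particular bound) are accurate.
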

\begin{proof}
The left-hand side of (\ref{Poisson3_ineq}) is bounded from above by
$$
c \int_{(B_{|x_1|/2}^d(x))^c} \frac{x_{d+1} }{|y - x|^{d+1}} \, dy \le \frac{c x_{d+1}}{|x_1|}.
$$
\end{proof}

The proof of next lemma is using ideas from the proof of Theorem 2.1 in \cite{HS2000}. 
\begin{lemma}
\label{normal_derivative}
Assume that $\calF(D,\lambda)$ is not empty. Then for any $\theta \in \partial K_{D,\lambda}$ we have 
$\partial_{\nu(\theta)}^{1/2} \varphi_{D,\lambda}(\theta) = -\lambda$. 
\end{lemma}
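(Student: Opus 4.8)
The plan is as follows. Fix $\theta\in\partial K_{D,\lambda}$; after a rotation and translation we may assume $\theta=0$ and $\nu(\theta)=e_1^d$. Since $\varphi_{D,\lambda}(\theta)=u_{D,\lambda}(\theta,0)=1$ and $v_t$ is given by \eqref{vr_def}, we have $\partial_{\nu(\theta)}^{1/2}\varphi_{D,\lambda}(\theta)=-\lim_{t\to0^+}v_t((e_1^d,0))$ whenever the limit exists. The quantity $v_t((e_1^d,0))$ lies in $[0,\lambda]$ (lower bound because $u_{D,\lambda}\le1$, upper bound by \eqref{overdetermined} for $K_{D,\lambda}$), so it suffices to show that every subsequential limit of $v_{t_k}((e_1^d,0))$ along a sequence $t_k\to0^+$ equals $\lambda$. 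By Lemma \ref{subsequence} (after passing to a further subsequence) $v_{t_k}\to\beta\Psi$ locally uniformly for some $\beta\ge0$, so $v_{t_k}((e_1^d,0))\to\beta\Psi((e_1^d,0))=\beta$, and the whole matter reduces to proving $\beta=\lambda$. The inequality $\beta\le\lambda$ is immediate from \eqref{overdetermined}: $v_r(x,0)=(1-u_{D,\lambda}(rx))/r^{1/2}\le\lambda|x|^{1/2}$ for all $r>0$, $x\in\R^d$, and evaluating along $r=t_k$ at $x=e_1^d$ gives $\beta\le\lambda$.

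For $\beta\ge\lambda$ I would argue by contradiction, using the maximality of $K_{D,\lambda}$ in Corollary \ref{lemma_uDlambda}: if $\beta<\lambda$, I would construct a nonempty compact convex $\tilde K$ with $\tilde K\not\subseteq K_{D,\lambda}$ and $\tilde K\in\calF(D,\lambda)$, a contradiction since every element of $\calF(D,\lambda)$ is contained in $K_{D,\lambda}$. The guiding principle is that $\beta<\lambda$ forces $1-u_{D,\lambda}$ to grow near $\theta$ strictly slower than the critical rate $\lambda\,\delta_{K_{D,\lambda}}^{1/2}$ — a fact quantified by $v_{t_k}\to\beta\Psi$ together with the $1/2$-homogeneity of $\Psi$ — leaving a definite slack of order $\lambda-\mu$, where $\mu:=(\beta+\lambda)/2$, to push the contact set outward near $\theta$. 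Concretely, $\tilde K$ would be obtained by adjoining to $K_{D,\lambda}$ a ball, tangent to the tangent plane of $\partial K_{D,\lambda}$ at $\theta$ and of radius matched to $\mu$ through the normalisation in Lemmas \ref{q1_fractional_derivative} and \ref{Q1 satisfies boundary condition} (so that the corresponding rescaled, translated copy $b$ of the harmonic extension of $q_1$ has $1/2$-slope exactly $\mu$ at the tangency point and satisfies $b(y,0)\le\mu\,\dist(y,\text{ball})^{1/2}$); by Lemma \ref{C1boundary} the relevant part of $\partial K_{D,\lambda}$ is $C^1$, so the added ball can be made to protrude genuinely outward, the new portion of $\partial\tilde K$ is smooth, and the transition to $\partial K_{D,\lambda}$ produces only re-entrant corners, which do not hurt the overdetermined bound.

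To verify $\tilde K\in\calF(D,\lambda)$ I would use Lemma \ref{subharmonic} with $w:=\max(u_{D,\lambda},\widetilde b)$, where $\widetilde b$ is a domain-adapted (truncated) version of $b$ that vanishes near $\partial D$; the conditions on $\Delta w$, on $\partial D$, and at infinity are routine, and the estimate $\sup_{y\in D\setminus\tilde K}(1-w(y,0))/\delta_{\tilde K}(y)^{1/2}\le\lambda$ is checked in two regimes: for $y$ away from $\theta$ one has $w=u_{D,\lambda}$ and \eqref{overdetermined} for $K_{D,\lambda}$ applies directly, while for $y$ near $\theta$ one uses $\widetilde b$, its slope $\mu<\lambda$, and the slack coming from $\beta<\lambda$. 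The matching of the two regimes at the correct scale, and the control of $w$ across the seam where the maximum switches, would be carried out using the Poisson-kernel estimates of Lemmas \ref{Poisson1}, \ref{Poisson2}, \ref{Poisson3} together with the local representation in Lemma \ref{rough_estimate}; Lemma \ref{Qeps} supplies the needed lower bound on $b$ near the tangency point, with its $\kappa(x_*)$-correction accounting for the curvature of the adjoined ball.

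The main obstacle is exactly this enlargement step: producing a perturbation that is genuinely convex, genuinely larger than $K_{D,\lambda}$, and still satisfies the overdetermined inequality with the same constant $\lambda$ near $\theta$. The two delicate features are the nonlocality of the $1/2$-slope — pushing the contact set out near $\theta$ affects the boundary behaviour through the far field, and this must be absorbed into $\lambda-\mu$ by taking the perturbation small and the scale (i.e. the index in the blow-up subsequence) appropriately — and the mere $C^1$ regularity of $\partial K_{D,\lambda}$, which rules out interior-ball constructions and forces one to work with a protruding cap glued to $K_{D,\lambda}$. Once the contradiction is reached, $\beta=\lambda$ for every subsequential limit, hence $\lim_{t\to0^+}v_t((e_1^d,0))=\lambda$, and therefore $\partial_{\nu(\theta)}^{1/2}\varphi_{D,\lambda}(\theta)=-\lambda$.
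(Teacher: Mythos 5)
Your proposal is correct and follows essentially the same route as the paper: blow-up along a sequence via Lemma \ref{subsequence} to reduce to the limit slope $\beta$, the trivial bound $\beta\le\lambda$ from \eqref{overdetermined}, and a contradiction with the maximality of $K_{D,\lambda}$ when $\beta<\lambda$ by adjoining a small protruding ball and comparing $u_{D,\lambda}$ with a rescaled, translated, amplitude-adjusted copy of $Q_1$ (normalized by $C_0$), glued by a max/min and validated through Lemma \ref{subharmonic}. The paper carries out the step you flag as the main obstacle — the comparison $\tilde{Q}\ge v_{r_j}$ on the boundary of a small box, split into the regions $U_1,\dots,U_6$ and controlled exactly by Lemmas \ref{rough_estimate}, \ref{Qeps}, and \ref{Poisson1}--\ref{Poisson3} as you anticipate — so your plan matches the actual argument in both strategy and choice of tools.
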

\begin{proof}
Assume, for a contradiction, that there exists a point $\theta \in \partial K_{D,\lambda}$ such that $\partial_{\nu(\theta)}^{1/2} \varphi_{D,\lambda}(\theta)$ does not exist or it exists but $\partial_{\nu(\theta)}^{1/2} \varphi_{D,\lambda}(\theta) \ne -\lambda$.  Then $\limsup_{t \to 0^+} (\varphi_{D,\lambda}(\theta + t \nu(\theta)) - 1)/t^{1/2} = -\beta$ for some $0 \le \beta < \lambda$. Assume that the origin is at $\theta$ and that the exterior normal to $\partial K_{D,\lambda}$ at $\theta$ is directed by the first coordinate vector $e_1^d$. Then there exist a decreasing sequence $r_j$ tending to $0$ such that $r_j \in (0,1]$ for any $j$ and $\lim_{j \to \infty} (1 - \varphi_{D,\lambda}(r_j e_1^d))/r_j^{1/2} = \beta$ For any $r > 0$ let $v_r$ be defined by (\ref{vr_def}). By Lemma \ref{subsequence} there exists a decreasing subsequence of $r_j$ also denoted by $r_j$ such that the sequence $v_{r_j}$ converges uniformly on any compact subset of $\R^{d+1}$ and pointwise on all of $\R^{d+1}$ to $\beta \Psi$.

\begin{figure}
\centering
\includegraphics[scale=0.8]{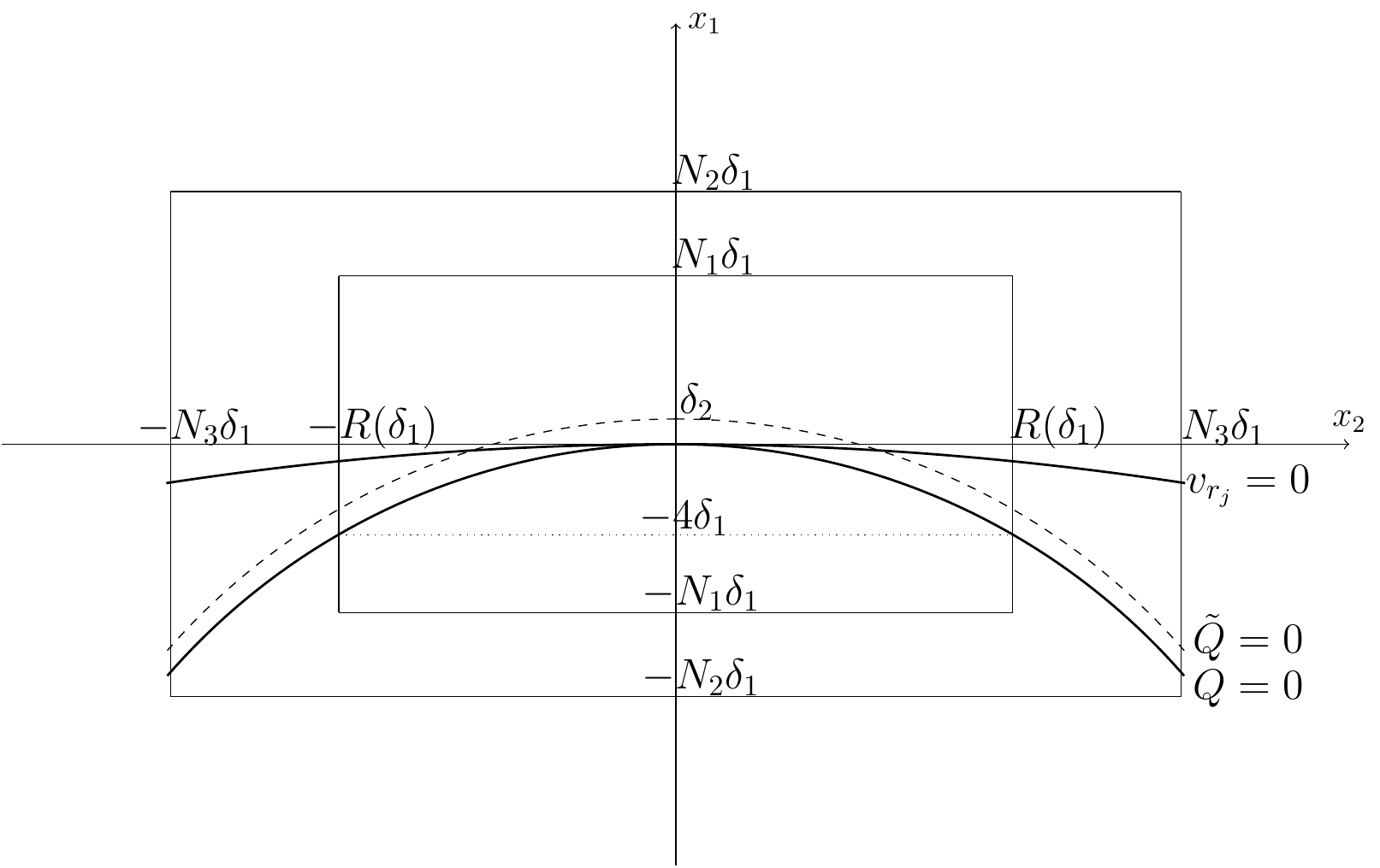}
\caption{Boxes $W_1$, $W_2$.}
\label{fig:2}
\end{figure}

For any $\delta_1 \in (0,1/16)$ put $R(\delta_1) = (1-(1-4\delta_1)^2)^{1/2} = (8\delta_1 - 16 \delta_1^2)^{1/2}$ and note that $R(\delta_1)\in(\sqrt{7\delta_1}, \frac{\sqrt{7}}{4})$. Let $\delta_1 \in (0,1/16)$ and $N_1 \ge 4$ be such that $R(\delta_1) \ge 2 N_1 \delta_1$. Let $N_2, N_3 \ge 4 R(\delta_1)/\delta_1$ be such that $N_2 \delta_1, N_3\delta_1 \le 3/4$. Note that thus $N_2,N_3>N_1$.
We define a small box
$$
W_1 = \{(x_1,x_*,x_{d+1}) \in \R^{d+1}: \, x_1 \in [-N_1 \delta_1, N_1 \delta_1], \, |x_*| \le R(\delta_1), \, x_{d+1} \in [-N_1 \delta_1, N_1 \delta_1]\}
$$
and a large box
$$
W_2 = \{(x_1,x_*,x_{d+1}) \in \R^{d+1}: \, x_1 \in [-N_2 \delta_1, N_2 \delta_1], \, |x_*| \le N_3 \delta_1, \, x_{d+1} \in [-N_2 \delta_1, N_2 \delta_1]\}
$$
(see Figure \ref{fig:2}).

Choose $\eps_1 > 0$ such that $\beta +\eps_1 < \lambda^{1/2}$. Put $Q = (\beta + \eps_1) Q_1/C_0$. Let $\eps_2 \in (0,\eps_1]$. 
We choose $\delta_1$ small enough such that we may choose $N_1, N_2, N_3$ so that 
\begin{equation}
\label{N2N3}
\frac{N_2 \wedge N_3}{N_1} \ge \frac{36 \lambda^2 C_2^2}{C_1^2 \eps_2^2}.
\end{equation}
By Lemma \ref{Qeps} one can choose $\delta_1, N_1, N_2, N_3$ (satisfying the conditions mentioned above) such that 
$$
Q(x_1,x_*,x_{d+1}) \ge (\beta  + \eps_2) \Psi(x_1-\kappa(x_*),0,x_{d+1}) \ge (\beta  + \eps_2) \Psi(x_1,0,x_{d+1})
$$ 
for any $(x_1,x_*,x_{d+1}) \in W_2$. From here on, $N_1,N_2,N_3,\delta_1,\epsilon_1,\epsilon_2$ are fixed such that the above inequalities holds.

Let $\delta_2 \in (0,\delta_1)$ and $\tilde{Q}(x_1,x_*,x_{d+1}) = Q(x_1 - \delta_2,x_*,x_{d+1})$. Note that $(1-4\delta_1)^2 + (R(\delta_1))^2 = 1$ so $Q( -4\delta_1, x_*,0) = 0$ for $x_* \in \R^{d-1}$ such that $|x_*| = R(\delta_1)$. Hence for such $x_*$ we have 
$\tilde{Q}( -4\delta_1+\delta_2, x_*,0) = 0$.

Let $\eps_3 > 0$. We choose $j$ large enough so that
\begin{equation}
\label{beta_eps3} 
v_{r_j}(y) \le \beta \Psi(y) + \eps_3, \quad y \in W_2, 
\end{equation}
\begin{equation}
\label{delta1_belt}
\{x \in \R^d: \, (x,0) \in W_2, v_{r_j}(x,0) = 0\} \subset \{x \in \R^d: \, x_1 \in (-\delta_1,0]\}.
\end{equation}

By Lemma \ref{rough_estimate} there exists $C_4 =C_4(D,\lambda) > 0$ such that for sufficiently large $j$ and $x \in \partial W_1$ with $x_{d+1} > 0$ we have
\begin{equation}
\label{rough_estimate1}
v_{r_j}(x) \le \int_{(K_{D,\lambda}/r_j)^c} P(x_1-y_1,x_*-y_*,x_{d+1}) v_{r_j}(y_1,y_*,0) \, dy_1 \, dy_* +
C_4 r_j^{1/2} x_{d+1}.
\end{equation}

We next show that under an appropriate choice of $\eps_3$  we get for sufficiently large $j$
\begin{equation}
\label{boundary_estimate}
\text{$\tilde{Q}(x) \ge v_{r_j}(x)$ for $x \in \partial W_1$ and $\tilde{Q}(x) > v_{r_j}(x)$
for $x \in \partial W_1$ such that $v_{r_j}(x) > 0$.}
\end{equation}
We may assume that $x \in \partial W_1$ satisfy $x_{d+1} \ge 0$.

Let us consider
$$
U_1 = \{x \in \partial W_1: \, x_1 \ge - 2 \delta_1, \, |x_*| = R(\delta_1), x_{d+1} \ge 0\}.
$$
Note that for $x \in U_1$ we have $\kappa(x_*) = -4 \delta_1$ (with $\kappa$ defined as in \eqref{def:kappa-ast}).
Hence for $(x_1,x_*,x_{d+1}) \in U_1$ we get
\begin{eqnarray}
\nonumber
\tilde{Q}(x_1,x_*,x_{d+1}) &=& Q(x_1 - \delta_2,x_*,x_{d+1})\\
\nonumber 
&\ge& Q(x_1 - \delta_1,x_*,x_{d+1})\\
\nonumber
&\ge& (\beta  + \eps_2) \Psi(x_1-\delta_1 -\kappa(x_*),0,x_{d+1})\\
\label{U1_estimate}
&=& (\beta  + \eps_2) \Psi(x_1 + 3\delta_1,0,x_{d+1})
\end{eqnarray}
Now, $\eps_3$ must be chosen small enough so that for $x_{d+1} \in [0, N_1 \delta_1]$ we have $\eps_2 \Psi(3\delta_1,0,x_{d+1 }) > \eps_3$.
Then, for $x \in U_1$ (\ref{U1_estimate}) is bounded from below by
\begin{equation*}
\beta \Psi(x_1 ,0,x_{d+1}) + \eps_2 \Psi(3\delta_1,0,x_{d+1})  > \beta \Psi(x_1 ,x_*,x_{d+1}) + \eps_3
\ge v_{r_j}(x_1 ,x_*,x_{d+1}). 
\end{equation*}
So $\tilde{Q} > v_{r_j}$ on $U_1$.

Now, let us consider
$$
U_2 = \{x \in \partial W_1: \, x_1 = N_1 \delta_1, \,  x_{d+1} \ge 0\}.
$$
In order to show (\ref{boundary_estimate}) for $x \in U_2$ it is enough to choose $\eps_3$ and $\delta_2$ small enough. More precisely, we choose $\eps_3$ smaller if necessary, so that for $x_{d+1} \in [0,N_1\delta_1]$ we have $(\eps_2/2) \Psi((N_1 - 1)\delta_1,0,x_{d+1}) > \eps_3$. Moreover, we choose $\delta_2$ smaller if necessary, so that for $x_{d+1} \in [0,N_1\delta_1]$ we have $(\beta + \eps_2/2) \Psi(N_1\delta_1 - \delta_2,0,x_{d+1}) \ge \beta \Psi(N_1\delta_1,0,x_{d+1})$. Clearly, this is possible and then for $x \in U_2$ we have
\begin{eqnarray*}
\tilde{Q}(x_1,x_*,x_{d+1}) &=& Q(N_1\delta_1 - \delta_2,x_*,x_{d+1})\\
&\ge& (\beta  + \eps_2) \Psi(N_1\delta_1 - \delta_2,0,x_{d+1})\\
&\ge& (\beta + \eps_2/2) \Psi(N_1\delta_1 - \delta_2,0,x_{d+1}) + (\eps_2/2) \Psi((N_1 - 1)\delta_1,0,x_{d+1})\\
&>& \beta \Psi(N_1\delta_1,x_*,x_{d+1}) + \eps_3\\
&\ge& v_{r_j}(x_1 ,x_*,x_{d+1}) 
\end{eqnarray*}
Now, put
$$
U_3 = \{x \in \partial W_1: \, x_1 \le - 2 \delta_1,  x_{d+1} \in (0,|x_1|]\},
$$
$$
U_4 = \{x \in \partial W_1: \, x_1 \le - 2 \delta_1,  x_{d+1} \in (|x_1|,N_1 \delta_1)\},
$$
$$
U_5 = \{x \in \partial W_1: \, x_{d+1} = N_1 \delta_1\},
$$
$$
U_6 = \{x \in \partial W_1: \, x_1 \le - 2 \delta_1, x_{d+1} = 0\}.
$$
Next, by making $\delta_2$ even smaller if necessary, for any $x \in U_3 \cup U_4 \cup U_5$ and $y \in \R_+^d$ we have
$$
\frac{P(x_1-y_1-\delta_2,x_*-y_*,x_{d+1})}{P(x_1-y_1,x_*-y_*,x_{d+1})} > \frac{\beta + \eps_2/2}{\beta + \eps_2}
$$
Using this we get for $x \in U_3 \cup U_4 \cup U_5$
\begin{eqnarray*}
\tilde{Q}(x_1,x_*,x_{d+1}) &=& Q(x_1 - \delta_2,x_*,x_{d+1})\\
&\ge& (\beta  + \eps_2) \Psi(x_1 - \delta_2,0,x_{d+1})\\
&=& (\beta  + \eps_2) \int_{\R_+^d} P(x_1-y_1-\delta_2,x_*-y_*,x_{d+1}) y_1^{1/2} \, dy_1 \, dy_*\\
&>& (\beta  + \eps_2/2) \int_{\R_+^d} P(x_1-y_1,x_*-y_*,x_{d+1}) y_1^{1/2} \, dy_1 \, dy_*\\ 
&=& \text{I}.
\end{eqnarray*}

For $x \in U_3$ put $W_3 = (B_{|x_1|/2}^d(x))^c$ and for $x \in U_4 \cup U_5$ put $W_3 = \R^d$. Note that by (\ref{delta1_belt}) for $x = (x_1,x_*)$ satisfying $x_1 < -2 \delta_1$ and sufficiently large $j$ we have $v_{r_j}(y,0) = 0$ for $y \in B_{|x_1|/2}^d(x)$.
Using this, (\ref{beta_eps3}), (\ref{rough_estimate1}), and (\ref{general_estimate}) for sufficiently large $j$ and $x \in U_3 \cup U_4 \cup U_5$ we get
$$
v_{r_j}(x_1,x_*,x_{d+1}) \le \text{II} + \text{III} + \text{IV} + \text{V},
$$
where 
$$
\text{II} = \beta \int_{\R_+^d \cap W_2} P(x_1-y_1,x_*-y_*,x_{d+1}) y_1^{1/2} \, dy_1 \, dy_*,
$$
$$
\text{III} = \eps_3 \int_{W_3} P(x_1-y_1,x_*-y_*,x_{d+1}) \, dy_1 \, dy_*,
$$
$$
\text{IV} = \lambda \int_{(K_{D,\lambda}/r_j)^c \setminus W_2} P(x_1-y_1,x_*-y_*,x_{d+1}) |y|^{1/2} \, dy_1 \, dy_*
$$
and $\text{V} = C_4 r_j^{1/2} x_{d+1}$.

We next show that for sufficiently large $j$ and $x \in U_3 \cup U_4 \cup U_5$ we have
\begin{equation}
\label{U345}
\text{I} - \text{II} \ge \text{III} + \text{IV} + \text{V}.
\end{equation}

For $x \in U_3 \cup U_4 \cup U_5$ by Lemma \ref{Poisson1} we get
$$
\text{I} - \text{II} 
\ge \frac{\eps_2}{2} \int_{\R_+^d} P(x_1-y_1,x_*-y_*,x_{d+1}) y_1^{1/2} \, dy_1 \, dy_* \ge \frac{ C_1 \eps_2 x_{d+1}}{2 ((-x_1) \vee x_{d+1})^{1/2}}.
$$
Next, for $x \in U_3$ by Lemma \ref{Poisson3} we get $\text{III} \le \eps_3  C_3 x_{d+1}/((-x_1) \vee x_{d+1})$. On the other hand for $x \in U_4 \cup U_5$ we trivially have $\text{III} \le \eps_3 \le \eps_3 x_{d+1}/((-x_1)\vee x_{d+1})$. So for $x \in U_3 \cup U_4 \cup U_5$ we have
$$
\text{III} \le \frac{\eps_3 ( C_3 \vee 1 )x_{d+1}}{(-x_1)\vee x_{d+1}} 
\le \frac{\eps_3 ( C_3 \vee 1 )x_{d+1}}{((-x_1)\vee x_{d+1})^{1/2} (2 \delta_1)^{1/2}}.
$$
For $x \in U_3 \cup U_4 \cup U_5$ by Lemma \ref{Poisson2} we get
$$
\text{IV} \le \frac{ \lambda C_2 x_{d+1}}{(N_2 \wedge N_3)^{1/2} \delta_1^{1/2}}.
$$
Choosing $\epsilon_3$ smaller, if necessary, we obtain for $j$ big enough 
$$
\frac{1}{3} \left(\text{I} - \text{II} \right) \ge
\frac{ C_3 \eps_2 x_{d+1}}{6 ((-x_1) \vee x_{d+1})^{1/2}} \ge
\frac{\eps_3 ( C_3\vee 1) x_{d+1}}{((-x_1)\vee x_{d+1})^{1/2} (2 \delta_1)^{1/2}} \ge
\text{III}.
$$
By taking $j$ big enough, so that $r_j$ are small enough, we also get
$$
\frac{1}{3} \left(\text{I} - \text{II} \right) \ge
\frac{ C_1\eps_2 x_{d+1}}{6 N_1^{1/2} \delta_1^{1/2}} \ge
 C_4 r_j^{1/2} x_{d+1} =
\text{V}.
$$
By (\ref{N2N3}) we finally also get
$$
\frac{1}{3} \left(\text{I} - \text{II} \right) \ge
\frac{ C_1 \eps_2 x_{d+1}}{6 N_1^{1/2} \delta_1^{1/2}} \ge
\frac{ \lambda C_2 x_{d+1}}{(N_2 \wedge N_3)^{1/2} \delta_1^{1/2}} \ge
\text{IV}.
$$
This shows (\ref{U345}) for $x \in U_3 \cup U_4 \cup U_5$.

By (\ref{delta1_belt}) for $x \in U_6$ with  $x_1 \in (-4 \delta_1 + \delta_2, -2 \delta_1]$ we have $\tilde{Q}(x) > 0 = v_{r_j}(x)$ and for $x \in U_6$ with $x_1 \le -4 \delta_1 + \delta_2$ we have $\tilde{Q}(x) = v_{r_j}(x) = 0$, which finishes the proof of (\ref{boundary_estimate}).

\medskip

For a fixed, sufficiently small $\epsilon_3$ and corresponding large enough $j$, so that the above inequalities are satisfied, put 
\begin{equation*}
\tilde{w} = \left\{
\begin{aligned}
&\min(\tilde{Q}, v_{r_j})&\quad &\text{inside $W_1$,}\\
&v_{r_j}&\quad &\text{outside $W_1$}.\\
\end{aligned}
\right.
\end{equation*}

We have $\tilde{Q} \ge v_{r_j}$ on $\partial W_1$ so $\tilde{w} = v_{r_j}$ on $\partial W_1$, which implies that $\tilde{w}$ is continuous on $\R^{d+1}$. We have $\tilde{w} = v_{r_j} = r_j^{-1/2}$ on $\partial(r_j^{-1}D) \times \R$ and $0 \le \tilde{w} \le r_j^{-1/2}$ in $\R^{d+1}$. Let $\tilde{K} = \{x \in \R^d: \, \tilde{w}(x,0) = 0\}$. Clearly, $\tilde{K} = r_j^{-1} K_{D,\lambda} \cup \overline{B_1^d((-1+\delta_2)e_1^d)}$. Note that $v_{r_j}$ is harmonic on $((r_j^{-1}D) \times \R) \setminus ((r_j^{-1}K_{D,\lambda}) \times \{0\})$ and $\tilde{Q}$ is harmonic on $(\overline{B_1^d((-1+\delta_2)e_1^d)} \times \{0\})^c$.  By Lemma \ref{Q1 satisfies boundary condition} $\tilde{Q}(y,0) \le (\beta +\eps_1) (\dist(y,B_1^d((-1+\delta_2)e_1^d)))^{1/2}$ for any $y \in \R^d$.
By similar arguments as in the proof of Lemma \ref{lemmasupv} we have $\Delta \tilde{w} \le 0$ in $((r_j^{-1}D) \times \R) \setminus (\tilde{K} \times \{0\})$ in the viscosity sense and $\tilde{w}(y,0) \le \lambda (\dist(y,\tilde{K}))^{1/2}$ for $y \in \R^d$. For any $x \in \R^d$ and $y \in \R$ we clearly have $\tilde{w}(x,-y) = \tilde{w}(x,y)$. Furthermore, for any $x \in \R^d$ and $y_2 > y_1 \ge 0$ we have $\tilde{w}(x,y_2) \ge \tilde{w}(x,y_1)$.

Now put $w(x) = 1 - r_j^{1/2} \tilde{w}(x/r_j)$. Using the above properties of $\tilde{w}$ Lemma \ref{subharmonic} implies that $K_w:=\{x \in \R^d:\,w(\cdot,0)=1\} \in \calF(D,\lambda)$. On the other hand, $K_{D,\lambda} \subset K_w$ and $K_{D,\lambda} \ne K_w$ because 
$w(x,0) = 1$ for $x$ in some ball in $\R^d$ with center $0$. This gives contradiction with the maximal property of $u_{D,\lambda}$.
\end{proof}

\begin{proof}[Proof of Theorem \ref{thm_int_spectral}]
Let $u_{D,\lambda}$ be the function defined in Lemma \ref{lemma_uDlambda} and let $\varphi_{D,\lambda}$ be the function defined by $\varphi_{D,\lambda}(x) = u_{D,\lambda}(x,0)$ for $x \in \overline{D}$. We know that $K_{D,\lambda} \in \calF(D,\lambda)$ so for any $x \in D \setminus K_{D,\lambda}$ and $y \ge 0$ we have $u_{D,\lambda}(x,-y) = u_{D,\lambda}(x,y)$, which implies $\partial u_{D,\lambda}/\partial x_{d+1} (x,0) = 0$. Using this and \eqref{problemv} with $K=K_{D,\lambda}$ we get that $(-\Delta_D)^{1/2} \varphi_{D,\lambda}(x) = 0$ for $x \in D \setminus K_{D,\lambda}$. By Lemma \ref{C1boundary} the set $\{\varphi_{D,\lambda} = 1\} = K_{D,\lambda}$ is of class $C^1$. By Lemma \ref{normal_derivative} $\lim\limits_{t\to 0^+} \frac{\varphi_{D,\lambda}(\theta) - \varphi_{D,\lambda}(\theta+t\nu(\theta))}{\sqrt{t}} =\lambda$ for all $\theta\in \partial \{x \in \R^d: \, u_{D,\lambda}(x,0)  = 1\} = \partial K_{D,\lambda}$, where $\nu(\theta)$ denotes the exterior normal of $K_{D,\lambda}$ at $\theta$. It follows that $u = \varphi_{D,\lambda}$ satisfies Problem \ref{isbp}.

By the definition of $K_{D,\lambda}$ (see Lemma \ref{lemma_uDlambda}) the set $\{x \in \R^d: \, u_{D,\lambda}(x,0)  = 1\} = K_{D,\lambda}$ is convex.
\end{proof}

\begin{proof}[Proof of Proposition \ref{homolambdaprop}] 
Although it is almost straightforward, let us give a proof for the sake of the reader.
Let $K\in\calF(D_1,\lambda)$ and $v_1$ be the related subsolution. Then let $v_2$ be the solution of \eqref{problemv} with $D=D_2$. Then, by comparison, $v_2\geq v_1$, whence 
$$
\frac{1-v_2(y,0)}{\delta_K^{1/2}(y)} \le \frac{1-v_1(y,0)}{\delta_K^{1/2}(y)}\le \lambda\quad\text{for }y\in D_1\setminus K\,,
$$
while (see Lemma \ref{lemmadist})
$$
\frac{1-v_2(y,0)}{\delta_K^{1/2}(y)} \le \frac{1}{\delta_K^{1/2}(y)}\le \frac{1}{\text{dist}(\partial D_1,K)^{1/2}}\le\lambda\quad\text{for }y\in D_2\setminus D_1\,.
$$
Finally, since $D_2\setminus K=(D_1\setminus K)\cup(D_2\setminus D_1)$, we get $v_2\in\calF(D_2,\lambda)$ and this yields (i).
\medskip

Now let $K\in\calF(D,\lambda)$ and $v_K$ be the solution of \eqref{problemv}. Let $s>0$ and set
$$
v_s(x)=v_K(x/s)\quad\text{for }x\in sD \times \R\,.
$$
Then $v_s$ satisfies
$$
\left\{\begin{array}{ll}
\Delta v_s=0\quad&\text{in }( (sD)\times\R)\setminus((sK)\times\{0\})\\
v_s=0\quad&\text{on }\partial (sD)\times\R\\
v_s=1\quad&\text{in }(sK)\times\{0\}\\
\end{array}\right.
$$
and
$$ \frac{ |1-v_{s}(y,0)|}{\delta_{sK}^{1/2}(y)}=\frac{ |1-v_{K}(y/s,0)|}{s^{1/2}\delta_{K}^{1/2}(y/s)},
$$
whence $sK\in\calF(sD,s^{-1/2}\lambda)$. Viceversa, if $K'\in\calF(sD,\lambda')$ we can similarly see that $s^{-1}K'\in\calF(D,s^{1/2}\lambda')$. All together, we get
$s\calF(D,\lambda)=\calF(sD,s^{-1/2}\lambda)$ for every $\lambda>0$, which yields (ii).
\end{proof}
\begin{proof}[Proof of Proposition \ref{existlambda_S}]
As we have already said, the very definition of $\Lambda_S$ implies that no solution exists if $\lambda<\Lambda_S(D)$.
Now let $\lambda_n$ be a sequence such that $\calF(D,\lambda_n)\neq\emptyset$ and $\lim_{n\to+\infty} \lambda_n=\Lambda_S(D)$ and take any $\lambda>\Lambda_S(D)$. Then there exists $\tilde{n}$ such that $\lambda_{\tilde{n}}<\lambda$, whence $\calF(D,\lambda_{\tilde{n}})\subseteq\calF(D,\lambda)$ and a solution of Problem  \ref{isbp1} exists by Theorem \ref{thm_int_spectral}.
\\
Finally, we have to prove that a solution of Problem  \ref{isbp1} exists for $\lambda=\Lambda_S(D)$. First notice that we can choose the minimizing sequence $\lambda_n$ so that it is decreasing, then we have 
$\calF(D,\lambda_{n+1})\subseteq\calF(D,\lambda_n)$, whence
\begin{equation}\label{monotoneKn}
 K_{D,\lambda_{n+1}}\subseteq K_{D,\lambda_n}\,.
\end{equation}
For simplicity, let us shorten $K_{D,\lambda_n}$ into $K_n$ and set
$$
K=\lim_{n\to+\infty}K_n= \bigcap_{n=1}^\infty K_n\,.
$$
Furthermore, let us denote by $v_n$ the solution of 
\begin{equation}\label{problemvn}
\left\{\begin{array}{ll}
\Delta v_n=0\quad&\text{in }(D\times\R)\setminus(K_n\times\{0\})\\
v_n=0\quad&\text{on }\partial D\times\R\\
v_n=1\quad&\text{in }K_n\times\{0\}\\
\sup_{y\in D\setminus K_n} \dfrac{|v_{n}(y,0)-1|}{\delta_{K_n}^{1/2}(y)} = \lambda_n
\end{array}\right.
\end{equation}
given by Theorem \ref{thm_int_spectral}. By \eqref{monotoneKn} and comparison principle, we have
$$
v_{n+1}\leq v_n\quad\text{in }D\,.
$$
Then $v_n$ is a bounded decreasing sequence of harmonic functions, hence converging (uniformly in $D$) to a harmonic function $v_K$ which solves \eqref{problemv}.
Next we want to show that 
$$
\sup_{y\in D\setminus K} \dfrac{|v_K(y,0)-1|}{\delta_{K}^{1/2}(y)} \leq \Lambda_S(D)\,,
$$
or equivalently
\begin{equation}\label{conditionv}
1-v_K(y,0)\leq \Lambda_S(D)\, {\delta_{K}^{1/2}(y)}\quad\text{for }y\in D\setminus K\,.
\end{equation}
Now fix any point $y\in D\setminus K$, then there exist $\bar{n}$ such that $y\in D\setminus K_n$ for $n\geq\bar{n}$ and, by \eqref{problemvn}, it holds
$$
1-v_{n}(y,0)\leq \lambda_n\, {\delta_{K_n}^{1/2}(y)}\quad\text{for }n\geq\bar{n}\,.
$$
Passing to the limit as $n$ tends to $\infty$ and taking into account that $\lim_{n\to\infty}\delta_{K_n}(y)=\delta_K(y)$ we get \eqref{conditionv} as desired.
\\
Then $v$ is a subsolution of Problem \ref{isbp} for $\Lambda_S(D)$, that is $\calF(D,\Lambda_S(D))\neq\emptyset$ and a solution exists by Theorem  \ref{thm_int_spectral}.
\end{proof}

\begin{proof}[Proof of Proposition \ref{Bernoulli_constant}]
By scaling for any $r, s > 0$ we have $\Lambda_S(B_r^d(0)) = \sqrt{s} \Lambda_S(B_s^d(0))/\sqrt{r}$. We will estimate  $\Lambda_S(B_2^d(0))$.

Let $f: \R^d \to \R$ be the function defined by $f(x) = j(x - e_1^d)$, $x\in \R^d$ (where $j$ is given by (\ref{j_formula})). Note that $f$ is a radial function. Let $F$ be the harmonic extension of $f$. For $t > 0$, $x \in \R^{d+1}$ put $V_t(x) = t F(x) - (t - 1)$. Note that for any $t > 0$ $V_t \equiv 1$ on $B_1^d(0) \times \{0\}$. Fix $t > 0$ such that $V_t \equiv 0$ on $\partial B_2^d(0) \times \{0\}$. We have $V_t(2 e_1^d,0) = t f(2 e_1^d) - t + 1 = 0$, so $t = (1 - f(2 e_1^d))^{-1}$. For any $\tilde{x} \in B_1^d(0)$ we have 
$\frac{\partial}{\partial x_{d+1}} F(\tilde{x},0) \le 0$. Note that $F$ is harmonic on $(B_1^d(0) \times \{0\})^c$. Hence for any $\tilde{x} \in (\overline{B_1^d(0)})^c$ we have $\frac{\partial}{\partial x_{d+1}} F(\tilde{x},0) = 0$. Then by standard techniques we get
$$
\frac{\partial}{\partial x_{d+1}} F(\tilde{x},x_{d+1}) = 
\int_{\R^d} P((\tilde{x},x_{d+1}),y)  \left[\frac{\partial}{\partial x_{d+1}} F\right](y,0) \, dy \le 0,
$$
for any $\tilde{x} \in \R^d$, $x_{d+1} > 0$. This gives $\frac{\partial}{\partial x_{d+1}} V_t(\tilde{x},x_{d+1}) \le 0$ for any $\tilde{x} \in \R^d$, $x_{d+1} > 0$. Since $V_t \equiv 0$ on $\partial B_2^d(0) \times \{0\}$ we get $V_t(\tilde{x},x_{d+1}) \le 0$ for any $\tilde{x} \in \partial B_2^d(0)$, $x_{d+1} > 0$. By symmetry we get $V_t \le 0$ on $\partial B_2^d(0) \times \R$. 

Note that $\tilde{x} \to V_t(\tilde{x},0)$ is radial and radially nonincreasing. Hence for any $x_{d+1} \in \R$ $\tilde{x} \to V_t(\tilde{x},x_{d+1})$ is radial and radially nonincreasing. For $x \in \R^{d+1}$ put $\tilde{V}_t(x) = V_t(x)$ when $V_t(x) > 0$ and $\tilde{V}_t(x) = 0$ when $V_t(x) \le 0$. Put $\tilde{v}_t(x) = \tilde{V}_t(x,0)$ for $x \in \R^d$. For $x \in B_2^d(0)$ $\tilde{v}_t(x) = v_t(x) = t f(x) - t + 1$. Using this and Lemma \ref{q1_fractional_derivative} we get $\partial_{e_1^d}^{1/2} \tilde{v}_t(e_1^d) = - t  C_0$. Put $\lambda = t  C_0$. Note that $\{x\in \R^d\;:\; \tilde{v}_t(x)=1\} \in \calF(B_2^d(0),\lambda)$ by Lemma \ref{subharmonic} so $\Lambda_S{B_2^d(0)} \le \lambda$. Hence for any $r > 0$ we get $\Lambda_S(B_r^d(0)) = \sqrt{2} \Lambda_S(B_2^d(0))/\sqrt{r} \le \sqrt{2} t C_0/\sqrt{r}$.
\end{proof}

\section{The Brunn-Minkowski and Urysohn inequalities}

\begin{lemma}
\label{lemmalambdas}
Let  $D_0 \subset \Rd$ and  $D_1 \subset \Rd$ be bounded nonempty convex open sets, $s\in(0,1)$ and set $D_s=(1-s)D_0+sD_1$.
If $\calF(D_0,\lambda_0)$ and $\calF(D_1,\lambda_1)$ are not empty, then 
$$
K_s=(1-s) K_{D_0,\lambda_0} + s K_{D_1,\lambda_1} \in\calF(D_s,\max\{\lambda_0,\lambda_1\})\,.
$$
\end{lemma}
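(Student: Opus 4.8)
The plan is to produce, out of the two optimal subsolutions for $D_0$ and $D_1$, a single subsolution for $D_s$ whose contact set is exactly $K_s$, and then to quote Lemma \ref{subharmonic}. Write $u_i=u_{D_i,\lambda_i}$ and $K_i=K_{D_i,\lambda_i}$ for $i=0,1$; by Corollary \ref{lemma_uDlambda} these are well defined, $K_i$ is nonempty, compact and convex, $u_i$ solves \eqref{problemv} with $K=K_i$, satisfies \eqref{overdetermined} with constant $\lambda_i$, is even and nonincreasing in $x_{d+1}$, tends to $0$ at infinity, and by Lemma \ref{lemmadist} $\dist(K_i,\partial D_i)\ge\lambda_i^{-2}$. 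Extend $u_i$ by $0$ to all of $\R^{d+1}$, set $\lambda=\max\{\lambda_0,\lambda_1\}$ and define the Minkowski combination
$$
w(z)=\sup\Big\{\min\big(u_0(z^0),u_1(z^1)\big)\;:\;z^0,z^1\in\R^{d+1},\ (1-s)z^0+sz^1=z\Big\},\qquad z\in\R^{d+1}.
$$
First I would record the elementary facts $0\le w\le 1$, $K_s=(1-s)K_0+sK_1$ is compact and convex, and $K_s\subset D_s$ with $\dist(K_s,\partial D_s)\ge\lambda^{-2}$, using that $D_s=(1-s)D_0+sD_1$ is bounded, open and convex.

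Next I would show that $v:=w$ satisfies all the hypotheses of Lemma \ref{subharmonic} for the compact set $K_s\subset D_s$ and the constant $\lambda$. The decisive structural input is that each $u_i$ is quasi-concave, i.e. its superlevel sets $\{u_i\ge t\}$, $t\in(0,1]$, are convex: this is the classical convexity of the potential of the convex ring $(D_i\times\R)\setminus(K_i\times\{0\})$ (the mild degeneracy of the lower-dimensional inner set $K_i\times\{0\}$ being removed by approximating it with $K_i\times[-\eps,\eps]$ and letting $\eps\to0$; cf.\ \cite{CS2003}). Granting this, one gets $\{w\ge t\}=(1-s)\{u_0\ge t\}+s\{u_1\ge t\}$ for $t\in(0,1]$ (the supremum defining $w$ is attained on a compact set because $\{u_i\ge t\}$ is compact), hence $w$ is quasi-concave with convex compact superlevel sets; since each such set lies at positive distance from $\partial D_i\times\R$, it follows that $\{w=1\}=K_s\times\{0\}$, that $w\equiv 0$ outside $D_s\times\R$ (in particular $w\le 0$ on $\partial D_s\times\R$), that $w\to 0$ at infinity, and, by standard arguments, that $w\in C(\overline{D_s}\times\R)$. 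The remaining and hardest point is $\Delta w\ge 0$ in the viscosity sense on $(D_s\times\R)\setminus(K_s\times\{0\})$: there, locally, $w$ is a Minkowski combination of the \emph{harmonic} and quasi-concave functions $u_0,u_1$, and this is precisely the Brunn--Minkowski-type subharmonicity lemma, which can be cited from \cite{Borell2} (or reproved by Borell's second-order inspection at the decomposition realizing the supremum). I expect this step to be the main obstacle and the only place where the quasi-concavity of the $u_i$ is genuinely used.

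Finally I would verify the overdetermined inequality $\dfrac{|w(y,0)-1|}{\delta_{K_s}(y)^{1/2}}\le\lambda$ for $y\in D_s\setminus K_s$. If $\delta:=\delta_{K_s}(y)\ge\lambda^{-2}$ this is immediate since $0\le w\le 1$. If $\delta<\lambda^{-2}$, let $\bar x$ be the metric projection of $y$ onto the convex body $K_s$ and $\xi=(y-\bar x)/\delta\in S^{d-1}$ the corresponding outer unit normal; using the identity $F_{K_s}(\xi)=(1-s)F_{K_0}(\xi)+sF_{K_1}(\xi)$ for support sets of a Minkowski sum, write $\bar x=(1-s)\bar x_0+s\bar x_1$ with $\bar x_i\in K_i$ a support point of $K_i$ in direction $\xi$, and put $y_i=\bar x_i+\delta\xi$. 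Then $(1-s)y_0+sy_1=y$, and $\delta_{K_i}(y_i)=\delta$ because $\xi$ is an outer normal of $K_i$ at $\bar x_i$; moreover $y_i\in D_i\setminus K_i$, since $\delta<\lambda^{-2}\le\lambda_i^{-2}\le\dist(K_i,\partial D_i)$ forces $y_i\in B^d_{\dist(K_i,\partial D_i)}(\bar x_i)\subset D_i$. Hence, by \eqref{overdetermined} for $K_i$, $u_i(y_i,0)\ge 1-\lambda_i\delta^{1/2}\ge 1-\lambda\delta^{1/2}$, so that $w(y,0)\ge\min\{u_0(y_0,0),u_1(y_1,0)\}\ge 1-\lambda\delta^{1/2}$, which is the claim. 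With all hypotheses of Lemma \ref{subharmonic} checked for $v=w$, we conclude $K_s\in\calF(D_s,\lambda)$.
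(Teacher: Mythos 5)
Your argument is correct and follows essentially the same route as the paper: there, too, one forms the Minkowski combination $v_s^*$ of the two extremal subsolutions, invokes the viscosity subharmonicity of such combinations of harmonic functions (citing \cite{CS2003-2} and \cite{BLS2009} — note these do not require the preliminary quasi-concavity of $u_0,u_1$ that you establish first, so the detour through \cite{CS2003} and the $K_i\times[-\eps,\eps]$ approximation can be dropped), and verifies \eqref{overdetermined} via exactly your support-point decomposition $\bar x=(1-s)\bar x_0+s\bar x_1$ before concluding by comparison with $v_{K_s}$, i.e. Lemma \ref{subharmonic}. Your case distinction $\delta\ge\lambda^{-2}$ versus $\delta<\lambda^{-2}$, which guarantees $y_i\in D_i$ through Lemma \ref{lemmadist}, is a small point the paper leaves implicit.
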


\begin{proof}
 Observe that $K_s$ are compact, $D_s$ are open, convex and $K_s\subseteq D_s$. For $i = 0, 1$ denote $v_i = v_{D_i,\lambda_i}$, $K_i = K_{D_i,\lambda_i}$.
Then, let $v_s^*$ be the function whose superlevel sets are the Minkowski combination of the corresponding superlevel sets of $v_0$ and $v_1$. More explicitly:
$$
v_s^*(x)=\sup\left\{\min(v_0(x_0),v_1(x_{1}))\,:\,x_0\in  \R^{d+1},\,x_1\in  \R^{d+1},\,(1-s)x_0+s x_1=x\right\}\,.
$$
Notice that 
\begin{equation}\label{propertyvs}
 v_s^*\in C( \R^{d+1})\,,\,\,\,0\leq v_s^*\leq 1\,,\,\,\,v_s^*=0\, \text{ on $ D_s^c \times \R$} and \,\{v_s^*=1\}=  K_s \times \{0\}\,.
 \end{equation}
Moreover, since $v_i\to 0$ for $|x|\to \infty$ for $i=0,1$, clearly also $v_s^*\to 0$ for $|x|\to \infty$.

Furthermore, it holds 
\begin{equation}\label{subharmonic-minkowski}
\Delta v_s^*\geq 0\,\,\,\text{ in the viscosity sense in }  (D_s\times\R) \setminus(K_s\times\{0\}) \,,
\end{equation}
see for instance the proof of Theorem 1 in \cite{CS2003-2}, in particular formula (38), or also \cite{BLS2009} (in particular formula (2.6) or Proposition 2.3, therein).

Now we want to show that $v_s^*$ satisfies \eqref{overdetermined}. Let $y\in D_s\setminus  K_s$ and fix the unique $x_s\in \partial K_s$ such that 
$$\delta_{K_s}(y)=\dist(y,K_s)=|y-x_s|\,,$$ 
which is possible, since $K_s$ is convex. 
Then, by the properties of Minkowski addition of convex sets, there exists $x_0\in\partial K_0$ and $x_1\in\partial K_1$ such that
$$
(1-s)x_0+s x_1=x_s\,\,\,\text{and}\,\,\,\nu_0(x_0)=\nu_1(x_1)=\nu_s(x_s)\,,
$$
where $\nu_i(x_i)$ denotes the outer unit normal of $K_i$ at $x_i$, for $i=0,1,s$.
Put $y_i=x_{i}+y-x_s$ for $i=0,1$. Then, by the convexity of the involved sets, we have
$$\delta_{K_0}(y_0)=|y_0-x_0|=\delta_{K_1}(y_1)=|y_1-x_1|=\delta_{K_s}(y)=|y-x_s|\,.
$$
Furthermore,
$$
(1-s)y_0+sy_1=y
$$
and thus
$$
v_s^*(y,0)\geq \min \{ v_0( y_0,0),v_1(y_1,0)\}.
$$
Without loss of generality, we may assume that the above minimum is attained at $v_1(y_1,0)$ and then we have
\begin{align*}
\frac{|v_s^*(y,0) - 1|}{\delta_{K_s}^{1/2}(y)}=\frac{ 1 - v_s^*(y,0) }{{\delta_{K_s}^{1/2}(y)}}\leq  \frac{ 1- v_1(y_1,0) }{\delta_{K_1}^{1/2}(y_1)}=\frac{|v(y_1,0) - 1|}{\delta_{K_1}^{1/2}(y_1)} \leq  \lambda_1 \leq \max\{\lambda_0,\lambda_1\}
\end{align*}
and thus $v_s^*$ satisfies \eqref{overdetermined}.

Now let us consider the solution $v_{K_s}$ of \eqref{problemv} associated to $K_s$: thanks to  \eqref{subharmonic-minkowski} we have $v_{K_s}\geq v_s^*$ in $D_s$ and since $v_{K_s}=v_s^*=1$ on $\partial K_s$, we have that 
\eqref{overdetermined} for $v_s^*$ easily implies \eqref{overdetermined} for $v_{K_s}$, which concludes the proof.
\end{proof}

Now we are ready to prove the Brunn-Minkowski inequality for $\Lambda_S$.
\begin{proof}[Proof of Theorem \ref{BMlambda}]
A straightforward consequence of the previous lemma is the following.
\begin{equation}\label{corlambdas}
\Lambda_S(D_s)\leq\max\{\Lambda_S(D_0),\,\Lambda_S(D_1)\}\,,
\end{equation}
which takes to \eqref{BMlambdaeq} thanks to a standard procedure based on the homogeneity of $\Lambda_S$. We give the proof for the sake of the reader.

Let $\tilde{D}_i=\Lambda_S(D_i)^2D_i$ for $i=0,1$ and observe that (ii) of Proposition \ref{homolambdaprop} yields
$$
\Lambda_S(\tilde{D}_0)=\Lambda_S(\tilde{D}_1)=1\,.
$$
Then, for every $\mu\in(0,1)$, by \eqref{corlambdas} we have
\begin{equation}\label{quasiBM}
\Lambda_S(\tilde{D}_\mu)\leq 1\,,
\end{equation}
where
$$
\tilde{D}_\mu=(1-\mu)\tilde{D}_0+\mu\tilde{D}_1\,.
$$
Now taking 
$$
\mu=\frac{s\Lambda_S(D_1)^{-2}}{(1-s)\Lambda_S(D_0)^{-2}+s\Lambda_S(D_1)^{-2}}\,,
$$
we have
$$
\tilde{D}_\mu=\frac{1}{(1-s)\Lambda_S(D_0)^{-2}+s\Lambda_S(D_1)^{-2}}\left((1-s){D}_0+s{D}_1\right)\,.
$$
Then \eqref{quasiBM} and (ii) of Proposition \ref{homolambdaprop} give
$$
\left[(1-s)\Lambda_S(D_0)^{-2}+s\Lambda_S(D_1)^{-2}\right]^{1/2}\Lambda_S\left((1-s){D}_0+s{D}_1\right)\leq 1\,,
$$
which coincides with \eqref{BMlambdaeq}.
\end{proof}

\begin{proof}[Proof of  Corollary \ref{urysohncor}]
The proof of the Urysohn inequality for the Bernoulli constant for the spectral half Laplacian is based on a standard procedure, as described in the proof of Corollary 2.2 of \cite{BS2009} or in Section 6 of \cite{S2015}. We leave the proof to the reader.
\end{proof}

\section{Appendix}

\begin{proof}[Proof of Lemma \ref{radial functions}]
In the following, $c$ denotes a positive constant depending only on $d$, whose value may change from line to line. The case $d=1$ is simple and we omit it. In the following let $d\geq 2$. Then, by translation into polar coordinates, denoting $S^d:=\partial B_1(0)$, we have
\begin{align*}
[u]_{1}&=\int_{0}^{\infty}\int_0^{\infty} \int_{S^d}\int_{S^d}\frac{(g(r)-g(\tau))^2r^{d-1}\tau^{d-1}}{|r\theta-\tau \phi|^{d+1}}\ d\theta\ d\phi \ d\tau dr\\
&=c\int_{0}^{\infty}\int_0^{\infty} (g(r)-g( \tau))^2r^{d-1}\tau^{d-1}  \int_{0}^{\pi}\frac{\sin(t)^{d-2}}{(r^2+\tau^2-2r\tau\cos(t))^{\frac{d+1}{2}}} \ dt \ d\tau dr.
\end{align*}
Using the monotonicity of $g$, we hence have
\begin{align*}
[u]_1&\geq \epsilon^2 c\int_0^{t_0}\int_{t_0}^{\infty}r^{d-1}\tau^{d-1}\int_{0}^{\pi}\frac{\sin(t)^{d-2}}{(r^2+\tau^2-2r\tau\cos(t))^{\frac{d+1}{2}}} \ dt \ d\tau dr\\
&=\epsilon^2t_0^{d-1}c \int_0^1\int_1^{\infty} r^{d-1}\tau^{d-1}\int_{0}^{\pi}\frac{\sin(t)^{d-2}}{(r^2+\tau^2-2r\tau\cos(t))^{\frac{d+1}{2}}} \ dt \ d\tau dr,
\end{align*}
where in the last step we substituted $r$ and $\tau$ with $t_0r$ and $t_0\tau$ abusing slightly the notation.
The claim follows once we show
\begin{equation}
\label{complicated integral}
\int_0^1\int_1^{\infty} r^{d-1}\tau^{d-1}\int_{0}^{\pi}\frac{\sin(t)^{d-2}}{(r^2+\tau^2-2r\tau\cos(t))^{\frac{d+1}{2}}} \ dt \ d\tau\ dr=\infty.
\end{equation}
Note here that we may write 
$$
\int_{0}^{\pi}\frac{\sin(t)^{d-2}}{(r^2+\tau^2-2r\tau\cos(t))^{\frac{d+1}{2}}} \ dt=r^{-d-1}\int_{0}^{\pi}\frac{\sin(t)^{d-2}}{(1+\gamma^2-2\gamma\cos(t))^{\frac{d+1}{2}}} \ dt
$$
with $\gamma=\frac{\tau}{r}>1$. Note that we have
\begin{equation}
\label{complicated bound}
\int_{0}^{\pi}\frac{\sin(t)^{d-2}}{(1+\gamma^2-2\gamma\cos(t))^{\frac{d+1}{2}}} \ dt\geq \frac{c}{\gamma^{d-2}(\gamma^2-1)^2}.
\end{equation}
The proof of this inequality is postponed to the end of this proof. With \eqref{complicated bound} we find
\begin{align*}
\int_0^1\int_1^{\infty} \int_{0}^{\pi}\frac{r^{d-1}\tau^{d-1}\sin(t)^{d-2}}{(r^2+\tau^2-2r\tau\cos(t))^{\frac{d+1}{2}}} \ dt \ d\tau\ dr&\geq 
c\int_0^1\int_1^{\infty}  r^{d-3} \frac{\tau}{ \left((\frac{\tau}{r})^2-1\right)^2}\ d\tau\ dr\\
&=c\int_0^1 \frac{ r^{d+1}}{1-r^2}\ dr=\infty,
\end{align*}
which shows \eqref{complicated integral} and finishes the proof.

 To see \eqref{complicated bound}, we perform the same \textit{nonlinear change of variable} as in \cite[Appendix (A.26)]{B2016} by putting
\begin{equation}\label{change-of-variable}
\frac{\sin(t)}{\sqrt{1+\gamma^2-2\gamma \cos(t)}}=\frac{\sin(\theta)}{\gamma}.
\end{equation}
It follows by differentiating that
\begin{equation}\label{change-of-variable-part2}
dt=\Bigg(1-\frac{\cos(\theta)}{\sqrt{\gamma^2-\sin^2(\theta)}}\Bigg)\ d\theta.
\end{equation}

We also have $\cos^2(\theta) =\frac{(1-\gamma\cos(t))^2}{\gamma^2+1-2\gamma\cos(t)}$ and $\gamma^2-\sin^2(\theta)=\gamma^2\frac{(\gamma-\cos(t))^2}{\gamma^2+1-2\gamma\cos(t)}$. Hence $\sqrt{\gamma^2-\sin^2(\theta)}-\cos(\theta)=\sqrt{\gamma^2+1-2\gamma\cos(t)}$. Using this, \eqref{change-of-variable} and \eqref{change-of-variable-part2} we then have

\begin{align*}
\int_{0}^{\pi}&\frac{\sin(t)^{d-2}}{(1+\gamma^2-2\gamma \cos(t))^{\frac{d+1}{2}}}\ dt\\
&=\int_{0}^{\pi}\frac{\sin(\theta)^{d-2}}{\gamma^{d-2}(\sqrt{\gamma^2-\sin^2(\theta)}-\cos(\theta))^3}\Bigg(1-\frac{\cos(\theta)}{\sqrt{\gamma^2-\sin^2(\theta)}}\Bigg)\ d\theta\\
&=\frac{1}{\gamma^{d-2}(\gamma^2-1)^2}\int_{0}^{\pi}\frac{\sin(\theta)^{d-2}(\gamma^2-\sin^2(\theta) +2\sqrt{\gamma^2-\sin^2(\theta)}\cos(\theta)+\cos^2(\theta))}{\sqrt{\gamma^2-\sin^2(\theta)}} \ d\theta\\
&=\frac{2}{\gamma^{d-2} (\gamma^2-1)^2}\Bigg(\int_{0}^{\frac{\pi}{2}} \sin(\theta)^{d-2}\sqrt{\gamma^2-\sin^2(\theta)}\ d\theta +\int_{0}^{\frac{\pi}{2}}\frac{\sin(\theta)^{d-2}(1-\sin^2(\theta))}{\sqrt{\gamma^2-\sin^2(\theta)}} \ d\theta\Bigg)
\end{align*}
using the symmetries of the integrals. It holds
\begin{align*}
\int_{0}^{\frac{\pi}{2}} \sin(\theta)^{d-2}(\gamma^2-\sin^2(\theta))^{\frac{1}{2}}\ d\theta
&= \frac{1}{2}\int_0^1 \rho^{\frac{d-3}{2}}\Big(\frac{\gamma^2-\rho}{1-\rho}\Big)^{\frac{1}{2}} \ d\rho\qquad\text{and similarly}\\
\int_{0}^{\frac{\pi}{2}} \frac{\sin(\theta)^{d-2}(1-\sin^2(\theta))}{(\gamma^2-\sin^2(\theta))^{\frac{1}{2}}}\ d\theta&=\frac{1}{2}\int_0^1 \rho^{\frac{d-3}{2}}\Big(\frac{1-\rho }{\gamma^2-\rho}\Big)^{\frac{1}{2}}  \ d\rho.
\end{align*}
Hence, using $\gamma>1$
\begin{align*}
\int_{0}^{\pi}\frac{\sin(t)^{d-2}}{(1+\gamma^2-2\gamma \cos(t))^{\frac{d+1}{2}}}\ dt&=\frac{1}{\gamma^{d-2}(\gamma^2-1)^2}\int_0^{1}\rho^{\frac{d-3}{2}}\Bigg(\Big(\frac{\gamma^2-\rho}{1-\rho}\Big)^{\frac{1}{2}} +\Big(\frac{1-\rho }{\gamma^2-\rho}\Big)^{\frac{1}{2}} \Bigg)\ d\rho\\
&\geq \frac{1}{\gamma^{d-2}(\gamma^2-1)^2}\int_0^{1}\rho^{\frac{d-3}{2}} \ d\rho=\frac{c}{\gamma^{d-2}(\gamma^2-1)^2},
\end{align*}
which shows \eqref{complicated bound}.
\end{proof}

\vspace{20pt}

\footnotesize

\end{document}